\theoremstyle{plain}
\newtheorem{theorem}{Theorem}[section]
\newtheorem{lemma}[theorem]{Lemma}
\newtheorem{proposition}[theorem]{Proposition}
\newtheorem{corollary}[theorem]{Corollary}
\theoremstyle{definition}
\theoremstyle{remark}
\newtheorem{remark}[theorem]{Remark}
\newtheorem{example}[theorem]{Example}
\numberwithin{equation}{section}
\def\N{\mathbb{N}}
\def\id{{\rm id}}
\def\bsigma{{\boldsymbol{\sigma}}}
\def\IET{\mathrm{IET}}
\def\cA{\mathcal{A}}
\def\cB{\mathcal{B}}
\def\cC{\mathcal{C}}
\def\cD{\mathcal{D}}
\def\cE{\mathcal{E}}
\def\cG{\mathcal{G}}
\def\cL{\mathcal{L}}
\def\cP{\mathcal{P}}
\def\cR{\mathcal{R}}
\def\cS{\mathcal{S}}
\def\cT{\mathcal{T}}
\def\cSD{\mathcal{S}_3}
\def\cSL{\mathcal{S}_{\rm LI}}
\def\cSR{\mathcal{S}_{\rm RI}}
\def\NN{{\mathbb{N}}}
\def\ZZ{{\mathbb{Z}}}
\newcommand{\cX}[1]{\mathcal{X}_{[3,#1]}}
\DeclareMathOperator{\Fac}{Fac}
\DeclareMathOperator{\DP}{DP}
\DeclareMathOperator{\DPP}{DPP}
\definecolor{light-gray}{gray}{0.7}
\DeclareMathOperator{\A0}{\mathcal{A}_0}
\tikzset{auto shift/.style={auto=right,->,
to path={ let \p1=(\tikztostart),\p2=(\tikztotarget),
\n1={atan2(\y2-\y1,\x2-\x1)},\n2={\n1+180}
in ($(\tikztostart.{\n1})!1mm!270:(\tikztotarget.{\n2})$) -- 
($(\tikztotarget.{\n2})!1mm!90:(\tikztostart.{\n1})$) \tikztonodes}}}
\newcommand{\subjclass}[2][2010]{%
  \let\@oldtitle\@title%
  \gdef\@title{\@oldtitle\footnotetext{#1 \emph{Mathematics subject classification.} #2}}%
}
\newcommand{\keywords}[1]{%
  \let\@@oldtitle\@title%
  \gdef\@title{\@@oldtitle\footnotetext{\emph{Key words.} #1.}}%
}
\title{$\cS$-adic characterization of minimal ternary dendric shifts}
\author[1,2]{France Gheeraert}
\author[1,2]{Marie Lejeune}
\author[1]{Julien Leroy}
\affil[1]{Department of Mathematics, University of Liège, Allée de la Découverte 12 (B37), B-4000 Liège, Belgium.
\url{{france.gheeraert,m.lejeune,j.leroy} @uliege.be}}
\affil[2]{Supported by a FNRS fellowship}
\keywords{Symbolic dynamics, substitutions, S-adic, dendric, interval exchange, return words.}
\subjclass[2010]{68R15; 37B10}
\date{}
\begin{document}

\maketitle

\begin{abstract}
Dendric shifts are defined by combinatorial restrictions of the extensions of the words in their languages. 
This family generalizes well-known families of shifts such as Sturmian shifts, Arnoux-Rauzy shifts and codings of interval exchange transformations.
It is known that any minimal dendric shift has a primitive $\mathcal{S}$-adic representation where the morphisms in $\mathcal{S}$ are positive tame automorphisms of the free group generated by the alphabet.
In this paper we investigate those $\mathcal{S}$-adic representations, heading towards an $\mathcal{S}$-adic characterization of this family.
We obtain such a characterization in the ternary case, involving a directed graph with 2 vertices.
\end{abstract}

\section{Introduction}

Dendric shifts are defined in terms of extension graphs that describe the left and right extensions of their factors. 
Extension graphs are bipartite graphs that can roughly be described as follows: if $u$ is a word in the language $\cL(X)$ of the shift space $X$, one puts an edge between the left and right copies of letters $a$ and $b$ such that $aub$ is in $\cL(X)$. 
A shift space is then said to be dendric if the extension graph of every word of its language is a tree.
These shift spaces were initially defined through their languages under the name of tree sets~\cite{acyclic} and were studied in a series of papers.
They generalize classical families of shift spaces such as Sturmian shifts~\cite{morse_hedlund}, Arnoux-Rauzy shifts~\cite{arnoux_rauzy}, codings of regular interval exchange transformations~\cite{oseledec,arnold} (IET) or else shift spaces arising from the application of the Cassaigne multidimensional continued fraction algorithm~\cite{Cassaigne_Labbe_Leroy_WORDS} (MCF).

Minimal dendric shifts exhibit striking algebraic~\cite{acyclic,finite_index}, combinatorial~\cite{bifix_decoding,rigidity}, 
and ergodic properties~\cite{dimension_group}. 
They for instance have factor complexity $\#(\cL(X) \cap \cA^n) = (\#\cA-1)n+1$~\cite{acyclic} and topological rank $\#\cA$~\cite{dimension_group}, where $\cA$ is the alphabet of the shift space. 
They also fall into the class of shift spaces satisfying the regular bispecial condition~\cite{damron_fickenscher:2020}, which implies that the number of their ergodic measures is at most $\#\cA/2$.
An important property for our work is that the derived shift of a minimal dendric shift is again a minimal dendric shift on the same alphabet, where derivation is here understood as derivation by return words (see Section~\ref{section:s-adicity} for definitions).
This allows to give $\cS$-adic representations of such shift spaces~\cite{Ferenczi:1996}, i.e., to define a set $\cS$ of endomorphisms of the free monoid $\cA^*$ and a sequence $\bsigma = (\sigma_n)_{n \geq 1} \in \cS^\NN$, called an $\cS$-adic representation, such that 
\[
    X = \{x \in \cA^\ZZ \mid u \in \cL(x) \Rightarrow \exists n\in \NN, a\in \cA: u \in \cL(\sigma_1 \sigma_2 \cdots \sigma_n(a))\}.
\]

$\cS$-adic representations are a classical tool that allows to study several properties of shift spaces such as factor complexity~\cite{Durand_Leroy_Richomme,donoso_durand_maass_petite}, the number of ergodic measures~\cite{berthe_delecroix,bedaride_hilion_lustig_1,bedaride_hilion_lustig_2}, the dimension group and topological rank~\cite{dimension_group} or yet the automorphism group~\cite{espinoza_maass}. 
In the case of minimal dendric shifts, the involved endomorphisms are particular tame automorphisms of the free group generated by the alphabet~\cite{bifix_decoding,rigidity}. 
This in particular allows to prove that minimal dendric shifts have topological rank equal to the cardinality of the alphabet and that ergodic measures are completely determined by the measures of the letter cylinders~\cite{dimension_group,bedaride_hilion_lustig_2}.

An important open problem concerning $\cS$-adic representations is the $\cS$-adic conjecture whose goal is to give an $\cS$-adic characterization of shift spaces with at most linear complexity~\cite{leroy_improvements}, i.e., to find a stronger notion of $\cS$-adicity such that a shift space has an at most linear factor complexity if and only if it is ``strongly $\cS$-adic''.
Our work goes one step further towards this conjecture by studying $\cS$-adic representations of minimal dendric shifts.
Our main result is the following that gives an $\Sigma_3\cSD\Sigma_3$-adic characterization of minimal dendric shifts over a ternary alphabet, where $\cSD$ is defined in Section~\ref{subsection:dendric morphisms Ster} and $\Sigma_3$ is the symmetric group.
It involves a labeled directed graph $\cG$ with 2 vertices and which is non-deterministic, i.e., a given morphism may label several edges leaving a given vertex.

\begin{theorem}
\label{thm:main}
A shift space $(X,S)$ is a minimal dendric shift over $\cA_3 = \{1,2,3\}$ if and only if it has a primitive $\Sigma_3\cSD\Sigma_3$-adic representation $\bsigma \in (\Sigma_3\cSD\Sigma_3)^\NN$ that labels a path in the graph $\cG$ represented in Figure~\ref{fig:graph of graphs}.
\end{theorem}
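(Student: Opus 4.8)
The plan is to prove the two implications separately, both resting on the analysis of how dendricity interacts with derivation by return words carried out in Section~\ref{section:s-adicity} and Section~\ref{subsection:dendric morphisms Ster}. The key ingredients from that analysis are: first, the classification of the \emph{return morphisms} that code a minimal dendric ternary shift into its derived shift — up to pre- and post-composition with permutations, these are exactly the elements of $\cSD$, which is why representations live in $(\Sigma_3\cSD\Sigma_3)^\NN$; and second, the description of the two \emph{combinatorial types} of ternary dendric shift attached to the two vertices of $\cG$, together with the bookkeeping of how a given return morphism turns the type of a shift into the type of its derived shift — this is precisely what the edges of $\cG$ record.

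\textbf{Forward direction.} Assume $(X,S)$ is a minimal dendric shift on $\cA_3$. Since the derived shift of a minimal dendric shift is again minimal and dendric on the same alphabet, set $X_0 = X$ and let $X_{n+1}$ be the derived shift of $X_n$ (by return words to a suitably chosen letter), coded into $X_{n-1}$ by a return morphism $\sigma_n$. By the classification of ternary dendric return morphisms, each $\sigma_n$ lies in $\Sigma_3\cSD\Sigma_3$; since every $X_n$ is minimal and these morphisms are proper, $\bsigma = (\sigma_n)_{n\ge 1}$ is a primitive $\cS$-adic representation of $X$. It remains to see that $\bsigma$ labels a path in $\cG$: assigning to each $X_n$ one of the two vertices of $\cG$ as its type $t_n$, the claim is that $\sigma_{n+1}$ is always an admissible return morphism for a shift of type $t_n$ and produces a derived shift of type $t_{n+1}$, i.e. that $(t_n,\sigma_{n+1},t_{n+1})$ is an edge of $\cG$. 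This is a finite verification: for each vertex and each morphism of $\cSD$ one reads off the type of the resulting derived shift from the way that morphism acts on the relevant extension graphs.

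\textbf{Backward direction.} Assume $\bsigma \in (\Sigma_3\cSD\Sigma_3)^\NN$ is primitive and labels a path $(t_n)_n$ in the graph $\cG$ of Figure~\ref{fig:graph of graphs}, and let $X$ be the associated shift. Primitivity gives minimality, so it suffices to show that $E_X(u)$ is a tree for every $u \in \cL(X)$. Writing $X^{(n)}$ for the shift generated by $(\sigma_m)_{m>n}$ and using recognizability of the representation, one desubstitutes $u$ to obtain in some $X^{(n)}$ a shorter centered word $v$ whose extension graph is carried onto $E_X(u)$ by the transformation induced by $\sigma_1\sigma_2\cdots\sigma_n$; because at each level the path follows an edge of $\cG$, each one-step transformation $\sigma_m$ preserves tree-ness (this is exactly the information attached to the edges of $\cG$, and is what rules out the configurations that would make an extension graph acyclic or disconnected), and the induction bottoms out since the extension graphs of the empty word — which the two vertices of $\cG$ are designed to keep being trees — are trees at every level. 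Hence $E_X(u)$ is a tree and $X$ is dendric.

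\textbf{Main obstacle.} The genuine difficulty, and the point where the statement could conceivably break, is the \emph{sufficiency} lurking in the backward direction: one must know that $\cSD$ is exhaustive (every ternary dendric return morphism, up to permutations, appears) and that $\cG$ lists exactly the admissible triples (type, morphism, type), so that staying inside $\cG$ excludes every transformation producing a non-tree extension graph. Both are intricate finite case analyses over the possible tree shapes of the pertinent bispecial factors and over compositions of the morphisms of $\cSD$, and they are the object of the preparatory sections; granting them, the two implications above assemble as described. A secondary technical point — arranging primitivity and recognizability uniformly along the tower — is handled in the standard way by grouping consecutive morphisms when needed.
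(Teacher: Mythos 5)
There are two genuine gaps, one in each direction. In the forward direction you treat the two vertices of $\cG$ as ``the two combinatorial types of ternary dendric shift'' and propose to assign to each derived shift $X_n$ one of these two vertices and then check edge-by-edge. But the natural invariant here, the pair $(\cC^-(X_n),\cC^+(X_n))$ of disconnecting letters, takes \emph{five} values up to permutation, including the cases where one or both sets are empty (for an Arnoux--Rauzy shift both are empty at every level). So the derivation process naturally produces a path in the larger five-vertex graph $\cG'$ of the paper, not in $\cG$, and passing to the two-vertex graph is not a relabelling of vertices: it requires replacing the directive sequence by an \emph{equivalent} one, i.e.\ choosing permutations $\psi_n$ coherently along the whole infinite sequence and setting $\tau_n=\psi_n\sigma_n\psi_{n+1}^{-1}$, which is possible only because once $\cC^-$ becomes empty all subsequent morphisms are left-invariant (Corollary~\ref{cor:empty is preserved}, Lemma~\ref{lemma:properties of edges}). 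This reduction is the bulk of the paper's proof of Theorem~\ref{thm:main}; it changes the morphisms, not just the types, and is missing from your argument — it is not a ``finite verification'' per morphism.

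In the backward direction, the step ``each one-step transformation $\sigma_m$ preserves tree-ness because the path follows an edge of $\cG$'' is not justified as stated. The morphisms of $\cSD$ do \emph{not} preserve tree-ness unconditionally: e.g.\ $\beta$ destroys dendricity exactly when $1\in\cC^-(v)$ (Proposition~\ref{prop:condition for non-tree-successor}), and an edge of $\cG$ only guarantees dendric preservation for factors whose disconnecting-letter data is compatible with the class at the target vertex. In your induction you may not assume this compatibility for the actual shifts $X^{(m)}$, since they are not yet known to be dendric, nor to lie in the classes labelling the path — that is precisely what is being proved. The paper closes this hole by observing that the extension graph of the oldest (initial) ancestor is determined by $\sigma_k$ alone (Corollary~\ref{cor:back to epsilon}) and then transferring the whole computation to an auxiliary shift that genuinely is dendric and lies in the labelled class, whose existence is Lemma~\ref{lemma:classes not empty} (built from an Arnoux--Rauzy shift); alternatively one could propagate the $\cC^\pm$-data of the descendants level by level via Lemmas~\ref{lemma:non-left-invariant} and~\ref{lemma:left-invariant}, but some such argument must be supplied. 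A minor additional point: what you call ``recognizability'' is not what is used; the desubstitution rests on the uniqueness of antecedents for injective strongly left proper morphisms (Proposition~\ref{prop:def antecedent and bsp ext image}).
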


We then characterize, within this graph, the well-known families of Arnoux-Rauzy shifts and of coding of regular 3-IET (Theorem~\ref{thm:representation interval exchange}). 
We also show that shift spaces arising from the Cassaigne MCF are never Arnoux-Rauzy shifts, nor codings of regular 3-IET (Proposition~\ref{prop:cassaigne not iet}).
Observe that minimal ternary dendric shifts have factor complexity $2n+1$ and another $S$-adic characterization could be deduced from~\cite{leroy_2n}.
This other $S$-adic characterization would also involve a labeled graph, but with 9 vertices.

Observe that we do not focus only on the ternary case. 
We investigate the $\cS$-adic representations of minimal dendric shifts over any alphabet obtained when considering derivation by return words to letters.
We for instance show that when taking the image $Y$ of a shift space $X$ under a morphism in $\cS$, the extension graphs of long enough factors of $Y$ are the image of the extension graph of factors of $X$ under some graph homomorphism (Proposition~\ref{prop:morphic image of extension graph}).
This allows us to introduce the notion of dendric preserving morphism for $X$ which is the fundamental notion for the construction of the graph $\cG$.
We also characterize the morphisms $\sigma$ of $\cS$ that are dendric preserving for all $X$ using Arnoux-Rauzy morphisms (Proposition~\ref{prop:characterization dendric preserving}).

The paper is organized as follows.
We start by giving, in Section~\ref{section:preliminaries}, the basic definitions for the study of shift spaces. We introduce the notion of extension graph of a word, of dendric shift and of $\cS$-adic representation of a shift space.

In Section~\ref{section:s-adicity}, we recall the existence of an $\cS$-adic representation using return words for minimal shift spaces (Theorem~\ref{thm:S-adic representation of minimal}) and the link between return words and Rauzy graph.

In Section~\ref{section:bispecial factors}, we then study the relation between words in a shift space and in its image by a strongly left proper morphism (Proposition~\ref{prop:def antecedent and bsp ext image}). We deduce from it a link between the extension graphs (Proposition~\ref{prop:morphic image of extension graph}) using graph morphisms and we prove that the injective and strongly left proper morphisms that preserve dendricity can be characterized using Arnoux-Rauzy morphisms (Proposition~\ref{prop:characterization dendric preserving}).

In Section~\ref{section:ternary case}, we study the notions and results of Section~\ref{section:bispecial factors} in the case of a ternary alphabet.
We then prove the main result of this paper (Theorem~\ref{thm:main}) which gives an $\cS$-adic characterization of ternary minimal dendric shifts using infinite paths in a graph.

Finally, in Section~\ref{section:sub families}, we focus on three sub-families of dendric shifts: Arnoux-Rauzy shifts, interval exchanges and Cassaigne shifts. For interval exchanges, we first recall the associated definitions and basic properties, then provide an $\cS$-adic characterization (Theorem~\ref{thm:representation interval exchange}) in the ternary case using a subgraph of the graph obtained in the dendric case. 
We also prove that the families of Cassaigne shifts, of Arnoux-Rauzy shifts and of regular interval exchanges are disjoint.

\section{Preliminaries}
\label{section:preliminaries}

\subsection{Words, languages and shift spaces}
Let $\cA$ be a finite alphabet of cardinality $d \geq 2$.
Let us denote by $\varepsilon$ the empty word of the free monoid $\cA^*$ (endowed with concatenation), and by $\cA^{\ZZ}$ the set of bi-infinite words over $\cA$.
For a word  $w= w_{1} \cdots w_{\ell} \in \cA^\ell$,
its  {\em length} is denoted $|w|$ and equals $\ell$.
We say that a word $u$ is a {\em factor} of a word $w$ if there exist words $p,s$ such that $w = pus$.
If $p = \varepsilon$ (resp., $s = \varepsilon$) we say that $u$ is a {\em prefix} (resp., {\em suffix}) of $w$.  
For a word  $u \in  \cA^{*}$,  an index $ 1 \le j \le \ell$ such that $w_{j}\cdots w_{j+|u|-1} =u$ is called an {\em occurrence} of $u$ in $w$ and we use the same term for bi-infinite word in $\cA^{\ZZ}$. 
The number of occurrences of  a  word $u \in \cA^*$ in a  finite word $w$ is denoted as $|w|_u$.

The set  $\cA^{\ZZ}$ endowed with the product topology of the discrete topology on each copy of $\cA$ is topologically a Cantor set. 
The {\em shift map}  $S$ defined by $S \left( (x_n)_{n \in \mathbb{Z}} \right) = (x_{n+1})_{n \in \mathbb{Z}}$ is a homeomorphism of  $\cA^{\ZZ}$. 
A {\em shift space} is a pair $(X,S)$ where $X$ is a closed shift-invariant  subset of some $\cA^{\ZZ}$.
It is thus a {\em topological dynamical system}.
It is {\em minimal} if the only closed shift-invariant subset $Y \subset X$ are $\emptyset$ and $X$.
Equivalently, $(X,S)$ is minimal if and only if the orbit of every $x \in X$ is dense in $X$.
Usually we say that the set $X$ is itself a shift space.

The {\em language} of a sequence $x \in \cA^{\ZZ}$ is its set of factors and is denoted $\cL(x)$. 
For a shift space $X$, its {\em language} $\cL(X)$ is $\cup_{x\in X} \cL(x)$ and we set $\cL_n(X) = \cL(X) \cap \cA^n$, $n \in \NN$. 
Its {\em factor complexity} is the function $p_X:\NN \to \NN$ defined by $p_X(n) = \#\cL_n(X)$.
We say that a shift space $X$ is {\em over} $\cA$ if $\cL_1(X) = \cA$.

\subsection{Extension graphs and dendric shifts}

Dendric shifts are  defined with respect to combinatorial properties of their language expressed in terms of extension graphs. 
Let $F$ be a set of finite words on the alphabet $\cA$ which is factorial, i.e., if $u \in F$ and $v$ is a factor of $u$, then $v \in F$.
For $w \in F$, we define the sets of left, right and bi-extensions of $w$ by
\begin{align*}
	E_F^-(w) &= \{ a \in \cA \mid aw \in F\};	\\
	E_F^+(w) &= \{ b \in \cA \mid wb \in F\};	\\
	E_F(w) &= \{ (a,b) \in \cA \times \cA \mid awb \in F\}.		
\end{align*}
The elements of $E_F^-(w)$, $E_F^+(w)$ and $E_F^-(w)$ are respectively called the {\em left extensions}, the {\em right extensions} and the {\em bi-extensions} of $w$ in $F$. If $X$ is a shift space over $\cA$, we will use the terminology extensions in $X$ instead of extensions in $\cL(X)$ and the index $\cL(X)$ will be replaced by $X$ or even omitted if the context is clear.
Observe that as $X \subset \cA^\ZZ$, the set $E_X(w)$ completely determines $E_X^-(w)$ and $E_X^+(w)$.  
A word $w$ is said to be {\em right special} (resp., {\em left special}) if $\#(E^+(w))\geq 2$ (resp., $\#(E^-(w)) \geq 2$). 
It is {\em bispecial} if it is both left and right special.
The factor complexity of a shift space is completely governed by the 
extensions of its special factors.
In particular, we have the following result.

\begin{proposition}[Cassaigne and Nicolas~\cite{CANT_cassaigne}]
\label{prop:complexity}
Let $X$ be a shift space. 
For all $n$, we have 
\begin{align*}
	p_X(n+1)-p_X(n) 
	&= \sum_{w \in \cL_n(X)} (\#E^+(w)-1)	\\
	&= \sum_{w \in \cL_n(X)} (\#E^-(w)-1)
\end{align*}
In addition, if for every bispecial factor $w \in \cL(X)$, one has 
\begin{equation}
\label{eq:bilateral multiplicity}
	\#E(w) - \# E^-(w) - \# E^+(w) + 1 = 0,
\end{equation}
then $p_X(n) = (p_X(1)-1)n +1$ for every $n$. 
\end{proposition}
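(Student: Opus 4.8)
The plan is to prove all the displayed identities by a simple counting of factors along the ``extension fibrations'' of the language. For the first two equalities, I would consider the map $\cL_{n+1}(X) \to \cL_n(X)$ sending a word to its prefix of length $n$. It is surjective, since every factor of a (nonempty) shift space has at least one right extension: it occurs in some bi-infinite point of $X$ and can be extended there. Moreover the fiber above $w \in \cL_n(X)$ is $\{ wb \mid b \in E^+(w)\}$, which has $\#E^+(w)$ elements. Hence $p_X(n+1) = \sum_{w \in \cL_n(X)} \#E^+(w)$, and subtracting $p_X(n) = \sum_{w \in \cL_n(X)} 1$ gives the first identity; the second follows symmetrically using suffixes and $E^-(w)$.

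For the affine-complexity statement, I would introduce the \emph{bilateral multiplicity} $m(w) = \#E(w) - \#E^-(w) - \#E^+(w) + 1$ and compute the second difference of $p_X$ in terms of it. The same counting argument, now writing a word of length $n+2$ uniquely as $awb$ with $a,b \in \cA$ and $w \in \cL_n(X)$, gives $p_X(n+2) = \sum_{w \in \cL_n(X)} \#E(w)$. Combining this with the two identities of the first part yields, for every $n \geq 0$ (with $\cL_0(X) = \{\varepsilon\}$),
\[
p_X(n+2) - 2 p_X(n+1) + p_X(n) = \sum_{w \in \cL_n(X)} \bigl( \#E(w) - \#E^-(w) - \#E^+(w) + 1 \bigr) = \sum_{w \in \cL_n(X)} m(w).
\]

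Finally I would check that $m(w) = 0$ whenever $w$ is not bispecial. If $w$ is not right special, write $E^+(w) = \{b\}$; since $w$ is a factor of the shift space $X$, every occurrence of $w$ inside a point of $X$ is followed by $b$, so $aw \in \cL(X)$ implies $awb \in \cL(X)$, i.e.\ $E(w) = E^-(w) \times \{b\}$ and thus $m(w) = \#E^-(w) - \#E^-(w) - 1 + 1 = 0$; the case where $w$ is not left special is symmetric. Together with the hypothesis of the proposition --- which also covers $w = \varepsilon$, bispecial as soon as $d \geq 2$ --- this makes every summand above vanish, so $p_X(n+1) - p_X(n)$ is constant. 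Evaluating at $n = 0$ with $p_X(0) = 1$ identifies this constant as $p_X(1) - 1$, and a telescoping sum gives $p_X(n) = (p_X(1)-1)n + 1$. The computation is essentially routine; the one step deserving care is the identity $E(w) = E^-(w) \times E^+(w)$ in the non-bispecial case, which is precisely where one uses that $X$ is a shift space rather than an arbitrary factorial language: a one-sided extension of $w$ that is forced is forced uniformly over all occurrences of $w$.
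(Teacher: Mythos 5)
Your argument is correct and complete: the fiber-counting bijections give $p_X(n+1)=\sum_{w\in\cL_n(X)}\#E^+(w)$ and $p_X(n+2)=\sum_{w\in\cL_n(X)}\#E(w)$, the vanishing of the bilateral multiplicity for non-bispecial words uses exactly the right-extendability of $aw$ in a shift space, and the telescoping from $p_X(0)=1$ gives the affine formula. The paper itself gives no proof of this proposition (it is quoted from Cassaigne--Nicolas), and your proof is precisely the standard second-difference argument from that reference, so there is nothing to add.
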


A classical family of bispecial factors satisfying Equation~\eqref{eq:bilateral multiplicity} is made of the {\em ordinary} bispecial factors that are defined by $E(w) \subset (\{a\} \times \cA) \cup (\cA \times \{b\})$ for some $(a,b) \in E(w)$.
A larger family of bispecial factors also satisfying Equation~\eqref{eq:bilateral multiplicity} are the dendric bispecial factors defined below.

For a word $w \in F$, we consider the undirected bipartite graph $\cE_F(w)$ called its \emph{extension graph} with respect to $F$ and defined as follows:
its set of vertices is the disjoint union of $E_F^-(w)$ and $E_F^+(w)$ and its edges are the pairs $(a,b) \in E_F^-(w) \times E_F^+(w)$ such that $awb \in F$.
For an illustration, see Example~\ref{ex:fibo} below.
We say that $w$ is {\em dendric} if $\cE(w)$ is a tree. 
We then say that a shift space $X$ is a \emph{dendric shift} if all its factors are dendric in $\cL(X)$.
Note that every non-bispecial word and every ordinary bispecial word is trivially dendric.
In particular, the Arnoux-Rauzy shift spaces are dendric (recall that Arnoux-Rauzy shift spaces are the minimal shift spaces having exactly one left-special factor $u_n$ and one right-special factor $v_n$ of each length $n$ and such that $E^-(u_n) = \cA = E^+(v_n)$; all bispecial factors of an Arnoux-Rauzy shift are ordinary).
By Proposition~\ref{prop:complexity}, we deduce that any dendric shift has factor complexity $p_X(n) = (p_X(1)-1)n +1$ for every $n$.

\begin{example}\label{ex:fibo}
\rm
Let $\sigma$ be the Fibonacci substitution defined over the alphabet  $\{0,1\}$ by  $\sigma \colon 0 \mapsto 01, 1 \mapsto 0$  and consider the  shift space generated by $\sigma$ (i.e.,  the set of  bi-infinite words   over $\{0,1\}$ whose factors are factors of some $\sigma^n (0)$).
The extension graphs of the empty word and of the two letters $0$ and $1$ are represented in Figure~\ref{fig:fibo-ext}.

\begin{figure}[h]
 \tikzset{node/.style={circle,draw,minimum size=0.5cm,inner sep=0pt}}
 \tikzset{title/.style={minimum size=0.5cm,inner sep=0pt}}

 \begin{center}
  \begin{tikzpicture}
   \node[title](ee) {$\cE(\varepsilon)$};
   \node[node](eal) [below left= 0.5cm and 0.6cm of ee] {$0$};
   \node[node](ebl) [below= 0.7cm of eal] {$1$};
   \node[node](ear) [right= 1.5cm of eal] {$0$};
   \node[node](ebr) [below= 0.7cm of ear] {$1$};
   \path[draw,thick]
    (eal) edge node {} (ear)
    (eal) edge node {} (ebr)
    (ebl) edge node {} (ear);
   \node[title](ea) [right = 3cm of ee] {$\cE(0)$};
   \node[node](aal) [below left= 0.5cm and 0.6cm of ea] {$0$};
   \node[node](abl) [below= 0.7cm of aal] {$1$};
   \node[node](aar) [right= 1.5cm of aal] {$0$};
   \node[node](abr) [below= 0.7cm of aar] {$1$};
   \path[draw,thick]
    (aal) edge node {} (abr)
    (abl) edge node {} (aar)
    (abl) edge node {} (abr);
   \node[title](eb) [right = 3cm of ea] {$\cE(1)$};
   \node[node](bal) [below left= 0.5cm and 0.6cm of eb] {$0$};
   \node[node](bar) [right= 1.5cm of bal] {$0$};
   \path[draw,thick]
    (bal) edge node {} (bar);
  \end{tikzpicture}
 \end{center}

 \caption{The extension graphs of $\varepsilon$ (on the left), $0$ (in the center) and $1$ (on the right) are trees.}
 \label{fig:fibo-ext}
\end{figure}
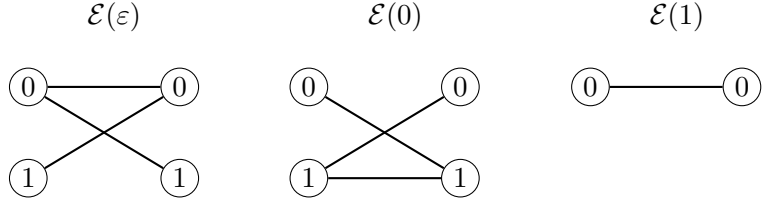
\end{example}

\subsection{$\cS$-adicity}
\label{subsection:s-adicity}

Let $\cA, \cB$ be finite alphabets with cardinality at least 2. 
By a {\em morphism} $\sigma:\cA^* \to \cB^*$, we mean a \emph{non-erasing} monoid homomorphism (also called a \emph{substitution} when $\cA = \cB$). 
By non-erasing, we mean that the image of any letter is a non-empty word. 
We stress the fact that all morphisms are assumed to be non-erasing in the following.
Using concatenation, we extend $\sigma$ to~$\cA^\mathbb{N}$ and~$\cA^\mathbb{Z}$. 
In particular, if $X$ is a shift space over $\cA$, the {\em image of $X$ under $\sigma$} is the shift space 
\[
Y = \{S^k \sigma(x) \mid x \in X, 0\leq k < |\sigma(x_0)|\}.
\]
The \emph{incidence matrix} of $\sigma$ is the matrix $M_\sigma \in \N^{\cB \times \cA}$ such that $(M_\sigma)_{b,a} = |\sigma(a)|_b$ for any $a \in \cA$, $b \in \cB$.

The morphism $\sigma$ is said to be {\em left proper} (resp., {\em right proper}) when there exists a letter $\ell \in \cB$ such that for all $a \in \cA$, $\sigma(a)$ starts with $\ell$ (resp., ends with $\ell$).
It is {\em strongly left proper} (resp., {\em strongly right proper}) if it is left proper (resp., right proper) and the starting letter (resp., ending letter) $\ell$ only occurs once in each image $\sigma(a)$, $a \in \cA$. 
It is said to be {\em proper} if it is both left and right proper.
With a left proper morphism $\sigma:\cA^* \to \cB^*$ with first letter $\ell$, we associate a right proper morphism $\bar{\sigma}:\cA^* \to \cB^*$ by
\[
	\sigma(a)\ell = \ell \bar{\sigma}(a), \quad \forall a \in \cA.	
\]

Let $\bsigma = (\sigma_n : \cA_{n+1}^* \to \cA_n^*)_{n \geq 1}$ be a sequence of morphisms such that 
$
\max_{a \in \cA_n} |\sigma_1 \circ \cdots \circ \sigma_{n-1}(a)|
$
goes to infinity when $n$ increases. We assume that all the alphabets $\cA_{n}$ are minimal, in the sense that for all $n \in \mathbb{N}$ and $b \in \cA_{n}$, there exists $a \in \cA_{n+1}$ such that $b$ is a factor of $\sigma_n(a)$.
For $1\leq n<N$, we define the morphism $\sigma_{[n,N)} = \sigma_n \circ \sigma_{n+1} \circ \dots \circ \sigma_{N-1}$.
For $n\geq 1$, the \emph{language $\mathcal{L}^{(n)}({\bsigma})$ of level $n$ associated with $\bsigma$} is defined~by 
\[
\mathcal{L}^{(n)}({\bsigma}) = 
\left\{ w \in \cA_n^* \mid \mbox{$w$ occurs in $\sigma_{[n,N)}(a)$ for some $a \in \cA_N$ and $N>n$} \right\}.
\]

As $\max_{a \in \cA_n} |\sigma_{[1,n)}(a)|$ goes to infinity when $n$ increases, $\mathcal{L}^{(n)}({\bsigma})$ defines a non-empty shift space $X_{\bsigma}^{(n)}$ that we call the {\em shift space generated by $\mathcal{L}^{(n)}({\bsigma})$}.
More precisely, $X_{\bsigma}^{(n)}$ is the set of points $x \in \cA_n^\mathbb{Z}$ such that $\cL (x) \subseteq \cL^{(n)}({\bsigma})$. 
Note that it may happen that $\cL(X_{\bsigma}^{(n)})$ is strictly contained in $\mathcal{L}^{(n)}({\bsigma})$.
Also observe that for all $n$, $X_{\bsigma}^{(n)}$ is the image of $X_{\bsigma}^{(n+1)}$ under $\sigma_n$.

We set $\mathcal{L}({\bsigma}) = \mathcal{L}^{(1)}({\bsigma})$, $X_{\bsigma} = X_{\bsigma}^{(1)}$ and call $X_{\bsigma}$ the {\em $\cS$-adic shift} generated by the {\em directive sequence}~$\bsigma$. 
We also say that the directive sequence $\bsigma$ is an {\em $\cS$-adic representation} of $X_{\bsigma}$.

We say that  $\bsigma$ is {\em primitive} if, for any $n\geq 1$, there exists $N>n$ such that for all $(a,b) \in \cA_n \times \cA_N$, $a$ occurs in $\sigma_{[n,N)}(b)$.
Observe that if $\bsigma$ is primitive, then $\min_{a \in \cA_n} |\sigma_{[1,n)}(a)|$ goes to infinity when $n$ increases, $\mathcal{L}(X_{\bsigma}^{(n)})= \mathcal{L}^{(n)}({\bsigma})$, and $X_{\bsigma}^{(n)}$ is a minimal shift space (see for instance~\cite[Lemma 7]{Durand:2000}).

We say that $\bsigma$ is ({\em (strongly) left}, {\em (strongly) right}) {\em proper} whenever each morphism $\sigma_n$ is ((strongly) left, (strongly) right) proper.
We also say that $\bsigma$ is {\em injective} if each morphism $\sigma_n$ is injective (seen as an application from $\cA_{n+1}^*$ to $\cA_n^*$).
By abuse of language, we say that a shift space is a (strongly left or right proper, primitive, injective) $\cS$-adic shift if there exists a (strongly left or right proper, primitive, injective) sequence of morphisms $\bsigma$ such that $X = X_{\bsigma}$.

\section{$\cS$-adicity using return words and shapes of Rauzy graphs}
\label{section:s-adicity}

\subsection{$\cS$-adicity using return words and derived shifts}
\label{subsection:S-adicity through derivation}

Let $X$ be a minimal shift space over the alphabet $\cA$ and let $w \in \cL(X)$ be a non-empty word.
A {\em return word} to $w$ in $X$ is a non-empty word $r$ such that $w$ is a prefix of $rw$ and, $rw \in \cL(X)$ and $rw$ contains exactly two occurrences of $w$ (one as a prefix and one as a suffix).
We let $\cR_X(w)$ denote the set of return words to $w$ in $X$ and we omit the subscript $X$ whenever it is clear from the context.
The shift space $X$ being minimal, $\cR(w)$ is always finite.

Let $w \in \cL(X)$ be a non-empty word and write $R_X(w) = \{1,\dots,\#(\cR_X(w))\}$.
A morphism $\sigma: {R(w)}^* \to \cA^*$ is a {\em coding morphism} associated with $w$ if $\sigma(R(w)) = \cR(w)$.
It is trivially injective.
Let us consider the set $\cD_w(X) = \{x \in {R(w)}^\ZZ \mid \sigma(x) \in X\}$.
It is a minimal shift space, called the {\em derived shift of $X$ (with respect to $w$)}.
We now show that derivation of minimal shift spaces allows to build left proper and primitive $\cS$-adic representations. 
We inductively define the sequences $(a_n)_{n \geq 1}$, $(R_n)_{n \geq 1}$, $(X_n)_{n \geq 1}$ and $(\sigma_n)_{n \geq 1}$ by
\begin{itemize}
\item
$X_1 = X$, $R_1 = \cA$ and $a_1 \in \cA$;
\item
for all $n$, $R_{n+1} = R_{X_{n}}(a_n)$, $\sigma_n: R_{n+1}^* \to R_n^*$ is a coding morphism associated with $a_n$, $X_{n+1} = \cD_{a_n}(X_n)$ and $a_{n+1} \in R_{n+1}$.
\end{itemize}

Observe that the sequence $(a_n)_{n \geq 1}$ is not uniquely defined as well as the morphism $\sigma_n$ (even if $a_n$ is fixed).
However, to avoid heavy considerations when we deal with sequences of morphisms obtained in this way, we will speak about ``the'' sequence $(\sigma_n)_{n \geq 1}$ and it is understood that we may consider any such sequence. 
Also observe that as we consider derived shifts with respect to letters, each coding morphism $\sigma_n$ is strongly left proper.
It is furthermore a characterization: a morphism $\sigma:\{1,\dots,n\}^* \to \cA^*$ is injective and strongly left proper (with first letter $\ell$) if and only if there is a shift space $X$ over $\cA$ such that $\sigma$ is a coding morphism associated with $\ell$.

\begin{theorem}[Durand~\cite{Durand:1998}]
\label{thm:S-adic representation of minimal}
Let $X$ be a minimal shift space.
Using the notation defined above, the sequence of morphisms $\bsigma = (\sigma_n:R_{n+1}^* \to R_n^*)_{n \geq 1}$ is a strongly left proper, primitive and injective $\cS$-adic representation of $X$.
In particular, for all $n$, we have $X_n = X_{\bsigma}^{(n)}$.
\end{theorem}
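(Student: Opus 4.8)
The plan is to check the three properties in turn, with essentially all the work in primitivity. \emph{Strong left properness} and \emph{injectivity} are immediate: each $\sigma_n$ is, by construction, a coding morphism associated with the single letter $a_n \in R_n$, and the characterization recalled just before the statement says precisely that such morphisms are injective and strongly left proper. Before anything else I would record the elementary identity that, for a minimal shift space $Z$ and $w \in \cL(Z)$, the image of $\cD_w(Z)$ under a coding morphism $\sigma$ associated with $w$ is again $Z$: the inclusion ``$\subseteq$'' is the definition of $\cD_w$ together with closedness and shift-invariance of $Z$, while ``$\supseteq$'' follows because minimality makes $w$ occur in every $x \in Z$ with bounded gaps, so $x$ is (up to a shift) a bi-infinite concatenation of return words to $w$ and hence lies in $\sigma(\cD_w(Z))$ --- equivalently, $\sigma(\cD_w(Z))$ is a nonempty closed shift-invariant subset of the minimal set $Z$. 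Applied with $Z = X_n$ and $w = a_n$, this gives that $X_n$ is the image of $X_{n+1}$ under $\sigma_n$; moreover $\cD_{a_n}(X_n)$ is again minimal (recalled in the text) and, inductively, infinite (if $X_{n+1}$ were a single periodic orbit then $X_n = \sigma_n(X_{n+1})$ would be a morphic image of a periodic shift, hence periodic). Thus every $X_n$ is an infinite minimal shift over $R_n$ and $\sigma_n(\cL(X_{n+1})) \subseteq \cL(X_n)$. (If $X$ itself is finite, the $\cS$-adic framework --- in particular the requirement $\max_a |\sigma_{[1,n)}(a)| \to \infty$ --- does not apply, so we may assume $X$ infinite.)

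The heart of the proof is \emph{primitivity}. Fix a level $n'$ and set $w_k = \sigma_{[n',k)}(a_k)$ for $k > n'$. Since $\sigma_{k-1}$ is strongly left proper, $\sigma_{k-1}(a_k)$ begins with $a_{k-1}$, so $w_{k-1}$ is a prefix of $w_k$; hence $|w_k|$ is nondecreasing, and it strictly increases unless $\sigma_{k-1}(a_k) = a_{k-1}$, which forces $a_{k-1}a_{k-1} \in \cL(X_{k-1})$. The key claim is that this ``trivial'' case cannot hold for all large $k$: letting $m_k = \max\{j : a_k^j \in \cL(X_k)\}$, which is finite because $X_k$ is infinite, a direct count on the return-word decomposition shows that $\sigma_{k-1}(a_k) = a_{k-1}$ forces $m_k = m_{k-1} - 1$; so if every step were eventually trivial, the positive integers $m_k$ would be strictly decreasing, which is absurd. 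Therefore $|w_k| \to \infty$ (in particular $\max_{a \in R_k} |\sigma_{[n',k)}(a)| \to \infty$, so $\bsigma$ is a legitimate directive sequence; the minimal-alphabet condition is routine). To finish, use minimality of $X_{n'}$ to pick $L$ such that every word of $\cL(X_{n'})$ of length $\geq L$ contains every letter of $R_{n'}$, and choose $N$ with $|w_{N-1}| \geq L$. For each $b \in R_N$, $\sigma_{N-1}(b)$ is a return word to $a_{N-1}$ in $X_{N-1}$, hence begins with $a_{N-1}$, so $\sigma_{[n',N)}(b)$ begins with $w_{N-1}$; being a word of $\cL(X_{n'})$ of length $\geq L$, it contains every letter of $R_{n'}$. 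This is exactly primitivity at level $n'$, and $n'$ was arbitrary.

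It remains to identify the $\cS$-adic shift, i.e.\ to show $X_n = X_{\bsigma}^{(n)}$. By primitivity, the remark following the definition of $\cS$-adicity gives that $X_{\bsigma}^{(n)}$ is a nonempty shift space with $\cL(X_{\bsigma}^{(n)}) = \mathcal{L}^{(n)}(\bsigma)$. On the other hand, every word of $\mathcal{L}^{(n)}(\bsigma)$ occurs in some $\sigma_{[n,N)}(a)$ with $a \in R_N$; here $\sigma_{N-1}(a)$ is a return word to $a_{N-1}$ in $X_{N-1}$, hence lies in $\cL(X_{N-1})$, and applying $\sigma_{[n,N-1)}$, which pushes languages forward, yields $\sigma_{[n,N)}(a) \in \cL(X_n)$. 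Thus $\cL(X_{\bsigma}^{(n)}) \subseteq \cL(X_n)$, so $X_{\bsigma}^{(n)}$ is a nonempty closed shift-invariant subset of the minimal shift $X_n$ and therefore equals it. In particular $X = X_1 = X_{\bsigma}^{(1)} = X_{\bsigma}$, so $\bsigma$ is an $\cS$-adic representation of $X$.

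The one genuinely delicate ingredient is the non-triviality argument inside primitivity: the computation that $\sigma_{k-1}(a_k) = a_{k-1}$ makes the maximal power of $a_k$ in $X_k$ drop by exactly one, which rules out this degenerate behaviour holding cofinitely and thereby forces the intermediate words $w_k$ to grow. Everything else is bookkeeping around two standard facts --- that the image of a derived shift under its coding morphism recovers the original shift, and that minimality yields uniform recurrence, hence that every letter occurs in all sufficiently long factors.
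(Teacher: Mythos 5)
The paper itself gives no proof of this statement --- it is quoted from Durand~\cite{Durand:1998} --- so there is no internal argument to compare against; judged on its own terms, your proof is correct. Injectivity and strong left properness are indeed immediate from the characterization of coding morphisms recalled just before the theorem; the identity that the image of $\cD_w(Z)$ under its coding morphism is $Z$ (via minimality) and the closing identification $X_n = X_{\bsigma}^{(n)}$ (the inclusion $\cL^{(n)}(\bsigma) \subseteq \cL(X_n)$ plus minimality of $X_n$) are the standard steps, done correctly. The genuinely nontrivial point is the one you isolate: with derivation by return words to \emph{letters}, the words $\sigma_{[n',k)}(a_k)$ can fail to grow at a step, exactly when the chosen $a_k$ codes the one-letter return word $a_{k-1}$ (which requires $a_{k-1}a_{k-1} \in \cL(X_{k-1})$), and your count showing that such a step decreases $m_k = \max\{j : a_k^{\,j} \in \cL(X_k)\}$ by exactly one (finite because each $X_k$ is infinite, which you justify inductively) correctly rules out cofinal stalling; this yields $|\sigma_{[n',k)}(a_k)| \to \infty$ and then primitivity via uniform recurrence, since every $\sigma_{[n',N)}(b)$ begins with the long common prefix $\sigma_{[n',N-1)}(a_{N-1})$. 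This is a different route from Durand's original one, where derivation is taken with respect to prefixes of increasing length, so that growth of the composed morphisms is essentially automatic (bounded return times to arbitrarily long prefixes would force periodicity); your maximal-power argument is precisely the extra ingredient needed for the letter-based construction used in this paper. The only caveat, which you flag appropriately, is that the statement implicitly concerns aperiodic minimal shifts (as in Durand and as in the paper's applications), since for a periodic minimal shift the growth condition in the definition of a directive sequence cannot hold.
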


In the case of minimal dendric shifts, the $\cS$-adic representation $\bsigma$ can be made stronger.
This is summarized by the following result.
Recall that if $F_\cA$ is the free group generated by $\cA$, an automorphism $\alpha$ of $F_\cA$ is {\em tame} if it belongs to the monoid generated by the permutations of $\cA$ and by the elementary automorphisms 
\[
	\begin{cases}
		a \mapsto ab, \\
		c \mapsto c, & \text{for } c \neq a,
	\end{cases}
\qquad \text{and} \qquad
	\begin{cases}
		a \mapsto ba, \\
		c \mapsto c, & \text{for } c \neq a.
	\end{cases}
\]

\begin{theorem}[Berthé et al.~\cite{bifix_decoding}]
\label{thm:decoding dendric}
Let $X$ be a minimal dendric shift over the alphabet $\cA = \{1,\dots,d\}$.
For any $w \in \cL(X)$, $\cD_w(X)$ is a minimal dendric shift over $\cA$ and the coding morphism associated with $w$ is a tame automorphism of $F_\cA$.
As a consequence, if $\bsigma = (\sigma_n)_{n \geq 1}$ is the primitive directive sequence of Theorem~\ref{thm:S-adic representation of minimal}, then all morphisms $\sigma_n$ are strongly left proper tame automorphisms of $F_\cA$. 
\end{theorem}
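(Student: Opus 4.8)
The plan is to establish, in order, the three assertions contained in the statement: \textbf{(a)} for every $w \in \cL(X)$ the derived shift $\cD_w(X)$ is a minimal dendric shift over an alphabet of size $d$; \textbf{(b)} the associated coding morphism $\sigma : R(w)^* \to \cA^*$, once $R(w)$ is identified with $\cA = \{1,\dots,d\}$, is a tame automorphism of $F_\cA$; and \textbf{(c)} the consequence for the directive sequence $\bsigma$ of Theorem~\ref{thm:S-adic representation of minimal}.

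\emph{Part (a).} Minimality of $\cD_w(X)$ holds for any minimal $X$, so what must be shown is dendricity together with the count $\#\cR_X(w) = d$. For dendricity I would compare, for $u \in \cL(\cD_w(X))$, the extension graph $\cE_{\cD_w(X)}(u)$ with the extension-graph structure of $X$ around the occurrences of $w$: a left (resp.\ right) extension of $u$ in $\cD_w(X)$ is a return word $r$ with $r\sigma(u)w \in \cL(X)$ (resp.\ $\sigma(u)r w \in \cL(X)$), and the bipartite incidences of $\cE_{\cD_w(X)}(u)$ are read off the corresponding factors $r\sigma(u)r'w \in \cL(X)$. This is precisely the mechanism by which maximal bifix decoding --- of which return-word decoding is a special case --- transports the tree property; since every extension graph in the dendric shift $X$ is a tree, one concludes that every $\cE_{\cD_w(X)}(u)$ is a tree, i.e.\ that $\cD_w(X)$ is dendric. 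For the exact count $\#\cR_X(w) = d$, and simultaneously for the group-theoretic half of (b), I would invoke the Return Theorem for tree sets~\cite{acyclic,finite_index}: in a minimal dendric shift, $\cR_X(w)$ is a basis of the free group $F_\cA$, so in particular it has $d$ elements. Consequently $\cD_w(X)$ is a dendric shift over an alphabet of cardinality $d$, and $\sigma$, sending the free generating set $R(w)$ to the basis $\cR_X(w)$ of $F_\cA$, extends to an isomorphism $F_{R(w)} \to F_\cA$, that is (after identification) an automorphism of $F_\cA$.

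\emph{Part (b).} It remains to prove that this automorphism is tame. I would proceed by induction on $|w|$, the inductive claim being that replacing $w$ by a one-letter extension alters the return-word coding morphism only by composition with a tame automorphism. If $wb \in \cL(X)$ (a left extension being handled symmetrically), each return word to $w$ is either still a return word to $wb$ or must be concatenated with one of the return words that may legally follow it before the next occurrence of $wb$; dendricity of $X$ forces the pattern of these concatenations to be governed by the extension graphs of the bispecial factors of $X$ that extend $w$, and the fact that those graphs are trees is exactly what makes the resulting change of alphabet $R(wb)^* \to R(w)^*$ --- an automorphism of $F_\cA$, since both alphabets have $d$ letters by part (a) --- a composition of the elementary automorphisms $a \mapsto ab$, $a \mapsto ba$ and permutations, rather than a merely positive automorphism. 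Iterating down to $|w| = 1$ leaves return words to a single letter, treated by the same mechanism starting from the identity coding morphism of the empty word. I expect this decomposition, together with the extension-graph transfer of part (a), to be the main technical obstacle: it is here that dendricity is used in an essential way, and not merely the complexity identity $p_X(n) = (d-1)n+1$.

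\emph{Part (c).} Finally, one iterates. The shift $X_1 = X$ is minimal dendric over $\cA$; applying part (a) repeatedly, every $X_n = X_{\bsigma}^{(n)}$, which equals $\cD_{a_{n-1}}(X_{n-1})$, is minimal dendric over $\cA$; and each $\sigma_n$ is a coding morphism of $\cR_{X_n}(a_n)$, hence strongly left proper --- as already observed, since one derives with respect to a single letter --- and, by parts (a)--(b) applied with $w = a_n$, a tame automorphism of $F_\cA$.
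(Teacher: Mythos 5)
The paper itself offers no proof of Theorem~\ref{thm:decoding dendric}: it is imported wholesale from Berthé et al.~\cite{bifix_decoding}, so the only question is whether your sketch could stand on its own or correctly reduces to the cited literature. Part of it does: invoking the Return Theorem for tree sets to get that $\cR_X(w)$ is a basis of $F_\cA$, hence has exactly $d$ elements and the coding morphism extends to an automorphism of $F_\cA$, is exactly the right reduction, and part (c) is a routine iteration once (a) and (b) are available. But the two substantive claims are not established. For dendricity of $\cD_w(X)$ you say that return-word decoding ``transports the tree property''; that transport is itself one of the main theorems of the cited work, not a mechanism you have exhibited. Concretely, the extension graph $\cE_{\cD_w(X)}(u)$ is a \emph{generalized} extension graph of $X$ in which the left (resp.\ right) extensions range over the suffix (resp.\ prefix) code built from $\cR_X(w)$, and showing that such graphs are trees knowing only that the one-letter extension graphs of $X$ are trees requires an induction on these codes which your paragraph does not supply. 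So part (a), as written, either has a gap or is simply a re-citation of the result the paper already cites.

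The more serious gap is part (b). Tameness is strictly stronger than being an automorphism: for $d \geq 3$ the monoid generated by permutations and the two positive elementary automorphisms is a proper submonoid of $\mathrm{Aut}(F_\cA)$, so nothing about $\cR_X(w)$ being a basis gives tameness for free. Your proposed induction on $|w|$ asserts that passing from $\cR_X(w)$ to $\cR_X(wb)$ modifies the coding morphism by a composition of elementary automorphisms and permutations, with dendricity ``forcing'' this; but that assertion \emph{is} the theorem, and no argument is given for why the concatenation pattern of return words dictated by the bispecial extensions of $w$ factors through positive elementary moves rather than through a general automorphism. You flag this yourself as ``the main technical obstacle,'' which is an accurate self-diagnosis: without that step the induction does not close, and the proposal is a plausible outline of the known proof strategy (which in the literature is carried out via Rauzy-graph/induction arguments on bispecial factors) rather than a proof.
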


\subsection{Rauzy graphs}

Let $X$ be a shift space over an alphabet $\cA$. 
The \textit{Rauzy graph of order $n$} of $X$, is the directed graph $G_n(X)$ whose set of vertices is $\cL_n(X)$ and there is an edge from $u$ to $v$ if there are letters $a,b$ such that $ub = av \in \cL(X)$; this edge is sometimes labeled by $a$.

Assuming that $X$ is minimal, any Rauzy graph $G_n(X)$ is strongly connected. 
It is also easily seen that any return word to a non-empty word $w \in \cL(X)$ labels a path from $w$ to $w$ in $G_{|w|}(X)$ in which no internal vertex is $w$. 
As a consequence, the shape of the Rauzy graph $G_1(X)$ provides restrictions on the possible return words to a letter $a$ in $X$.
Furthermore, the extension graph of the empty word having for edges the pairs $(a,b) \in E_X(\varepsilon)$, it completely determines $\cL_2(X)$, hence the Rauzy graph $G_1(X)$.
Therefore, the extension graph $\cE_X(\varepsilon)$ provides restrictions on the possible return words to letters in $X$.

\begin{example} 
\label{ex:rauzy graph fibo}
The Rauzy graph of order 2 of the shift space $X$ generated by the Fibonacci substitution is given in Figure~\ref{fig:rauzy graph fibo order 2}. 
The word $00100$ belongs to $\cL(X)$, so that $001$ is a return word to $00$ and it labels the circuit $(00,01,10,00)$.
The converse however does not hold, i.e., there might exist  paths from $w$ to $w$ whose label is not a return word.
For any $n \geq 1$, the word $0(01)^n$ labels a path from $00$ to $00$ but does not belong to $\cL(X)$ when $n \geq 3$.

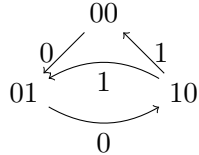
\begin{figure}[h]
\centering
\begin{tikzpicture}
\node (aa) at (0,0) {$00$};
\node (ab) [below left of = aa, node distance = 1.5cm] {$01$};
\node (ba) [below right of = aa, node distance = 1.5cm] {$10$};

\draw [->] (aa) edge node[left,pos=.5] {$0$} (ab);
\draw [->,bend right] (ab) edge node[below,pos=.5] {$0$} (ba);
\draw [->,bend right] (ba) edge node[below,pos=.5] {$1$} (ab);
\draw [->] (ba) edge node[right,pos=.5] {$1$}(aa);
\end{tikzpicture}
\caption{Rauzy graph of order 2 for the Fibonacci shift}
\label{fig:rauzy graph fibo order 2}
\end{figure}

\end{example}

\section{Bispecial factors in $\cS$-adic shifts}
\label{section:bispecial factors}

\subsection{Description of bispecial factors in injective strongly left proper $\cS$-adic shifts}

Our aim is to describe bispecial factors and their bi-extensions in an $\cS$-adic shift $X_\bsigma$. 
A classical way to do this is to ``desubstitute'' a bispecial factor $u$, i.e., to find the set of ``minimal'' factors $v_i$ in $\cL(X_\bsigma^{(k)})$ such that $u$ is a factor of the words $\sigma_{[1,k)}(v_i)$ and then to deduce the extensions of $u$ from those of the $v_i$'s.
The set of such $v_i$'s can be easily described when $\bsigma$ is an injective and strongly left proper sequence of morphisms.

\begin{proposition}
\label{prop:def antecedent and bsp ext image}
Let $X$ be a shift space over $\cA$, $\sigma:\cA^* \to \cB^*$ be an injective and strongly left proper morphism (with first letter $\ell$), $Y$ the image of $X$ under $\sigma$ and $u$ a non-empty word in $\cL(Y)$.
If $\ell$ does not occur in $u$, then there exists $b \in \cA$ such that $u$ is a non-prefix factor of $\sigma (b)$.  
Otherwise, there is a unique triplet $(s,v,p) \in \cB^* \times \cL(X) \times \cB^*$ for which there exists a pair $(a,b) \in E_{X}(v)$ 
such that $u = s \sigma(v)p$ with
\begin{enumerate}
\item
$s$ a proper suffix of $\sigma(a)$;
\item
$p$ is a non-empty prefix of $\sigma(b)$.
\end{enumerate}

In the case where $\ell$ occurs in $u$, the left, right and bi-extensions of $u$ are governed by those of $v$ through the equation
\begin{equation}
\label{eq:extensions of u from v}
	E_Y(u) = 
		\{(a',b') \in \cB \times \cB 
		\mid 
		\exists (a,b) \in E_X(v): 
	 	\sigma(a) \in \cB^* a' s 
	 	\wedge 
	 	\sigma(b)\ell \in p b' \cB^* \}.
\end{equation}
\end{proposition}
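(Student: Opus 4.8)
The plan is to analyze how an occurrence of $u$ inside a point of $Y$ lifts to the preimage structure provided by $\sigma$. The key observation is that since $\sigma$ is strongly left proper with first letter $\ell$, the letter $\ell$ marks exactly the ``block boundaries'' in any word $\sigma(x)$ with $x \in X^\ZZ$: the occurrences of $\ell$ in $\sigma(x)$ are in bijection with the positions of the letters of $x$. So, first I would fix a point $y \in Y$ with $u$ occurring in $y$, write $y = S^k\sigma(x)$ for some $x \in X$ and $0 \le k < |\sigma(x_0)|$, and locate the occurrence of $u$. If $\ell$ does not occur in $u$, then $u$ lies strictly inside a single block $\sigma(b)$ (it cannot span a boundary, since crossing a boundary forces an $\ell$), and it is not a prefix of that block because the block starts with $\ell$; this gives the first alternative. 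This part is essentially bookkeeping about where block boundaries fall.

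If $\ell$ does occur in $u$, I would use the first and last occurrences of $\ell$ in $u$ to pin down the decomposition. Let the occurrence of $u$ in $\sigma(x)$ run from the interior of some block $\sigma(a)$ (with $a = x_{i-1}$), through a maximal run of complete blocks $\sigma(x_i)\sigma(x_{i+1})\cdots\sigma(x_{j})$, into the interior (a nonempty prefix) of the next block $\sigma(b)$ with $b = x_{j+1}$. Setting $v = x_i \cdots x_j \in \cL(X)$, $s$ the piece of $\sigma(a)$ preceding the first $\ell$ of $u$ (a proper suffix of $\sigma(a)$ — proper because $u$ does contain an $\ell$, namely the start of $\sigma(x_i)$), and $p$ the piece of $\sigma(b)$ after the last $\ell$ (a nonempty prefix of $\sigma(b)$), we get $u = s\,\sigma(v)\,p$. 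For uniqueness I would argue that the positions of $\ell$ in $u$ are forced by $u$ itself, hence the cut points between $s$, $\sigma(v)$ and $p$ are determined: the first $\ell$ of $u$ ends $s$ and the last $\ell$ of $u$ begins the final block, so $p$ and $s$ are read off directly; then $\sigma(v)$ is the middle part, and injectivity of $\sigma$ recovers $v$ uniquely. (One should also note $(a,b) \in E_X(v)$ by construction, and conversely any such triplet arising from some occurrence must coincide with this one.)

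For the extension formula \eqref{eq:extensions of u from v}, I would take $(a',b') \in E_Y(u)$, so $a'ub' \in \cL(Y)$, realize it inside some $S^m\sigma(x')$, and apply the decomposition just obtained to the occurrence of $u$: the letter $a'$ immediately preceding $u$ lies in the same block $\sigma(a)$ as $s$, in the position just before $s$, so $\sigma(a)$ ends with $a's$, i.e. $\sigma(a) \in \cB^* a' s$; and $b'$ immediately follows $u$, hence lies in the block $\sigma(b)$ just after the prefix $p$ — using the convention $\sigma(b)\ell = \ell\bar\sigma(b)$ to handle the case where $b'$ is the first letter of the next block, this is exactly $\sigma(b)\ell \in p b'\cB^*$. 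Since $a$ may be $x'_{i-1}$ and $b = x'_{j+1}$ with $a v b \in \cL(X)$, we have $(a,b) \in E_X(v)$, giving the inclusion $\subseteq$. For $\supseteq$, given $(a,b) \in E_X(v)$ with $\sigma(a) \in \cB^* a's$ and $\sigma(b)\ell \in pb'\cB^*$, the word $avb \in \cL(X)$ produces $\sigma(avb) \in \cL(Y)$ containing the factor $a's\sigma(v)pb' = a'ub'$, so $(a',b') \in E_Y(u)$.

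The main obstacle will be the uniqueness claim and making the case analysis around block boundaries fully rigorous — in particular handling the boundary subtleties when $p$ or $s$ is empty, or when $b'$ (resp.\ $a'$) sits exactly at a block boundary, where the passage $\sigma(b)\ell = \ell\bar\sigma(b)$ must be invoked carefully. The rest is a careful but routine translation between ``occurrence in $\sigma(x)$'' and ``factor of $\sigma(v)$-type words'', using strong left properness to guarantee that the $\ell$'s give an unambiguous parsing.
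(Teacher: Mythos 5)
Your proposal is correct and takes essentially the same approach as the paper: strong left properness makes the occurrences of $\ell$ an unambiguous parsing of $u$ into $s$, $\sigma(v)$, $p$, yielding existence, uniqueness, and both inclusions of \eqref{eq:extensions of u from v} (the paper merely phrases the lifting via minimal ``covering'' words $w$ with $u$ a factor of $\sigma(w)$ instead of inside a bi-infinite point). Only note that in your $\supseteq$ direction $a's\sigma(v)pb'$ need not be a factor of $\sigma(avb)$ when $p=\sigma(b)$ and $b'=\ell$; one extends $avb$ by one more letter on the right, which is exactly the boundary case you already flag.
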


\begin{proof}
Since $u$ is a word in $\cL(Y)$, it is a factor of $\sigma(w)$ for some $w \in \cL(X)$.
The word $u$ being non-empty, any such $w$ is non-empty as well.
We say that $w$ is {\em covering} $u$ if $u$ is a factor of $\sigma(w)$ and for any proper factor $w'$ of $w$, $u$ is not a factor of $\sigma(w')$.
Recall that $\ell$ is the first letter of $\sigma(a)$ for all $a \in \cA$.
Thus, if $|u|_\ell = 0$, then any word $w$ covering $u$ is a letter and $u$ is a non-prefix factor of $\sigma(w)$.

Now assume that $|u|_\ell \geq 1$ and let $u = s u' p$ with $|s|_\ell = 0$, $p \in \ell (\cA\setminus\{\ell\})^*$ and $u' \in \{\varepsilon\} \cup \ell \cA^*$.
As the letter $\ell$ occurs only as a prefix in any image $\sigma(a)$, $a \in \cA$, any word $w$ covering $u$ is of the form $x v' y$ with $x \in \cA \cup \{\varepsilon\}$ and $y \in \cA$, where one has $\sigma(v') = u'$, $s$ is a suffix of $\sigma(x)$ (which is proper if $x \neq \varepsilon$) and $p$ is a prefix of $\sigma(y)$.
In particular, the triplet $(s,v,p)$ satisfies the requirements of the result (take $b = y$ and $a = x$ if $x \neq \varepsilon$ or any $a \in E^-(vy)$ if $x = \varepsilon$).

Let us show the uniqueness of $(s,v,p)$.
Assume that $(s',v',p')$ is a triplet satisfying the requirements with the extension $(a',b') \in E(v')$. 
As $u = s'\sigma(v')p'$, where $s'$ is a proper suffix of $\sigma(a')$ and $p'$ is a non-empty proper prefix of $\sigma(b')\ell$, we have $|s'|_\ell = 0$, $p' \in \ell (\cA\setminus\{\ell\})^*$ and $\sigma(v') \in \{\varepsilon\} \cup \ell \cA^*$.
This implies that $(s',\sigma(v'), p') = (s,\sigma(v),p)$ and, as $\sigma$ is injective, that $(s',v',p') = (s,v,p)$.

Let us now prove Equation~\eqref{eq:extensions of u from v}.
The inclusion 
\[
	E_Y(u) \supset 
		\{(a', b') \in \cB \times \cB 
		\mid 
		\exists (a,b) \in E_X(v): 
	 	\sigma(a) \in \cB^* a' s 
	 	\wedge 
	 	\sigma(b)\ell \in p b' \cB^* \}
\]
is trivial.
For the other one, assume that $(a',b')$ is in $E_Y(u)$.
Thus we have $a'ub' \in \cL(Y)$.
Let $w \in \cL(X)$ be a covering word for $a'ub'$.
By definition of $v$, $v$ is a factor of $w$, thus one has $w = xvy$ for some words $x,y$.
We then have $a'ub' = a's\sigma(v)pb'$, with $a's$ a suffix of $\sigma(x)$ and $pb'$ a prefix of $\sigma(y)\ell$.
In particular, $x$ and $y$ are non-empty.
Let $a$ be the last letter of $x$ and $b$ be the first letter of $y$.
We have $(a,b) \in E_X(v)$ and,
as $|s|_{\ell} = 0$, $s$ is a proper suffix of $\sigma(a)$, from which we have $\sigma(a) \in \cB^* a's$. 
We also have that $p$ is a prefix of $\sigma(b)$.
If it is proper, then $pb'$ is a prefix of $\sigma(b)$ so that $\sigma(b)\ell \in pb' \cB^*$.
Otherwise, $p = \sigma(b)$, $y$ has length at least 2 and $b'$ is the first letter of $\sigma(c)$, where $c$ is such that $bc$ is prefix of $y$.
Otherwise stated, $b' = \ell$ and we indeed have $\sigma(b)\ell \in pb'\cB^*$. 
\end{proof}

Motivated by the previous result, if $X$ is a shift space over $\cA$ and $\sigma:\cA^* \to \cB^*$ is a strongly left proper morphism (with first letter $\ell$), then for any words $v \in \cL(X)$ and $x,y \in \cB^*$, we define the sets
\begin{align*}
E_{X,x}^-(v) &= \{ a \in E_X^-(v) \mid \sigma(a) \in \cB^* x \};	\\
E_{X,y}^+(v) &= \{ b \in E_X^+(v) \mid \sigma(b)\ell \in y \cB^* \}; 	\\
E_{X,x,y}(v) &= E_X(v) \cap (E_{X,x}^-(v) \times E_{X,y}^+(v)).
\end{align*}
Thus Equation~\eqref{eq:extensions of u from v} can be written
\[
	E_Y(u) = 
		\{(a',b') \in \cB \times \cB 
		\mid 
		\exists (a,b) \in E_{X,a's,pb'}(v) \}.
\]

\begin{remark}
Observe that, as we have seen in the previous proof (and using the same notation), as $s$ is a proper suffix of $\sigma(a)$, the letter $\ell$ does not occur in it.
As a consequence, for any $a' \in E^-_{X,s}(v)$, $s$ is a proper suffix of $\sigma(a')$.
\end{remark}

Whenever $u$ and $v$ are as in the previous proposition with $|u|_\ell \geq 1$, the word $v$ is called the {\em antecedent} of $u$ under $\sigma$ and $u$ is said to be an {\em extended image} of $v$.
Thus, the antecedent is defined only for words containing an occurrence of the letter $\ell$ and an extended image always contains an occurrence of $\ell$.
Whenever $u$ is a bispecial factor, the next result gives additional information about $s$ and $p$.
We first need to define the following notation.
If $\sigma : \cA^* \rightarrow \cB^*$ is a morphism and $a_1, a_2 \in \cA$, let us denote by $s(a_1,a_2)$ (resp., $p(a_1,a_2)$) the longest common suffix (resp., prefix) between $\sigma(a_1)$ and $\sigma(a_2)$.

\begin{corollary}
\label{cor:characterization of extended images}
Let $X$ be a shift space over $\cA$, $\sigma:\cA^* \to \cB^*$ be an injective and strongly left proper morphism (with first letter $\ell$), $Y$ the image of $X$ under $\sigma$ and $v$ a word in $\cL(X)$.
A word $u$ is a bispecial extended image of $v$ if and only if there exist $(a_1,b_1),(a_2,b_2) \in E_X(v)$ with $a_1 \neq a_2$, $b_1 \neq b_2$ and $u = s(a_1,a_2) \sigma(v) p(b_1,b_2)$.
In particular, the antecedent $v$ of a bispecial word $u$ is bispecial.
\end{corollary}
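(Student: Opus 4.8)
The plan is to derive everything from Proposition~\ref{prop:def antecedent and bsp ext image}, and in particular from the description of $E_Y(u)$ given by Equation~\eqref{eq:extensions of u from v}. I would first record a preliminary fact used in both directions: for any two distinct letters $a_1,a_2 \in \cA$, injectivity and strong left properness force $s(a_1,a_2)$ to be a \emph{proper} suffix of both $\sigma(a_1)$ and $\sigma(a_2)$, with \emph{distinct} letters immediately preceding it. Indeed, if $s(a_1,a_2)$ were all of $\sigma(a_1)$, then $\sigma(a_1)$ would be a suffix of $\sigma(a_2)$, proper since $\sigma$ is injective and $a_1 \neq a_2$, so the first letter $\ell$ of $\sigma(a_1)$ would occur strictly inside $\sigma(a_2)$, contradicting strong left properness; and once $s(a_1,a_2)$ is a proper common suffix it is automatically maximal, so the two preceding letters differ. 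Dually, for $b_1 \neq b_2$ the longest common prefix of $\sigma(b_1)\ell$ and $\sigma(b_2)\ell$ equals $p(b_1,b_2)$, which is a non-empty prefix of both $\sigma(b_1)$ and $\sigma(b_2)$, and the two letters following it in $\sigma(b_1)\ell$ and $\sigma(b_2)\ell$ differ (the only delicate case is $p(b_1,b_2) = \sigma(b_1)$, where these letters are $\ell$ and a non-$\ell$ letter of $\sigma(b_2)$, again by strong left properness).

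For the ``if'' direction, I would set $s = s(a_1,a_2)$ and $p = p(b_1,b_2)$, so that $u = s\sigma(v)p$. Since $s$ is a suffix of $\sigma(a_1)$, $p$ a prefix of $\sigma(b_1)$ and $a_1 v b_1 \in \cL(X)$, the word $u$ is a factor of $\sigma(a_1 v b_1) \in \cL(Y)$, hence $u \in \cL(Y)$; moreover $\ell$ occurs in $u$ since $p$ begins with $\ell$. By the preliminary fact the triplet $(s,v,p)$ meets the requirements of Proposition~\ref{prop:def antecedent and bsp ext image}, witnessed by the pair $(a_1,b_1)\in E_X(v)$, so by the uniqueness part $v$ is the antecedent of $u$ and $u$ is an extended image of $v$. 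To see that $u$ is bispecial, let $c_i$ be the letter preceding $s$ in $\sigma(a_i)$ and $d_i$ the letter following $p$ in $\sigma(b_i)\ell$; the preliminary fact gives $c_1 \neq c_2$ and $d_1 \neq d_2$, while Equation~\eqref{eq:extensions of u from v} applied to $(a_i,b_i)$ gives $(c_i,d_i) \in E_Y(u)$. Thus $u$ has at least two left and at least two right extensions.

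For the ``only if'' direction, I would write $u = s\sigma(v)p$ for its antecedent triplet. Since $u$ is both left and right special, $\cE_Y(u)$ has at least two vertices on each side, and, $Y$ being a shift space, every vertex of $\cE_Y(u)$ has a neighbour; a bipartite graph with these two properties is not a star, hence contains a matching of size two, i.e.\ there are $(c_1,d_1),(c_2,d_2) \in E_Y(u)$ with $c_1 \neq c_2$ and $d_1 \neq d_2$. Plugging these into Equation~\eqref{eq:extensions of u from v}, I obtain $(a_1,b_1),(a_2,b_2) \in E_X(v)$ such that $\sigma(a_i)$ ends with $c_i s$ and $\sigma(b_i)\ell$ starts with $p d_i$. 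Since $c_1 \neq c_2$, the preliminary fact then forces $a_1 \neq a_2$ and $s = s(a_1,a_2)$; since $d_1 \neq d_2$, it forces $b_1 \neq b_2$ and $p = p(b_1,b_2)$. Hence $u = s(a_1,a_2)\sigma(v)p(b_1,b_2)$, as claimed. The final assertion follows at once: the edges $(a_1,b_1),(a_2,b_2) \in E_X(v)$ with $a_1 \neq a_2$ and $b_1 \neq b_2$ exhibit two left extensions and two right extensions of $v$, so $v$ is bispecial.

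The step I expect to need the most care is in the ``only if'' direction: the passage from ``$u$ is bispecial'' to ``$\cE_Y(u)$ contains a matching of size two''. This hinges on the elementary but essential observation that a bipartite graph without isolated vertices and with at least two vertices on each side admits two independent edges (equivalently, is not a star). Everything else is bookkeeping with longest common prefixes and suffixes, where the main thing to track is, for each factor of $u$, inside which letter-image of $\sigma$ it lies.
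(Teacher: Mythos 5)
Your proof is correct and follows essentially the same route as the paper: both directions rest on Proposition~\ref{prop:def antecedent and bsp ext image} and Equation~\eqref{eq:extensions of u from v}, with the same longest-common-prefix/suffix bookkeeping. The only difference is that you spell out two points the paper leaves implicit — that the maximal common suffix/prefix is proper with distinct adjacent letters, and that a bispecial $u$ yields two bi-extensions $(c_1,d_1),(c_2,d_2)$ with $c_1\neq c_2$ and $d_1\neq d_2$ (your size-two matching observation) — and these added details are accurate.
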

\begin{proof}
First assume that there exist $(a_1,b_1),(a_2,b_2) \in E_X(v)$ with $a_1 \neq a_2$, $b_1 \neq b_2$ and $u = s(a_1,a_2) \sigma(v) p(b_1,b_2)$.
Let us fix $s = s(a_1,a_2)$ and $p = p(b_1,b_2)$.
Since $\sigma$ is injective, $\sigma(a_1) \neq \sigma(a_2)$, hence $s$ is a proper suffix of one of them.
Thus, as $\sigma$ is strongly left proper, $s$ does not contain any occurrence of the letter $\ell$. 
As a consequence, $s$ is a proper suffix of both $\sigma(a_1)$ and $\sigma(a_2)$.
In particular, $u$ is left special.
The same reasoning shows that $p$ is a proper prefix of $\sigma(b_i)$ for some $i \in \{1,2\}$ and that $u$ is right special, hence bispecial.
Furthermore, $p$ is non-empty since it admits $\ell$ as a prefix.
The pair $(a_i,b_i)$ thus satisfies Proposition~\ref{prop:def antecedent and bsp ext image}.

Now assume that $u$ is a bispecial extended image of $v$ with $u = s \sigma(v)p$.
Since $u$ is bispecial, there exist $(a_1',b_1'),(a_2',b_2') \in E_Y(u)$ with $a_1' \neq a_2'$ and $b_1' \neq b_2'$.
From Equation~\ref{eq:extensions of u from v}, there exist $(a_1,b_1),(a_2,b_2) \in E_{X,s,p}(v)$ such that
\[
	\sigma(a_i) \in \cB^* a_i's
	\quad \text{and} \quad
	\sigma(b_i)\ell \in pb_i'\cB^*
\]	
for all $i \in \{1,2\}$. 
We deduce that $s = s(a_1,a_2)$ and $a_1 \neq a_2$.
As $b_1'$ and $b_2'$ cannot be simultaneously equal to $\ell$, we deduce that $b_1 \neq b_2$ and that $p = p(b_1,b_2)$.
\end{proof}

\begin{example}
\label{ex:bispecial images}
Consider the morphism 
\[
	\delta:
	\begin{cases}
		1 \mapsto 1 	\\ 
		2 \mapsto 123	\\
		3 \mapsto 1233	\\
	\end{cases}
\]
that will appear again in Section~\ref{section:ternary case}. Assume that $v$ is a bispecial factor of $X \subset \{1,2,3\}^{\mathbb{Z}}$ whose extension graph is
\begin{center}
\begin{tikzpicture}
   \node(1l) {$1$};
   \node(2l) [below= .2cm of 1l] {$2$};
   \node(3l) [below= .2cm of 2l] {$3$};

   \node(1r) [right=2cm of 1l] 	 {$1$};
   \node(2r) [below= .2cm of 1r] {$2$};

	\draw[-] (1l)--(1r);
	\draw[-] (2l)--(1r);
	\draw[-] (2l)--(2r);
	\draw[-] (3l)--(2r);
\end{tikzpicture}
\end{center}

By Corollary~\ref{cor:characterization of extended images}, the word $v$ admits $\delta(v) 1$ and $3 \delta(v) 1$ as bispecial extended images. Using Proposition~\ref{prop:def antecedent and bsp ext image}, their extension graphs are given in Figure~\ref{fig:graph of bispecial images}.

\begin{figure}[h]
\centering
\begin{tabular}{cc}
\begin{tikzpicture}
\node (1) at (1, 1) {$\cE(\delta(v)1)$};
\node (1L) at (0,0) {$1$};
\node (23L) [below of = 1L, node distance = .5cm] {$3$};

\node (1R) [right of = 1L, node distance = 2cm] {$1$};
\node (23R) [below of = 1R, node distance = .5cm] {$2$};

\draw (1L)--(1R);
\draw (23L)--(1R);
\draw (23L)--(23R);
\end{tikzpicture}
&
\begin{tikzpicture}
\node (31) at (1, 1) {$\cE(3 \delta(v) 1)$};
\node (2L) at (0,0) {$2$};
\node (3L) [below of = 2L, node distance = .5cm] {$3$};

\node (1R) [right of = 2L, node distance = 2cm] {$1$};
\node (23R) [below of = 1R, node distance = .5cm] {$2$};

\draw (2L) -- (1R);
\draw (2L) -- (23R);
\draw (3L) -- (23R);
\end{tikzpicture}
\end{tabular}
\caption{Extension graphs of the bispecial extended images of $v$}
\label{fig:graph of bispecial images}
\end{figure}
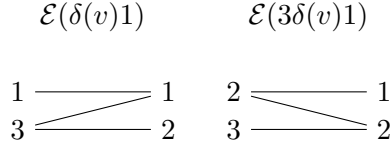

\end{example}

Assume that $X_\bsigma$ is an $\cS$-adic shift where the directive sequence $\bsigma = (\sigma_n:\cA_{n+1}^* \to \cA_n^*)_{n \geq 1}$ is primitive and contains only strongly left proper injective morphisms.
The directive sequence $\bsigma$ being primitive, the sequence $(\min_{a \in \cA_n} |\sigma_{[1,n)}(a)|)_{n \geq 1}$ goes to infinity.
Hence, iterating Proposition~\ref{prop:def antecedent and bsp ext image}, with any word $u \in \cL(X_\bsigma)$ one can associate a unique finite sequence $(u_1,u_2,\dots,u_k)$ such that $u_1 = u$, $u_k \in \cL(X_\bsigma^{(k)})$ does not have any antecedent under $\sigma_k$ and, for $i <k$, $u_{i+1} \in \cL(X_\bsigma^{(i+1)})$ is the antecedent of $u_i$ under $\sigma_i$. 
We say that $u$ is a {\em descendant} of each $u_i$, $1 \leq i \leq k$, and, reciprocally, that each $u_i$, $1 \leq i \leq k$, is an {\em ancestor} of $u$.
The word $u_k \in \cL(X_\bsigma^{(k)})$ is its {\em oldest ancestor} and it is either empty or a non-prefix factor of $\sigma_k (b)$ for some letter $b \in \cA_{k+1}$.
Observe that with our definition, $u$ is an ancestor and a descendant of itself.

Let $X$ be an $\cS$-adic shift with a strongly left proper and injective $\cS$-adic representation $\bsigma$.
Let $u$ be a bispecial factor of $X$.
From Corollary~\ref{cor:characterization of extended images}, all ancestors of $u$ are bispecial factors of some $X_\bsigma^{(k)}$.
From Proposition~\ref{prop:def antecedent and bsp ext image}, the extensions of $u$ are completely governed by those of its oldest ancestor.
More precisely, we have the following direct corollary. 

\begin{corollary}
\label{cor:back to epsilon}
For all $k \geq 1$, there is a finite number of bispecial factors of $X_\bsigma^{(k)}$ that do not have an antecedent under $\sigma_k$. They are called {\em initial bispecial factors of order $k$}.
Furthermore, for any bispecial factor $u$ of $X$, there is a unique $k \geq 1$ and a unique initial bispecial factor $v\in \cL(X_\bsigma^{(k)})$ such that $u$ is a descendant of $v$.
Finally, $\cE_X(u)$ depends only on $\cE_{X_\bsigma^{(k)}}(v)$, i.e., if $Y$ is a shift space such that $\cE_{Y}(v) = \cE_{X_\bsigma^{(k)}}(v)$ and if $Z$ is the image of $Y$ under $\sigma_{[1,k)}$, then $\cE_Z(u) = \cE_X(u)$.
\end{corollary}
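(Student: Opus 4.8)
The plan is to deduce all three assertions from Proposition~\ref{prop:def antecedent and bsp ext image} and Corollary~\ref{cor:characterization of extended images}, handling them in turn. For finiteness: a bispecial factor $v$ of $X_\bsigma^{(k)}$ without an antecedent under $\sigma_k$ is, by Proposition~\ref{prop:def antecedent and bsp ext image} applied to $\sigma_k$ (with first letter $\ell_k$), a word in which $\ell_k$ does not occur; hence either $v = \varepsilon$ or $v$ is a non-prefix factor of $\sigma_k(b)$ for some $b \in \cA_{k+1}$. As $\cA_{k+1}$ is finite and each $\sigma_k(b)$ has only finitely many factors, there are finitely many such words, a fortiori finitely many bispecial ones.

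Next, existence and uniqueness of the initial bispecial ancestor. Recall from the discussion preceding the statement that each word of $\cL(X_\bsigma)$ has a unique finite ancestor chain $(u_1,\dots,u_m)$, with $u_1$ the word itself, $u_{i+1}$ the antecedent of $u_i$ under $\sigma_i$ for $i<m$, and $u_m$ having no antecedent under $\sigma_m$. If $u$ is bispecial, then by Corollary~\ref{cor:characterization of extended images} (the antecedent of a bispecial word is bispecial) an immediate induction shows every $u_i$ is bispecial; in particular $v := u_m$ is an initial bispecial factor of order $k := m$, and $u$ is a descendant of $v$ by construction of the chain. For uniqueness: if $v'$ is an initial bispecial factor of some order $k'$ of which $u$ is a descendant, then $v'$ occurs in the unique ancestor chain of $u$, say $v' = u_j$; since $v'$ has no antecedent under $\sigma_{k'}$ whereas $u_j$ has antecedent $u_{j+1}$ whenever $j<m$, we must have $j = m$, whence $k' = m$ and $v' = v$.

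Finally, the dependence of $\cE_X(u)$ on $\cE_{X_\bsigma^{(k)}}(v)$ alone. Let $Y$ be a shift space with $\cE_Y(v) = \cE_{X_\bsigma^{(k)}}(v)$ (this already forces $v \in \cL(Y)$, since $v$ is bispecial so the graph has a vertex), put $Z_k = Y$ and let $Z_i$ be the image of $Z_{i+1}$ under $\sigma_i$ for $i = k-1, \dots, 1$, so $Z_1 = Z$ is the image of $Y$ under $\sigma_{[1,k)}$. I would prove by downward induction on $i \in \{1,\dots,k\}$ that $\cE_{Z_i}(u_i) = \cE_{X_\bsigma^{(i)}}(u_i)$; the case $i = 1$ is the claim, and the base case $i = k$ is the hypothesis. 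For the inductive step, write $u_i = s(a_1,a_2)\,\sigma_i(u_{i+1})\,p(b_1,b_2)$ as furnished by Corollary~\ref{cor:characterization of extended images} for suitable $(a_1,b_1),(a_2,b_2)\in E_{X_\bsigma^{(i+1)}}(u_{i+1})$; by the induction hypothesis $E_{Z_{i+1}}(u_{i+1}) = E_{X_\bsigma^{(i+1)}}(u_{i+1})$, and the words $s(a_1,a_2),p(b_1,b_2)$ depend only on $\sigma_i$, so the forward direction of Corollary~\ref{cor:characterization of extended images} makes the same word $u_i$ a bispecial extended image of $u_{i+1}$ in $Z_i$. Then Equation~\eqref{eq:extensions of u from v} expresses $E_{Z_i}(u_i)$, hence the entire graph $\cE_{Z_i}(u_i)$, as a function of $\sigma_i$ and $E_{Z_{i+1}}(u_{i+1})$ only; running the identical computation with $(X_\bsigma^{(i+1)}, X_\bsigma^{(i)})$ in place of $(Z_{i+1}, Z_i)$ yields $\cE_{Z_i}(u_i) = \cE_{X_\bsigma^{(i)}}(u_i)$, completing the induction.

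There is no genuine obstacle: the whole statement is bookkeeping layered on Proposition~\ref{prop:def antecedent and bsp ext image} and Corollary~\ref{cor:characterization of extended images}. The two points to keep straight are that the word $u_i$ built inside $Z_i$ is literally the ancestor $u_i$ of $u$ (true because $s(a_1,a_2)$, $p(b_1,b_2)$ and $\sigma_i$ are intrinsic to the morphism, not to the shift), and that $\cE$ packages exactly the data $E$ (with its projections $E^-$ and $E^+$) that Equation~\eqref{eq:extensions of u from v} consumes, so the propagation step is genuinely a map $\cE_{Z_{i+1}}(u_{i+1}) \mapsto \cE_{Z_i}(u_i)$.
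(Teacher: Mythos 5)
Your proposal is correct and follows essentially the same route the paper intends: the paper states this as a direct corollary of Proposition~\ref{prop:def antecedent and bsp ext image} and Corollary~\ref{cor:characterization of extended images}, relying on the unique finite ancestor chain and on Equation~\eqref{eq:extensions of u from v} to propagate extension data, which is exactly what your finiteness argument, chain-uniqueness argument, and downward induction make explicit.
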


\subsection{Action of morphisms on extension graphs}
\label{subsection:stability of tree}

Proposition~\ref{prop:def antecedent and bsp ext image} shows that whenever $v$ is the antecedent of $u$ under $\sigma$, the extension graph $\cE_Y(u)$ is the image under a graph morphism of a subgraph of $\cE_X(v)$ (where by subgraph we mean the subgraph generated by a subset of edges).
In particular, if $\#(E^-_Y(u)) = \#(E^-_X(v))$ and $\#(E^+_Y(u)) = \#(E^+_X(v))$, then $\cE_Y(u)$ and $\cE_X(v)$ are isomorphic.
In this section, we formalize this observation and study the behavior of a tree structure when we consider the extension graphs of bispecial extended images.

In this section, $X$ is a shift space over $\cA$, $\sigma:\cA^* \to \cB^*$ an injective and strongly left proper morphism (with first letter $\ell$), $Y$ the image of $X$ under $\sigma$ and $v$ a bispecial word in $\cL(X)$.
By Corollary~\ref{cor:characterization of extended images}, the bispecial extended images of $v$ under $\sigma$ are the words $u$ of the form $s \sigma(v) p$ where $s=s(a_1,a_2)$ and $p=p(b_1,b_2)$ for some $(a_1,b_1),(a_2,b_2) \in E_X(v)$ such that $a_1 \neq a_2$ and $b_1 \neq b_2$.
For any such $\sigma$ and $v$, we introduce the following notations:
\begin{align*}
\cT^-(\sigma) &= \{s(a_1,a_2) \mid a_1, a_2 \in \cA, a_1 \neq a_2\}; \\
\cT^+(\sigma) &= \{p(b_1,b_2) \mid b_1, b_2 \in \cA, b_1 \neq b_2\};	\\ 
\cT^-_v(\sigma) &= \{s(a_1,a_2) \mid a_1,a_2 \in E^-_{X}(v), a_1 \neq a_2\};	\\
\cT^+_v(\sigma) &= \{p(b_1,b_2) \mid b_1,b_2 \in E^+_{X}(v), b_1 \neq b_2\};	\\ 
\overline{\cT^-_v(\sigma)} &= \cT^-_v(\sigma) \cup \sigma(E_X^-(v));\\
\overline{\cT^+_v(\sigma)} &= \cT^+_v(\sigma) \cup \sigma(E_X^+(v)) \ell. 
\end{align*}

The prefix strict order (resp., suffix strict order) defines a tree structure called \emph{radix tree} on $\overline{\cT^+_v(\sigma)}$ (resp., on $\overline{\cT^-_v(\sigma)}$), where the root $p_0$ (resp., $s_0$) is the shortest word of the set.
In particular, $E^-_{X,s_0}(v) = E^-_X(v)$ and $E^+_{X,p_0}(v) = E^+_X(v)$.
Furthermore, the leafs of $\overline{\cT^+_v(\sigma)}$ (resp., $\overline{\cT^-_v(\sigma)}$) are exactly the elements of $\sigma(E_X^+(v)) \ell$ (resp., $\sigma(E_X^-(v))$) and every internal node (i.e., every element of $\cT^+_v(\sigma)$ or $\cT^-_v(\sigma)$) has at least two children.

For $(s,p) \in \cT^-_v(\sigma) \times \cT^+_v(\sigma)$, we define the subgraph $\cE_{X,s,p}(v)$ of $\cE_X(v)$ whose vertices are those involved by the edges  in $E_{X,s,p}(v)$.
Observe that the sets of left and right vertices of $\cE_{X,s,p}(v)$ are respectively included in $E^-_{X,s}(v)$ and $E^+_{X,p}(v)$. 
Furthermore, if $s'\in \overline{\cT^-_v(\sigma)}$ (resp., $p'\in \overline{\cT^+_v(\sigma)}$) is a child of $s$ (resp., of $p$), then $\cE_{X,s',p}(v)$ (resp., $\cE_{X,s,p'}(v)$) is a subgraph of $\cE_{X,s,p}(v)$.

\begin{example}
Using the notations from Example~\ref{ex:bispecial images}, we have the following radix trees:
\begin{center}
\begin{tikzpicture}
	\node(S) at (-5,0) {$\overline{\cT^-_v(\delta)}$};
	\node(s0)at (-5,-1) {$\varepsilon$}; 
	\node(1s)at (-6,-2){$1$};
	\node(23s)at (-4,-2) {$3$};
	\node(2s)at (-4.7,-3)  {$123$};
	\node(3s)at (-3.3,-3) {$1233$};

	\draw (s0) -- (1s);
	\draw (s0) -- (23s);
	\draw (23s) -- (2s);
	\draw (23s) -- (3s);

	\node(P) at (0,0) {$\overline{\cT^+_v(\delta)}$};
	\node(p0)at (0,-1) {$1$}; 
	\node(1p)at (-1,-2){$11$};
	\node(2p)at (1,-2) {$1231$};

	\draw (p0) -- (1p);
	\draw (p0) -- (2p);
\end{tikzpicture}
\end{center}
These structures help us understand the construction of the extension graphs of Figure~\ref{fig:graph of bispecial images}. Let $s \in \cT^-_v(\sigma)$ and $p \in \cT^+_v(\sigma)$ be such that $u = s\sigma(v)p$.
The extension graph of $u$ can be obtained from the extension graph of $v$ as follows:
\begin{enumerate}
	\item
	Start by selecting the elements of $E^-_{X, s}(v)$ and $E^+_{X, p}(v)$ (see Table~\ref{table:graph of bispecial images step 1}).
	These elements are the letters such that the corresponding leaf in $\overline{\cT^-_v(\sigma)}$ (resp., $\overline{\cT^+_v(\sigma)}$) is in the subtree with root $s$ (resp., $p$). 
	\item
	Take the subgraph of $\cE_X(v)$ with only the vertices which are in these two sets and remove the isolated vertices that were created. This gives the graph $\cE_{X, s, p}(v)$ (see Figure~\ref{fig:graph of bispecial images step 2}).
	\item
	For any letter $a \in \cA$, merge the vertices $b$ on the left side such that $\sigma(b) \in \cA^*as$ into a new left vertex labeled by $a$.
	In other words, for any left vertex $b$, map it to the left vertex labeled by the letter $a$ such that the leaf corresponding to $b$ is in the subtree whose root is the only child of $s$ ending by $as$.
	Do the same on the right side with the vertices $b$ such that $\sigma(b) \ell \in pa\cA^*$ (see Table~\ref{table:graph of bispecial images step 3}).
\end{enumerate}

\begin{table}[h]
\centering
$\begin{array}{c|c}
\delta(v)1
&3\delta(v)1
\\
\hline
&\\
E^-_{X, \varepsilon}(v) = \{1, 2, 3\}
&
E^-_{X, 3}(v) = \{2,3\}
\\
E^+_{X, 1}(v) = \{1,2\}  
&
E^+_{X, 1}(v) = \{1,2\}
\end{array}$
\caption{Step 1}
\label{table:graph of bispecial images step 1}
\end{table}

\begin{figure}[h]
\centering
\begin{tabular}{cc}
$\cE_{X, \varepsilon, 1}(v)$
&
$\cE_{X, 3,1}(v)$
\\
\\
\begin{tikzpicture}
    \node(1L) at (0,0) {$1$};
    \node(2l) [below= .2cm of 1l] {$2$};
    \node(3l) [below= .2cm of 2l] {$3$};

   \node(1r) [right=2cm of 1l] 	 {$1$};
   \node(2r) [below= .2cm of 1r] {$2$};

	\draw[-] (1l)--(1r);
	\draw[-] (2l)--(1r);
	\draw[-] (2l)--(2r);
	\draw[-] (3l)--(2r);
\end{tikzpicture}
&
\begin{tikzpicture}
    \node(2l) at (0,0) {$2$};
    \node(3l) [below= .2cm of 2l] {$3$};
    \node(1l) [below= .35cm of 3l] {};

   \node(1r) [right=2cm of 2l] 	 {$1$};
   \node(2r) [below= .2cm of 1r] {$2$};

	\draw[-] (2l)--(1r);
	\draw[-] (2l)--(2r);
	\draw[-] (3l)--(2r);
\end{tikzpicture}
\end{tabular}
\caption{Step 2}
\label{fig:graph of bispecial images step 2}
\end{figure}

\begin{table}[h]
\centering
$\begin{array}{|c|c|c|}
\hline
&
\text{Left side}
&
\text{Right side}
\\
\hline
\delta(v) 1
&
\begin{array}{c}
1 \mapsto 1\\
2 \mapsto 3\\
3 \mapsto 3\\
\end{array}
&
\begin{array}{c}
1 \mapsto 1\\
2 \mapsto 2\\
\end{array}
\\
\hline
3 \delta(v) 1
&
\begin{array}{c}
2 \mapsto 2\\
3 \mapsto 3\\
\end{array}
&
\begin{array}{c}
1 \mapsto 1\\
2 \mapsto 2\\
\end{array}
\\
\hline
\end{array}$

\caption{Step 3}
\label{table:graph of bispecial images step 3}
\end{table}

\end{example}

The next result gives a more formal description of this construction and directly follows from Equation~\eqref{eq:extensions of u from v}.

\begin{proposition}
\label{prop:morphic image of extension graph}
If $(s,p) \in \cT^-_v(\sigma) \times \cT^+_v(\sigma)$ is such that $u = s \sigma(v)p$ is an extended image of $v$, the extension graph of $u$ is the image of $\cE_{X,s,p}(v)$ under the graph morphism $\varphi_{v,s,p}:\cE_{X,s,p}(v) \to \cE_{Y}(u)$ that
\begin{itemize}
\item
    for every child $s'$ of $s$ in $\overline{\cT^-_v(\sigma)}$, maps all vertices of $E^-_{X,s'}(v)$ to the left vertex with label $a \in \cB$ such that $s' \in \cB^* as$;
\item
    for every child $p'$ of $p$ in $\overline{\cT^+_v(\sigma)}$, maps all vertices of $E^+_{X,p'}(v)$ to the right vertex with label $b \in \cB$ such that $p' \in pb \cB^*$.
\end{itemize}
In particular, if $\cT^-_v(\sigma) = \{s_0\}$ and $\cT^+_v(\sigma) = \{p_0\}$, then $v$ has a unique bispecial extended image $u$ and the associated morphism $\varphi_{v,s,p}$ is an isomorphism. 
\end{proposition}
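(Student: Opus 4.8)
The statement is Equation~\eqref{eq:extensions of u from v} translated into the language of graphs, so the plan is to set up the dictionary between the two and then read off the claim. First I would check that $\varphi_{v,s,p}$ is well defined. Let $a$ be a left vertex of $\cE_{X,s,p}(v)$, so $a \in E^-_{X,s}(v)$. Since $s \in \cT^-_v(\sigma)$ contains no occurrence of $\ell$, the Remark following Proposition~\ref{prop:def antecedent and bsp ext image} gives that $s$ is a \emph{proper} suffix of $\sigma(a)$; hence $\sigma(a)$, which is a leaf of the radix tree $\overline{\cT^-_v(\sigma)}$, lies strictly below $s$, so there is a unique child $s'$ of $s$ on the path from $s$ to $\sigma(a)$. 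This $s'$ is a suffix of $\sigma(a)$ strictly longer than $s$, so the unique letter $a' \in \cB$ with $s' \in \cB^* a' s$ is simply the letter of $\sigma(a)$ immediately preceding its suffix $s$; in particular it does not depend on the chosen edge of $\cE_{X,s,p}(v)$ through $a$. The same discussion applies verbatim on the right to $b \in E^+_{X,p}(v)$, the child $p'$ of $p$ on the path to the leaf $\sigma(b)\ell$, and the letter $b'$ with $p' \in p b' \cB^*$. Writing $\varphi_-$ and $\varphi_+$ for the left and right components of $\varphi_{v,s,p}$, this also yields the equivalences
\[
\varphi_-(a) = a' \iff \sigma(a) \in \cB^* a' s, \qquad \varphi_+(b) = b' \iff \sigma(b)\ell \in p b' \cB^*,
\]
valid for $a \in E^-_{X,s}(v)$ and $b \in E^+_{X,p}(v)$: the forward implications hold because $s'$ (resp.\ $p'$) is a suffix (resp.\ prefix) of $\sigma(a)$ (resp.\ $\sigma(b)\ell$), and the converses because $a's$ and the length-$(|s|+1)$ suffix of $s'$ are two suffixes of $\sigma(a)$ of the same length, hence equal (symmetrically on the right).

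With this dictionary I would run through Equation~\eqref{eq:extensions of u from v}. The conditions $\sigma(a) \in \cB^* a' s$ and $\sigma(b)\ell \in p b' \cB^*$ force $s$ to be a (proper) suffix of $\sigma(a)$ and $p$ to be a prefix of $\sigma(b)\ell$, so any witnessing pair $(a,b) \in E_X(v)$ in that equation automatically lies in $E_{X,s,p}(v)$, i.e.\ is an edge of $\cE_{X,s,p}(v)$. Combining with the equivalences above, Equation~\eqref{eq:extensions of u from v} says precisely that $(a',b')$ is an edge of $\cE_Y(u)$ if and only if there is an edge $(a,b)$ of $\cE_{X,s,p}(v)$ with $\varphi_-(a)=a'$ and $\varphi_+(b)=b'$, that is, with $\varphi_{v,s,p}(a,b)=(a',b')$. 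Hence the edge set of $\cE_Y(u)$ is exactly the image of the edge set of $\cE_{X,s,p}(v)$ under $\varphi_{v,s,p}$; since $\cE_{X,s,p}(v)$ has no isolated vertex by definition and extension graphs of shift spaces have none either, the vertex sets match as well, so $\cE_Y(u)$ is the image of $\cE_{X,s,p}(v)$ under $\varphi_{v,s,p}$.

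For the last assertion, assume $\cT^-_v(\sigma) = \{s_0\}$ and $\cT^+_v(\sigma) = \{p_0\}$. Then $s_0$ and $p_0$ are the only admissible $s$ and $p$, so $u = s_0 \sigma(v) p_0$ is the unique bispecial extended image of $v$, and $E^-_{X,s_0}(v) = E^-_X(v)$, $E^+_{X,p_0}(v) = E^+_X(v)$ give $\cE_{X,s_0,p_0}(v) = \cE_X(v)$. It remains to see that $\varphi_{v,s_0,p_0}$ is injective on vertices: if $a_1 \neq a_2$ in $E^-_X(v)$ then $s(a_1,a_2) = s_0$, which means the letters preceding the suffix $s_0$ in $\sigma(a_1)$ and in $\sigma(a_2)$ are distinct (otherwise their longest common suffix would strictly extend $s_0$), i.e.\ $\varphi_-(a_1) \neq \varphi_-(a_2)$; symmetrically $\varphi_+$ is injective. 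A graph morphism of finite simple graphs that is injective on vertices and surjective on edges (hence on vertices, there being no isolated vertices) is an isomorphism, which finishes the proof.

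The only genuinely delicate point is the well-definedness of $\varphi_{v,s,p}$ through the radix-tree description and its reconciliation with the raw suffix/prefix conditions of Equation~\eqref{eq:extensions of u from v} — a short bookkeeping argument identifying which child of $s$ (resp.\ $p$) a given leaf hangs under. Once this is in place, the rest is a direct rereading of that equation.
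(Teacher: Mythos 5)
Your proof is correct and takes essentially the same route as the paper, which obtains the proposition directly from Equation~\eqref{eq:extensions of u from v}; you merely spell out the radix-tree bookkeeping (well-definedness of $\varphi_{v,s,p}$, the translation between children of $s$ and $p$ and the suffix/prefix conditions $\sigma(a)\in\cB^*a's$, $\sigma(b)\ell\in pb'\cB^*$, and the injectivity argument in the single-element case) that the paper leaves implicit. No gaps.
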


Observe that the morphism $\varphi_{v,s,p}$ of the previous result acts independently on the left and right vertices of $\cE_{X,s,p}(v)$, i.e. it can be seen as the composition of two commuting graph morphisms $\varphi^-_{v,s}$ and $\varphi^+_{v,p}$, where $\varphi^-_{v,s}$ acts only on the left vertices of $\cE_{X,s,p}(v)$ and $\varphi^+_{v,p}$ acts only on the right vertices of $\cE_{X,s,p}(v)$.
Furthermore, if a letter $a$ belongs to $E^-_{X,s}(v) \cap E^-_{X,s}(v')$ (resp., to $E^+_{X,p}(v) \cap E^+_{X,p}(v')$) then $\varphi^-_{v,s}(a) = \varphi^-_{v',s}(a)$ (resp., $\varphi^+_{v,p}(a) = \varphi^+_{v',p}(a)$).
Thus we can define partial maps $\varphi^-_{s},\varphi^+_{p}:\cA \to \cB$ by $\varphi^-_{s}(a) = \varphi^-_{\varepsilon,s}(a)$ whenever $a \in E^-_{X,s}(\varepsilon)$ and by $\varphi^+_{p}(a) = \varphi^+_{\varepsilon,p}(a)$ whenever $a \in E^+_{X,p}(\varepsilon)$.

\subsection{Stability of dendricity}

In this section, we use the results and notations of the previous section to understand under which conditions a dendric bispecial factor only has dendric bispecial extended images under some morphism.
We then characterize the morphisms for which every dendric bispecial factor only has dendric bispecial extended images.

If $X$ is a shift space over $\cA$ and $v$ is a dendric bispecial factor of $X$, we say that an injective and strongly left proper morphism $\sigma:\cA^* \to \cB^*$ is {\em dendric preserving} for $v \in \cL(X)$ if all bispecial extended images of $v$ under $\sigma$ are dendric.
We extend this definition by saying that a morphism is \emph{dendric preserving} for a shift space $X$ if it is dendric preserving for all $v \in \cL(X)$.

\begin{proposition}
\label{prop:bispecial has only tree bispecial extensions}
Let $X$ be a shift space over $\cA$ and $v \in \cL(X)$ be a dendric bispecial factor.
An injective and strongly left proper morphism $\sigma:\cA^* \to \cB^*$ is dendric preserving for $v$ if and only if the following conditions are satisfied
\begin{enumerate}
\item \label{item 1 tree extension}
for every $s \in \cT^-_v(\sigma) \setminus \{s_0\}$, $\cE_{X,s,p_0}(v)$ is a tree;
\item \label{item 2 tree extension}
for every $p \in \cT^+_v(\sigma) \setminus \{p_0\}$, $\cE_{X,s_0,p}(v)$ is a tree.
\end{enumerate}
\end{proposition}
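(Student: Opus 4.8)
The plan is to reduce everything, via Proposition~\ref{prop:morphic image of extension graph}, to a statement about graph quotients of the tree $T := \cE_X(v)$ (a tree because $v$ is dendric). By Corollary~\ref{cor:characterization of extended images}, a bispecial extended image of $v$ is exactly a word $u = s\sigma(v)p$ with $s \in \cT^-_v(\sigma)$ and $p \in \cT^+_v(\sigma)$, and by Proposition~\ref{prop:morphic image of extension graph}, $\cE_Y(u)$ is the image of $\cE_{X,s,p}(v)$ under $\varphi_{v,s,p}$, which identifies, for each child $s'$ of $s$ in the radix tree, all left vertices lying in $E^-_{X,s'}(v)$, and, for each child $p'$ of $p$, all right vertices lying in $E^+_{X,p'}(v)$; moreover distinct children are sent to distinct vertices. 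So I must show that this block quotient is a tree for all $(s,p)$ if and only if the two listed conditions hold. I would rely on two elementary facts about the tree $T$: (a) as edge sets, $E_{X,s,p}(v) = E_{X,s,p_0}(v) \cap E_{X,s_0,p}(v)$, and the intersection of two connected subgraphs of a tree, if nonempty, is connected (hence any subforest of $T$ obtained as such an intersection is a tree); and (b) identifying in a tree finitely many pairwise disjoint sets of left (then right) vertices, each spanning a connected subgraph, again yields a tree — a one-line Euler-characteristic count, since identifying $k$ vertices that span a subtree deletes $k-1$ vertices and $k-1$ edges and preserves connectedness.

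For the direction ``conditions imply dendric preserving'', I would fix $u = s\sigma(v)p$. First, $\cE_{X,s,p_0}(v)$ is a tree — it is $T$ itself if $s = s_0$, and otherwise this is condition~\ref{item 1 tree extension} — and likewise $\cE_{X,s_0,p}(v)$ is a tree by condition~\ref{item 2 tree extension}; by fact (a), $\cE_{X,s,p}(v)$ is a tree. Next, for each child $s'$ of $s$, the subgraph of $\cE_{X,s,p}(v)$ spanned by $E^-_{X,s'}(v)$ is $\cE_{X,s',p}(v) = \cE_{X,s',p_0}(v) \cap \cE_{X,s_0,p}(v)$, which is connected: $\cE_{X,s',p_0}(v)$ is a tree by condition~\ref{item 1 tree extension} (here $s' \neq s_0$ since $s'$ is a proper descendant of $s$, or $s'$ is a leaf and the subgraph is a star) and $\cE_{X,s_0,p}(v)$ is a tree; the symmetric statement holds for children of $p$. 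So $\cE_{X,s,p}(v)$ is a tree in which every block to be identified spans a connected subgraph, and fact (b) gives that $\cE_Y(u) = \varphi_{v,s,p}(\cE_{X,s,p}(v))$ is a tree.

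For the converse I argue by contraposition. Assume condition~\ref{item 1 tree extension} or condition~\ref{item 2 tree extension} fails. Since every $\cE_{X,s,p}(v)$ is a subforest of $T$, failing to be a tree is the same as being disconnected, so the set $\mathcal{D} = \{(s,p) \in \cT^-_v(\sigma) \times \cT^+_v(\sigma) : \cE_{X,s,p}(v)\text{ is disconnected}\}$ is nonempty (it contains $(s,p_0)$ or $(s_0,p)$ for the offending $s$ or $p$). Choose $(s,p) \in \mathcal{D}$ maximal for the order in which $s$ and $p$ independently descend their radix trees. Then for every internal child $s'$ of $s$, maximality forces $\cE_{X,s',p}(v)$ to be connected, and likewise $\cE_{X,s,p'}(v)$ for every internal child $p'$ of $p$ (leaf children give singleton blocks). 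Therefore, in the disconnected graph $\cE_{X,s,p}(v)$, every block identified by $\varphi_{v,s,p}$ lies inside a single connected component; since distinct blocks map to distinct vertices, $\varphi_{v,s,p}$ cannot merge components. Hence $\cE_Y(s\sigma(v)p)$ is disconnected, so $s\sigma(v)p$ is a bispecial extended image of $v$ that is not dendric, i.e.\ $\sigma$ is not dendric preserving for $v$.

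The crux is this last choice of extended image. The naive candidate $u = s\sigma(v)p_0$, with $s$ witnessing the failure of condition~\ref{item 1 tree extension}, is not enough, because the right-hand identification $\varphi^+_{v,p_0}$ could in principle reconnect the components of $\cE_{X,s,p_0}(v)$; choosing $(s,p)$ maximal in $\mathcal{D}$ in \emph{both} coordinates is precisely what forbids such a reconnection on either side. The remaining ingredients — the Euler count of fact (b), the subtree-intersection statement (a), and checking that the morphism of Proposition~\ref{prop:morphic image of extension graph} sends distinct blocks to distinct vertices — are routine.
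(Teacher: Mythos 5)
Your forward direction is correct and is essentially the paper's argument in a different packaging: your fact (a) is the paper's uniqueness-of-paths argument showing that $\cE_{X,s,p}(v)$ is a tree, and your Euler-characteristic count for block identification replaces the paper's lifting of a hypothetical cycle; both hinge on the same key point, namely that each block $\cE_{X,s',p}(v)$ (resp.\ $\cE_{X,s,p'}(v)$) is connected.

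The genuine gap is in the converse, and it originates in your opening sentence, where you read Corollary~\ref{cor:characterization of extended images} as saying that the bispecial extended images of $v$ are \emph{exactly} the words $s\sigma(v)p$ with $(s,p)\in\cT^-_v(\sigma)\times\cT^+_v(\sigma)$. That is not what the corollary says: $u=s\sigma(v)p$ is a bispecial extended image only when there exist bi-extensions $(a_1,b_1),(a_2,b_2)\in E_X(v)$ with $a_1\neq a_2$, $b_1\neq b_2$ realizing $s=s(a_1,a_2)$ and $p=p(b_1,b_2)$ \emph{simultaneously}, and for an arbitrary pair $(s,p)$ such witnesses need not exist (the set $E_{X,s,p}(v)$ may even be empty). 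In your contrapositive you choose $(s,p)$ maximal in $\mathcal{D}$, show that $\varphi_{v,s,p}$ cannot merge components, and then conclude that ``$s\sigma(v)p$ is a bispecial extended image of $v$ that is not dendric''. The clause ``is a bispecial extended image of $v$'' is precisely what is needed to contradict dendric preservation (which quantifies only over actual bispecial extended images), and you never verify it. The gap can be closed: since $\cE_{X,s,p}(v)$ is disconnected it is nonempty, so $s\sigma(v)p$ lies in $\cL(Y)$ and is an extended image of $v$ in the sense of Proposition~\ref{prop:def antecedent and bsp ext image}; then Proposition~\ref{prop:morphic image of extension graph} applies, and once its extension graph is known to be disconnected with every component containing an edge, it has at least two left and two right vertices, hence is bispecial. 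Alternatively, one can argue from maximality that two edges of $\cE_{X,s,p}(v)$ lie in distinct children classes on both the left and the right side, which produces the witnesses required by the corollary. Note that the paper's proof sidesteps this issue by a different maximal choice: it takes $s'$ maximal in $\{s(a_1,a_2)\mid a_1\in A_1,\,a_2\in A_2\}$ and then $p'$ maximal in $\{p(b_1',b_2')\mid b_1'\in B_1,\,b_2'\in B_2\}$, where the sets $A_i$, $B_i$ are attached to the two components, so realizability of $(s',p')$ by vertices on opposite sides is built into the construction; your global maximal pair in $\mathcal{D}$ is arguably cleaner, but it is exactly what makes this extra verification necessary.
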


\begin{proof}
Let us first assume that every bispecial extended image of $v$ is dendric.
We show item~\ref{item 2 tree extension}, the other one being symmetric.
Consider $p \in \cT^+_v(\sigma)$.
The graph $\cE_{X,s_0,p}(v)$ is a subgraph of $\cE_{X}(v)$, which is a tree.
Thus, $\cE_{X,s_0,p}(v)$ is acyclic.
Let us show that it is connected.
As there is no isolated vertex, it suffices to show that for all distinct right vertices $b_1,b_2$ of $\cE_{X,s_0,p}(v)$, there is a path in $\cE_{X,s_0,p}(v)$ from $b_1$ to $b_2$.
Assume by contrary that there exist two right vertices $b_1,b_2$ of $\cE_{X,s_0,p}(v)$ that are not connected.

For $i \in \{1,2\}$, let $A_i$ denote the set of left vertices of $\cE_{X,s_0,p}(v)$ that are connected to $b_i$.
Consider a maximal word $s'$ (for the suffix order) in $\{s(a_1,a_2) \mid a_1 \in A_1,a_2 \in A_2\}$.
Let also $B_i$, $i \in \{1,2\}$, denote the set of right vertices of $\cE_{X,s',p}(v)$ that are connected to vertices of $A_i$ and consider a maximal word $p'$ (for the prefix order) in $\{p(b_1',b_2') \mid b_1' \in B_1,b_2' \in B_2\}$.
We claim that the extension graph of the bispecial extended image $u = s' \sigma(v) p'$ is not connected.

Let $Y$ be the image of $X$ under $\sigma$ and let $\varphi$ be the morphism from $\cE_{X,s',p'}(v)$ to $\cE_Y(u)$ given by Proposition~\ref{prop:morphic image of extension graph}.
By maximality of $s'$ and $p'$, the morphism $\varphi$ identifies two left vertices $a,a'$ (resp., right vertices $b,b'$) only if they belong to the same $A_i$ (resp., $B_i$).
This implies that $\cE_Y(u)$ is not connected, which is a contradiction.

Let us now show that, under the hypothesis \ref{item 1 tree extension} and \ref{item 2 tree extension}, any bispecial extended image of $v$ is dendric.
By Corollary~\ref{cor:characterization of extended images}, the bispecial extended images of $v$ are of the form $s \sigma(v) p \in \cL(Y)$ where $s$ is in $\cT^-_v(\sigma)$ and $p$ is in $\cT^+_v(\sigma)$.

For any such pair $(s, p)$, we first show that the graph $\cE_{X,s,p}(v)$ is a tree.
It is trivially acyclic as it is a subgraph of $\cE_{X,s,p_0}(v)$, which is assumed to be a tree.
Let us show that it is connected.
Let $b_1,b_2$ be right vertices of $\cE_{X,s,p}(v)$.
As $\cE_{X,s,p}(v)$ is a subgraph of both $\cE_{X,s_0,p}(v)$ and $\cE_{X,s,p_0}(v)$ which are trees, there exist a path $q$ in $\cE_{X,s_0,p}(v)$ and a path $q'$ in $\cE_{X,s,p_0}(v)$ connecting $b_1$ and $b_2$.
In particular, the right vertices occurring in $q$ belong to $E^+_{X,p}(v)$ and the left vertices occurring in $q'$ belong to $E^-_{X,s}(v)$. 
As $\cE_{X,s_0,p}(v)$ and $\cE_{X,s,p_0}(v)$ are both subgraphs of $\cE_{X,s_0,p_0}(v)$, which is a tree, the paths $q$ and $q'$ coincide.
It means that this path only goes through left vertices belonging to $E^-_{X,s}(v)$ and through right vertices belonging to $E^+_{X,p}(v)$. 
This implies that it is a path of $\cE_{X,s,p}(v)$, hence that $b_1$ and $b_2$ are connected in $\cE_{X,s,p}(v)$.

We now show that if $u = s \sigma(v)p$ is an extended image of $v$, then it is dendric.
By Proposition~\ref{prop:morphic image of extension graph}, $\cE_Y(u)$ is the image of $\cE_{X,s,p}(v)$ under some graph morphism $\varphi$, hence it is connected.
We proceed by contradiction to show that it is acyclic.
Assume that $c = (a_1,b_1,a_2,b_2, \dots, a_{n},b_{n},a_1)$, $n \geq 2$, is a non-trivial cycle in $\cE_Y(u)$, where $a_1,\dots,a_{n}$ are left vertices and $b_1,\dots,b_{n}$ are right vertices.
Again by Proposition~\ref{prop:morphic image of extension graph}, for every $i\leq n$, there exist
\begin{itemize}
\item
a child $s_i$ of $s$ in $\overline{\cT^-_v(\sigma)}$;
\item
a child $p_i$ of $p$ in $\overline{\cT^+_v(\sigma)}$;
\item
left vertices $a_i', a_i''$ of $\cE_{X,s,p}(v)$, belonging to $E^-_{X,s_i}(v)$;
\item
right vertices $b_i', b_i''$ of $\cE_{X,s,p}(v)$, belonging to $E^+_{X,p_i}(v)$;
\end{itemize}
such that $\varphi(a_i') = \varphi(a_i'') = a_i$, $\varphi(b_i') = \varphi(b_i'') = b_i$ and such that the pairs 
\begin{align*}	
	& (a_j',b_j'), (a_{j+1}'',b_j''), \quad j < n, \\
	& (a_n',b_n'), (a_1'',b_n''),
\end{align*}
are edges of $\cE_{X,s,p}(v)$.

Observe that as $\cE_{X,s,p}(v)$ is a tree, for each $i\leq n$ there is a unique simple path $q_{i}$ from $a_{i}''$ to $a_{i}'$ and a unique simple path $q_i'$ from $b_i'$ to $b_i''$. In $\cE_{X,s,p}(v)$, we thus have the circuit
\[
	c' = (\underbrace{a_1'',\dots,a_1'}_{q_1},  
	\underbrace{b_1',\dots,b_1''}_{q_1'},
	\underbrace{a_2'',\dots,a_2'}_{q_2},
	\underbrace{b_2',\dots,b_2''}_{q_2'}, 
	\dots,	
	\underbrace{a_n'',\dots,a_n'}_{q_n},
	\underbrace{b_n',\dots,b_n''}_{q_n'},
	a_1'').
\]

We will now prove that the image of $c'$ by $\varphi$ reduces to the non trivial cycle $c$ of $\cE_Y(u)$. By reducing, we mean that we remove consecutive redundant edges, i.e. every occurrence of $a,b,a$ in the path is replaced by $a$. This will imply that $c'$ is not trivial and contradict the fact that $\cE_{X,s,p}(v)$ is a tree, which will end the proof.

As $\cE_{X,s_i,p}(v)$ is a sub-tree of $\cE_{X,s,p}(v)$, the path $q_i$ is a path of $\cE_{X,s_i,p}(v)$, otherwise this would contradict the fact that $\cE_{X,s,p}(v)$ is acyclic.
In particular,
the only left vertex of $\varphi(q_i)$ is $a_i$ thus $\varphi(q_i)$ reduces to the length-$0$ path $a_i$ in $\cE_Y(u)$. Similarly, $q_i'$ is a path of $\cE_{X,s,p_i}(v)$ thus $\varphi(q_i')$ reduces to the length-$0$ path $b_i$. This concludes the proof that $\varphi(c')$ reduces to $c$.
\end{proof}

\begin{corollary}\label{cor:image of Arnoux-Rauzy}
If $v$ is an ordinary bispecial factor of a shift space over $\cA$, then any injective and strongly left proper morphism $\sigma : \cA^* \to \cB^*$ is dendric preserving for $v$.
In particular, the image under $\sigma$ of an Arnoux-Rauzy shift space over $\cA$ is a minimal dendric shift. 
\end{corollary}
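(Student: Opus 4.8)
The plan is to deduce the corollary directly from Proposition~\ref{prop:bispecial has only tree bispecial extensions}. First I would recall that an ordinary bispecial factor $v$ satisfies $E_X(v) \subset (\{a\} \times \cA) \cup (\cA \times \{b\})$ for some $(a,b) \in E_X(v)$; in particular its extension graph $\cE_X(v)$ is a "double star" centered at the left vertex $a$ and the right vertex $b$, which is manifestly a tree (every other left vertex is connected only to $b$, every other right vertex only to $a$, and the edge $(a,b)$ joins the two centers). So $v$ is dendric, and I need to verify conditions~\ref{item 1 tree extension} and~\ref{item 2 tree extension} of the proposition for an arbitrary injective strongly left proper $\sigma : \cA^* \to \cB^*$.

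For condition~\ref{item 2 tree extension}, fix $p \in \cT^+_v(\sigma)\setminus\{p_0\}$ and consider $\cE_{X,s_0,p}(v)$. Since $s_0$ imposes no restriction on the left vertices, this is the subgraph of $\cE_X(v)$ whose right vertices are restricted to $E^+_{X,p}(v)$. As a subgraph of the tree $\cE_X(v)$ it is acyclic, so the only thing to check is connectedness (there are no isolated vertices by construction of $\cE_{X,s,p}(v)$). Here I would use the ordinary structure: any left vertex $a' \neq a$ of $\cE_{X,s_0,p}(v)$ is adjacent (in $\cE_X(v)$, hence in the subgraph provided $b$ survives) only to $b$, so it forces $b \in E^+_{X,p}(v)$; and any right vertex $b' \neq b$ is adjacent only to $a$, forcing $a \in E^-_{X,s_0}(v) = E^-_X(v)$, which always holds. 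Thus whenever the subgraph is nonempty, $b$ is one of its right vertices and $a$ one of its left vertices, the edge $(a,b)$ is present, every surviving non-central left vertex attaches to $b$, and every surviving non-central right vertex attaches to $a$ — so the subgraph is again a double star, hence connected. Condition~\ref{item 1 tree extension} is symmetric, swapping the roles of left/right and of $a$/$b$ (the word $s_0$ playing the role $p_0$ played, and the surviving right vertices all attaching to the central left vertex $a$, which survives since $a \in E^-_{X,s}(v)$ is exactly the restriction imposed). Hence by Proposition~\ref{prop:bispecial has only tree bispecial extensions}, $\sigma$ is dendric preserving for $v$.

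For the last sentence: let $X$ be an Arnoux-Rauzy shift over $\cA$ and $\sigma$ injective and strongly left proper; let $Y$ be its image. By the discussion before Example~\ref{ex:fibo}, every bispecial factor of $X$ is ordinary, hence dendric, so $X$ is a minimal dendric shift (minimality of $Y$ follows since $X$ is minimal and $\sigma$ non-erasing, and $\sigma$ being a coding morphism is injective). By Corollary~\ref{cor:back to epsilon}, every bispecial factor $u$ of $Y$ is a descendant of an initial bispecial factor; but here the situation is even simpler — we only need that $\cE_Y(u)$ depends on the extension graph of an ancestor, and iterating the "antecedent" construction, the oldest ancestor is either empty or a non-prefix factor of an image, the non-bispecial ones being trivially dendric. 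The key point is that $u$ is an extended image of its antecedent $v$ which (by Corollary~\ref{cor:characterization of extended images}) is bispecial in $\cL(X_\bsigma^{(k)})$ for the one-step representation $\bsigma = (\sigma)$, i.e. in $\cL(X)$, hence ordinary; by the first part $\sigma$ is dendric preserving for $v$, so $u$ is dendric. Since every bispecial factor of $Y$ is dendric and non-bispecial words are trivially dendric, $Y$ is dendric.

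The main obstacle is purely bookkeeping in the subgraph argument of the middle paragraph: one must be careful that "subgraph generated by a subset of edges" together with the removal of isolated vertices does not destroy the double-star shape, i.e. that the central vertices $a$ and $b$ are retained precisely when the restricted vertex sets are nonempty. This amounts to checking that $a \in E^-_{X,s}(v)$ and $b \in E^+_{X,p}(v)$ are forced by the ordinary structure whenever $\cE_{X,s,p_0}(v)$ (resp.\ $\cE_{X,s_0,p}(v)$) has any edge at all — which is exactly the observation that every non-central vertex's unique neighbour is a central vertex.
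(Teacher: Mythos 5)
Your proof of the first assertion is correct and is essentially the intended derivation: for an ordinary bispecial $v$ the graph $\cE_X(v)$ is the double star with central edge $(a,b)$, each restricted graph $\cE_{X,s,p_0}(v)$ and $\cE_{X,s_0,p}(v)$ is again acyclic and connected, and Proposition~\ref{prop:bispecial has only tree bispecial extensions} applies. (One small slip: if no left vertex other than $a$ survives in $\cE_{X,s_0,p}(v)$, then $b$ need not be a vertex of that graph, contrary to what you assert; but in that degenerate case the graph is a star centred at $a$, still a tree, so the conclusion stands.)

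The second assertion is where your argument has a genuine gap. Being dendric preserving for every $v \in \cL(X)$ only controls the bispecial factors of $Y$ that contain the letter $\ell$, i.e.\ those that are extended images (Proposition~\ref{prop:def antecedent and bsp ext image}). The bispecial factors of $Y$ with no occurrence of $\ell$ --- the empty word and the bispecial non-prefix factors of the words $\sigma(b)$ --- have no antecedent in $X$, and your clause ``the non-bispecial ones being trivially dendric'' silently skips exactly these. This is not a routine verification: condition~2 in the proof of Proposition~\ref{prop:characterization dendric preserving} is precisely the requirement that such $\ell$-free words be dendric, and it can fail for an injective strongly left proper $\sigma$ even when the source is Arnoux-Rauzy. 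Concretely, take $\cB = \{0,a,b,c,d\}$, $\ell = 0$, and $\sigma(1) = 0baccad$, $\sigma(2) = 0bad$, $\sigma(3) = 0cac$; this $\sigma$ is injective and strongly left proper, yet in the image $Y$ of any Arnoux-Rauzy shift over $\{1,2,3\}$ the word $a$ has bi-extensions $(b,c)$, $(b,d)$, $(c,c)$, $(c,d)$, so $\cE_Y(a)$ contains a four-cycle and $Y$ is not dendric. Hence the second sentence cannot be deduced from the first in the stated generality: one needs in addition that the $\ell$-free factors of the words $\sigma(a)\ell$ are dendric, which is exactly what is verified for the morphisms of $\cSD$ in Lemma~\ref{lemma:DP implies dendric}, i.e.\ in the situation where this statement is actually invoked later in the paper (Lemma~\ref{lemma:classes not empty}). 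Your write-up should at least isolate and check these initial bispecial factors rather than fold them into the non-bispecial case.
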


The previous result is illustrated in the preceding examples.
Indeed, for $\delta$ and $v$ as in Example~\ref{ex:bispecial images}, we have $\cT^-_v(\delta) = \{3, \varepsilon\}$ and $\cT^+_v(\delta) = \{1\} = \{p_0\}$. 
The extension graph $\cE_{X,3,1}(v)$ in Figure~\ref{fig:graph of bispecial images step 2} being a tree, $v$ has only dendric bispecial extended images, as already observed in Figure~\ref{fig:graph of bispecial images}.

Another consequence of Proposition~\ref{prop:bispecial has only tree bispecial extensions} is given by the following corollary.

\begin{corollary}
\label{cor:dendric preserving for any word}
Let $\sigma:\cA^* \to \cB^*$ be an injective and strongly left proper morphism. The following properties are equivalent.
\begin{enumerate}
\item
	The sets $\cT^-(\sigma)$ and $\cT^+(\sigma)$ both only contain one element.
\item
	For any shift space $X$ on the alphabet $\cA$ and any dendric bispecial factor $v \in \cL(X)$, $\sigma$ is dendric preserving for $v$.
\end{enumerate}
\end{corollary}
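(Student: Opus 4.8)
The plan is to prove the two implications separately, relying on Proposition~\ref{prop:bispecial has only tree bispecial extensions} throughout. The implication from (1) to (2) is the easy direction and is essentially a reformulation of the last sentence of Proposition~\ref{prop:morphic image of extension graph}. Indeed, if $\cT^-(\sigma)$ and $\cT^+(\sigma)$ each contain a single element, then for every shift space $X$ and every word $v \in \cL(X)$ we have $\cT^-_v(\sigma) \subseteq \cT^-(\sigma)$ and $\cT^+_v(\sigma) \subseteq \cT^+(\sigma)$, so both are singletons, necessarily $\{s_0\}$ and $\{p_0\}$. Then conditions~\eqref{item 1 tree extension} and~\eqref{item 2 tree extension} of Proposition~\ref{prop:bispecial has only tree bispecial extensions} are vacuously satisfied (there is nothing to check since $\cT^-_v(\sigma)\setminus\{s_0\} = \cT^+_v(\sigma)\setminus\{p_0\} = \emptyset$), and hence $\sigma$ is dendric preserving for every dendric bispecial $v$.

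For the implication from (2) to (1), I would argue by contraposition: assume that one of $\cT^-(\sigma)$, $\cT^+(\sigma)$ has at least two elements — say $\cT^-(\sigma)$, the other case being symmetric — and construct a shift space $X$ over $\cA$ together with a dendric bispecial factor $v$ for which $\sigma$ is not dendric preserving. The natural candidate for $v$ is the empty word $\varepsilon$: one only needs a shift space whose extension graph $\cE_X(\varepsilon)$ is a prescribed tree, and this is very flexible since $\cE_X(\varepsilon)$ merely encodes $\cL_2(X)$. Since $\#\cT^-(\sigma) \geq 2$, there is a nontrivial suffix $s = s(a_1,a_2)$ that is a proper common suffix of $\sigma(a_1)$ and $\sigma(a_2)$ with $a_1 \neq a_2$; by definition of $\cT^-$ this $s$ is strictly longer than $s_0$. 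The goal is to design $\cE_X(\varepsilon)$ so that the subgraph $\cE_{X,s,p_0}(\varepsilon)$ fails to be a tree — concretely, so that it contains a cycle — which by Proposition~\ref{prop:bispecial has only tree bispecial extensions}\eqref{item 1 tree extension} forces a non-dendric bispecial extended image.

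Building such an $X$ is the crux. I would choose the left-extension set $E_X^-(\varepsilon)$ to contain $a_1, a_2$ and at least one further letter, and choose $E_X^+(\varepsilon)$ large enough that, after restricting to the letters $b$ with $\sigma(b)\ell \in p_0 \cB^*$ (i.e.\ all of $E_X^+(\varepsilon)$, since $p_0$ is the root), there are two right vertices both joined to $a_1$ and to $a_2$, producing a $4$-cycle $a_1 b a_2 b' a_1$ inside $\cE_{X,s,p_0}(\varepsilon)$ while keeping the full graph $\cE_X(\varepsilon)$ a tree. The only subtlety is realizability: one must check that the chosen bipartite tree actually arises as $\cE_X(\varepsilon)$ for some shift space, which is immediate by taking $X$ to be, for instance, a suitable Arnoux--Rauzy-type or more simply any minimal shift whose language begins with the prescribed $\cL_2$ — or even more cheaply, noting that any finite tree on $E^- \sqcup E^+$ with no isolated vertex is the extension graph of the empty word of some (periodic or not) shift space over $\cA$, after enlarging $\cA$ if needed to accommodate the letters appearing as vertices. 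I expect the main obstacle to be precisely this bookkeeping: arranging the tree $\cE_X(\varepsilon)$ so that collapsing along the child of $s$ identifies $a_1$ with $a_2$ (and nothing else problematic happens on the right), while simultaneously verifying $s \in \cT^-_v(\sigma)$ rather than merely $s \in \cT^-(\sigma)$, i.e.\ that $a_1$ and $a_2$ can both be taken as genuine left extensions of $v$ in $X$. Once the cycle in $\cE_{X,s,p_0}(\varepsilon)$ is exhibited, Proposition~\ref{prop:bispecial has only tree bispecial extensions} delivers the contradiction with~(2) and completes the contrapositive.
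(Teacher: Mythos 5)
Your first implication ((1) $\Rightarrow$ (2)) is fine and is exactly the paper's argument: $\cT^-_v(\sigma)\subset\cT^-(\sigma)$, $\cT^+_v(\sigma)\subset\cT^+(\sigma)$, so the conditions of Proposition~\ref{prop:bispecial has only tree bispecial extensions} are vacuous. The contrapositive strategy for (2) $\Rightarrow$ (1), with $v=\varepsilon$, is also the right one. But the concrete mechanism you propose for making condition~\ref{item 1 tree extension} fail cannot work: you want $\cE_{X,s,p_0}(\varepsilon)$ to contain a $4$-cycle $a_1\,b\,a_2\,b'\,a_1$ ``while keeping the full graph $\cE_X(\varepsilon)$ a tree.'' The graph $\cE_{X,s,p_0}(\varepsilon)$ is by definition the subgraph of $\cE_X(\varepsilon)$ generated by the edge set $E_{X,s,p_0}(\varepsilon)\subset E_X(\varepsilon)$, so every edge of your alleged cycle is already an edge of $\cE_X(\varepsilon)$; the cycle would then sit inside $\cE_X(\varepsilon)$, contradicting the requirement that $\varepsilon$ be dendric (and if $\varepsilon$ were not dendric it could not serve as the required dendric bispecial counterexample at all). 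Since these subgraphs are always acyclic, the only way a condition of Proposition~\ref{prop:bispecial has only tree bispecial extensions} can fail is by \emph{disconnection}, and that is what the paper arranges: writing $s_1=s(a,b)\neq s_0$, the fact that $s_0\in\cT^-(\sigma)$ gives a letter $c$ with $s_1$ not a suffix of $\sigma(c)$ (so $\#\cA\geq 3$), and one takes for $X$ the coding of an explicit interval exchange whose extension graph of $\varepsilon$ is a path (hence a tree) in which the unique path between the left vertices $a$ and $b$ passes through the left vertex $c$; restricting to left vertices whose image ends in $s_1$ removes $c$ and disconnects $a$ from $b$ in $\cE_{X,s_1,p_0}(\varepsilon)$.

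Two secondary points. Your worry about having $s\in\cT^-_v(\sigma)$ rather than merely $s\in\cT^-(\sigma)$ evaporates for $v=\varepsilon$: for a shift space over $\cA$ one has $E^-_X(\varepsilon)=E^+_X(\varepsilon)=\cA$, so $\cT^-_\varepsilon(\sigma)=\cT^-(\sigma)$. On the other hand, your realizability remark ``after enlarging $\cA$ if needed'' is not available: $\sigma$ is fixed on $\cA$ and the shift must be over $\cA$, so one needs an actual construction of a shift over $\cA$ realizing the prescribed tree as $\cE_X(\varepsilon)$; this is precisely why the paper exhibits a concrete interval exchange rather than appealing to a generic existence claim.
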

\begin{proof}
If $\cT^-(\sigma) = \{s_0\}$ and $\cT^+(\sigma) = \{p_0\}$, the fact that $\sigma$ is dendric preserving for any dendric bispecial word $v$ is a direct consequence of Proposition~\ref{prop:bispecial has only tree bispecial extensions} since $\cT^-_v(\sigma) \subset \cT^-(\sigma)$ and $\cT^+_v(\sigma) \subset \cT^+(\sigma)$.

To prove the other implication, let us assume that $s_0,s_1 \in \cT^-(\sigma)$, where $s_0$ is the root of $\cT^-(\sigma)$ and $s_1 \neq s_0$.
Let $a, b$ be such that $s_1 = s(a, b)$. As $s_0 \in \cT^-(\sigma)$, there exists $c$ such that $s_1$ is not a suffix of $\sigma(c)$.
In particular, we have $\#\cA \geq 3$.

Let $a_1, \dots, a_n$ be the elements of $\cA \setminus \{a, b, c\}$. 
Let us denote by $X$ the shift space coding the interval exchange transformation represented below (for precise definitions and more details about interval exchanges, see Subsection~\ref{subsection:interval exchange}).

\begin{center}
\begin{tikzpicture}
\draw[|-,line width=1pt,color=blue] 
	(0,0)
	-- 
	node[above] {$c$} 
	(2,0)
	;
\draw[|-,line width=1pt,color=red] 
	(2,0) 
	-- 
	node[above] {$b$} 
	(4,0)
	;
\draw[|-,line width=1pt,color=green] 
	(4,0) 
	-- 
	node[above] {$a_1$} 
	(6.2,0)
	;
\draw[|-,line width=1pt,color=black] 
	(6.2,0) 
	-- 
	node[above] {$a_2$} 
	(6.6,0)
	;
\draw[|-,line width=1pt,color=black] 
	(6.6,0) 
	-- 
	node[above] {$\dots$} 
	(7.6,0)
	;
\draw[|-,line width=1pt,color=purple] 
	(7.6,0) 
	-- 
	node[above] {$a$} 
	(8,0) 
	;
\draw[|-,line width=1pt,color=purple] 
	(0,-.7)
	-- 
	node[below] {$a$} 
	(0.4,-.7);
\draw[|-,line width=1pt,color=blue] 
	(0.4,-.7) 
	-- 
	node[below] {$c$} 
	(2.4,-.7)
	;
\draw[|-,line width=1pt,color=red] 
	(2.4,-.7) 
	-- 
	node[below] {$b$} 
	(4.4,-.7)
	;
\draw[|-,line width=1pt,color=black] 
	(4.4,-.7) 
	-- 
	node[below] {$a_2$} 
	(4.8,-.7)
	;
\draw[|-,line width=1pt,color=black] 
	(4.8,-.7) 
	-- 
	node[below] {$\dots$} 
	(5.8,-.7)
	;
\draw[|-,line width=1pt,color=green] 
	(5.8,-.7) 
	-- 
	node[below] {$a_1$} 
	(8,-.7)
	;
\end{tikzpicture}
\end{center}

The extension graph of the word $\varepsilon$ is given by

\begin{center}
\begin{tikzpicture}
	\node (aL) at (0,0) {$a$};
	\node (cL) [below of = aL, node distance = .5cm] {$c$};
	\node (bL) [below of = cL, node distance = .5cm] {$b$};
	\node (a2L) [below of = bL, node distance = .5cm] {$a_2$};
	\node (doL) [below of = a2L, node distance = .7cm] {$\vdots$};
	\node (a1L) [below of = doL, node distance = .7cm] {$a_1$};
	
	\node (cR) [right of = aL, node distance = 2cm] {$c$};
	\node (bR) [below of = cR, node distance = .5cm] {$b$};
	\node (a1R) [below of = bR, node distance = .5cm] {$a_1$};
	\node (a2R) [below of = a1R, node distance = .5cm] {$a_2$};
	\node (doR) [below of = a2R, node distance = .7cm] {$\vdots$};
	\node (aR) [below of = doR, node distance = .7cm] {$a$};
	
	\draw[-] (aL)--(cR);
	\draw[-] (cL)--(cR);
	\draw[-] (cL)--(bR);
	\draw[-] (bL)--(bR);
	\draw[-] (bL)--(a1R);
	\draw[-] (a2L)--(a1R);
	\draw[-] (a1L)--(a1R);
	\draw[-] (a1L)--(a2R);
	\draw[-] (a1L)--(aR);
\end{tikzpicture}
\end{center}

It is a tree thus $\varepsilon$ is dendric bispecial. However, the graph $\cE_{X, s_1, p_0}(\varepsilon)$ is not connected as it does not contain the vertex $c$ on the left but both $a$ and $b$ are left vertices. By Proposition~\ref{prop:bispecial has only tree bispecial extensions}, $\sigma$ is not dendric preserving for $\varepsilon$. This proves that $\cT^-(\sigma)$ must contain exactly one element. Similarly, $\cT^+(\sigma)$ also contains exactly one element.
\end{proof}

The previous result characterizes injective and strongly left proper morphisms for which every dendric bispecial factor has only dendric bispecial extended images.
However, the condition does not imply that the image of a dendric shift by such a morphism  $\sigma$ is again dendric.
Indeed, the result gives information only on the bispecial factors that are extended images under $\sigma$, i.e., that have an antecedent.
The next result characterizes those morphisms $\sigma$ for which even the new initial bispecial factors are dendric.
For any letter $a \in \cA$, let $\alpha_a$ and $\bar{\alpha}_a$ denote the so-called {\em Arnoux-Rauzy morphisms}
\[
	\alpha_a(b) =
	\begin{cases}
		a & \text{ if } b = a,\\
		ab & \text{ otherwise,}
	\end{cases}
	\qquad
	\bar{\alpha}_a(b) = 
	\begin{cases}
		a & \text{ if } b = a,\\
		ba & \text{ otherwise.}
	\end{cases}
\]

\begin{proposition}
\label{prop:characterization dendric preserving}
The injective and strongly left proper morphisms preserving dendricity, i.e. such that the image of any dendric shift is a dendric shift, are exactly the morphisms
\[
	\bar{\alpha}_{a_1} \circ \dots \circ \bar{\alpha}_{a_n} \circ \alpha_\ell \circ \pi
\]
for any $n \geq 0$, any $a_1, \dots, a_n \in \cA \setminus \{\ell\}$ and any permutation $\pi$ of $\cA$.
\end{proposition}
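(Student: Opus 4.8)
The plan is to prove the two inclusions separately. The inclusion ``$\supseteq$'' is short: I would check that each building block preserves dendricity and that the image of a composition of morphisms is the composition of the images, so that dendric-preserving morphisms are closed under composition. A permutation $\pi$ of $\cA$ preserves dendricity trivially. For $\alpha_a$ with $a$ \emph{any} letter (in particular $a=\ell$), one computes $\cT^+(\alpha_a)=\{a\}$ and $\cT^-(\alpha_a)=\{\varepsilon\}$, both singletons, so by Corollary~\ref{cor:dendric preserving for any word} every bispecial factor of any shift over $\cA$ has only dendric bispecial extended images under $\alpha_a$; hence the bispecial factors of $\alpha_a(X)$ that are extended images are dendric whenever $X$ is. The remaining bispecial factors of $\alpha_a(X)$ (those without antecedent) are the empty word, whose extension graph in $\alpha_a(X)$ is always the double star with edge set $\{(a,c),(c,a):c\in\cA\}$, a tree; and the $a$-free non-prefix factors of the words $\alpha_a(b)\in\{a,ab\}$, which are single letters $b\neq a$, and those are never left special in $\alpha_a(X)$. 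So $\alpha_a$ preserves dendricity. For $\bar\alpha_a$ with $a\neq\ell$ I would use the mirror (word-reversal) involution $w\mapsto\widetilde w$, which transposes extension graphs (hence preserves dendricity of shifts) and satisfies $\widetilde{\bar\alpha_a(w)}=\alpha_a(\widetilde w)$; thus $\bar\alpha_a(X)=\widetilde{\alpha_a(\widetilde X)}$ is dendric whenever $X$ is. Composing, $\bar\alpha_{a_1}\circ\cdots\circ\bar\alpha_{a_n}\circ\alpha_\ell\circ\pi$ preserves dendricity.

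For the converse, let $\sigma\colon\cA^*\to\cB^*$ be injective and strongly left proper with first letter $\ell$, and dendric preserving. Since every bispecial factor of a dendric image $\sigma(X)$ that is an extended image must be dendric, Corollary~\ref{cor:dendric preserving for any word} gives $\cT^-(\sigma)=\{s_0\}$ and $\cT^+(\sigma)=\{p_0\}$. I would then extract the combinatorial shape of $\sigma$ from the radix-tree descriptions of $\overline{\cT^-_\varepsilon(\sigma)}$ and $\overline{\cT^+_\varepsilon(\sigma)}$, together with strong left properness: all $\sigma(b)$ begin with the common prefix $p_0=\ell v$ with $v$ not containing $\ell$, and the ``first letter after $p_0$'' is pairwise distinct among the $b$'s. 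Using in addition that $\cE_{\sigma(X)}(\varepsilon)$ must be a tree for some dendric $X$ over $\cA$ --- a property which, one checks, depends only on $\sigma$ --- one gets that $\#\cB=\#\cA=:d$ and that exactly one letter $a^*$ satisfies $\sigma(a^*)=p_0$. Choosing a permutation $\pi$ of $\cA$ with $\pi(a^*)=\ell$ then yields a factorization $\sigma=\rho\circ\alpha_\ell\circ\pi$ with $\rho$ injective and non-erasing.

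Next I would induct on $|\sigma|=\sum_{b}|\sigma(b)|$. If $|\sigma|$ is minimal, i.e. $\rho$ maps letters to letters, then $\sigma=\alpha_\ell\circ\pi'$ for a permutation $\pi'$ and we are done ($n=0$). Otherwise I claim the dendric-preserving hypothesis forces all $\sigma(b)$ to end with one common letter $a\neq\ell$ and every occurrence in $\sigma(b)$ of a letter $\neq a$ to be immediately followed by an $a$, i.e. $\sigma(b)\in\bar\alpha_a(\cB^*)$ for all $b$. Granting this, $\sigma$ factors uniquely as $\sigma=\bar\alpha_a\circ\tau$ (delete the inserted $a$'s); $\tau$ is non-erasing, injective (since $\sigma=\bar\alpha_a\circ\tau$ is), strongly left proper with first letter $\ell$ (as $\sigma(b)$ starts with $\ell$ and contains a single $\ell$, the same holds for $\tau(b)$), of strictly smaller total length, and again dendric preserving. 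For this last point one uses that $\cT^\pm(\bar\alpha_a)$ are singletons, so by the isomorphism clause of the symmetric version of Proposition~\ref{prop:morphic image of extension graph} every bispecial factor of a shift $Z$ has a bispecial extended image in $\bar\alpha_a(Z)$ with isomorphic extension graph, whence $\bar\alpha_a$ \emph{reflects} dendricity and $\bar\alpha_a(\tau(X))=\sigma(X)$ dendric forces $\tau(X)$ dendric for all dendric $X$. The induction hypothesis applied to $\tau$ gives the announced form for $\sigma$.

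The main obstacle is the claim just used, namely that all $\sigma(b)$ lie in $\bar\alpha_a(\cB^*)$ when $\sigma\neq\alpha_\ell\circ\pi$. The conditions already in hand ($\cT^\pm(\sigma)$ singletons, $\cE_{\sigma(X)}(\varepsilon)$ a tree) only constrain the outer shape of the images $\sigma(b)$; to control their internal patterns one must exploit that \emph{every} new initial bispecial factor of $\sigma(X)$ --- every $\ell$-free non-prefix factor of some $\sigma(b)$ that is bispecial in $\sigma(X)$ --- is dendric for \emph{every} dendric $X$ over $\cA$. For a hypothetical forbidden pattern inside some $\sigma(b_0)$ (two consecutive letters both $\neq a$, or a wrong last letter), I would build an explicit dendric test shift $X$ over $\cA$ --- a coding of a suitably chosen interval exchange, in the spirit of the shift used in the proof of Corollary~\ref{cor:dendric preserving for any word} --- whose language realizes enough left and right contexts around that pattern to create a cycle in the corresponding extension graph in $\sigma(X)$, contradicting dendricity. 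Making this argument precise and uniform in the pattern and its position in $\sigma(b_0)$ is the crux of the proof.
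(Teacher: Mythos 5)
Your overall skeleton matches the paper's: reduce via Corollary~\ref{cor:dendric preserving for any word} to singleton $\cT^{\pm}(\sigma)$, normalize by a permutation so that the shortest image is $\sigma(\ell)=p_0$, show the images factor through some $\bar{\alpha}_a$ with $a\neq\ell$, peel off $\bar{\alpha}_a$ (using that it reflects dendricity, since its extension graphs transfer isomorphically), and induct. Your forward direction (each $\alpha_a$ preserves dendricity, $\bar{\alpha}_a$ via the mirror involution, then compose) is a sound and slightly different packaging of what the paper does by computing $\cT^{\pm}$ of the full composition and inducting on $n$ for the initial bispecial factors; that part is fine.

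The genuine gap is exactly the step you flag as ``the crux'': you never prove that when $\sigma\neq\alpha_\ell\circ\pi$ all images lie in $\bar{\alpha}_a(\cA^*)$ for a common letter $a\neq\ell$, and the strategy you sketch for it (hand-crafting, for each hypothetical forbidden pattern inside some $\sigma(b_0)$, a dendric test shift realizing enough contexts to force a cycle) is both unproven and unnecessary. The paper closes this step without any auxiliary shift: the only bispecial factors of $\sigma(X)$ without antecedent are $\varepsilon$ and the $\ell$-free non-prefix factors of the images (Proposition~\ref{prop:def antecedent and bsp ext image}), and their extension graphs in $\sigma(X)$ coincide, for every shift $X$ over $\cA$, with their extension graphs in the finite factorial set $F_\sigma=\Fac(\{\sigma(c)\ell : c\in\cA\})$; so dendricity preservation forces all $\ell$-free words of $F_\sigma$ to be dendric in $F_\sigma$. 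Now, since $\cT^-(\sigma)=\{s_0\}$ and $\cT^+(\sigma)=\{p_0\}$, one gets $p_0=\ell s_0$, every image ends with $s_0$, and (after the permutation normalization) $\ell s_0c$ is a prefix of $\sigma(c)\ell$ for every $c$; hence every prefix of $s_0\ell$ has $E^-_{F_\sigma}=\cA$ and every suffix of $\ell s_0$ has $E^+_{F_\sigma}=\cA$. In particular $\cE_{F_\sigma}(\varepsilon)$ has all $2\#\cA$ vertices and contains the double star $(\cA\times\{a\})\cup(\{b\}\times\cA)$, where $a$ is the first letter of $s_0\ell$ and $b$ the last letter of $\ell s_0$; being a tree, it must be \emph{exactly} this double star, so in $F_\sigma$ every letter other than $b$ is always followed by $a$. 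This single observation (dendricity of the empty word, whose left and right extensions are automatically full because the needed contexts occur inside the images themselves) yields $a=b$ when $s_0\neq\varepsilon$ and $\sigma(c)\in\ell a(\{a\}\cup(\cA\setminus\{\ell\})a)^*$, i.e.\ $\sigma=\bar{\alpha}_a\circ\tau$, after which your peeling/induction goes through (the paper checks that $\tau$ again satisfies both conditions via the correspondence $cwd\in F_\tau\Leftrightarrow ca\bar{\alpha}_a(w)d\in F_\sigma$). So the missing idea is not a cleverer family of test shifts but the analysis of $F_\sigma$ itself at the empty word; as written, your proof of the converse is incomplete at its central step.
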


\begin{proof}
Using Corollary~\ref{cor:dendric preserving for any word}, an injective and strongly left proper morphism $\sigma$ for the letter $\ell$ preserves dendricity if and only if the following conditions are satisfied:
\begin{enumerate}
\item
\label{item:S and P contain one element}
	the sets $\cT^-(\sigma)$ and $\cT^+(\sigma)$ both only contain one element;
\item
\label{item:initial factors are dendric}
	if $F_\sigma$ is the set
	\[
		F_\sigma = \Fac\left(\{\sigma(a)\ell : a \in \cA\}\right),
	\]
	then any word $w \in F_\sigma$ such that $|w|_\ell = 0$ is dendric in $F_\sigma$.
\end{enumerate}
Let $\sigma = \bar{\alpha}_{a_1} \circ \dots \circ \bar{\alpha}_{a_n} \circ \alpha_\ell \circ \pi$ for $a_1, \dots, a_n \in \cA \setminus \{\ell\}$. It is easily verified that $\sigma$ is injective and strongly left proper for the letter $\ell$. As conditions~\ref{item:S and P contain one element} and \ref{item:initial factors are dendric} only depend on the set $\sigma(\cA)$, one can assume that $\pi = id$.
For any letter $c$, $\ell c$ is a prefix of $\alpha_\ell(c) \ell$ thus
\[
	\left(\bar{\alpha}_{a_1} \circ \dots \circ \bar{\alpha}_{a_n}(\ell)\right) c
\]
is a prefix of $\sigma(c) \ell$. This shows that
\[
	\cT^+(\sigma) = \{\bar{\alpha}_{a_1} \circ \dots \circ \bar{\alpha}_{a_n}(\ell)\}.
\]
Similarly, $c \ell$ is a suffix of $\ell \bar{\alpha}_\ell(c)$ thus
\[
	c \left(\alpha_{a_1} \circ \dots \circ \alpha_{a_n} (\ell)\right)
\]
is a suffix of
\[
	\ell \left(\alpha_{a_1} \circ \dots \circ \alpha_{a_n} \circ \bar{\alpha}_\ell(c)\right)
	= \left(\bar{\alpha}_{a_1} \circ \dots \circ \bar{\alpha}_{a_n} \circ \alpha_\ell(c)\right) \ell
	= \sigma(c) \ell
\]
thus $\cT^-(\sigma)$ only contains one element and $\sigma$ satisfies the condition~\ref{item:S and P contain one element}.
To prove condition~\ref{item:initial factors are dendric}, let us proceed by induction on $n$. If $n = 0$, then the only bispecial word is $\varepsilon$ and it is easy to verify that it is dendric. If $\sigma' = \bar{\alpha}_{a_2} \circ \dots \circ \bar{\alpha}_{a_n} \circ \alpha_\ell$ satisfies condition~\ref{item:initial factors are dendric}, then simple adaptions of Proposition~\ref{prop:def antecedent and bsp ext image} and of Proposition~\ref{prop:bispecial has only tree bispecial extensions} tell us that, as $\bar{\alpha}_{a_1}$ is strongly right proper for the letter $a_1$ and the images of letters by $\bar{\alpha}_{a_1}$ have a unique longest common suffix and a unique longest common prefix, it suffices to prove that the words $w \in F_\sigma$ such that $|w|_{a_1} = 0$ are dendric. These words are the elements of $\{\varepsilon\} \cup \cA \setminus \{a_1\}$, of which only $\varepsilon$ is bispecial. The conclusion follows.

Let us now assume that $\sigma'$ is a strongly left proper morphism for the letter $\ell$ which satisfies conditions~\ref{item:S and P contain one element} and \ref{item:initial factors are dendric}. As $\cS(\sigma') = \{s_0\}$, for any letter $a \in \cA$, $as_0$ is suffix of some $\sigma'(b)$. In particular, for $a = \ell$, this implies that there exists $b \in \cA$ such that $\ell s_0 = \sigma'(b)$ and that, for any letter $c \ne b$, $\sigma'(c)$ is strictly longer than $\sigma'(b)$. Similarly, $p_0 \ell$ is prefix of some $\sigma(b') \ell$ thus $\sigma'(b') = p_0$ and, as $\sigma'(b')$ must be strictly shorter than any other $\sigma'(a)$, we obtain $b = b'$ and $p_0 = \ell s_0$.
We can thus assume that $\sigma' = \sigma \circ \pi$ where $\pi$ is a permutation of $\cA$ such that $\ell s_0 a$ is a prefix of $\sigma(a) \ell$ for all $a \in \cA$. In particular, $\sigma(\ell) = \ell s_0$. We have
\[
	F_\sigma = F_{\sigma'}.
\]
By construction, for any prefix $u$ of $s_0 \ell$ and any suffix $v$ of $\ell s_0$,
\[
	E_{F_\sigma}^-(u) = \cA \quad \text{and} \quad E_{F_\sigma}^+ = \cA.
\]
In particular, if $s_0 \ell \in a \cA^*$ and $\ell s_0 \in \cA^* b$, then
\[
	E_{F_\sigma}(\varepsilon) = (\cA \times \{a\}) \cup (\{b\} \times \cA)
\]
because $\varepsilon$ is dendric in $E_{F_\sigma}$. Thus, any occurrence of $c \ne b$ in $F_\sigma$ can only be followed by an occurrence of $a$. Let us use this observation to prove that $\sigma = \bar{\alpha}_{a_1} \circ \dots \circ \bar{\alpha}_{a_n} \circ \alpha_\ell$ for some letters $a_1, \dots, a_n \in \cA \setminus \{\ell\}$.
If $s_0 = \varepsilon$, then $a = b = \ell$ thus $\sigma(c) = \ell c$ for all $c \in \cA \setminus \{\ell\}$ and $\sigma = \alpha_\ell$.
If $s_0$ is not empty, then $a$ is the first letter of $s_0$ and $b$ the last one. In particular, $a$ and $b$ cannot be the letter $\ell$ and, as $s_0\ell$ is an element of $F_\sigma$, $a$ cannot only be followed by occurrences of $a$ thus $a$ must be equal to $b$. For any letter $c \in \cA$, we know that $\sigma(c)$ begins with $\ell$ and that any letter $d \ne a$ in $\sigma(c)$ is followed by an $a$ thus
\[
	\sigma(c) \in \ell a \left(\{a\} \cup (\cA \setminus \{\ell\}) a\right)^*.
\]
Let us define the morphism $\tau$ such that
\[
	\sigma(c) = \bar{\alpha}_a \circ \tau(c).
\]
This morphism is unique and $\tau(c)$ is obtained by removing an occurrence of $a$ after each letter $d \ne a$ in $\sigma(c)$.
By construction, $\tau$ is injective and strongly left proper for the letter $\ell$. In addition, $s_0$ begins and ends with the letter $a$ thus there exists $s_0' \in \cA^*$ such that
\[
	a \bar{\alpha}_a (s_0') = s_0.
\]
It is easy to check that, as $\ell s_0 c$ is a prefix of $\sigma(c) \ell$, $\ell s_0' c$ is a prefix of $\tau(c) \ell$ and that, as $c s_0$ is a suffix of $\sigma(c)$, $c s_0'$ is a suffix of $\tau(c)$ for all $c \in \cA$. Thus, $\cS(\tau)$ and $\cP(\tau)$ both contain only one element and $\tau$ satisfies the condition~\ref{item:S and P contain one element}.
In addition, for all $w \in F_\tau$ and all $c, d \in \cA$
\[
	cwd \in F_\tau \Leftrightarrow ca \bar{\alpha}_a(w) d \in F_\sigma.
\]
Indeed, this equivalence is direct if $c \ne a$ and, for $c = a$, it derives from the fact that $a \ne \ell$ thus, if $awd \in F_\tau$, then there exists $c'$ such that $c'awd \in F_\tau$.
As a consequence, the extension graph of $w$ in $F_\tau$ is the same as the extension graph of $a \bar{\alpha}_a(w)$ in $F_\sigma$ and $\tau$ satisfies the condition~\ref{item:initial factors are dendric}.
By construction, we have $|s_0'| < |s_0|$ thus we can conclude by iterating the proof on $\tau$.
\end{proof}

\section{The case of ternary minimal dendric shifts}
\label{section:ternary case}

In Section~\ref{section:s-adicity}, we showed that any minimal dendric shift $X$ over the alphabet $\cA$ is $\cS$-adic with $\cS$ a set of tame automorphisms of $F_\cA$, a directive sequence of $X$ being given by Theorem~\ref{thm:S-adic representation of minimal}.
In this section, we give an $\cS$-adic characterization of minimal dendric shifts over the alphabet $\cA_3 = \{1,2,3\}$.
More precisely, we strengthen Theorem~\ref{thm:S-adic representation of minimal} by exhibiting a set $\cS$ (several choices are possible) and a subset $D \subset \cS^\NN$ such that a ternary subshift is minimal dendric if and only if it has an $\cS$-adic representation in $D$.

\subsection{Return morphisms in the ternary case}
\label{subsection:dendric morphisms Ster}

Let us start with an example. 
Assume that $X$ is a minimal dendric shift over $\cA_3$ and that the extension graph of $\varepsilon$ in $X$ is 
\begin{center}
\begin{tikzpicture}
\node (1L) at (0,0) {$1$};
\node (2L) [below of = 1L, node distance = .5cm] {$2$};
\node (3L) [below of = 2L, node distance = .5cm] {$3$};

\node (1R) [right of = 1L, node distance = 1.5cm] {$1$};
\node (2R) [below of = 1R, node distance = .5cm] {$2$};
\node (3R) [below of = 2R, node distance = .5cm] {$3$};

\draw (1L) -- (1R);
\draw (1L) -- (2R);
\draw (1L) -- (3R);

\draw (2L) -- (1R);
\draw (3L) -- (2R);
\end{tikzpicture}
\end{center}
The associated Rauzy graph $G_1(X)$ is 
\begin{center}
\begin{tikzpicture}
\node (1) at (0,0) {$1$};
\node (2) [below left of = 1, node distance = 1cm] {$2$};
\node (3) [below right of = 1, node distance = 1cm] {$3$};

\draw [->] (1) edge[loop above] (1);
\draw [<->] (1) edge (2);
\draw [->] (1) edge (3);
\draw [->] (3) edge (2);
\end{tikzpicture}
\end{center}
From it, we deduce that 
\begin{itemize}
\item 
the return words to $1$ are $1$, $12$ and $132$;
\item
the return words to $2$ are of the form $21^k$ or $21^k3$ with $k \geq 1$;
\item
the return words to $3$ belong to $3(21^+)^+$.
\end{itemize}
An additional restriction concerning the powers of $1$ occurring in the return words to $2$ can be deduced from the fact that $X$ is dendric.
We claim that if $21^k$ is a return word to $2$, then the other return words cannot be of the form $21^\ell$ for some $\ell \geq k+2$.
Indeed, if both $21^k$ and $21^{\ell}$, $k \geq 1$, $\ell \geq k+2$, are return words, then by definition of return words, the words $21^k2, 21^\ell 2$ belong to $\cL(X)$. 
This implies that there is a cycle in the extension graph of $1^{k}$, contradicting the fact that $X$ is dendric.
In addition, the third return word cannot be of the form $21^n3$ with $n \geq \min\{k, \ell\} + 2$ for the same reason.
Similarly, if $21^k3$ and $21^\ell 3$ are return word to $2$ then $|k - \ell| \leq 1$ and the third return word is $21^n$ with $n \leq \min\{k, \ell\} + 1$.
Therefore, the set of return words to $2$ is one of the following for some $k \geq 1$ and some $1 \leq \ell \leq k+1$:
\[
	\{21^k,21^{k+1},21^\ell 3\},
	\
	\{21^\ell, 21^k 3,21^{k+1}3\}.
\]

Return words to $3$ are less easily described. 
Since $\cR(3) \subset 3(21^+)^+$ and $\#(\cR(3)) = 3$, the set $\cR(3)$ is determined by three sequences $(k^{(j)}_i)_{1 \leq i \leq n_j}$, $j \in \{1,2,3\}$ such that 
\[
	\cR(3) = \{321^{k^{(j)}_1}21^{k^{(j)}_2} \cdots 21^{k^{(j)}_{n_j}} \mid j \in \{1,2,3\}\}.
\]
Similar arguments show that there exist inequality constraints between the $k^{(j)}_i$,
but precisely describing the three sequences $(k^{(j)}_i)_{1 \leq i \leq n_j}$, $j \in \{1,2,3\}$, is much more tricky.
The main reason for this difference is that the letter $3$ is not left special in $X$.
If $u$ is the smallest left special factor having $3$ as a suffix, then writing $u = v3$, we have $v \cR(3) = \cR(u)v$. 
To better understand the possible sequences $(k^{(j)}_i)_{1 \leq i \leq n_j}$, we thus need the Rauzy graph of order $|u|$ of $X$ and not just $G_1(X)$.

With the notation of Theorem~\ref{thm:S-adic representation of minimal}, any choice of sequence of letters $(a_n)_{n \geq 1}$ leads to a directive sequence of $X$.
Consequently, in the sequel we will only consider return words to left special letters with the ``simplest'' return words.
In other words, if the extension graph of the empty word in $X$ is as in the previous example, we will only consider the coding morphisms associated with the left special letter $1$.

Up to a permutation on $\cA_3$, the possible extension graphs of the empty word for minimal dendric shifts on $\cA_3$ are given in Figures~\ref{fig:extension graph of epsilon} and \ref{fig:extension graph of epsilon2}.
They must satisfy two conditions: $\cE(\varepsilon)$ must be a tree and the associated Rauzy graph $G_1(X)$ must be strongly connected (by minimality of $X$). 
We always assume that $1$ is a left special letter and we present these associated Rauzy graph of order 1 as well as coding morphisms associated with $\cR(1)$.
Whenever some power appear in an image, we always have $k \geq 1$.
The reason why we only have $k$ and $k+1$ as exponent is the same as in the previous example: a bigger difference would contradict dendricity by inducing a cycle in some extension graph. 
We denote the set
\[
	\cSD = \{\alpha,\beta,\gamma,\eta\} \cup \{\delta^{(k)},\zeta^{(k)} \mid k \geq 1\}
\]
of morphisms as defined in Figures~\ref{fig:extension graph of epsilon} and \ref{fig:extension graph of epsilon2}.

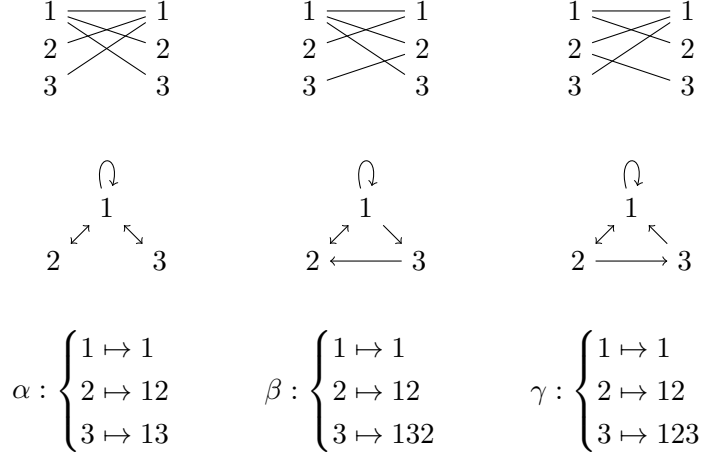
\begin{figure}[h]
\centering
\begin{tabular}{ccc}
\begin{tikzpicture}
\node (1L) at (0,0) {$1$};
\node (2L) [below of = 1L, node distance = .5cm] {$2$};
\node (3L) [below of = 2L, node distance = .5cm] {$3$};

\node (1R) [right of = 1L, node distance = 1.5cm] {$1$};
\node (2R) [below of = 1R, node distance = .5cm] {$2$};
\node (3R) [below of = 2R, node distance = .5cm] {$3$};

\draw (1L) -- (1R);
\draw (1L) -- (2R);
\draw (1L) -- (3R);
\draw (2L) -- (1R);
\draw (3L) -- (1R);
\end{tikzpicture}
&
\begin{tikzpicture}
\node (1L) at (0,0) {$1$};
\node (2L) [below of = 1L, node distance = .5cm] {$2$};
\node (3L) [below of = 2L, node distance = .5cm] {$3$};

\node (1R) [right of = 1L, node distance = 1.5cm] {$1$};
\node (2R) [below of = 1R, node distance = .5cm] {$2$};
\node (3R) [below of = 2R, node distance = .5cm] {$3$};

\draw (1L) -- (1R);
\draw (1L) -- (2R);
\draw (1L) -- (3R);
\draw (2L) -- (1R);
\draw (3L) -- (2R);
\end{tikzpicture}
&
\begin{tikzpicture}
\node (1L) at (0,0) {$1$};
\node (2L) [below of = 1L, node distance = .5cm] {$2$};
\node (3L) [below of = 2L, node distance = .5cm] {$3$};

\node (1R) [right of = 1L, node distance = 1.5cm] {$1$};
\node (2R) [below of = 1R, node distance = .5cm] {$2$};
\node (3R) [below of = 2R, node distance = .5cm] {$3$};

\draw (1L) -- (1R);
\draw (1L) -- (2R);
\draw (2L) -- (3R);
\draw (2L) -- (1R);
\draw (3L) -- (1R);
\end{tikzpicture}
\\ \\
\begin{tikzpicture}
\node (1) at (0,0) {$1$};
\node (2) [below left of = 1, node distance = 1cm] {$2$};
\node (3) [below right of = 1, node distance = 1cm] {$3$};

\draw [->] (1) edge[loop above] (1);
\draw [<->] (1) edge (2);
\draw [<->] (1) edge (3);
\end{tikzpicture}
&
\begin{tikzpicture}
\node (1) at (0,0) {$1$};
\node (2) [below left of = 1, node distance = 1cm] {$2$};
\node (3) [below right of = 1, node distance = 1cm] {$3$};

\draw [->] (1) edge[loop above] (1);
\draw [<->] (1) edge (2);
\draw [->] (1) edge (3);
\draw [->] (3) edge (2);
\end{tikzpicture}
&
\begin{tikzpicture}
\node (1) at (0,0) {$1$};
\node (2) [below left of = 1, node distance = 1cm] {$2$};
\node (3) [below right of = 1, node distance = 1cm] {$3$};

\draw [->] (1) edge[loop above] (1);
\draw [<->] (1) edge (2);
\draw [->] (2) edge (3);
\draw [->] (3) edge (1);
\end{tikzpicture}
\\ \\
\begin{tabular}[t]{l}
$\alpha:
\begin{cases}
	1 \mapsto 1 	\\ 
	2 \mapsto 12	\\
	3 \mapsto 13
\end{cases}
$
\end{tabular}
&
\begin{tabular}[t]{l}
$\beta:
\begin{cases}
	1 \mapsto 1 	\\ 
	2 \mapsto 12	\\
	3 \mapsto 132
\end{cases}
$
\end{tabular}
&
\begin{tabular}[t]{l}
$\gamma:
\begin{cases}
	1 \mapsto 1 	\\ 
	2 \mapsto 12	\\
	3 \mapsto 123
\end{cases}
$
\end{tabular}
\end{tabular}
\caption{The cases with a unique left special letter and/or a unique right special letter}
\label{fig:extension graph of epsilon}
\end{figure}

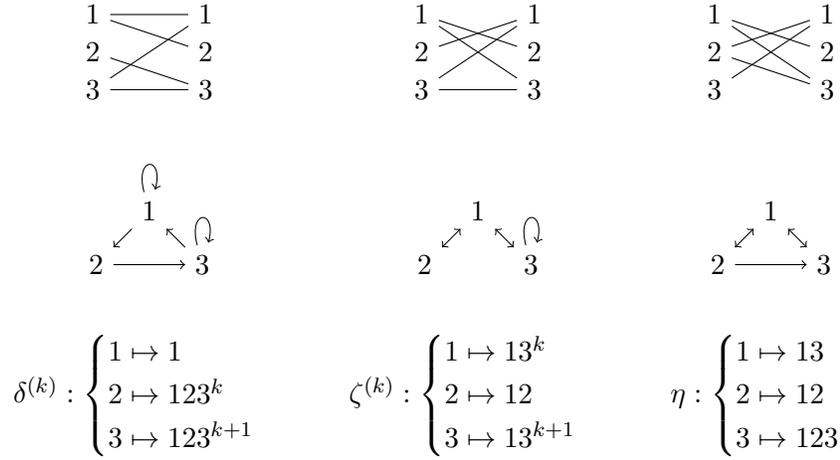
\begin{figure}[h]
\centering
\begin{tabular}{ccc}
\begin{tikzpicture}
\node (1L) at (0,0) {$1$};
\node (2L) [below of = 1L, node distance = .5cm] {$2$};
\node (3L) [below of = 2L, node distance = .5cm] {$3$};

\node (1R) [right of = 1L, node distance = 1.5cm] {$1$};
\node (2R) [below of = 1R, node distance = .5cm] {$2$};
\node (3R) [below of = 2R, node distance = .5cm] {$3$};

\draw (1L) -- (1R);
\draw (1L) -- (2R);
\draw (2L) -- (3R);
\draw (3L) -- (1R);
\draw (3L) -- (3R);
\end{tikzpicture}
&
\begin{tikzpicture}
\node (1L) at (0,0) {$1$};
\node (2L) [below of = 1L, node distance = .5cm] {$2$};
\node (3L) [below of = 2L, node distance = .5cm] {$3$};

\node (1R) [right of = 1L, node distance = 1.5cm] {$1$};
\node (2R) [below of = 1R, node distance = .5cm] {$2$};
\node (3R) [below of = 2R, node distance = .5cm] {$3$};

\draw (1L) -- (2R);
\draw (1L) -- (3R);
\draw (2L) -- (1R);
\draw (3L) -- (1R);
\draw (3L) -- (3R);
\end{tikzpicture}
&
\begin{tikzpicture}
\node (1L) at (0,0) {$1$};
\node (2L) [below of = 1L, node distance = .5cm] {$2$};
\node (3L) [below of = 2L, node distance = .5cm] {$3$};

\node (1R) [right of = 1L, node distance = 1.5cm] {$1$};
\node (2R) [below of = 1R, node distance = .5cm] {$2$};
\node (3R) [below of = 2R, node distance = .5cm] {$3$};

\draw (1L) -- (2R);
\draw (1L) -- (3R);
\draw (2L) -- (1R);
\draw (2L) -- (3R);
\draw (3L) -- (1R);
\end{tikzpicture}
\\
\\
\begin{tikzpicture}
\node (1) at (0,0) {$1$};
\node (2) [below left of = 1, node distance = 1cm] {$2$};
\node (3) [below right of = 1, node distance = 1cm] {$3$};

\draw [->] (1) edge[loop above] (1);
\draw [->] (1) edge (2);
\draw [->] (2) edge (3);
\draw [->] (3) edge (1);
\draw [->] (3) edge[loop above] (3);
\end{tikzpicture}
&
\begin{tikzpicture}
\node (1) at (0,0) {$1$};
\node (2) [below left of = 1, node distance = 1cm] {$2$};
\node (3) [below right of = 1, node distance = 1cm] {$3$};

\draw [->] (3) edge[loop above] (3);
\draw [<->] (3) edge (1);
\draw [<->] (1) edge (2);
\end{tikzpicture}
&
\begin{tikzpicture}
\node (1) at (0,0) {$1$};
\node (2) [below left of = 1, node distance = 1cm] {$2$};
\node (3) [below right of = 1, node distance = 1cm] {$3$};

\draw [<->] (1) edge (2);
\draw [<->] (1) edge (3);
\draw [->] (2) edge (3);
\end{tikzpicture}
\\
\\
\begin{tabular}[t]{l}
$\delta^{(k)}:
\begin{cases}
	1 \mapsto 1 	\\ 
	2 \mapsto 123^{k}	\\
	3 \mapsto 123^{k+1}
\end{cases}
$
\end{tabular}
&
\begin{tabular}[t]{l}
$\zeta^{(k)}:
\begin{cases}
	1 \mapsto 13^k 	\\ 
	2 \mapsto 12	\\
	3 \mapsto 13^{k+1}
\end{cases}
$
\end{tabular}
&
\begin{tabular}[t]{l}
$\eta:
\begin{cases}
	1 \mapsto 13	\\ 
	2 \mapsto 12	\\
	3 \mapsto 123
\end{cases}
$
\end{tabular}
\end{tabular}
\caption{The cases with two left special letters and two right special letters}
\label{fig:extension graph of epsilon2}
\end{figure}

Let $\Sigma_3$ be the symmetric group on $\cA_3 = \{1, 2, 3\}$. 
If $\cA_3 = \{a, b, c\}$, we let $\pi_{abc} \in \Sigma_3$ denote the permutation $1 \mapsto a$, $2 \mapsto b$, $3 \mapsto c$.

\begin{proposition}
\label{prop:Ster-adic representation}
Any ternary minimal dendric shift $X$ has a primitive $\Sigma_3\cSD\Sigma_3$-adic representation $\boldsymbol{\sigma}$ and, for each such representation, $X_{\bsigma}^{(n)}$ is a ternary minimal dendric shift for each $n$.
\end{proposition}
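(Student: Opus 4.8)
The plan is to feed the structural description of $\cE_X(\varepsilon)$ obtained in the discussion just above into the derivation machinery of Theorems~\ref{thm:S-adic representation of minimal} and~\ref{thm:decoding dendric}, and then to read the second assertion off the dictionary between coding morphisms and derivation.

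For the existence part, I would start from a ternary minimal dendric shift $X$. One first records that $\cE_X(\varepsilon)$ is a bipartite tree whose left and right vertex sets are both equal to $\cA_3$ (dendricity together with $\cL_1(X)=\cA_3$), and that the Rauzy graph $G_1(X)$ it determines is strongly connected (minimality). A finite enumeration of such trees, combined with the cycle-in-$\cE(1^k)$ argument already used above to bound the powers of $1$ in the return words, shows that, up to a permutation $\pi\in\Sigma_3$ of the letters, $\cE_X(\varepsilon)$ is one of the six graphs of Figures~\ref{fig:extension graph of epsilon}--\ref{fig:extension graph of epsilon2}; in each of them the letter $1$ is left special, and the set of return words to $\pi(1)$ in $X$ is $\pi(\theta(\{1,2,3\}))$ for a suitable $\theta\in\cSD$ (with $\theta\in\{\delta^{(k)},\zeta^{(k)}\}$ for the appropriate $k\geq 1$ when the graph forces a power of a letter). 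Hence, fixing any enumeration of $\cR_X(\pi(1))$, the coding morphism $\sigma_1$ associated with the left special letter $\pi(1)$ has the form $\pi\circ\theta\circ\pi'$ with $\pi,\pi'\in\Sigma_3$ and $\theta\in\cSD$, i.e.\ $\sigma_1\in\Sigma_3\cSD\Sigma_3$. Setting $X_2=\cD_{\pi(1)}(X)$, which by Theorem~\ref{thm:decoding dendric} is again a ternary minimal dendric shift, and iterating, one obtains a legitimate instance of the inductive construction preceding Theorem~\ref{thm:S-adic representation of minimal} in which every $\sigma_n$ lies in $\Sigma_3\cSD\Sigma_3$; that theorem then guarantees that the resulting $\bsigma=(\sigma_n)_{n\geq 1}$ is a primitive (strongly left proper, injective) $\Sigma_3\cSD\Sigma_3$-adic representation of $X$ with $X_{\bsigma}^{(n)}=X_n$ for all $n$.

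For the second part, let $\bsigma\in(\Sigma_3\cSD\Sigma_3)^\NN$ be any primitive $\Sigma_3\cSD\Sigma_3$-adic representation of a ternary minimal dendric shift $X$. Each $\sigma_n=\pi_1\circ\theta\circ\pi_2$ with $\pi_1,\pi_2\in\Sigma_3$ and $\theta\in\cSD$; since every element of $\cSD$ is injective, strongly left proper with first letter $1$, and surjective on letters, the same holds for $\sigma_n$ with first letter $\ell_n:=\pi_1(1)$. I would argue by induction on $n$ that $X_{\bsigma}^{(n)}$ is a ternary minimal dendric shift, the case $n=1$ being the hypothesis. Assuming $X_{\bsigma}^{(n-1)}$ is ternary minimal dendric: because $X_{\bsigma}^{(n-1)}=\sigma_{n-1}(X_{\bsigma}^{(n)})$ and $\ell_{n-1}$ occurs exactly once in each block $\sigma_{n-1}(a)$, namely at its start, the occurrences of $\ell_{n-1}$ in points of $X_{\bsigma}^{(n-1)}$ are precisely the boundaries of the factorization into such blocks; hence $\cR_{X_{\bsigma}^{(n-1)}}(\ell_{n-1})=\sigma_{n-1}(\cA_3)$ and $\sigma_{n-1}$, once its domain is identified with $\cA_3$, is a coding morphism associated with $\ell_{n-1}$ in $X_{\bsigma}^{(n-1)}$. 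Consequently $\cD_{\ell_{n-1}}(X_{\bsigma}^{(n-1)})\supseteq X_{\bsigma}^{(n)}$, and both being minimal by primitivity, they coincide. Applying Theorem~\ref{thm:decoding dendric} to the minimal dendric shift $X_{\bsigma}^{(n-1)}$ and the word $\ell_{n-1}\in\cL(X_{\bsigma}^{(n-1)})$ then yields that $X_{\bsigma}^{(n)}=\cD_{\ell_{n-1}}(X_{\bsigma}^{(n-1)})$ is a minimal dendric shift over $\cA_3$, which closes the induction.

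The bulk of the work, and the only genuinely delicate point, is the structural classification invoked in the second paragraph: verifying that the six extension graphs of Figures~\ref{fig:extension graph of epsilon}--\ref{fig:extension graph of epsilon2} are exhaustive up to permutation, and that for each of them the simplest return words to a left special letter are exactly the images of letters under the corresponding $\cSD$-morphism — in particular the ``$k$ versus $k+1$'' constraint on the exponents, which again comes from forbidding a cycle in the extension graph of a suitable power of a letter. This is the finite case analysis sketched in the discussion preceding the statement; granting it, everything else is a direct application of Theorems~\ref{thm:S-adic representation of minimal} and~\ref{thm:decoding dendric} together with the coding-morphism/derivation dictionary.
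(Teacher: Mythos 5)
Your proposal is correct and follows essentially the same route as the paper: existence comes from running the derivation construction of Theorem~\ref{thm:S-adic representation of minimal} while choosing at each step the left special letter whose return words are described (up to permutation) by the case analysis of Figures~\ref{fig:extension graph of epsilon}--\ref{fig:extension graph of epsilon2}, and the second assertion comes from Theorems~\ref{thm:S-adic representation of minimal} and~\ref{thm:decoding dendric}. You merely make explicit what the paper leaves implicit, namely that each injective strongly left proper $\sigma_n\in\Sigma_3\cSD\Sigma_3$ is a coding morphism associated with its first letter in $X_\bsigma^{(n)}$, so that $X_\bsigma^{(n+1)}$ is the corresponding derived shift and dendricity propagates by induction.
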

\begin{proof}
For the existence, we consider the construction of a directive sequence following Theorem~\ref{thm:S-adic representation of minimal} where at each step, we choose the left special letter $a_n$ for which $\sigma_n$ is in $\Sigma_3\cSD\Sigma_3$. 
Using Theorem~\ref{thm:S-adic representation of minimal} and Theorem~\ref{thm:decoding dendric}, we obtain that for each $\Sigma_3\cSD\Sigma_3$-adic representation $\bsigma$ and each $n$, $X_{\bsigma}^{(n)}$ is a ternary minimal dendric shift.
\end{proof}

Note that the previous result is also true when considering $\Sigma_3\cSD$-adic representations but, for our results, $\Sigma_3\cSD\Sigma-3$-adic representations are more convenient.

While Proposition~\ref{prop:Ster-adic representation} deals with $\Sigma_3\cSD\Sigma_3$-adic representations, it is obvious that only the morphisms in $\cSD$ really matter.
In the next sections, we essentially focus on them and we involve permutations only when it is needed.

\subsection{Conditions for having only dendric bispecial extended images}
\label{subsection:action of morphisms ternary}

Assume that $X$ is a minimal shift space over $\cA$ and that $v \in \cL(X)$ is a bispecial factor and let $Y$ be the image of $X$ under some injective and strongly left proper morphism $\sigma: \cA^* \to \cB^*$. 
Recall from Section~\ref{subsection:stability of tree} (and, in particular, Proposition~\ref{prop:morphic image of extension graph}) that the extension graph of any bispecial extended image of $v$ under $\sigma$ is the image under two consecutive graph morphisms $\varphi^-_s$ and $\varphi^+_p$ of a subgraph of $\cE_X(v)$.
In this section, we determine those graph morphisms when $\sigma$ is a morphism in $\cSD$ and we give necessary and sufficient conditions on the extension graph of $v \in \cL(X)$ so that $v$ only has dendric bispecial extended images.
In this particular case, since the alphabet has cardinality 3, $\cT^+(\sigma)$ and $\cT^-(\sigma)$ have cardinality at most 2.
Tables~\ref{table:def des varphi1} and~\ref{table:def des varphi2} define the (possibly partial) maps $\varphi^-_s, \varphi^+_p: \cA_3 \to \cA_3$ associated with each morphism $\sigma \in \cSD$.

\begin{table}[h]
\centering
$\begin{array}{|c|c|c|c|c|}
\hline
\sigma & \cT^-(\sigma) & \varphi^-_s & \cT^+(\sigma) & \varphi^+_p	\\
\hline
\alpha	&	\{\varepsilon\}	& \id	& \{1\} & \id 	\\
\hline
\beta	&	\{\varepsilon,2\}	&	
\varphi^-_\varepsilon :
\begin{cases}
	1   \mapsto 1	\\
	2,3 \mapsto 2	
\end{cases}
 & \{1\} & 	\id \\
& & 
\varphi^-_2:
\begin{cases}
	2 \mapsto 1	\\
	3 \mapsto 3 
\end{cases}
& & \\
\hline
\gamma	&	\{\varepsilon\}	& \id & \{1,12\} & 
\varphi^+_{1}:
\begin{cases}
	1 	 \mapsto 1	\\
	2,3  \mapsto 2	
\end{cases}
	\\
& & & & 
\varphi^+_{12}:
\begin{cases}
	2 \mapsto 1	\\
	3 \mapsto 3 
\end{cases}
\\
\hline
\end{array}
$
\caption{Definition of the graph morphisms $\varphi^-_s$ and $\varphi^+_p$ associated with the morphisms $\alpha$, $\beta$ and $\gamma$}
\label{table:def des varphi1}
\end{table}

\begin{table}[h]
\centering
$\begin{array}{|c|c|c|c|c|}
\hline
\sigma & \cT^-(\sigma) & \varphi_s & \cT^+(\sigma) & \varphi_p	\\
\hline
\delta^{(k)}	&	\{\varepsilon,3^k\}	&  
\varphi^-_{\varepsilon}:
\begin{cases}
	1   \mapsto 1	\\
	2,3 \mapsto 3	
\end{cases}
& \{1,123^k\} & 
\varphi^+_{1}:
\begin{cases}
	1   \mapsto 1	\\
	2,3 \mapsto 2	
\end{cases}
	\\
& & 
\varphi^-_{3^k}:
\begin{cases}
	2 \mapsto 2	\\
	3 \mapsto 3 
\end{cases}
& &
\varphi^+_{123^k}:
\begin{cases}
	2 \mapsto 1	\\
	3 \mapsto 3 
\end{cases}
\\
\hline
\zeta^{(k)}	&	\{\varepsilon,3^k\}	&  
\varphi^-_{\varepsilon}:
\begin{cases}
	1,3 \mapsto 3   \\
	2   \mapsto 2
\end{cases}
& \{1,13^k\} & 
\varphi^+_{1}:
\begin{cases}
	1,3 \mapsto 3   \\
	2   \mapsto 2
\end{cases}
	\\
& & 
\varphi^-_{3^k}:
\begin{cases}
	1 \mapsto 1	\\
	3 \mapsto 3 
\end{cases}
 & &
\varphi^+_{13^k}:
\begin{cases}
	1 \mapsto 1	\\
	3 \mapsto 3 
\end{cases}
\\
\hline
\eta	&	\{\varepsilon,3\}	&  
\varphi^-_{\varepsilon}:
\begin{cases}
	1,3 \mapsto 3   \\
	2   \mapsto 2
\end{cases}
& \{1,12\} & 
\varphi^+_{1}:
\begin{cases}
	1   \mapsto 3	\\
	2,3 \mapsto 2 
\end{cases}
	\\
& & 
\varphi^-_{3}:
\begin{cases}
	1 \mapsto 1	\\
	3 \mapsto 2 
\end{cases}
& & 
\varphi^+_{12}:
\begin{cases}
	2 \mapsto 1	\\
	3 \mapsto 3	\\
\end{cases}
\\
\hline
\end{array}
$
\caption{Definition of the graph morphisms $\varphi^-_s$ and $\varphi^+_p$ associated with the morphisms $\delta^{(k)}$, $\zeta^{(k)}$ and $\eta$}
\label{table:def des varphi2}
\end{table}

A direct application of Proposition~\ref{prop:bispecial has only tree bispecial extensions} shows that whenever $v$ is dendric, then $v$ has only dendric bispecial extended images if and only if the following conditions are satisfied:
\begin{enumerate}
\item
either $\cT^-_v(\sigma) = \{s_0\}$, or both $\cT^-_v(\sigma) = \{s_0,s\}$ and $\cE_{X,s,p_0}(v)$ is a tree;
\item
either $\cT^+_v(\sigma) = \{p_0\}$, or both $\cT^+_v(\sigma) = \{p_0,p\}$ and $\cE_{X,s_0,p}(v)$ is a tree.
\end{enumerate}

We first give a handier interpretation of these conditions.
Observe that for convenience, we actually characterize the dendric bispecial factors $v \in \cL(X)$ that have a non-dendric bispecial extended image. 
When considering a letter $a \in \cA_3$ as a vertex of $\cE(v)$, we respectively write $a^-$ or $a^+$ to emphasize that $a$ is considered as a left or right vertex.

For $v \in \cL(X)$, we define $\cC_X^-(v)$ (resp., $\cC_X^+(v)$) as the set of letters $a \in \cA$ such that the subgraph of $\cE_X(v)$ obtained by removing the vertex $a^-$ (resp., $a^+$) and all the induced isolated vertices (if any) is not connected. When the context is clear, the subscript $X$ will be omitted.

\begin{remark}
\label{remark:interpretations of C^- and C^+}
If $v \in \cL(X)$ is a dendric factor, then $a \in \cC^-(v)$ if and only if $a^-$ has at least two neighbors that are not leaves, i.e., that have degree at least 2. In particular, $v$ is bispecial.
Observe also that as $\cE(v)$ is a tree, this implies that the left side of $\cE(v)$ contains three vertices.
Hence, another equivalent condition when $\cE(v)$ is a tree is that, writing $\cA_3 = \{a,b,c\}$, the path from $b^-$ to $c^-$ has length 4.
\end{remark}

\begin{proposition}
\label{prop:condition for non-tree-successor}
Let $X$ be a shift space over $\cA_3$ and $\sigma$ be a morphism in $\cSD$.

If $v \in \cL(X)$ is a dendric bispecial factor, then $v$ has a non-dendric bispecial extended image under $\sigma$ if and only if one of the following conditions is satisfied:
\begin{enumerate}
\item
$1 \in \cC_X^-(v)$ and $\sigma \in \{\beta,\delta^{(k)} \mid k \geq 1\}$;
\item
$2 \in \cC_X^-(v)$ and $\sigma \in \{\zeta^{(k)},\eta \mid k \geq 1\}$;
\item
$1 \in \cC_X^+(v)$ and $\sigma \in \{\gamma,\delta^{(k)},\eta \mid k \geq 1\}$;
\item
$2 \in \cC_X^+(v)$ and $\sigma \in \{\zeta^{(k)} \mid k \geq 1\}$.
\end{enumerate}
\end{proposition}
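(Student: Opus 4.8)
The plan is to reduce everything to Proposition~\ref{prop:bispecial has only tree bispecial extensions} and then read off the answer from Tables~\ref{table:def des varphi1} and~\ref{table:def des varphi2}. Since $\#\cA_3 = 3$, for every $\sigma \in \cSD$ each of $\cT^-(\sigma)$ and $\cT^+(\sigma)$ has at most two elements, hence so do $\cT^-_v(\sigma) \subseteq \cT^-(\sigma)$ and $\cT^+_v(\sigma) \subseteq \cT^+(\sigma)$. By Proposition~\ref{prop:bispecial has only tree bispecial extensions}, the dendric bispecial factor $v$ has a non-dendric bispecial extended image under $\sigma$ if and only if either $\cT^-_v(\sigma) = \{s_0, s\}$ with $s \neq s_0$ and $\cE_{X,s,p_0}(v)$ is not a tree, or, symmetrically, $\cT^+_v(\sigma) = \{p_0, p\}$ with $p \neq p_0$ and $\cE_{X,s_0,p}(v)$ is not a tree. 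I would treat the ``left'' disjunct in detail; the ``right'' one is identical after exchanging left and right.

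If $\cT^-(\sigma)$ is a singleton (which happens exactly for $\sigma \in \{\alpha,\gamma\}$), the left disjunct can never hold; in particular $\alpha$, for which both $\cT^-(\alpha)$ and $\cT^+(\alpha)$ are singletons, never produces a non-dendric bispecial extended image, consistently with the statement. Otherwise $\cT^-(\sigma) = \{s_0, s\}$, and the tables give a distinguished letter $a \in \cA_3$ --- the one absent from the domain of $\varphi^-_s$ --- with the property that the longest common suffix $s$ of the relevant pair of images is a suffix of $\sigma(b)$ precisely for $b \neq a$. Consequently $E^-_{X,s}(v) = E^-_X(v) \setminus \{a\}$, while $E^+_{X,p_0}(v) = E^+_X(v)$ as noted in Section~\ref{subsection:stability of tree}. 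Therefore $\cE_{X,s,p_0}(v)$ is exactly the subgraph of $\cE_X(v)$ obtained by deleting the left vertex $a^-$ together with the isolated vertices this creates. As $\cE_X(v)$ is a tree, this subgraph is acyclic, so it fails to be a tree if and only if it is disconnected, i.e., if and only if $a \in \cC_X^-(v)$.

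It remains to absorb the side condition ``$\cT^-_v(\sigma) = \{s_0, s\}$''. If $a \in \cC_X^-(v)$, then by Remark~\ref{remark:interpretations of C^- and C^+} the left part of $\cE_X(v)$ has three vertices, so $E^-_X(v) = \cA_3$ and hence $\cT^-_v(\sigma) = \cT^-(\sigma) = \{s_0, s\}$; conversely, if $\cT^-_v(\sigma)$ is a singleton then $E^-_X(v) \neq \cA_3$ and no letter lies in $\cC_X^-(v)$. So the left disjunct is equivalent to ``$\cT^-(\sigma)$ has two elements and $a \in \cC_X^-(v)$'', and symmetrically the right disjunct is equivalent to ``$\cT^+(\sigma)$ has two elements and $a' \in \cC_X^+(v)$'' for the corresponding letter $a'$. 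Reading $a$ and $a'$ off the tables for each $\sigma \in \cSD$ --- $a = 1$ for $\beta$ and $\delta^{(k)}$ on the left; $a = 2$ for $\zeta^{(k)}$ and $\eta$ on the left; $a' = 1$ for $\gamma$, $\delta^{(k)}$ and $\eta$ on the right; $a' = 2$ for $\zeta^{(k)}$ on the right --- produces exactly the four listed conditions.

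The only genuinely delicate point is the identification, for each morphism in $\cSD$, of the single letter whose image fails to have $s$ (resp.\ $p$) as a suffix (resp.\ prefix), equivalently the claim that $E^-_{X,s}(v)$ (resp.\ $E^+_{X,p}(v)$) is the complement of one letter. This is a short case-by-case computation on the explicit images displayed in Figures~\ref{fig:extension graph of epsilon} and~\ref{fig:extension graph of epsilon2}; it is precisely the computation underlying the well-definedness of the maps $\varphi^-_s$ and $\varphi^+_p$ tabulated in Tables~\ref{table:def des varphi1} and~\ref{table:def des varphi2}, so once those tables are trusted no real obstacle remains.
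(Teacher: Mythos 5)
Your proposal is correct and takes essentially the same route as the paper's proof: both reduce the question to Proposition~\ref{prop:bispecial has only tree bispecial extensions} and then, for each $\sigma \in \cSD$, identify from the explicit images the unique letter $a$ such that $\cE_{X,s,p_0}(v)$ (resp.\ $\cE_{X,s_0,p}(v)$) is $\cE_X(v)$ with the vertex $a^-$ (resp.\ $a^+$) deleted, so that failing to be a tree amounts to $a \in \cC_X^-(v)$ (resp.\ $a \in \cC_X^+(v)$), and the four listed cases are read off the morphisms. Your explicit treatment of the side condition via Remark~\ref{remark:interpretations of C^- and C^+} (that $a \in \cC_X^-(v)$ forces $E_X^-(v) = \cA_3$, hence $\cT^-_v(\sigma) = \cT^-(\sigma)$) is a detail the paper leaves implicit, but the argument is the same.
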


\begin{proof}
The negation of item~\ref{item 1 tree extension} of Proposition~\ref{prop:bispecial has only tree bispecial extensions} is equivalent to ``there exists $a \in E^-_X(v)$ such that $\cE_{X, s(b, c), p_0}(v)$ is not a tree''. 
As $\cE_{X, s(b, c), p_0}(v)$ is a subgraph of $\cE(v)$, it is acyclic and, if $s(b, c)$ is a suffix of $\sigma(a)$ it is also connected (indeed, we then have $\cE_{X, s(b, c), p_0}(v) = \cE_{X, s_0, p_0}(v) = \cE(v)$). 
Thus, the first condition of Proposition~\ref{prop:bispecial has only tree bispecial extensions} is not satisfied if and only if there exists a permutation $\{a, b, c\}$ of $\cA_3$ such that $s(b, c)$ is not a prefix of $\sigma(a)$ and $a \in \cC^-(v)$. 
Using Figures~\ref{fig:extension graph of epsilon} and~\ref{fig:extension graph of epsilon2}, we see that it is equivalent to condition 1 or 2.
We proceed in a similar way to show that the second condition of Proposition~\ref{prop:bispecial has only tree bispecial extensions} is not satisfied if and only if one of the conditions 3 and 4 above is.
\end{proof}

\begin{example}
Assume that $v$ is a dendric bispecial factor in some minimal ternary shift space $X$ with extension graph
\begin{center}
\begin{tikzpicture}
\node (1L) at (0,0) {$1$};
\node (2L) [below of = 1L, node distance = .5cm] {$2$};
\node (3L) [below of = 2L, node distance = .5cm] {$3$};

\node (1R) [right of = 1L, node distance = 1.5cm] {$1$};
\node (2R) [below of = 1R, node distance = .5cm] {$2$};
\node (3R) [below of = 2R, node distance = .5cm] {$3$};

\draw (1L) -- (1R);
\draw (1L) -- (2R);
\draw (2L) -- (3R);

\draw (3L) -- (1R);
\draw (3L) -- (3R);
\end{tikzpicture}
\end{center}
Thus we have $3 \in \cC^-(v)$ and $1 \in \cC^+(v)$. 
If $Y_1$ is the image of $X$ under $\beta$, the bispecial extended images of $v$ in $Y_1$ are $u_1 = \beta(v)1$ and $u_2 = 2 \beta(v)1$ and they have the following extension graphs:
\begin{center}
\begin{tabular}[t]{cc}
$\cE_{Y_1}(u_1)$ & $\cE_{Y_1}(u_2)$ \\ 
\begin{tikzpicture}
\node (1L) at (0,0) {$1$};
\node (2L) [below of = 1L, node distance = .5cm] {$2$};

\node (1R) [right of = 1L, node distance = 1.5cm] {$1$};
\node (2R) [below of = 1R, node distance = .5cm] {$2$};
\node (3R) [below of = 2R, node distance = .5cm] {$3$};

\draw (1L) -- (1R);
\draw (1L) -- (2R);
\draw (2L) -- (3R);

\draw (2L) -- (1R);
\end{tikzpicture}
&
\begin{tikzpicture}
\node (1L) at (0,0) {1};
\node (3L) [below of = 1L, node distance = 1cm] {$3$};

\node (1R) [right of = 1L, node distance = 1.5cm] {$1$};
\node (3R) [below of = 1R, node distance = 1cm] {$3$};

\draw (1L) -- (3R);
\draw (3L) -- (1R);
\draw (3L) -- (3R);
\end{tikzpicture}
\end{tabular}
\end{center}
Similarly, if $Y_2$ is the image of $X$ under $\gamma$, the bispecial extended images of $v$ in $Y_2$ are $w_1 = \gamma(v)1$ and $w_2 = \gamma(v)12$ and they have the following extension graphs: 
\begin{center}
\begin{tabular}[t]{cc}
$\cE_{Y_2}(w_1)$ & $\cE_{Y_2}(w_2)$ \\ 
\begin{tikzpicture}
\node (1L) at (0,0) {$1$};
\node (2L) [below of = 1L, node distance = .5cm] {$2$};
\node (3L) [below of = 2L, node distance = .5cm] {$3$};

\node (1R) [right of = 1L, node distance = 1.5cm] {$1$};
\node (2R) [below of = 1R, node distance = 1cm] {$2$};

\draw (1L) -- (1R);
\draw (1L) -- (2R);
\draw (2L) -- (2R);
\draw (3L) -- (1R);
\draw (3L) -- (2R);
\end{tikzpicture}
&
\begin{tikzpicture}
\node (1L) at (0,0) {$1$};
\node (2L) [below of = 1L, node distance = .5cm] {$2$};
\node (3L) [below of = 2L, node distance = .5cm] {$3$};

\node (1R) [right of = 1L, node distance = 1.5cm] {$1$};
\node (3R) [below of = 1R, node distance = 1cm] {$3$};

\draw (1L) -- (1R);
\draw (2L) -- (3R);
\draw (3L) -- (3R);
\end{tikzpicture}
\end{tabular}
\end{center}

\end{example}

\subsection{Ternary dendric preserving morphisms}

Assuming that $v \in \cL(X)$ is a dendric bispecial factor, Proposition~\ref{prop:condition for non-tree-successor} characterizes under which conditions $v$ has a non-dendric bispecial extended image under $\sigma \in \cSD$ or, in other words, under which conditions $\sigma \in \cSD$ is not dendric preserving for $v$.
As we consider the ternary case, we denote by $\DP(v)$ the set of dendric preserving morphisms for $v$ in $\cSD$.

When $X$ is a ternary dendric shift, we extend the notations $\cC^-$ and $\cC^+$ and set
\begin{align*}
	\cC^-(X) & = \bigcup_{v \in \cL(X)} \cC^-_X(v);	\\
	\cC^+(X) & = \bigcup_{v \in \cL(X)} \cC^+_X(v);	\\
	\DP(X) &= \bigcap_{v \in \cL(X)} \DP(v).
\end{align*}

Using Proposition~\ref{prop:condition for non-tree-successor}, the sets $\cC^-(X)$ and $\cC^+(X)$ completely determine the set $\DP(X)$ of all morphisms in $\cSD$ that are dendric preserving for $X$.
In this section, we in particular show that $\cC^-(X)$ and $\cC^+(X)$ contain at most one letter and we show that, when $Y$ is the image of $X$ under $\sigma \in \DP(X)$, $\cC^-(Y)$ (resp., $\cC^+(Y)$) is completely determined by $\cC^-(X)$ (resp., $\cC^+(X)$) and $\sigma$.
The next lemma is a trivial consequence of Remark~\ref{remark:interpretations of C^- and C^+}.

\begin{lemma}
\label{lemma:AL Ar at most one letter}
Let $X$ be a shift space over $\cA_3$. For every dendric bispecial factor $v \in \cL(X)$, $\cC^-_X(v)$ (resp., $\cC^+_X(v)$) contains at most one letter. 
\end{lemma}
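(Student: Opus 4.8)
The plan is to argue by contradiction, using only the reformulation of $\cC^-$ recorded in Remark~\ref{remark:interpretations of C^- and C^+}. Suppose that $v \in \cL(X)$ is a dendric bispecial factor admitting two distinct letters $a, a' \in \cC^-_X(v)$, and let $b$ denote the third letter, so that $\cA_3 = \{a, a', b\}$. Since $v$ is dendric, $\cE_X(v)$ is a tree, and since $a \in \cC^-_X(v)$, the remark gives two pieces of information: the left side of $\cE_X(v)$ has exactly three vertices, i.e. $E^-_X(v) = \cA_3$, and the unique path in $\cE_X(v)$ joining $a'^-$ to $b^-$ has length $4$.

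First I would analyse the shape of this length-$4$ path. Being a simple path in the bipartite graph $\cE_X(v)$ with both endpoints on the left, it has the form $a'^- - x^+ - z^- - y^+ - b^-$ for some right vertices $x^+, y^+$ and some left vertex $z^-$. As the path is simple and the only left vertices available are $a^-, a'^-, b^-$, the interior left vertex must be $z^- = a^-$. In particular the subpath $a^- - y^+ - b^-$ shows that the unique path in $\cE_X(v)$ from $a^-$ to $b^-$ has length $2$.

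Then I would apply the same reformulation to $a'$: from $a' \in \cC^-_X(v)$ and $\cA_3 = \{a', a, b\}$, the remark forces the unique path from $a^-$ to $b^-$ to have length $4$. Since paths between two fixed vertices of a tree are unique, this contradicts the length $2$ obtained above. Hence $\cC^-_X(v)$ contains at most one letter. The assertion for $\cC^+_X(v)$ follows by the identical argument carried out on the right vertices of $\cE_X(v)$ (equivalently, by passing to the reversed shift of $X$), invoking the symmetric half of Remark~\ref{remark:interpretations of C^- and C^+}.

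This is essentially bookkeeping about finite trees, so there is no substantial obstacle; the only step worth a moment's care is the identification of the midpoint of the length-$4$ path with $a^-$, which relies on $\cE_X(v)$ having exactly three left vertices — and that is itself part of what Remark~\ref{remark:interpretations of C^- and C^+} supplies once we know $a \in \cC^-_X(v)$.
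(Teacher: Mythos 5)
Your argument is correct and is precisely the paper's approach spelled out: the paper gives no separate proof, stating only that the lemma is a trivial consequence of Remark~\ref{remark:interpretations of C^- and C^+}, and your path-length bookkeeping (the length-$4$ path forced by one letter having its midpoint at the other letter's vertex, contradicting uniqueness of paths in the tree) is exactly the missing verification. Nothing further is needed.
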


\begin{lemma}
\label{lemma::AL et AR du mot vide}
Let $X$ be a shift space over $\cA_3$ which is the image under $\sigma \in \cSD$ of another shift space $Z$ over $\cA_3$.
The sets $\cC_X^-(\varepsilon)$ and $\cC_X^+(\varepsilon)$ are given in Table~\ref{table:AL et AR du mot vide}.
\begin{table}[h]
\centering
$\begin{array}{|c|c|c|c|c|c|c|}
\hline
\sigma & \alpha & \beta & \gamma & \delta^{(k)} & \zeta^{(k)} & \eta	\\
\hline
\cC^-(\varepsilon) & \emptyset & \{1\} & \emptyset & \{3\} & \{3\} & \{2\} 
\\
\hline
\cC^+(\varepsilon)& \emptyset & \emptyset & \{1\} & \{1\} & \{3\} & \{3\}
\\
\hline
\end{array}
$
\caption{Sets $\cC_X^-(\varepsilon)$ and $\cC_X^+(\varepsilon)$ whenever $X$ is the image under $\sigma$ of a shift space over $\cA_3$.}
\label{table:AL et AR du mot vide}
\end{table}
\end{lemma}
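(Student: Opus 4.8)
The plan is to show first that $\cE_X(\varepsilon)$, equivalently the set $\cL_2(X)$, depends only on $\sigma$ and not on the shift $Z$, and then to read off $\cC_X^-(\varepsilon)$ and $\cC_X^+(\varepsilon)$ by a finite inspection. Since $X$ is the image of $Z$ under $\sigma$, we have $\cL(X) = \Fac(\{\sigma(w) \mid w \in \cL(Z)\})$, so a two-letter word lies in $\cL_2(X)$ if and only if it is a factor of some $\sigma(a)$ with $a \in \cA_3$, or it equals the concatenation of the last letter of $\sigma(a)$ with the first letter of $\sigma(b)$ for some $ab \in \cL_2(Z)$. Each morphism of $\cSD$ is strongly left proper with first letter $1$, so the first letter of $\sigma(b)$ is always $1$; and as $Z$ is a shift space over $\cA_3$, every letter of $\cA_3$ occurs in $\cL(Z)$ and hence admits a successor there, so every word $c_a\,1$, with $c_a$ the last letter of $\sigma(a)$ and $a \in \cA_3$, actually occurs in $\cL_2(X)$. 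Therefore
\[
	\cL_2(X) = \{\text{length-}2\text{ factors of }\sigma(a) \mid a \in \cA_3\} \cup \{c_a\,1 \mid a \in \cA_3\},
\]
which depends only on $\sigma$. Note also that for $\delta^{(k)}$ and $\zeta^{(k)}$ the exponent $k \geq 1$ is irrelevant here: the words $123^k$ and $123^{k+1}$ all have set of length-$2$ factors $\{12,23,33\}$ and last letter $3$, and likewise $13^k$ and $13^{k+1}$ all give $\{13,33\}$ and last letter $3$.

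Since moreover each of $1,2,3$ occurs in $\bigcup_{a}\sigma(a)$ for every $\sigma \in \cSD$, we have $\cL_1(X) = \cA_3$, so that $E_X^-(\varepsilon) = E_X^+(\varepsilon) = \cA_3$ and $\cE_X(\varepsilon)$ is the bipartite graph on $\cA_3 \sqcup \cA_3$ whose edge set is exactly $\cL_2(X)$. It then remains to treat the six cases. For each $\sigma \in \cSD$ one computes $\cL_2(X)$ from the displayed formula, obtaining precisely the extension graphs of $\varepsilon$ drawn in Figures~\ref{fig:extension graph of epsilon} and~\ref{fig:extension graph of epsilon2}; then, for each left vertex $a^-$ (resp.\ right vertex $a^+$), one checks whether deleting it and the vertices it isolates disconnects the graph, and collects in $\cC_X^-(\varepsilon)$ (resp.\ $\cC_X^+(\varepsilon)$) those $a$ for which it does. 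For example, for $\sigma = \beta$ the formula gives $\cL_2(X) = \{11,12,13,21,32\}$, so $\cE_X(\varepsilon)$ has edges $1^-1^+$, $1^-2^+$, $1^-3^+$, $2^-1^+$, $3^-2^+$; deleting $1^-$ makes $3^+$ isolated and leaves the two components $\{2^-,1^+\}$ and $\{3^-,2^+\}$, while deleting $2^-$ or $3^-$ leaves a connected graph (the vertex $1^-$ keeps everything together), and no single right vertex disconnects the graph; hence $\cC_X^-(\varepsilon) = \{1\}$ and $\cC_X^+(\varepsilon) = \emptyset$, in accordance with Table~\ref{table:AL et AR du mot vide}. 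The remaining five cases are entirely analogous.

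The computation is routine once the first reduction is in place; the only points requiring care are the independence from $Z$ (which rests on the fact that all morphisms of $\cSD$ start every image with the same letter $1$, together with the fact that every letter has a successor in $Z$) and, in the case-by-case check, the bookkeeping of the induced isolated vertices when removing a vertex from $\cE_X(\varepsilon)$.
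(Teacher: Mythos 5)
Your proposal is correct and takes essentially the same route as the paper: the paper's (very terse) proof likewise rests on the observation that $\sigma$ completely determines $\cE_X(\varepsilon)$ — your explicit computation of $\cL_2(X)$ from strong left properness and the existence of successors in $Z$ is exactly the justification of that observation — after which $\cC_X^-(\varepsilon)$ and $\cC_X^+(\varepsilon)$ are read off from the graphs of Figures~\ref{fig:extension graph of epsilon} and~\ref{fig:extension graph of epsilon2} by a finite check. One tiny wording slip: for $k=1$ the word $123^{k}$ (resp.\ $13^{k}$) does not contain $33$, but the union of the length-$2$ factors of the two relevant images does for every $k \geq 1$, so your conclusion that $\cL_2(X)$ is independent of $k$ stands.
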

\begin{proof}
Indeed, the morphism $\sigma$ completely determines the extension graph $\cE_X(\varepsilon)$.
The result thus directly follows from the definition of $\cC_X^-(\varepsilon)$ and $\cC_X^+(\varepsilon)$.
\end{proof}

\begin{lemma}
\label{lemma:DP implies dendric}
If $X$ is a ternary dendric shift and if $Y$ is the image of $X$ under some morphism $\sigma \in \cSD$, then $Y$ is dendric if and only if $\sigma \in \DP(X)$.
\end{lemma}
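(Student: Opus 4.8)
The plan is to prove the two implications separately. The direction ``$Y$ dendric $\Rightarrow \sigma \in \DP(X)$'' is immediate, and the content lies in the converse, which I would organise as a case analysis on the bispecial factors of $Y$. Throughout, write $\ell$ for the first letter of $\sigma$; one checks on the definitions that $\ell = 1$ for every $\sigma \in \cSD$, and recall that each $\sigma \in \cSD$ is injective and strongly left proper, so that Proposition~\ref{prop:def antecedent and bsp ext image} and Corollary~\ref{cor:characterization of extended images} apply with $Y$ the image of $X$ under $\sigma$.

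For the easy direction, suppose $Y$ is dendric. Then every word of $\cL(Y)$ is dendric; in particular, for any $v \in \cL(X)$, every bispecial extended image of $v$ under $\sigma$ (which, by definition, is a word of $\cL(Y)$) is dendric. Thus $\sigma$ is dendric preserving for every $v \in \cL(X)$, that is, $\sigma \in \bigcap_{v \in \cL(X)} \DP(v) = \DP(X)$.

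For the converse, assume $\sigma \in \DP(X)$ and take $u \in \cL(Y)$; the goal is to show that $\cE_Y(u)$ is a tree. Since every non-bispecial word and every ordinary bispecial word is trivially dendric, we may assume $u$ is bispecial, and we distinguish whether $\ell$ occurs in $u$. If $u \ne \varepsilon$ and $\ell$ occurs in $u$, then Proposition~\ref{prop:def antecedent and bsp ext image} provides an antecedent $v \in \cL(X)$ of $u$, and Corollary~\ref{cor:characterization of extended images} tells us that $v$ is bispecial and $u$ is a bispecial extended image of $v$. As $X$ is dendric, $v$ is dendric, and since $\sigma \in \DP(X) \subseteq \DP(v)$ the word $u$ is dendric. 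It remains to treat the bispecial factors of $Y$ in which $\ell$ does not occur. For $u = \varepsilon$, the extension graph $\cE_Y(\varepsilon)$ depends only on $\sigma$ (it is governed by $\cL_2(Y)$), and a direct inspection shows it is the extension graph attached to $\sigma$ in Figures~\ref{fig:extension graph of epsilon}--\ref{fig:extension graph of epsilon2} (cf.\ Lemma~\ref{lemma::AL et AR du mot vide}), which is a tree. For $u \ne \varepsilon$ with $|u|_\ell = 0$, Proposition~\ref{prop:def antecedent and bsp ext image} says that $u$ is a non-prefix factor of some $\sigma(b)$, $b \in \cA_3$; running through the morphisms of $\cSD$, the $\ell$-free factors of the images are so constrained that the only ones that can be bispecial in $Y$ are the powers $3^{j}$, which occur only for $\sigma \in \{\delta^{(k)},\zeta^{(k)}\}$ and with $j$ bounded in terms of $k$. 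For each of these finitely many words one reads off $E_Y^-(u)$, $E_Y^+(u)$ and $\cE_Y(u)$ from the shape of $\sigma$ and checks that the graph is a tree (for $\delta^{(k)}$, for instance, $\cE_Y(3^j)$ is a path for every admissible $j$). This exhausts the bispecial factors of $Y$, hence $Y$ is dendric.

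I expect the only step requiring genuine (though routine) computation to be the very last one: the explicit, morphism-by-morphism verification that the ``new'' bispecial factors of $Y$, namely those without an antecedent, are dendric. It is conceptually straightforward since the images of the morphisms in $\cSD$ are highly rigid, but it has to be carried out for each family in $\cSD$ and constitutes the main bookkeeping obstacle.
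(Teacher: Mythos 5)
Your proposal is correct and follows essentially the same route as the paper: the ``$Y$ dendric $\Rightarrow \sigma \in \DP(X)$'' direction is immediate, and for the converse one splits the bispecial factors of $Y$ into extended images (handled by Proposition~\ref{prop:def antecedent and bsp ext image}, Corollary~\ref{cor:characterization of extended images} and the definition of $\DP(X)$) and the initial bispecial factors ($\varepsilon$ or non-prefix factors of some $\sigma(a)$), which must be checked dendric for each $\sigma \in \cSD$. The only difference is that the paper leaves this last case-by-case verification as an assertion, whereas you sketch it explicitly (identifying the powers of $3$ for $\delta^{(k)},\zeta^{(k)}$ as the only candidates), which is consistent with the paper's intent.
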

\begin{proof}
Indeed, if $\sigma \in \DP(X)$, every bispecial extended image of a bispecial factor of $X$ is dendric by definition of $\DP(X)$.
Any other bispecial factor of $Y$ is the empty word or a non-prefix factor of an image $\sigma(a)$, $a \in \cA_3$ (by Proposition~\ref{prop:def antecedent and bsp ext image}).
It suffices to check that any such bispecial factor is dendric when $\sigma$ belongs to $\cSD$ to prove that $Y$ is dendric.

Assume now that $Y$ is dendric. If $\sigma$ is not in $\DP(X)$ then there exists $v \in \cL(X)$ such that $\sigma \notin \DP(v)$ thus $v$ has an extended image in $Y$ which is not dendric.
\end{proof}

We say that a morphism $\sigma \in \cSD$ is {\em left-invariant} (resp., {\em right-invariant}) if $\cT^-(\sigma)$ (resp., $\cT^+(\sigma)$) is a singleton, i.e. $\cT^-(\sigma) = \{s_0\}$ (resp., $\cT^+(\sigma) = \{p_0\}$).
The next lemma directly follows from the definition of the morphisms in $\cSD$.

\begin{lemma}
\label{lemma:which ones are left,right invariant}
\begin{enumerate}
\item
$\alpha$ is both left-invariant and right-invariant;
\item
$\beta$ is right-invariant, but not left-invariant;
\item
$\gamma$ is left-invariant, but not right-invariant;
\item
$\delta^{(k)}$, $\zeta^{(k)}$ and $\eta$ neither are left-invariant, nor right-invariant.
\end{enumerate}
\end{lemma}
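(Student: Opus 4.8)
The plan is to prove the lemma by a finite, completely explicit computation, since by definition $\cT^-(\sigma)=\{s(a_1,a_2)\mid a_1\neq a_2\}$ and $\cT^+(\sigma)=\{p(b_1,b_2)\mid b_1\neq b_2\}$ depend only on the three images $\sigma(1),\sigma(2),\sigma(3)$, which are listed explicitly for every $\sigma\in\cSD$ in Figures~\ref{fig:extension graph of epsilon} and~\ref{fig:extension graph of epsilon2}. Recalling the definition given just above the statement, $\sigma$ is left-invariant exactly when $\cT^-(\sigma)$ is a singleton, i.e.\ when the three pairwise longest common suffixes $s(1,2)$, $s(1,3)$, $s(2,3)$ all coincide; dually, $\sigma$ is right-invariant exactly when $p(1,2)$, $p(1,3)$, $p(2,3)$ all coincide. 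So for each of the seven morphisms it suffices either to check that the three suffixes (resp.\ prefixes) are equal, or to exhibit two of them that differ.

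First I would record the structure common to all $\sigma\in\cSD$: every $\sigma(a)$ starts with the letter $1$, and since these morphisms are strongly left proper the letter $1$ occurs only as the first letter of each image; consequently every common-suffix/prefix computation is finite and unambiguous, and in fact $s(1,2)=\varepsilon$ and $p(1,2)=1$ for every morphism of $\cSD$, so only $s(1,3)$, $s(2,3)$, $p(1,3)$, $p(2,3)$ remain to be examined. Running through the list: for $\alpha$ (images $1,12,13$) all pairwise suffixes are empty and all pairwise prefixes equal $1$, so $\cT^-(\alpha)=\{\varepsilon\}$ and $\cT^+(\alpha)=\{1\}$, giving both invariances. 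For $\beta$ (images $1,12,132$) one has $s(2,3)=2\neq\varepsilon=s(1,3)$ while every prefix equals $1$, so $\cT^-(\beta)=\{\varepsilon,2\}$ and $\cT^+(\beta)=\{1\}$: right-invariant, not left-invariant. For $\gamma$ (images $1,12,123$) all suffixes are empty but $p(2,3)=12\neq1=p(1,3)$, so $\cT^-(\gamma)=\{\varepsilon\}$ and $\cT^+(\gamma)=\{1,12\}$: left-invariant, not right-invariant. Finally, for $\delta^{(k)},\zeta^{(k)},\eta$ one checks in each case that some pair of images ends in a nonempty block of $3$'s (the pairs $\{123^k,123^{k+1}\}$, $\{13^k,13^{k+1}\}$, $\{13,123\}$ respectively), so $\cT^-$ contains a nonempty suffix besides $\varepsilon$, and symmetrically some pair shares a common prefix strictly longer than $1$ (namely $123^k$, $13^k$, $12$), so $\cT^+$ also has two elements; hence none of these three is left- or right-invariant. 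The values obtained are precisely the contents of the columns headed $\cT^-(\sigma)$ and $\cT^+(\sigma)$ in Tables~\ref{table:def des varphi1} and~\ref{table:def des varphi2}, so the lemma is just a reading of those tables.

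There is no genuine obstacle in this proof; the only step deserving a word of care is the parametrized families, where one must use the standing assumption $k\geq1$ to be sure the advertised common suffix or prefix is really nonempty and is exactly as long as claimed --- for instance that $123^k$ and $123^{k+1}$ share the suffix $3^k$ and no more, which holds because the letter immediately before the final run of $k$ threes is $2$ in the first image and $3$ in the second. Everything else is routine bookkeeping.
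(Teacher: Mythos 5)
Your proof is correct and follows the same route as the paper, which simply states that the lemma ``directly follows from the definition of the morphisms in $\cSD$'' and records the sets $\cT^-(\sigma)$, $\cT^+(\sigma)$ in Tables~\ref{table:def des varphi1} and~\ref{table:def des varphi2}; you have just carried out that finite verification explicitly, and your computed values agree with the tables.
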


We let $\cSL$ and $\cSR$ respectively denote the left-invariant and right-invariant morphisms, i.e., 
\[
\cSL = \{\alpha,\gamma\}
\quad \text{and} \quad
\cSR = \{\alpha,\beta\}.
\]
Observe that if $\sigma \in \cSL$ (resp., $\sigma \in \cSR$), then the associated graph morphism $\varphi^-_{s_0}$ (resp., $\varphi^+_{p_0}$) is the identity.
Moreover, from Table~\ref{table:AL et AR du mot vide}, a morphism $\sigma$ belongs to $\cSL$ (resp., to $\cSR$) if and only if $\cC_X^-(\varepsilon)$ (resp., $\cC_X^+(\varepsilon)$) is empty, where $X$ is the image under $\sigma$ of a shift over $\cA_3$.

\begin{lemma}
\label{lemma:non-left-invariant}
Let $X$ be a shift space over $\cA_3$, $\sigma \in \cSD$ a non left-invariant (resp., non right-invariant) morphism and $Y$ the image of $X$ under $\sigma$.
If $v \in \cL(X)$ is a dendric bispecial factor, then any dendric extended image $u$ of $v$ is such that $\cC^-_Y(u) = \emptyset$ (resp., $\cC^+_Y(u) = \emptyset$).

In particular, if $X$ is dendric and $\sigma$ is in $\DP(X)$, then $\cC^-(Y) = \cC^-_Y(\varepsilon) \ne \emptyset$ (resp., $\cC^+(Y) = \cC^+_Y(\varepsilon) \ne \emptyset$).
\end{lemma}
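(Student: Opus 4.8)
The plan is to analyze the graph morphisms $\varphi^-_s$ and $\varphi^+_p$ recorded in Tables~\ref{table:def des varphi1} and~\ref{table:def des varphi2} and show that, for a non left-invariant $\sigma \in \cSD$, the left side of the extension graph of any bispecial extended image is ``too small'' to allow a letter of $\cC^-$. First I would recall, via Remark~\ref{remark:interpretations of C^- and C^+}, that $a \in \cC^-_Y(u)$ for a dendric bispecial $u$ forces the left side of $\cE_Y(u)$ to have exactly three vertices, with the middle-degree structure making the $b^-$--$c^-$ path of length $4$; equivalently $\# E^-_Y(u) = 3$ and $u^-$ has two non-leaf neighbours. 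So it suffices to prove that when $\sigma$ is non left-invariant, every dendric bispecial extended image $u$ of a dendric bispecial $v$ satisfies $\# E^-_Y(u) \le 2$, and then $\cC^-_Y(u) = \emptyset$ trivially.

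The key step is a case analysis on the non left-invariant morphisms $\sigma \in \{\beta, \delta^{(k)}, \zeta^{(k)}, \eta \mid k \ge 1\}$ using Proposition~\ref{prop:morphic image of extension graph}: by Corollary~\ref{cor:characterization of extended images}, $u = s\sigma(v)p$ with $s \in \cT^-_v(\sigma)$, and $\# E^-_Y(u)$ equals the number of distinct images $\varphi^-_s(a)$ over the left vertices $a$ of $\cE_{X,s,p}(v)$. If $s = s_0 = \varepsilon$, then (reading off the tables) $\varphi^-_\varepsilon$ is never injective on $\cA_3$ — it identifies two of the three letters in every case — so the image has at most two left vertices. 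If $s \ne s_0$, then $E^-_{X,s}(v)$ has at most two elements (the $\varphi^-_s$ in the tables is defined only on a two-letter domain, consistent with the radix tree having root $s_0$ with the leaves split, as in Corollary~\ref{cor:dendric preserving for any word}), so again at most two left vertices. Either way $\# E^-_Y(u) \le 2$. I would write this out uniformly rather than morphism-by-morphism, pointing out that the only subtlety is checking that in each line of the tables the map $\varphi^-_{s_0}$ is genuinely non-injective and the non-root $s$ has $\# E^-_{X,s}(v) \le 2$; the symmetric argument with $\varphi^+$ and $\cT^+_v(\sigma)$ handles the non right-invariant case.

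For the ``in particular'' clause, suppose $X$ is dendric and $\sigma \in \DP(X)$, and let $Y$ be its image. By Lemma~\ref{lemma:DP implies dendric}, $Y$ is dendric. Every bispecial factor of $Y$ is either $\varepsilon$, a non-prefix factor of some $\sigma(a)$, or a bispecial extended image of some bispecial $v \in \cL(X)$ (Proposition~\ref{prop:def antecedent and bsp ext image}). The first part of this lemma kills all the extended images, and one checks directly from the (short) images $\sigma(a)$, $a \in \cA_3$, that the non-prefix factors of $\sigma(a)$ that happen to be bispecial contribute nothing to $\cC^-_Y$ either — indeed for each $\sigma \in \cSD$ these factors are among the letters and a couple of two-letter words, whose left sides have at most two vertices. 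Hence $\cC^-(Y) = \bigcup_u \cC^-_Y(u)$ collapses to $\cC^-_Y(\varepsilon)$, and $\cC^-_Y(\varepsilon) \ne \emptyset$ by Lemma~\ref{lemma::AL et AR du mot vide} (Table~\ref{table:AL et AR du mot vide}), since a non left-invariant $\sigma$ is exactly one with $\cC^-_Y(\varepsilon) \ne \emptyset$.

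I expect the main obstacle to be the bookkeeping in the ``non-prefix factor of $\sigma(a)$'' subcase: one must be sure that no such short word is a bispecial factor with a three-vertex left side. This is a finite check but it is the place where an error could slip in, so I would organize it by listing, for each $\sigma \in \cSD$, the set $\Fac(\{\sigma(a) : a \in \cA_3\})$ and isolating its bispecial elements, then observing each has $\# E^- \le 2$ (they are in fact all sub-words of length $\le 2$, for which the claim is immediate). Everything else is a transparent consequence of Proposition~\ref{prop:morphic image of extension graph} and Remark~\ref{remark:interpretations of C^- and C^+}.
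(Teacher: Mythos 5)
Your proposal is correct and takes essentially the same route as the paper: check in Tables~\ref{table:def des varphi1} and~\ref{table:def des varphi2} that for a non left-invariant $\sigma$ each $\varphi^-_s$, $s\in\cT^-(\sigma)$, has image of size at most $2$, so $\cE_Y(u)$ has at most two left vertices and $\cC^-_Y(u)=\emptyset$ by Remark~\ref{remark:interpretations of C^- and C^+}, and then reduce $\cC^-(Y)$ to $\cC^-_Y(\varepsilon)$ via Proposition~\ref{prop:def antecedent and bsp ext image} and Lemmas~\ref{lemma:DP implies dendric} and~\ref{lemma::AL et AR du mot vide}. One small caveat: your parenthetical claim that the relevant non-prefix factors of the $\sigma(a)$ all have length at most $2$ is false for $\delta^{(k)}$ and $\zeta^{(k)}$ with $k\geq 2$ (e.g.\ $3^k$), but the bound $\#E^-_Y(u)\leq 2$ you actually need still holds for these words, so the argument goes through as in the paper.
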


\begin{proof}
Let us show the result for a non-left-invariant morphism, the other case being symmetric. 
By definition of left-invariance, $\cT^-(\sigma)$ contains two elements $s_0$ and $s_1$.
It suffices to check in Tables~\ref{table:def des varphi1} and~\ref{table:def des varphi2} that for each of them, the range of the associated graph morphism $\varphi^-_{s_i}$ has cardinality 2. 
As the left vertices of $\cE_Y(u)$ are images of the left vertices of $\cE_X(v)$, $\cE_Y(u)$ contains at most two left vertices.
By Remark~\ref{remark:interpretations of C^- and C^+}, $C^-_Y(u)$ is empty.

Now assume that $X$ is dendric and that $\sigma$ belongs to $\DP(X)$.
By Lemma~\ref{lemma:DP implies dendric}, $Y$ is a dendric shift.
Let $u$ be a non-empty factor of $Y$. 
By Proposition~\ref{prop:def antecedent and bsp ext image}, either $u$ is a non-prefix factor of $\sigma(a)$ for some letter $a \in \cA_3$, or $u$ is an extended image of a factor $v \in \cL(X)$.
In the first case, it suffices to check that $\#(E_Y^-(u)) \leq 2$, which implies that $\cC^-_Y(u) = \emptyset$.
In the second case, as $X$ is dendric, $v$ is also dendric so by the first part of the lemma, $\cC^-_Y(u) = \emptyset$. Thus $\cC^-(Y) = \cC^-_Y(\varepsilon)$ and, by Lemma~\ref{lemma::AL et AR du mot vide}, it is non-empty.
\end{proof}

\begin{lemma}
\label{lemma:left-invariant}
Let $X$ be a shift space over $\cA_3$, $\sigma \in \cSD$ a left-invariant (resp., right-invariant) morphism and $Y$ the image of $X$ under $\sigma$.
If $v \in \cL(X)$ is a dendric bispecial factor, then \begin{align}
\label{eq:al(v)}
	\cC^-_X(v) &= \bigcup_{u \text{ bispecial extended image of } v} \cC^-_Y(u) \\
	\text{(resp., } \cC^+_X(v) &= \bigcup_{u \text{ bispecial extended image of } v} \cC^+_Y(u)\text{)}. 
	\nonumber
\end{align}

In particular, if $X$ is dendric and $\sigma$ is in $DP(X)$, then $\cC^-(Y) = \cC^-(X)$ (resp., $\cC^+(Y) = \cC^+(X)$).
\end{lemma}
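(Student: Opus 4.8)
The strategy is to combine the structural description of extended images from the previous sections with the explicit data in Tables~\ref{table:def des varphi1} and~\ref{table:def des varphi2}, exploiting the fact that a left-invariant morphism acts as the identity on the left side of every extension graph. I would treat only the left-invariant case, the right-invariant case being symmetric (swapping $\cC^-$ for $\cC^+$, suffixes for prefixes, and $\cSL$ for $\cSR$). So let $\sigma$ be left-invariant, meaning $\cT^-(\sigma)=\{s_0\}$, and recall from the remark after Proposition~\ref{prop:morphic image of extension graph} that the associated map $\varphi^-_{s_0}$ is the identity on $\cA_3$; concretely this is $\alpha$ or $\gamma$, where the left side of $\cE(\varepsilon)$ — and hence the ``$\varphi^-$'' column — is trivial.

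The core claim is Equation~\eqref{eq:al(v)}. First I would establish the inclusion $\supseteq$: if $u$ is a bispecial extended image of $v$ and $a\in\cC^-_Y(u)$, I want $a\in\cC^-_X(v)$. By Proposition~\ref{prop:morphic image of extension graph}, $\cE_Y(u)$ is the image of $\cE_{X,s,p}(v)$ under $\varphi_{v,s,p}=\varphi^+_{v,p}\circ\varphi^-_{v,s}$; since $\cT^-_v(\sigma)\subseteq\cT^-(\sigma)=\{s_0\}$ we must have $s=s_0$, so $\varphi^-_{v,s_0}$ is the identity on left vertices and $\cE_{X,s_0,p}(v)$ is a subgraph of $\cE_X(v)$ sharing all its left vertices and carrying only right vertices in $E^+_{X,p}(v)$. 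Using Remark~\ref{remark:interpretations of C^- and C^+}, $a\in\cC^-_Y(u)$ means (in the tree $\cE_Y(u)$) that $a^-$ has at least two non-leaf neighbours; pulling this back through $\varphi^+_{v,p}$ (which only merges right vertices, and whose target is a tree so it cannot create a non-leaf from two leaves) shows $a^-$ already has at least two non-leaf neighbours in $\cE_{X,s_0,p}(v)$, hence in $\cE_X(v)$, i.e. $a\in\cC^-_X(v)$. For the reverse inclusion $\subseteq$: if $a\in\cC^-_X(v)$, then $\cE_X(v)$ is a tree in which $a^-$ has two non-leaf neighbours $b^+,c^+$; since $\sigma$ is dendric preserving for $v$ (we may assume this, or at least that the relevant bispecial extended images are dendric), there is some bispecial extended image $u=s_0\sigma(v)p$ with $p$ chosen so that both $b$ and $c$ survive in $E^+_{X,p}(v)$ as distinct vertices of $\cE_{X,s_0,p}(v)$ — taking $p=p_0$ already works, giving $\cE_{X,s_0,p_0}(v)=\cE_X(v)$ and $u=s_0\sigma(v)p_0$, which is a bispecial extended image precisely because $v$ is bispecial. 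Then $a^-$ still has two non-leaf neighbours, so $a\in\cC^-_Y(u)$. One must double-check that $v$ bispecial forces the existence of at least one bispecial extended image; this follows from Corollary~\ref{cor:characterization of extended images}.

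For the ``In particular'' statement, assume $X$ is dendric and $\sigma\in\DP(X)$; by Lemma~\ref{lemma:DP implies dendric}, $Y$ is dendric. Taking the union of~\eqref{eq:al(v)} over all dendric bispecial $v\in\cL(X)$ gives $\cC^-(X)=\bigcup_v\bigcup_u\cC^-_Y(u)$ where the inner union ranges over bispecial extended images $u$ of $v$. Every bispecial factor of $Y$ is, by Proposition~\ref{prop:def antecedent and bsp ext image}, either a non-prefix factor of some $\sigma(a)$ with $a\in\cA_3$, or an extended image of some $v\in\cL(X)$ (which is automatically bispecial by Corollary~\ref{cor:characterization of extended images}, and dendric since $X$ is). For the finitely many factors of the first type one checks directly from the list in Figure~\ref{fig:extension graph of epsilon} that $\#E^-_Y(u)\le 2$ — for $\sigma\in\cSL=\{\alpha,\gamma\}$ the images $\sigma(a)$ are $1,12,13$ or $1,12,123$, whose non-prefix factors have at most two left extensions — so $\cC^-_Y(u)=\emptyset$ by Remark~\ref{remark:interpretations of C^- and C^+}. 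Hence $\cC^-(Y)=\bigcup_{v}\bigcup_{u}\cC^-_Y(u)=\cC^-(X)$, with the empty word handled as the extended image of the empty word of $X$ (or directly via Lemma~\ref{lemma::AL et AR du mot vide}).

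The main obstacle is the pull-back step in the $\supseteq$ direction: one needs to argue carefully that the right-side merging map $\varphi^+_{v,p}$ cannot ``promote'' a leaf-heavy configuration at $a^-$ in $\cE_{X,s_0,p}(v)$ into a genuinely branching configuration in $\cE_Y(u)$ — equivalently, that passing to the subgraph $\cE_{X,s_0,p}(v)$ of $\cE_X(v)$ and then applying $\varphi^+_{v,p}$ cannot increase the number of non-leaf neighbours of a left vertex. This is where the tree structure of both graphs, together with the fact (from Proposition~\ref{prop:bispecial has only tree bispecial extensions}, whose hypotheses hold since $\sigma$ is dendric preserving for $v$) that all the intermediate graphs $\cE_{X,s_0,p}(v)$ are themselves trees, does the real work; everything else is bookkeeping against the tables.
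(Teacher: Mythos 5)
Your reverse inclusion $\cC^-_X(v)\subseteq\bigcup_u\cC^-_Y(u)$ — the substantive half of Equation~\eqref{eq:al(v)} — is where the argument breaks. You claim that the witness can always be taken to be $u=s_0\sigma(v)p_0$ (``taking $p=p_0$ already works''). It does not: for $\sigma=\gamma$ the associated map $\varphi^+_{p_0}=\varphi^+_{1}$ identifies the right vertices $2$ and $3$, so when $a\in\cC^-_X(v)$ precisely because the two non-leaf neighbours of $a^-$ are $2^+$ and $3^+$, these are merged in $\cE_Y(\gamma(v)1)$ and $a^-$ can be left with a single non-leaf neighbour. Concretely, if $\cE_X(v)$ has edges $(a,1),(a,2),(a,3),(b,2),(c,3)$, then $a\in\cC^-_X(v)$, but $\cE_Y(\gamma(v)1)$ is the tree with edges $(a,1),(a,2),(b,2),(c,2)$ and $a\notin\cC^-_Y(\gamma(v)1)$; the letter $a$ is only recovered through the \emph{other} bispecial extended image $\gamma(v)12$, on which $\varphi^+_{12}$ is injective. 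This is exactly the case analysis the paper performs: writing the length-$4$ path through $a^-$ as $(b^-,x^+,a^-,y^+,c^-)$, it uses $\gamma(v)1$ when $1\in\{x,y\}$ and $\gamma(v)12$ when $\{x,y\}=\{2,3\}$, and this dichotomy is the very reason the statement takes a union over all bispecial extended images rather than a single maximal one. Without it the inclusion is not established.

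The other inclusion is also justified too loosely. The parenthetical ``$\varphi^+_{v,p}$ only merges right vertices, and its target is a tree so it cannot create a non-leaf from two leaves'' is not a valid general principle: identifying two right leaves attached to distinct left vertices does create a non-leaf, and produces no cycle when the domain $\cE_{X,s,p}(v)$ is disconnected. It can be repaired here only by inspecting the two left-invariant morphisms: for $\gamma$, merging occurs only when $p=p_0$, where the domain is the whole tree $\cE_X(v)$ and identifying two leaves with distinct neighbours would force a cycle in the image, while for $p=12$ the right map is injective; the paper sidesteps this by transporting the length-$4$ path of Remark~\ref{remark:interpretations of C^- and C^+} instead of counting non-leaf neighbours. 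Finally, both of your directions (like the paper's proof) apply Remark~\ref{remark:interpretations of C^- and C^+} to $\cE_Y(u)$, i.e.\ use dendricity of the extended images; your aside ``we may assume $\sigma$ is dendric preserving for $v$'' matches the intended reading but is not a consequence of the stated hypotheses, so it should be flagged as an assumption rather than derived. The ``in particular'' part of your plan follows the paper's argument and is fine once \eqref{eq:al(v)} is correctly proved.
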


\begin{proof}
Let us assume that $\sigma$ is left-invariant, the other case is symmetric.
We first show that if $u$ is a bispecial extended image of $v$, then $\cC^-_Y(u) \subset \cC^-_X(v)$. 
As $\cT^-(\sigma) = \{s_0\}$, there exists $p \in \cT^+(\sigma)$ such that $u = s_0 \sigma(v) p$.
Assume that $a \in \cC^-_Y(u)$.
Writing $\cA_3 = \{a,b,c\}$, Remark~\ref{remark:interpretations of C^- and C^+} states that the path $q$ from $b^-$ to $c^-$ in $\cE_Y(u)$ has length 4.
This path $q$ is the image under $\varphi_{s_0,p}$ of a path $q'$ of length at least 4 in $\cE_X(v)$.
As $\cE_X(v)$ is a tree with at most 6 vertices and the extremities of $q'$ are left vertices, the path has length exactly 4. 
As $\varphi^-_{s_0}$ is the identity, we conclude that $q'$ is a path of length 4 from $b^-$ to $c^-$ in $\cE_X(v)$, hence that $a \in \cC^-_X(v)$.

If $\cC^-_X(v) = \emptyset$, then Equality~\eqref{eq:al(v)} is direct.
Thus we only need to prove it when $\cC^-_X(v) \neq \emptyset$.
As $\sigma$ is left-invariant, we have by Lemma~\ref{lemma:which ones are left,right invariant} that $\sigma = \alpha$ or $\sigma = \gamma$.

If $\sigma = \alpha$, then as $\alpha$ is also right-invariant (see Lemma~\ref{lemma:which ones are left,right invariant}), Proposition~\ref{prop:morphic image of extension graph} implies that $u$ is the unique bispecial extended image of $v$ and that $\cE_Y(u) = \cE_X(v)$ (the graph morphism is the identity), hence that $\cC^-_Y(u) = \cC^-_X(v)$.

If $\sigma = \gamma$, then as $\cT^+(\sigma) = \{1,12\}$, (see Table~\ref{table:def des varphi1}), Corollary~\ref{cor:characterization of extended images} implies that $v$ has at most two bispecial extended images $u_1 = \sigma(v)1$ and $u_2 = \sigma(v)12$.
Let $\varphi = \varphi_{\varepsilon,1}$ and $\varphi' = \varphi_{\varepsilon,12}$. The morphism $\varphi^-_{\varepsilon}$ is the identity thus we have $\varphi = \varphi^+_{1}$ and $\varphi' = \varphi^+_{12}$.

As $\cC^-_X(v) \neq \emptyset$, by Lemma~\ref{lemma:AL Ar at most one letter} there is a letter $a \in \cA_3$ such that $\cC^-_X(v) = \{a\}$.
Using Remark~\ref{remark:interpretations of C^- and C^+}, the path $q$ from $b^-$ to $c^-$ has length 4 in $\cE_X(v)$.
Let us write $q = (b^-,x^+,a^-,y^+,c^-)$, with $x,y \in \cA_3$.

If $1 \in \{x,y\}$, we assume without loss of generality that $1 = x$. 
Then we have $\varphi(q) = (b^-,1^+,a^-,2^+,c^-)$ which is a path of length 4 from $b^-$ to $c^-$ in $\cE_Y(u_1)$.
By Remark~\ref{remark:interpretations of C^- and C^+} and Lemma~\ref{lemma:AL Ar at most one letter}, one has $\cC^-_Y(u_1) = \{a\}$.
As $\cC^-_Y(u_2) \subset \cC^-_X(v)$ by the first part of the proof, we get $\cC^-_Y(u_1) \cup \cC^-_Y(u_2) = \cC^-_X(v)$.  

If $\{x,y\} = \{2,3\}$, we assume without loss of generality that $(x,y) = (2,3)$.
Then we have $\varphi'(q) = (b^-,1^+,a^-,3^+,c^-)$ which is a path of length 4 from $b^-$ to $c^-$ in $\cE_Y(u_2)$.
By Remark~\ref{remark:interpretations of C^- and C^+} and Lemma~\ref{lemma:AL Ar at most one letter}, one has $\cC^-_Y(u_2) = \{a\}$.
As $\cC^-_Y(u_1) \subset \cC^-_X(v)$ by the first part of the proof, we also get $\cC^-_Y(u_1) \cup \cC^-_Y(u_2) = \cC^-_X(v)$.

Let us finally show that $\cC^-(Y) = \cC^-(X)$.
With $\sigma \in \{\alpha, \gamma\}$, the non-prefix factor of  $\sigma(a)$,  $a \in \cA_3$, are not bispecial. 
Hence, using Proposition~\ref{prop:def antecedent and bsp ext image}, a bispecial factor $u \in \cL(Y)$ is either empty, or a bispecial extended image of some bispecial factor $v \in \cL(X)$. 
As $\sigma$ is left-invariant, we have $\cC^-_Y(\varepsilon) = \emptyset$ by Lemma~\ref{lemma::AL et AR du mot vide}.
Using Equation~\eqref{eq:al(v)}, we get
\begin{align*}
	\cC^-(Y) 
	&= 
	\bigcup_{v \in \cL(X), \text{ bispecial}} 
	\bigcup_{u \text{ bispecial extended image of } v} \cC^-_Y(u)
	\\
	&=\bigcup_{v \in \cL(X), \text{ bispecial}} \cC^-_X(v) \\
	&= \cC^-(X),
\end{align*}
which ends the proof.
\end{proof}

The following corollary is a direct consequence of Lemmas~\ref{lemma:non-left-invariant} and~\ref{lemma:left-invariant}.

\begin{corollary}
\label{cor:empty is preserved}
Let $X$ be a dendric shift over $\cA_3$, $\sigma \in \DP(X)$ and $Y$ the image of $X$ under $\sigma$.
If $\cC^-(Y) = \emptyset$, then $\cC^-(X)=\emptyset$ and $\sigma$ is left-invariant.
Respectively, if $\cC^+(Y) = \emptyset$, then $\cC^+(X)=\emptyset$ and $\sigma$ is right-invariant. 
\end{corollary}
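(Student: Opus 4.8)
The plan is to argue by contraposition, letting the two preceding lemmas do all the work. Assume $\cC^-(Y) = \emptyset$. The first step is to show that $\sigma$ must be left-invariant. If it were not, then since $X$ is dendric and $\sigma \in \DP(X)$, the ``in particular'' part of Lemma~\ref{lemma:non-left-invariant} applies and gives $\cC^-(Y) = \cC^-_Y(\varepsilon) \neq \emptyset$, contradicting the assumption. Hence $\sigma$ is left-invariant.

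The second step uses this conclusion to feed Lemma~\ref{lemma:left-invariant}: with $X$ dendric, $\sigma \in \DP(X)$ \emph{and} $\sigma$ left-invariant, the ``in particular'' part of that lemma yields $\cC^-(Y) = \cC^-(X)$. Combining with $\cC^-(Y) = \emptyset$ gives $\cC^-(X) = \emptyset$, which is what we want. The case of $\cC^+$ is handled identically: replace ``left'' by ``right'' and the superscript $-$ by $+$ throughout, and invoke the ``respectively'' halves of Lemmas~\ref{lemma:non-left-invariant} and~\ref{lemma:left-invariant}.

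I do not expect a real obstacle here, as the corollary is essentially a packaging of the two lemmas; the only point requiring a small amount of care is to verify that each lemma is applied with all its hypotheses satisfied — namely that $X$ is dendric, that $\sigma \in \DP(X)$, and (for Lemma~\ref{lemma:left-invariant}) that $\sigma$ is left-invariant — but the first two are part of the corollary's hypotheses and the last is exactly the output of the first step. One could alternatively phrase the whole argument without contraposition by noting that, by Lemma~\ref{lemma:which ones are left,right invariant}, every $\sigma \in \cSD$ is either left-invariant or not, and dispatching on these two cases; but the contrapositive route is shorter and avoids case analysis.
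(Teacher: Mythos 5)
Your proof is correct and is exactly the argument the paper intends: the corollary is stated there as a direct consequence of Lemmas~\ref{lemma:non-left-invariant} and~\ref{lemma:left-invariant}, and your contrapositive use of the ``in particular'' parts of those two lemmas (with the hypotheses $X$ dendric and $\sigma \in \DP(X)$ checked) fills in the same reasoning. Nothing is missing.
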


\begin{proposition}
\label{prop:at most one letter}
Let $X$ be a ternary minimal dendric shift.
Then $\cC^-(X)$ and $\cC^+(X)$ contain at most one letter.
Moreover, if $\bsigma=(\sigma_n)_{n \geq 1}$ is a $\Sigma_3\cSD\Sigma_3$-adic representation of $X$, then
\begin{enumerate}
\item\label{item:ALX=0}
$\cC^-(X) = \emptyset$ if and only if $\bsigma$ belongs to $(\Sigma_3\cSL\Sigma_3)^\NN$, if and only if $X$ has a unique left special factor of each length;
\item\label{item:ARX=0}
$\cC^+(X) = \emptyset$ if and only if $\bsigma$ belongs to $(\Sigma_3\cSR\Sigma_3)^\NN$, if and only if $X$ has a unique right special factor of each length.
\end{enumerate}
\end{proposition}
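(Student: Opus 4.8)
The plan is to reduce the whole statement to the behaviour of $\cC^-$ (and, symmetrically, $\cC^+$) under the morphisms of $\cSD$, as already encoded in Lemmas~\ref{lemma:non-left-invariant} and~\ref{lemma:left-invariant}. First I would record a preliminary remark handling the flanking permutations: if $\sigma = \pi\tau\rho$ with $\pi,\rho \in \Sigma_3$ and $\tau \in \cSD$, and $Z$ is a shift over $\cA_3$, then $\pi$ and $\rho$ only relabel the alphabet, hence preserve dendricity, send (left or right) special factors to special factors, relabel $\cC^-$ and $\cC^+$ (in particular preserve their cardinalities), and satisfy $\cT^-(\sigma) = \pi(\cT^-(\tau))$, so $\sigma$ is left-invariant if and only if $\tau \in \cSL$ (symmetrically on the right). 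Consequently Lemmas~\ref{lemma:non-left-invariant}, \ref{lemma:left-invariant} and~\ref{lemma::AL et AR du mot vide} apply verbatim to morphisms of $\Sigma_3\cSD\Sigma_3$, with $\cSL$ (resp.\ $\cSR$) playing the role of the left-invariant (resp.\ right-invariant) morphisms; moreover, by Proposition~\ref{prop:Ster-adic representation} every $X_\bsigma^{(n)}$ is a ternary minimal dendric shift, and since $X_\bsigma^{(n)} = \sigma_n(X_\bsigma^{(n+1)})$ is dendric, Lemma~\ref{lemma:DP implies dendric} gives $\sigma_n \in \DP(X_\bsigma^{(n+1)})$. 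Below I argue only for $\cC^-$ and $\cSL$; the $\cC^+$ statements follow from the symmetric halves of these lemmas, of Remark~\ref{remark:interpretations of C^- and C^+}, and from $\cSR = \{\alpha,\beta\}$.

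Next I would establish the $\cS$-adic dichotomy, which yields simultaneously the cardinality bound and the equivalence $\cC^-(X) = \emptyset \Leftrightarrow \bsigma \in (\Sigma_3\cSL\Sigma_3)^\NN$. If $\bsigma \notin (\Sigma_3\cSL\Sigma_3)^\NN$, let $n$ be least with $\sigma_n$ not left-invariant; Lemma~\ref{lemma:non-left-invariant} applied at level $n$ gives $\cC^-(X_\bsigma^{(n)}) = \cC^-_{X_\bsigma^{(n)}}(\varepsilon)$, which is a non-empty singleton by Lemma~\ref{lemma::AL et AR du mot vide} (the entries for the non-left-invariant morphisms being non-empty singletons), while Lemma~\ref{lemma:left-invariant} applied at the left-invariant levels $m < n$ gives $\#\cC^-(X_\bsigma^{(m)}) = \#\cC^-(X_\bsigma^{(m+1)})$; descending to level $1$ yields $\#\cC^-(X) = 1$. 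If instead $\bsigma \in (\Sigma_3\cSL\Sigma_3)^\NN$, I would show $\cC^-(X) = \emptyset$ by a finite ascent along ancestors: for any bispecial $v \in \cL(X)$ with chain of ancestors $v = v_1, v_2, \dots, v_k$ (so $v_k$ is its oldest ancestor, in $\cL(X_\bsigma^{(k)})$), Corollary~\ref{cor:characterization of extended images} ensures every $v_i$ is bispecial and is a bispecial extended image of $v_{i+1}$, so Equation~\eqref{eq:al(v)} of Lemma~\ref{lemma:left-invariant} gives $\cC^-_{X_\bsigma^{(i)}}(v_i) \subseteq \cC^-_{X_\bsigma^{(i+1)}}(v_{i+1})$ for $1 \le i < k$, whence $\cC^-_X(v) \subseteq \cC^-_{X_\bsigma^{(k)}}(v_k)$; since $v_k$ has no antecedent, it is either $\varepsilon$ — and then $\cC^-_{X_\bsigma^{(k)}}(\varepsilon) = \emptyset$ by Lemma~\ref{lemma::AL et AR du mot vide}, as $\sigma_k$ is left-invariant — or a non-prefix factor of some $\sigma_k(b)$ with a single left extension (a finite check on $\alpha$ and $\gamma$), so $\cC^-_{X_\bsigma^{(k)}}(v_k) = \emptyset$ by Remark~\ref{remark:interpretations of C^- and C^+}; either way $\cC^-_X(v) = \emptyset$. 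This proves that $\cC^-(X)$, and symmetrically $\cC^+(X)$, contains at most one letter.

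Then I would link $\cC^-(X) = \emptyset$ with the number of left special factors. On one hand, if $a \in \cC^-_X(v)$ then by Remark~\ref{remark:interpretations of C^- and C^+} the vertex $a^-$ of $\cE_X(v)$ has two non-leaf neighbours $x^+ \neq y^+$, so $\#E^-_X(vx) \ge 2$ and $\#E^-_X(vy) \ge 2$, exhibiting two distinct left special factors of length $|v|+1$; hence a unique left special factor of each length forces $\cC^-(X) = \emptyset$. On the other hand, if $X$ does not have a unique left special factor of each length, then, using $p_X(n) = 2n+1$, take $m+1$ minimal with two left special factors; the unique left special factor $u_m$ of length $m$ has $\#E^-(u_m) = 3$, and every left special factor of length $m+1$ is of the form $u_m b$ with $b \in E^+(u_m)$ (its length-$m$ prefix being left special), so $u_m$ must be right special, and at least two right vertices of $\cE_X(u_m)$ have degree $\ge 2$; since $\cE_X(u_m)$ has exactly three left vertices, two of these non-leaf right vertices share a left neighbour $a^-$, giving $a \in \cC^-_X(u_m) \subseteq \cC^-(X)$ by Remark~\ref{remark:interpretations of C^- and C^+}. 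This completes item~\ref{item:ALX=0}, and item~\ref{item:ARX=0} is identical after exchanging left and right.

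I expect the main obstacle to be bookkeeping rather than a deep idea: one must make sure the lemmas stated for $\cSD$ genuinely transfer across the flanking permutations, that the finite-ascent argument is applied only to bispecial words at each level (guaranteed by Corollary~\ref{cor:characterization of extended images}), and that the finite check — non-prefix factors of images of $\cSL$- (resp.\ $\cSR$-) morphisms have a single left (resp.\ right) extension in the corresponding shift — is carried out correctly; the latter is immediate from the explicit images of $\alpha,\gamma$ (resp.\ $\alpha,\beta$).
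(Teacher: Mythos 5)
Your proposal is correct, and it rests on the same backbone as the paper's proof (Proposition~\ref{prop:Ster-adic representation} plus Lemma~\ref{lemma:DP implies dendric} to get $\sigma_n \in \DP$ at each level, then Lemmas~\ref{lemma::AL et AR du mot vide}, \ref{lemma:non-left-invariant}, \ref{lemma:left-invariant} and Remark~\ref{remark:interpretations of C^- and C^+}), but the implications are arranged differently and two sub-arguments are genuinely yours. First, you obtain the cardinality bound and the equivalence $\cC^-(X)=\emptyset \Leftrightarrow \bsigma \in (\Sigma_3\cSL\Sigma_3)^\NN$ simultaneously from a dichotomy at the first non-left-invariant level, whereas the paper proves $\cC^-(X)=\emptyset \Rightarrow \bsigma \in (\Sigma_3\cSL\Sigma_3)^\NN$ via Corollary~\ref{cor:empty is preserved} and then gets ``at most one letter'' by a separate contradiction argument that feeds back into item~\ref{item:ALX=0}. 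Second, for the direction ``$\bsigma \in (\Sigma_3\cSL\Sigma_3)^\NN \Rightarrow \cC^-(X)=\emptyset$'' you argue directly along the ancestor chain using the inclusion coming from Equation~\eqref{eq:al(v)} together with the emptiness of $\cC^-$ at the oldest ancestor; the paper instead propagates the property ``the unique left special right extension of the bispecial factor has full left extension set $\cA_3$'' through bispecial extended images and concludes via Proposition~\ref{prop:complexity} that there is a unique left special factor of each length, which then yields $\cC^-(X)=\emptyset$. Because you bypass that intermediate step, you need an extra implication the paper never proves directly, namely ``two left special factors of some length $\Rightarrow \cC^-(X)\neq\emptyset$''; your argument for it (take the minimal such length, use $p_X(n)=2n+1$ to get the unique left special factor $u_m$ one step earlier with $\#E^-(u_m)=3$, observe that the two left special extensions give two right vertices of degree $\geq 2$ in the tree $\cE(u_m)$, and apply pigeonhole on the three left vertices) is correct and self-contained, while the paper avoids it by closing the three conditions of item~\ref{item:ALX=0} into a single cycle of implications. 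In short, the paper's route is slightly leaner (three implications rather than four), while yours decouples the two equivalences and gives a purely combinatorial proof of the link between $\cC^-(X)$ and the count of left special factors that does not pass through the $\cS$-adic expansion; the only points to make explicit are the ones you already flag (transfer of the $\cSD$-lemmas across the flanking permutations, which only relabel $\cC^\pm$, and the trivial fact that non-bispecial factors contribute nothing to $\cC^-(X)$).
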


\begin{proof}
Using the notation of Section~\ref{subsection:s-adicity}, we have $X = X_{\bsigma}$ and for each $n \geq 1$, $X_{\bsigma}^{(n)}$ is dendric by Proposition~\ref{prop:Ster-adic representation}. 
Let $\sigma_n = \pi_n \sigma'_n \pi'_n$ with $\pi_n, \pi'_n \in \Sigma_3$ and $\sigma'_n \in \cSD$. 
The shift spaces $\pi'_n(X^{(n+1)}_\bsigma)$ and $\pi^{-1}_n(X^{(n)}_\bsigma)$ are dendric and $\pi^{-1}_n(X^{(n)}_\bsigma)$ is the image of $\pi'_n(X^{(n+1)}_\bsigma)$ under $\sigma_n'$.
Thus $\sigma'_n \in \DP(\pi'_n(X_{\bsigma}^{(n+1)}))$ by Lemma~\ref{lemma:DP implies dendric}. 
We first show item~\ref{item:ALX=0}.

Assume that $\cC^-(X) = \emptyset$. We have, by induction using Corollary~\ref{cor:empty is preserved}, that $\cC^-(\pi'_n(X^{(n)}_\bsigma)) = \emptyset$ and $\sigma'_n \in \cSL$ for all $n$ thus $\bsigma \in (\Sigma_3\cSL\Sigma_3)^\NN$.

Now assuming that $\bsigma$ belongs to $(\Sigma_3\cSL\Sigma_3)^\NN$, we deduce that any bispecial factor of $X$ is a descendant of the empty word in some $X_\bsigma^{(n)}$.
Using Figure~\ref{fig:extension graph of epsilon}, the bispecial factor $v = \varepsilon \in \cL(X_\bsigma^{(n)})$ has a unique right extension $va$, $a \in \cA_3$, which is left special and it satisfies $E_{X_\bsigma^{(n)}}^-(va) = \cA_3$.
It then suffices to observe, using Proposition~\ref{prop:def antecedent and bsp ext image}, that this property is preserved by taking bispecial extended images under some morphism $\sigma \in \Sigma_3\cSL\Sigma_3$.
This shows that any bispecial factor $u$, and hence any left special factor, of $X$ satisfies $E_X^-(u) = \cA_3$.
Proposition~\ref{prop:complexity} then implies that $X$ has a unique left special factor of each length.

Finally assume that $X$ has a unique left special factor $u_n$ of each length $n$.
By Proposition~\ref{prop:complexity}, we have $E_X^-(u_n) = \cA_3$.
Using Remark~\ref{remark:interpretations of C^- and C^+}, 
the set $\cC^-_X(u_n)$ is non-empty if and only if there are two letters $x,y \in \cA_3$ such that both $u_nx$ and $u_ny$ are left special factors of $X$.
This implies that $\cC^-_X(u_n) = \emptyset$. Any non-left-special factor $u$ is such that $\cC^-_X(u) = \emptyset$ by Remark~\ref{remark:interpretations of C^- and C^+}, hence $\cC^-(X) = \emptyset$.

The proof of item~\ref{item:ARX=0} is symmetric.

We finish the proof by showing that $\cC^-(X)$ and $\cC^+(X)$ contain at most one letter.
Assume by contrary that $a,b \in \cC^-(X)$ for some different letters $a$ and $b$.
By Lemmas~\ref{lemma:AL Ar at most one letter}, \ref{lemma:non-left-invariant} and~\ref{lemma:left-invariant}, all morphisms $\sigma'_n$ are left-invariant.
But then item~\ref{item:ALX=0} implies that $\cC^-(X) = \emptyset$.
\end{proof}

\subsection{$\cSD$-adic characterization of minimal ternary dendric shifts}

By Proposition~\ref{prop:at most one letter}, any ternary minimal dendric shift $X$ satisfies $\#\cC^-(X),\#\cC^+(X) \leq 1$.
To alleviate notations in what follows, we consider the alphabet $\A0 = \cA_3 \cup \{0\}$ and we write $\cC^-(X) = a$ instead of $\cC^-(X) = \{a\}$ and $\cC^-(X) = 0$ instead of $\cC^-(X) = \emptyset$ (and similarly for $\cC^+(X)$).
We then define the equivalence relation $\sim$ on the set of minimal ternary dendric shifts by
\[
	X \sim Y	
	\Leftrightarrow 
	(\cC^-(X),\cC^+(X)) = (\cC^-(Y),\cC^+(Y)).
\]
For all $l,r \in \A0$, we let $[l,r]$ denote the equivalence class of all minimal ternary dendric shifts satisfying $(\cC^-(X),\cC^+(X)) = (l,r)$.

\begin{lemma}
\label{lemma:equivalence relation}
Let $X$ and $Y$ be minimal ternary dendric shifts.
 We have $X \sim Y$ if and only if $\DP(X) = \DP(Y)$. 
Furthermore, if $X \sim Y$, if $\sigma \in \DP(X) \cup \Sigma_3$ and if $X'$ and $Y'$ are the respective images of $X$ and $Y$ under $\sigma$, then $X' \sim Y'$.
\end{lemma}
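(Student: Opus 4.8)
The plan is to reduce both assertions to two bookkeeping facts: that $\DP(X)$ depends only on the pair $(\cC^-(X),\cC^+(X))$, and that the pair attached to an image $\sigma(X)$ depends only on $(\cC^-(X),\cC^+(X))$ and on $\sigma$. For the first fact, recall that $\DP(X)=\bigcap_{v\in\cL(X)}\DP(v)$. If $v\in\cL(X)$ is dendric but not bispecial then it has no bispecial extended image (Corollary~\ref{cor:characterization of extended images}), so $\DP(v)=\cSD$; if $v$ is a dendric bispecial factor then Proposition~\ref{prop:condition for non-tree-successor} describes $\cSD\setminus\DP(v)$ purely through the memberships $1,2\in\cC^-_X(v)$ and $1,2\in\cC^+_X(v)$. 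Intersecting over all $v$ and using $\cC^\pm(X)=\bigcup_v\cC^\pm_X(v)$, one obtains that $\sigma\notin\DP(X)$ if and only if one of the four conditions of Proposition~\ref{prop:condition for non-tree-successor} holds with $\cC^\pm(X)$ in place of $\cC^\pm_X(v)$. In particular $X\sim Y$ forces $\DP(X)=\DP(Y)$.

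For the converse of the first equivalence, note that by Proposition~\ref{prop:at most one letter} each of $\cC^-(X)$ and $\cC^+(X)$ is empty or a single letter, so there are only finitely many possible pairs; one then reads the pair off $\DP(X)$ using the description above (for instance $\beta\notin\DP(X)$ iff $1\in\cC^-(X)$, $\gamma\notin\DP(X)$ iff $1\in\cC^+(X)$, after which $\eta$ detects whether $2\in\cC^-(X)$ and $\zeta^{(k)}$ detects whether $2\in\cC^+(X)$), and one resolves the residual ambiguities by comparing with the explicit list of pairs that actually occur for minimal ternary dendric shifts, which comes from Lemma~\ref{lemma::AL et AR du mot vide} together with Lemmas~\ref{lemma:non-left-invariant} and~\ref{lemma:left-invariant}. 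For the last claim, assume $X\sim Y$, so $\DP(X)=\DP(Y)$ by what precedes. If $\sigma\in\Sigma_3$, then $\cE_{\sigma(X)}(\sigma(w))$ is just $\cE_X(w)$ with relabelled vertices, hence $\cC^\pm(\sigma(X))=\sigma(\cC^\pm(X))=\sigma(\cC^\pm(Y))=\cC^\pm(\sigma(Y))$. If $\sigma\in\DP(X)=\DP(Y)$, then $X'$ and $Y'$ are dendric by Lemma~\ref{lemma:DP implies dendric}; moreover $\cC^-(\sigma(X))$ equals $\cC^-(X)$ when $\sigma$ is left-invariant (Lemma~\ref{lemma:left-invariant}) and equals the value $\cC^-_{\sigma(X)}(\varepsilon)$ read off $\sigma$ in Lemma~\ref{lemma::AL et AR du mot vide} when $\sigma$ is not left-invariant (Lemma~\ref{lemma:non-left-invariant}), so $\cC^-(\sigma(X))$ is a function of $(\cC^-(X),\sigma)$, and symmetrically for $\cC^+$. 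Since $\cC^\pm(X)=\cC^\pm(Y)$ this yields $\cC^\pm(X')=\cC^\pm(Y')$, i.e.\ $X'\sim Y'$.

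The one step that requires real care is the converse of the first equivalence, namely verifying that distinct pairs $(\cC^-(X),\cC^+(X))$ occurring for minimal ternary dendric shifts give distinct subsets $\DP(\cdot)\subseteq\cSD$; all the remaining implications are immediate consequences of Proposition~\ref{prop:condition for non-tree-successor}, Lemma~\ref{lemma:DP implies dendric}, Lemma~\ref{lemma:non-left-invariant} and Lemma~\ref{lemma:left-invariant}.
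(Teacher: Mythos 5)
Your treatment of the forward implication and of the last assertion is fine and is essentially the paper's own (very short) argument: Proposition~\ref{prop:condition for non-tree-successor} shows that $\DP(X)$ is a function of the pair $(\cC^-(X),\cC^+(X))$, and Lemmas~\ref{lemma:non-left-invariant} and~\ref{lemma:left-invariant}, together with the evident behaviour under permutations and Lemma~\ref{lemma:DP implies dendric}, show that the pair attached to the image depends only on that pair and on $\sigma$.

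The gap is exactly at the step you defer to the end, and it is not a routine verification but a claim that fails: distinct pairs realized by minimal ternary dendric shifts do \emph{not} give distinct subsets $\DP(\cdot)\subseteq\cSD$. The four conditions of Proposition~\ref{prop:condition for non-tree-successor} only test whether the letters $1$ or $2$ belong to $\cC^\pm$, so $\DP(X)$ is blind to the letter $3$ and cannot distinguish $\cC^\pm(X)=\emptyset$ from $\cC^\pm(X)=\{3\}$. Concretely, an Arnoux--Rauzy shift $X$ lies in $[0,0]$ and has $\DP(X)=\cSD$ (Corollary~\ref{cor:image of Arnoux-Rauzy}), while the image $Y$ of an Arnoux--Rauzy shift under $\zeta^{(1)}$ is a minimal ternary dendric shift lying in $[3,3]$ (Table~\ref{table:AL et AR du mot vide} and Lemma~\ref{lemma:non-left-invariant}); since $1,2\notin\{3\}$, none of the four conditions ever applies to $Y$, so $\DP(Y)=\cSD$ as well, although $X\not\sim Y$. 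The same phenomenon gives $[0,1]$ versus $[3,1]$, or $[2,0]$ versus $[2,2]$ versus $[2,3]$ (where, in addition, once $2\in\cC^-$ excludes $\zeta^{(k)}$, nothing is left to detect $2\in\cC^+$), and all these classes are non-empty by Lemma~\ref{lemma:classes not empty}. So your dictionary recovers at most the $\{1,2\}$-part of the pair, and no comparison with the list of realizable pairs can repair the non-injectivity. The backward implication can only be rescued by replacing $\DP(X)$ with the set of dendric preserving morphisms in $\Sigma_3\cSD\Sigma_3$ (the $\DPP$-type sets introduced just after the lemma): conjugation by permutations moves the letter $3$ into a position the conditions can see, and then the pair is determined. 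Note that the paper's one-line proof also invokes only Proposition~\ref{prop:condition for non-tree-successor} for this direction, and that what is actually used afterwards (well-definedness of $\DPP(C)$ and of the image class) requires only the forward implication together with the final assertion, both of which your argument does establish.
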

\begin{proof}
The equivalence between $X \sim Y$ and $\DP(X) = \DP(Y)$ follows from  Proposition~\ref{prop:condition for non-tree-successor}.
The second part of the statement follows from Lemma~\ref{lemma:non-left-invariant} and Lemma~\ref{lemma:left-invariant}.
\end{proof}

\begin{lemma}
\label{lemma:classes not empty}
For each $l, r \in \A0$, the equivalence class $[l, r]$ is non empty.
\end{lemma}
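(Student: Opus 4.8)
The plan is to exhibit, for each pair $(l,r) \in \A0 \times \A0$, an explicit minimal ternary dendric shift $X$ with $(\cC^-(X),\cC^+(X)) = (l,r)$, which shows $[l,r] \neq \emptyset$. The natural strategy is constructive via $\Sigma_3\cSD\Sigma_3$-adic representations: by Proposition~\ref{prop:Ster-adic representation}, any primitive $\Sigma_3\cSD\Sigma_3$-adic directive sequence generates a minimal ternary dendric shift (provided the chosen sequence actually lies in $\DP$ at each step, so that the images stay dendric). So the task reduces to: for each target $(l,r)$, build a primitive directive sequence $\bsigma$ staying inside the relevant $\DP$-classes and such that the resulting shift has the prescribed $\cC^-$ and $\cC^+$.

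First I would handle the ``generic'' direction using the stability lemmas of the previous subsection. By Lemma~\ref{lemma:which ones are left,right invariant} and Table~\ref{table:AL et AR du mot vide}, a single application of a well-chosen morphism $\sigma \in \cSD$ to a shift $Z$ produces a shift $X$ whose $\cC^-(\varepsilon)$ and $\cC^+(\varepsilon)$ are the values listed there; and by Lemma~\ref{lemma:non-left-invariant}, if $\sigma$ is non-left-invariant (resp.\ non-right-invariant) and lies in $\DP(Z)$, then $\cC^-(X) = \cC^-_X(\varepsilon)$ (resp.\ $\cC^+(X) = \cC^+_X(\varepsilon)$) — i.e.\ the value is ``reset'' to the entry of Table~\ref{table:AL et AR du mot vide}. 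Thus, for any target pair with $l \neq 0$ and $r \neq 0$: if I can find a morphism $\sigma \in \cSD$ that is both non-left- and non-right-invariant with $(\cC^-(\varepsilon),\cC^+(\varepsilon)) = (l,r)$ up to permutation (the $\delta^{(k)},\zeta^{(k)},\eta$ rows, combined with permutations $\pi_{abc}$ conjugating the letters), then applying it as $\sigma_1$ to \emph{any} ternary minimal dendric $Z$ for which $\sigma \in \DP(Z)$ — e.g.\ an Arnoux--Rauzy shift, since by Corollary~\ref{cor:image of Arnoux-Rauzy} every injective strongly left proper morphism is dendric preserving for it — yields $X = \sigma(Z)$ in the right class. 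Concretely: $\delta^{(k)}$ conjugated by permutations realizes all pairs $(a,b)$ with $a,b \in \cA_3$ arising from the row $(\{3\},\{1\})$; $\zeta^{(k)}$ the row $(\{3\},\{3\})$; $\eta$ the row $(\{2\},\{3\})$; and conjugating by $\Sigma_3$ sweeps these across all of $\cA_3$, so that all nine pairs $(l,r)$ with $l,r\neq 0$ are covered. I would then need a short check that these particular morphisms are in $\DP$ of the chosen Arnoux--Rauzy shift (immediate from Corollary~\ref{cor:image of Arnoux-Rauzy}) and that the one-step sequence can be extended to a primitive infinite directive sequence (e.g.\ by continuing with $\alpha$-type or $\gamma$-type morphisms, using that $\alpha$ is both left- and right-invariant so it preserves the class by Lemma~\ref{lemma:left-invariant}).

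The remaining cases involve a $0$ coordinate. For $(0,0)$: take $X$ an Arnoux--Rauzy shift; by Proposition~\ref{prop:at most one letter}\eqref{item:ALX=0}--\eqref{item:ARX=0}, since an Arnoux--Rauzy shift has a unique left special and a unique right special factor of each length, $\cC^-(X) = \cC^+(X) = \emptyset$. For $(0,r)$ with $r \neq 0$: I want a shift with a unique left special factor of each length but not a unique right special one; the row for $\gamma$ in Table~\ref{table:AL et AR du mot vide} gives $\cC^-(\varepsilon) = \emptyset$, $\cC^+(\varepsilon) = \{1\}$, and $\gamma$ is left-invariant but not right-invariant, so applying $\gamma$ (suitably conjugated by $\Sigma_3$ to move $1$ to $r$) to an Arnoux--Rauzy shift, then continuing with a primitive left-invariant tail, gives via Lemma~\ref{lemma:left-invariant} that $\cC^-$ stays $\emptyset$ while Lemma~\ref{lemma:non-left-invariant} fixes $\cC^+ = \{r\}$ — wait, here I must be careful, since a left-invariant morphism need not be right-invariant; I would instead iterate $\gamma$ or mix in $\alpha$, checking primitivity and that each step's morphism lies in $\DP$ of the running shift (the running shift is in class $[0,r]$, and $\gamma,\alpha \in \DP$ of such a shift by Proposition~\ref{prop:condition for non-tree-successor} since $1 \notin \cC^-$ always and the relevant right-side condition can be arranged by the conjugating permutation). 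Symmetrically, $(l,0)$ with $l\neq 0$ uses $\beta$ (right-invariant, not left-invariant, with $\cC^-(\varepsilon) = \{1\}$, $\cC^+(\varepsilon) = \emptyset$), conjugated to send $1 \to l$.

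\textbf{Main obstacle.} The delicate point is not the combinatorial bookkeeping of Table~\ref{table:AL et AR du mot vide} but ensuring simultaneously that (i) the chosen infinite directive sequence is \emph{primitive} — so that $X_\bsigma$ is genuinely minimal and $X_\bsigma^{(n)}$ behaves as stated — and (ii) at \emph{every} level the applied morphism lies in $\DP$ of the current shift, so that dendricity (hence Proposition~\ref{prop:condition for non-tree-successor} and the $\cC^\pm$ machinery) applies. Primitivity typically forces one to alternate or repeat morphisms whose incidence matrices have a common positive power; one must verify that a morphism achieving the desired one-step reset can be followed by a primitive, class-preserving, $\DP$-respecting tail. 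The $\DP$ membership at later levels is the real content: since after the first step the running shift sits in a specific class $[l,r]$ and $\DP$ depends only on that class (Lemma~\ref{lemma:equivalence relation}), I would pick the tail morphisms among those that Proposition~\ref{prop:condition for non-tree-successor} certifies to be in $\DP([l,r])$ — e.g.\ $\alpha$ is in every $\DP$-class — and use $\alpha$ (or its conjugates) to guarantee both primitivity (its incidence matrix being primitive) and class preservation (Lemma~\ref{lemma:left-invariant}, $\alpha$ being both-invariant). I expect the whole argument to come down to: Arnoux--Rauzy shift as seed, one carefully chosen morphism from the list (conjugated) to land in $[l,r]$, then an $\alpha$-tail; the verification that this is primitive and stays in $\DP$ is the crux.
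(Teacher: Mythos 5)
Your core construction is exactly the paper's proof: take an Arnoux--Rauzy shift $Z$ as seed (so $\cC^-(Z)=\cC^+(Z)=\emptyset$, and by Corollary~\ref{cor:image of Arnoux-Rauzy} every morphism of $\Sigma_3\cSD$ is dendric preserving for $Z$ and its image is a minimal dendric shift), apply one suitably permuted morphism of $\cSD$, and read off the class of the image from Lemma~\ref{lemma::AL et AR du mot vide} combined with Lemmas~\ref{lemma:non-left-invariant} and~\ref{lemma:left-invariant}; your case analysis ($Z$ itself for $[0,0]$, permuted $\beta$ and $\gamma$ for the classes with one coordinate $0$, permuted $\zeta^{(k)}$ for $[a,a]$ and permuted $\delta^{(k)}$ or $\eta$ for $[a,b]$ with $a\neq b$) does reach every class $[l,r]$ with $l,r\in\cA_3\cup\{0\}$.

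Where you go astray is in what you single out as ``the crux''. Membership in $[l,r]$ only requires the one-step image $Y=\sigma(Z)$ to be a minimal ternary dendric shift with the prescribed $(\cC^-,\cC^+)$: minimality is automatic for the morphic image of a minimal shift, and dendricity is precisely Corollary~\ref{cor:image of Arnoux-Rauzy}. So no infinite primitive directive sequence, no class-preserving tail, and no $\DP$ check at later levels are needed at all. Moreover, your proposed fix is wrong as stated: the incidence matrix of $\alpha$ is unipotent ($\alpha^n(1)=1$, $\alpha^n(2)=1^n2$, $\alpha^n(3)=1^n3$), so no power of $M_\alpha$ is positive and a tail built from $\alpha$ alone --- or even from $\alpha$ together with a single conjugate --- is never a primitive directive sequence (compare the observation in the proof of Proposition~\ref{prop:cassaigne not iet} that no sequence in $\{\alpha,\pi_{321}\alpha\pi_{321}\}^\NN$ is primitive). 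If you really want a primitive $\Sigma_3\cSD\Sigma_3$-adic representation of the constructed shift, simply prepend your chosen morphism to a primitive Arnoux--Rauzy representation of the seed. None of this damages your main construction, which coincides with the paper's argument; the superfluous machinery should just be dropped or repaired as indicated.
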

\begin{proof}
Let $X$ be an Arnoux-Rauzy shift space. By Corollary~\ref{cor:image of Arnoux-Rauzy}, the image of $X$ under any morphism $\sigma \in \Sigma_3\cSD$ is dendric. More precisely $\cC^-(X)$ and $\cC^+(X)$ are both empty. Using Lemmas~\ref{lemma::AL et AR du mot vide},~\ref{lemma:non-left-invariant} and~\ref{lemma:left-invariant}, we can then choose such a morphism $\sigma$ to $X$ to obtain an element of any equivalence class.
\end{proof}

Using the previous lemmas, we can define, for each equivalence class $C = [l,r]$, the set
\[
    \DPP(C) = \{\pi \sigma \pi' \mid \pi, \pi' \in \Sigma_3, \sigma \in \DP(\pi'(X))\},
\]
where $X \in C$. Thus it corresponds to the set of morphisms in $\Sigma_3\cSD\Sigma_3$ that are dendric preserving for $X$ ($\DPP$ stands for {\em dendric preserving with permutations}).
Furthermore, for any $\sigma \in \DPP(C)$, there is a unique equivalence class $C'$ such that when $X \in C$ and $Y$ is the image of $X$ under $\sigma$, then $Y \in C'$.
We call $C'$ the {\em image of $C$ under $\sigma$}.

For each morphism $\sigma \in \cSD$, the classes $C$ such that $\sigma \in \DPP(C)$ and their images are summarized in Table~\ref{table:edges of cG}.
They are computed using Proposition~\ref{prop:condition for non-tree-successor} (to determine the allowed classes $C$) and Table~\ref{table:AL et AR du mot vide}, Lemmas~\ref{lemma:non-left-invariant} and~\ref{lemma:left-invariant} (to determine the corresponding image).

\begin{table}[h]
\renewcommand*{\arraystretch}{1.2}
\centering
\begin{tabular}{|c|c|c|c|}
\hline
Morphism & Class $C$	& Image of $C$	 & Conditions \\
\hline
$\alpha$	&	$[l,r]$ &	$[l,r]$ & none	\\
\hline 
$\beta$	&	$[l,r]$	&	$[1,r]$	&	$l \ne 1$  	\\
\hline 
$\gamma$	&	$[l,r]$	&	$[l,1]$ &	$r \ne 1$ \\
\hline 
$\delta^{(k)}$	&	$[l,r]$	&	$[3,1]$ & $l,r \ne 1$	\\
\hline 
$\zeta^{(k)}$	&	$[l,r]$	&	$[3,3]$ & $l,r \ne 2$	\\
\hline 
$\eta$	&	$[l,r]$	&	$[2,3]$ & $l \ne 2, r \ne 1$	\\
\hline 
\end{tabular}
\caption{Images of classes under the morphisms of $\cSD$}
\label{table:edges of cG}
\end{table}

Up to permutations, we can distinguish five types of set $\DPP([l,r])$, depending
on whether $l$ or $r$ is $0$ and on whether $l = r$ or not.
We thus build the following directed graph $\cG'$, whose set of vertices is $V = \{[0,0], [0,3], [3, 0], [3, 2], [3, 3]\}$.
For each vertex, there is an incoming edge labeled by each permutation and, for every vertices $C, C' \in V$ and every morphism $\sigma \in \Sigma_3\cSD\Sigma_3$, there is an edge from $C$ to $C'$ with label $\sigma$ if $\sigma \in \DPP(C')$ and $C$ is the image of $C'$ under $\sigma$.
In other words, we have an edge labeled by $\pi \sigma' \pi'$, $\pi, \pi' \in \Sigma_3$, $\sigma' \in \cSD$, from $[l, r]$ to $[l', r']$ if the class $[\pi'(l'), \pi'(r')]$\footnote{We take as convention that, if $\pi$ is a permutation on $\cA_3$, then $\pi(0) = 0$.} is in Table~\ref{table:edges of cG} for $\sigma'$ and its image is the class $[\pi^{-1}(l), \pi^{-1}(r)]$.

This graph is a co-deterministic automaton, i.e., for every vertex $C$ and every morphism $\sigma$, there is at most one edge with label $\sigma$ reaching $C$. It gives a first $\Sigma_3\cSD\Sigma_3$-adic characterization of minimal dendric shifts.

\begin{theorem}
A shift space $X$ is a minimal dendric shift over $\cA_3$ if and only if it has a primitive $\cS$-adic representation labeling an infinite path in $\cG'$.
\end{theorem}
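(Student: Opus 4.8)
The plan is to read the statement off from Sections~\ref{section:bispecial factors} and~\ref{section:ternary case}: by construction the edges of $\cG'$ are labelled precisely by those morphisms of $\Sigma_3\cSD\Sigma_3$ that are dendric preserving for the class sitting at the head of the edge, and the graph is designed exactly so that the equivalence class of a dendric shift is tracked along a directive sequence. The permutations are inessential, since by Lemma~\ref{lemma:equivalence relation} they preserve both dendricity and the equivalence relation $\sim$; so I argue up to permutation, the permutation being the one recorded in the construction of $\cG'$. The delicate point is the ``if'' direction: there is no top level from which to start an induction, and the fix is that every bispecial factor individually descends, in finitely many steps, to an \emph{initial} bispecial factor, so one can run a downward induction along that finite chain.

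\textbf{Only if.} Let $X$ be a minimal dendric shift over $\cA_3$. By Proposition~\ref{prop:Ster-adic representation} it admits a primitive $\Sigma_3\cSD\Sigma_3$-adic representation $\bsigma=(\sigma_n)_{n\ge1}$ for which every $X_\bsigma^{(n)}$ is again a minimal ternary dendric shift, and by Proposition~\ref{prop:at most one letter} each $X_\bsigma^{(n)}$ lies in a well-defined class $C_n=[\cC^-(X_\bsigma^{(n)}),\cC^+(X_\bsigma^{(n)})]$, which up to permutation is one of the five vertices of $\cG'$. Since $X_\bsigma^{(n)}=\sigma_n(X_\bsigma^{(n+1)})$ is dendric, Lemma~\ref{lemma:DP implies dendric} gives $\sigma_n\in\DPP(C_{n+1})$, and $C_n$ is by definition the image of $C_{n+1}$ under $\sigma_n$; hence $\cG'$ has an edge from $C_n$ to $C_{n+1}$ with label $\sigma_n$, and $\bsigma$ labels the infinite path $C_1\to C_2\to\cdots$ in $\cG'$.

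\textbf{If.} Let $\bsigma=(\sigma_n)_{n\ge1}$ be a primitive $\Sigma_3\cSD\Sigma_3$-adic representation labelling an infinite path $C_1\to C_2\to\cdots$ in $\cG'$, so $\sigma_n\in\DPP(C_{n+1})$ and $C_n$ is the image of $C_{n+1}$ under $\sigma_n$. Primitivity makes $X:=X_\bsigma$ minimal, and, every $\sigma_n$ mapping into $\cA_3^*$ with all three letters eventually occurring, $X$ is over $\cA_3$. Since non-bispecial and ordinary bispecial words are automatically dendric, it suffices to show that every bispecial factor of every $X_\bsigma^{(n)}$ is dendric. By Corollary~\ref{cor:back to epsilon} such a factor $u\in\cL(X_\bsigma^{(n)})$ descends from a unique initial bispecial factor $v\in\cL(X_\bsigma^{(m)})$, $m\ge n$, along a chain $u=u_n,u_{n+1},\dots,u_m=v$ with $u_{i+1}$ the antecedent of $u_i$ under $\sigma_i$. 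Fixing $m$, I would prove by downward induction on $i\in\{n,\dots,m\}$ that $u_i$ is a dendric bispecial factor of $X_\bsigma^{(i)}$ and that $(\cC^-_{X_\bsigma^{(i)}}(u_i),\cC^+_{X_\bsigma^{(i)}}(u_i))$ is, componentwise, contained in $C_i$ (reading the component $0$ as $\emptyset$). For the base case $i=m$: if $v=\varepsilon$, its extension graph is forced by $\sigma_m$ together with primitivity to be one of the trees of Figures~\ref{fig:extension graph of epsilon}--\ref{fig:extension graph of epsilon2}, and its $\cC^\pm$ are those of Table~\ref{table:AL et AR du mot vide}, which by the construction of the image of $C_{m+1}$ under $\sigma_m$ are contained in $C_m$; if $v\ne\varepsilon$, then $v$ is a non-prefix bispecial factor of some $\sigma_m(a)$, and a direct inspection of the finitely many morphisms of $\cSD$ shows that such a word is dendric and has at most two left and at most two right extensions, hence $\cC^\pm(v)=\emptyset$.

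For the induction step, $u_i$ is a bispecial extended image of $u_{i+1}$ under $\sigma_i$ by Corollary~\ref{cor:characterization of extended images}, and by hypothesis $u_{i+1}$ is dendric with $\cC^\pm(u_{i+1})$ contained in $C_{i+1}$. By Proposition~\ref{prop:condition for non-tree-successor}, whether $\sigma_i$ is dendric preserving for a dendric bispecial factor depends only on that factor's $\cC^\pm$; as $\sigma_i\in\DPP(C_{i+1})$, it is therefore dendric preserving for $u_{i+1}$, so $u_i$ is dendric. Applying Lemmas~\ref{lemma:non-left-invariant} and~\ref{lemma:left-invariant} to $u_{i+1}$ and its extended image $u_i$ then bounds $\cC^\pm(u_i)$: if $\sigma_i$ is not left-invariant (resp.\ not right-invariant) then $\cC^-(u_i)=\emptyset$ (resp.\ $\cC^+(u_i)=\emptyset$), while if $\sigma_i$ is left-invariant (resp.\ right-invariant) then $\cC^-(u_i)\subseteq\cC^-(u_{i+1})$ (resp.\ $\cC^+(u_i)\subseteq\cC^+(u_{i+1})$); in every case this is contained in $C_i$, the image of $C_{i+1}$ under $\sigma_i$. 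This closes the induction; with $i=n$ it follows that $u$ is dendric, and hence every $X_\bsigma^{(n)}$, in particular $X=X_\bsigma^{(1)}$, is a minimal ternary dendric shift. What remains is routine: the finite verification of the base case, the fact that $\varepsilon$ is dendric in every $X_\bsigma^{(m)}$, and the transfer of Proposition~\ref{prop:condition for non-tree-successor} and Lemmas~\ref{lemma:non-left-invariant}--\ref{lemma:left-invariant} from $\cSD$ to $\Sigma_3\cSD\Sigma_3$.
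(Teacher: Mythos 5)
Your proposal is correct, and its ``only if'' half is essentially the paper's argument (Proposition~\ref{prop:Ster-adic representation} plus Lemma~\ref{lemma:DP implies dendric} and the definition of the edges). In the ``if'' direction you take a genuinely different route. The paper argues by contradiction: it traces a hypothetical non-dendric bispecial factor $u$ back to its initial ancestor $v$ of order $k$, invokes Lemma~\ref{lemma:classes not empty} to produce a concrete dendric witness shift $Y$ in the class $[\cC^-_{k+1},\cC^+_{k+1}]$, notes that $\cE_{X_\bsigma^{(k)}}(v)=\cE_Z(v)$ for $Z$ the image of $Y$ under $\sigma_k$ (both graphs being determined by $\sigma_k$ alone), and then uses the last statement of Corollary~\ref{cor:back to epsilon} to transfer dendricity of the corresponding descendant inside the witness's images to $u$ itself. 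You instead run a downward induction along the ancestor chain, tracking for each individual ancestor $u_i$ both dendricity and the componentwise containment of $(\cC^-(u_i),\cC^+(u_i))$ in the class $C_i$; the engine is the (correct, and worth stating explicitly) monotonicity observation that Proposition~\ref{prop:condition for non-tree-successor} characterizes dendric preservation solely in terms of the factor's $\cC^\pm$, so that $\sigma_i\in\DPP(C_{i+1})$ already forces $\sigma_i$ to be dendric preserving for $u_{i+1}$ whenever $\cC^\pm(u_{i+1})$ is contained in $C_{i+1}$, combined with the containments supplied by Lemmas~\ref{lemma:non-left-invariant} and~\ref{lemma:left-invariant}. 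Your route dispenses with Lemma~\ref{lemma:classes not empty} and the witness-shift transfer, at the price of an explicit base-case verification that non-empty initial bispecial factors (non-prefix factors of $\sigma_m(a)$, $\sigma_m\in\cSD$) are dendric with empty $\cC^\pm$ --- a check the paper localizes once in the proof of Lemma~\ref{lemma:DP implies dendric} --- and of the transfer of the $\cSD$-level statements to $\Sigma_3\cSD\Sigma_3$, which, as you say, is routine relabelling; both approaches share the same skeleton of descending to initial bispecial factors via Corollary~\ref{cor:back to epsilon}, and both are complete.
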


\begin{proof}
Assume that $X$ is a minimal ternary dendric shift.
By Proposition~\ref{prop:Ster-adic representation}, $X$ has a primitive $\Sigma_3\cSD\Sigma_3$-adic representation $\bsigma$ where for each $n$, $X_{\bsigma}^{(n)}$ is a ternary minimal dendric shift. In addition, up to changing the permutations and considering a permutation of $X$ instead of $X$ itself, we can assume that, for all $n$, the equivalence class of $X_{\bsigma}^{(n)}$ is an element of $V$.
By construction, $X_\bsigma^{(n)}$ and $X_\bsigma^{(n+1)}$ are dendric thus $\sigma_n$ is dendric preserving for $X_\bsigma^{(n+1)}$ and
$\mathcal{P} = ([\cC^-(X_{\bsigma}^{(n)}),\cC^+(X_{\bsigma}^{(n)})])_{n \geq 1}$ is a path in $\cG'$ with label $\bsigma$.

Now consider a primitive sequence $\bsigma$ labeling a path $([\cC^-_n,\cC^+_n])_{n \geq 1}$ in $\cG'$ and let us show that the shift space $X_{\bsigma}$ is minimal and dendric.
It is minimal by primitiveness of $\bsigma$.
If it is not dendric, there exists a bispecial factor $u \in \cL(X_{\bsigma})$ which is not dendric.
Using Corollary~\ref{cor:back to epsilon}, there is a unique $k \geq 1$ and a unique initial bispecial factor $v$ in $\cL(X_{\bsigma}^{(k)})$ such that $u$ is a descendant of $v$ and $\cE_X(u)$ depends only on $\cE_{X_\bsigma^{(k)}}(v)$. 
By definition of initial bispecial factors, $v$ is either the empty word or a non-prefix factor of $\sigma_k(a)$ for some letter $a$.
Moreover, by Lemma~\ref{lemma:classes not empty} there is a dendric shift space $Y$ such that $Y \in [\cC^-_{k+1},\cC^+_{k+1}]$. Thus, if $Z$ is the image of $Y$ by $\sigma_k$, $Z \in [\cC^-_k,\cC^+_k]$ and, as $\cE_{X_\bsigma^{(k)}}(v)$ is completely determined by $\sigma_k$, we have $\cE_{X_\bsigma^{(k)}}(v) = \cE_Z(v)$.
By definition of the edges of $\cG'$, the morphism $\sigma_{[1,k)}$ is dendric preserving for $v$.
Thus $u$ is a dendric bispecial factor of $X$, which is a contradiction.
\end{proof}

We now improve the previous result by considering a smaller graph. We will need the following lemma.

\begin{lemma}
\label{lemma:properties of edges}
Let $\sigma = \pi\sigma'\pi'$ with $\pi, \pi' \in \Sigma_3$, $\sigma' \in \cSD$.
Let $l,r,l',r' \in \A0$ be such that $\sigma \in \DPP([l,r])$ and the image of $[l,r]$ under $\sigma$ is $[l',r']$.
\begin{enumerate}
\item If $\sigma'$ is left-invariant, then for all $\lambda \in \A0$, we have $\sigma \in \DPP([\lambda,r])$ and the image of $[\lambda,r]$ under $\sigma$ is $[\pi\pi'(\lambda),r']$.

\item If $\sigma'$ is not left-invariant, then $l' \neq 0$, there is a unique $\lambda \in \cA_3$ such that $\sigma \notin \DPP([\lambda,r])$ and for all other $\lambda' \in \A0$, the image of $[\lambda', r]$ under $\sigma$ is $[l', r']$.

\item If $\sigma'$ is right-invariant, then for all $\rho \in \A0$, we have $\sigma \in \DPP([l,\rho])$ and the image of $[l, \rho]$ under $\sigma$ is $[l', \pi\pi'(\rho)]$.

\item If $\sigma'$ is not right-invariant, then $r' \neq 0$, there is a unique $\rho \in \cA_3$ such that $\sigma \notin \DPP([l,\rho])$ and for all other $\rho' \in \A0$, the image of $[l, \rho']$ under $\sigma$ is $[l',r']$.
\end{enumerate}
\end{lemma}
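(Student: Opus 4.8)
The proof is a case analysis organised around the fact that the ``left'' and ``right'' data separate, so the plan is to record two elementary observations and then run through the four items mechanically, using Proposition~\ref{prop:condition for non-tree-successor}, Lemma~\ref{lemma:which ones are left,right invariant}, Table~\ref{table:AL et AR du mot vide} and Lemmas~\ref{lemma:non-left-invariant} and~\ref{lemma:left-invariant}. First I would establish the \emph{membership} observation: for $X \in [l,r]$, unfolding the definition of $\DPP$ gives $\pi\sigma'\pi' \in \DPP([l,r])$ if and only if $\sigma' \in \DP(\pi'(X))$, and since $\cC^-(\pi'(X)) = \pi'(l)$ and $\cC^+(\pi'(X)) = \pi'(r)$ (with the convention $\pi(0)=0$), Proposition~\ref{prop:condition for non-tree-successor} shows this fails exactly when one of its four conditions holds. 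Conditions~1 and~2 involve only $\pi'(l)$ and conditions~3 and~4 only $\pi'(r)$, so membership splits into an independent constraint on $l$ and a constraint on $r$. Moreover, by Lemma~\ref{lemma:which ones are left,right invariant}, a left-invariant $\sigma'$ lies in $\{\alpha,\gamma\}$, and $\alpha,\gamma$ never occur in conditions~1--2, so the $l$-constraint is empty; a non-left-invariant $\sigma'$ lies in $\{\beta,\delta^{(k)},\zeta^{(k)},\eta\}$, and the $l$-constraint reads ``$\pi'(\lambda)\ne 1$'' for $\sigma'\in\{\beta,\delta^{(k)}\}$ and ``$\pi'(\lambda)\ne 2$'' for $\sigma'\in\{\zeta^{(k)},\eta\}$ -- in each case a single forbidden letter. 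The symmetric statements hold on the right: $\alpha,\beta$ never occur in conditions~3--4, and the $r$-constraint is ``$\pi'(\rho)\ne1$'' for $\sigma'\in\{\gamma,\delta^{(k)},\eta\}$ and ``$\pi'(\rho)\ne2$'' for $\sigma'\in\{\zeta^{(k)}\}$.

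Next I would establish the \emph{image} observation: if $\pi\sigma'\pi' \in \DPP([l,r])$, $X \in [l,r]$ and $Y$ is its image, then the left coordinate of the class of $Y$ equals $\pi$ applied to $\cC^-(\sigma'(\pi'(X)))$, and by Lemma~\ref{lemma:left-invariant} the latter equals $\cC^-(\pi'(X)) = \pi'(l)$ when $\sigma'$ is left-invariant, while by Lemma~\ref{lemma:non-left-invariant} together with Table~\ref{table:AL et AR du mot vide} it equals the fixed nonempty letter prescribed by $\sigma'$ in the $\cC^-(\varepsilon)$ row of that table when $\sigma'$ is not left-invariant; in both cases it depends only on $\sigma',\pi,\pi'$ and $l$, and symmetrically the right coordinate depends only on $\sigma',\pi,\pi'$ and $r$. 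With these two observations the four items are immediate. For item~1, left-invariance of $\sigma'$ makes the $l$-constraint vacuous, so $\pi\sigma'\pi'\in\DPP([\lambda,r])$ for every $\lambda\in\A0$ (the $r$-constraint being the one already satisfied), and the image of $[\lambda,r]$ has left coordinate $\pi\pi'(\lambda)$ and right coordinate unchanged, hence equal to $r'$; in particular $l'=\pi\pi'(l)$. For item~2, non-left-invariance gives $l'=\pi(\cdot)\ne 0$ for the fixed table letter, independent of the source's left coordinate, so the image of $[\lambda',r]$ is $[l',r']$ for every admissible $\lambda'$; and exactly one letter of $\cA_3$ is forbidden as $\pi'(\lambda)$, so there is a unique $\lambda\in\cA_3$, namely $(\pi')^{-1}$ of that letter, with $\pi\sigma'\pi'\notin\DPP([\lambda,r])$ (and $\lambda=0$ is always admissible since $0$ lies in no $\cC^-$). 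Items~3 and~4 are then obtained verbatim with the roles of left and right, of $\cC^-$ and $\cC^+$, and of Lemmas~\ref{lemma:non-left-invariant} and~\ref{lemma:left-invariant} interchanged, using the right-hand halves of the two observations and the $\cC^+(\varepsilon)$ row of Table~\ref{table:AL et AR du mot vide}.

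The argument is essentially bookkeeping; the main place an error could creep in is the interaction with the permutations $\pi$ and $\pi'$, namely verifying that the left/right decoupling survives pre- and post-composition with permutations and that the forbidden letter and the image coordinates are transported correctly (the forbidden $\lambda$ through $(\pi')^{-1}$, the image coordinates through $\pi\pi'$ or through $\pi$ alone). Once the membership and image observations are stated carefully, each of the four items reduces to a one-line inspection of Proposition~\ref{prop:condition for non-tree-successor}, Lemma~\ref{lemma:which ones are left,right invariant} and Table~\ref{table:AL et AR du mot vide}.
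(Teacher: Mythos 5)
Your proposal is correct and follows the same route as the paper: the paper proves this lemma simply by pointing to Table~\ref{table:edges of cG}, whose entries are exactly the membership and image data you derive from Proposition~\ref{prop:condition for non-tree-successor}, Table~\ref{table:AL et AR du mot vide} and Lemmas~\ref{lemma:which ones are left,right invariant}, \ref{lemma:non-left-invariant} and~\ref{lemma:left-invariant}, plus the routine transport through $\pi$ and $\pi'$. Your write-up just makes explicit the left/right decoupling and permutation bookkeeping that the paper leaves implicit in that table.
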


\begin{proof}
It follows from Table~\ref{table:edges of cG}
\end{proof}

We are now ready to prove Theorem~\ref{thm:main}. We say that two sequences $(\sigma_n)_{n \in \N}$ and $(\tau_n)_{n \in \N}$ are \emph{equivalent} if, for all $n$, there exist a permutation $\pi_n$ on the alphabet such that
\[
    \sigma_1 \dots \sigma_n \pi_n = \tau_1 \dots \tau_n.
\]
It is then clear that $(\sigma_n)_{n \in \N}$ is a primitive $\cS$-adic representation of a shift space $X$ if and only if $(\tau_n)_{n \in \N}$ also is.

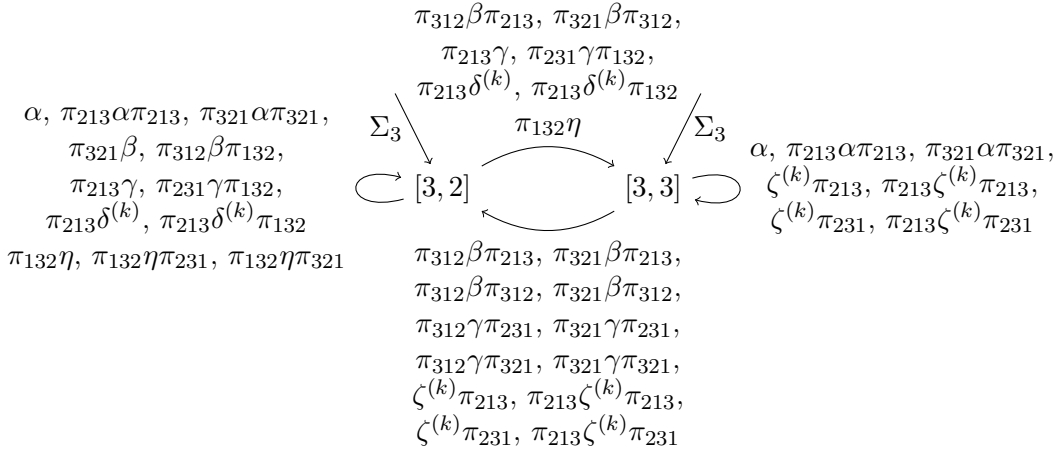
\begin{figure}[h]
\begin{center}
\begin{tikzpicture}[scale=0.7]
\node (32) at (0,0) {$[3,2]$};
\node (33) at (4,0) {$[3,3]$};
\node (fake32) at (-1,2) {};
\node (fake33) at (5,2) {};

\path (fake32) edge [->] node [pos=0.5,left] {$\Sigma_3$} (32);
\path (fake33) edge [->] node [pos=0.5,right] {$\Sigma_3$} (33);

\path (32) edge [loop left, ->] node [pos=0.5,left,align=center] {
$\alpha$, $\pi_{213}\alpha\pi_{213}$, $\pi_{321}\alpha\pi_{321}$,\\
$\pi_{321}\beta$, $\pi_{312}\beta\pi_{132}$,\\
$\pi_{213}\gamma$, $\pi_{231}\gamma\pi_{132}$,\\
$\pi_{213}\delta^{(k)}$, $\pi_{213}\delta^{(k)}\pi_{132}$\\
$\pi_{132}\eta$, $\pi_{132}\eta\pi_{231}$, $\pi_{132}\eta\pi_{321}$} (32);

\path (33) edge [loop right, ->] node [pos=0.5,right,align=center] {
$\alpha$, $\pi_{213}\alpha\pi_{213}$, $\pi_{321}\alpha\pi_{321}$,\\
$\zeta^{(k)}\pi_{213}$, $\pi_{213}\zeta^{(k)}\pi_{213}$,\\
$\zeta^{(k)}\pi_{231}$, $\pi_{213}\zeta^{(k)}\pi_{231}$} (33);

\path (32) edge [bend left, ->] node [pos=0.5,above,align=center] {
$\pi_{312}\beta\pi_{213}$, $\pi_{321}\beta\pi_{312}$,\\
$\pi_{213}\gamma$, $\pi_{231}\gamma\pi_{132}$,\\
$\pi_{213}\delta^{(k)}$, $\pi_{213}\delta^{(k)}\pi_{132}$\\
$\pi_{132}\eta$} (33);

\path (33) edge [bend left, ->] node [pos=0.5,below,align=center] {
$\pi_{312}\beta\pi_{213}$, $\pi_{321}\beta\pi_{213}$,\\
$\pi_{312}\beta\pi_{312}$, $\pi_{321}\beta\pi_{312}$,\\
$\pi_{312}\gamma\pi_{231}$, $\pi_{321}\gamma\pi_{231}$,\\
$\pi_{312}\gamma\pi_{321}$, $\pi_{321}\gamma\pi_{321}$,\\
$\zeta^{(k)}\pi_{213}$, $\pi_{213}\zeta^{(k)}\pi_{213}$,\\
$\zeta^{(k)}\pi_{231}$, $\pi_{213}\zeta^{(k)}\pi_{231}$} (32);
\end{tikzpicture}
\end{center}
\caption{A shift space on $\cA_3$ is minimal and dendric if and only if it has a primitive $\Sigma_3\cSD\Sigma_3$-adic representation labeling an infinite path in this graph denoted $\cG$.}
\label{fig:graph of graphs}
\end{figure}

\begin{proof}[Proof of Theorem~\ref{thm:main}]
Let $\cG$ be the subgraph of $\cG'$ obtained by deleting the vertices $[0,0]$, $[0, 3]$ and $[3, 0]$. By definition of the edges of $\cG'$, for each edge incoming in $[3,3]$ and labeled by $\sigma$, there is also an edge incoming in $[3,3]$ labeled by $\sigma\pi_{213}$ (for instance, $[3,2]$ is the image of $[3,3]$ under both $\pi_{132}\eta$ and $\pi_{132}\eta \pi_{213}$). 
In $\cG$, for each such pair, we only keep one of the two edges. The choice is made so as to reduce the total number of different morphisms labeling an edge in the graph.
This subgraph contains 2 vertices and is represented in Figure~\ref{fig:graph of graphs}.

We show that for each infinite path $\mathcal{P}$ in $\cG'$ labeled by $\bsigma = (\sigma_n)_{n \in \N}$, there is an equivalent path in $\cG$, i.e. a path labeled by a sequence $\boldsymbol{\tau} = (\tau_n)_{n \in \N}$ equivalent to $\bsigma$.
 
We thus consider a path $\mathcal{P} = ([l_n,r_n])_{n \geq 1}$ in $\cG'$ with label $\bsigma = (\sigma_n)_{n \geq 1}$ (hence $\sigma_n$ labels the edge from $[l_n,r_n]$ to $[l_{n+1},r_{n+1}]$).
For all $n \geq 1$, let $\sigma_n =  \pi_n \sigma'_n \pi'_n$ with $\pi_n, \pi'_n \in \Sigma_3$, $\sigma'_n \in \cSD$.

We first prove that we can delete the vertices $[0,0]$, $[0,3]$ and $[3,0]$. 
If $\mathcal{P}$ goes through one of the deleted vertices, then there exists $N$ such that $l_N$ or $r_N$ is $0$.
Assume that $N$ is the smallest integer such that $l_N = 0$.
By Corollary~\ref{cor:empty is preserved}, we deduce that for all $n \geq N$,
$\sigma'_n$ is left-invariant and $l_n = 0$.

We prove by induction that there exist a sequence $(\psi_n)_{n \geq 1}$ of permutations and a sequence $(l'_n)_{n \geq 1} \in \cA_3^\NN$ such that $\psi_1 = \id$ and for all $n \geq 1$, the morphism $\tau_n = \psi_n \sigma_n \psi_{n+1}^{-1}$ labels an edge from $[l'_n, \psi_n(r_n)]$ to $[l'_{n+1}, \psi_{n+1}(r_{n+1})]$ in $\cG'$. The sequence $(\tau_n)_{n \in \N}$ is trivially equivalent to $\bsigma$.

If $N = 1$, we take $l'_1 = 3$.
If $N > 1$, we take $\psi_n = id$ and $l'_n  = l_n$ for all $n \leq N - 1$.
Assume that we have found such $\psi_n$ and $l'_n$ for $n \geq \min\{N-1,1\}$ and let us find $\psi_{n+1}$ and $l'_{n+1}$.
We first show that we can find $l' \in \cA_3$ such that $\psi_n\sigma_n \in \DPP([l', r_{n+1}])$ and the image of $[l', r_{n+1}]$ under $\psi_n\sigma_n$ is $[l'_n, \psi_n(r_n)]$. Recall that $\psi_n\sigma_n$ is in $\DPP([0, r_{n+1}])$ and that the image of $[0, r_{n+1}]$ under $\psi_n\sigma_n$ is $[\psi_n(l_n), \psi_n(r_n)]$.

For $n = N-1$ (with $N > 1$), by Lemma~\ref{lemma:left-invariant}, $\sigma'_{N-1}$ is not left-invariant. Thus, since $l'_{N-1} = l_{N-1} = \psi_{N-1}(l_{N-1})$, we can choose such an $l' \in \cA_3$ by Lemma~\ref{lemma:properties of edges}.
If $n \geq N$, $\sigma'_n$ is left-invariant thus, by Lemma~\ref{lemma:properties of edges}, for any $l' \in \cA_3$, $\psi_n\sigma_n$ is in $\DPP([l', r_{n+1}])$ and, in particular, if $l' = (\psi_n\pi_n\pi'_n)^{-1}(l'_n)$, then the image of $[l', r_{n+1}]$ under $\psi_n\sigma_n$ is $[l'_n, \psi_n(r_n)]$.

Let $\psi_{n+1}$ be a permutation such that $[\psi_{n+1}(l'), \psi_{n+1}(r_{n+1})]$ is a vertex of $\cG'$ and let $l'_{n+1} = \psi_{n+1}(l')$. The morphism $\psi_n\sigma_n\psi_{n+1}^{-1}$ then labels an edge from $[l'_n, \psi_n(r_n)]$ to $[l'_{n+1}, \psi_{n+1}(r_{n+1})]$.

We have thus found a path $\mathcal{P}' = ([l_n',\psi_n(r_n)])_{n \geq 1}$ equivalent to $\mathcal{P}$ and that does not go through vertices of the form $[0,r]$.
Note that for all $n$, $\psi_n(r_n) = 0$ if and only if $r_n = 0$.

Starting from $\mathcal{P}$ or $\mathcal{P}'$, we similarly find another path $\mathcal{P}''$ equivalent to $\mathcal{P}$. This path $\mathcal{P}''$ does not go through the vertices $[0,0]$, $[0,3]$ or $[3,0]$.

We now show that we can delete half of the edges incoming in $[3,3]$, as explained in the beginning of the proof. It follows from the fact that if $[l_{n+1}, r_{n+1}] = [3, 3]$, then $\tau_n = \sigma_n \pi_{213}$ labels an edge from $[l_n, r_n]$ to $[3,3]$ and $\tau_{n+1} = \pi_{213} \sigma_{n+1}$ labels an edge from $[3,3]$ to $[l_{n+2}, r_{n+2}]$. Replacing $\sigma_n$ by $\tau_n$ and $\sigma_{n+1}$ by $\tau_{n+1}$ then gives an equivalent path.
This concludes the proof.
\end{proof}

We can make several observations regarding equivalent sequences in $(\Sigma_3\cSD\Sigma_3)^\N$. The first one is that, if $\bsigma = (\pi_n\sigma'_n\pi'_n)_{n \in \N}$ and $\boldsymbol{\tau} = (\psi_n\tau'_n\psi'_n)_{n \in \N}$, with $\pi_n, \pi'_n, \psi_n, \psi'_n \in \Sigma_3$ and $\sigma'_n, \tau'_n \in \cSD$, are equivalent, then $\sigma'_n = \tau'_n$ for all $n$. Indeed, let $\xi_n$ be the permutation such that $\sigma_1 \dots \sigma_{n-1} \xi_n = \tau_1 \dots \tau_{n-1}$. The morphism $\tau_1\dots\tau_{n-1}$ is injective thus $\tau_n = \xi_n^{-1}\sigma_n\xi_{n+1}$. The conclusion follows from the fact that composing a morphism of $\cSD$ with some permutations on the left and on the right will not give a different morphism of $\cSD$.

Another observation is the following result.

\begin{proposition}\label{prop:equivalent S-adic representations}
Any two $\Sigma_3\cSD\Sigma_3$-adic representations of a shift space are equivalent.
\end{proposition}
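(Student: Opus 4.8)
The plan is to prove, by induction on $n$, that any two primitive $\Sigma_3\cSD\Sigma_3$-adic representations $\bsigma=(\sigma_n)_{n\geq1}$ and $\boldsymbol{\tau}=(\tau_n)_{n\geq1}$ of a shift space $X$ satisfy the following: there is a permutation $\pi_n\in\Sigma_3$ with $\sigma_1\cdots\sigma_n\pi_n=\tau_1\cdots\tau_n$ and, in addition, $\pi_n^{-1}(X_{\bsigma}^{(n+1)})=X_{\boldsymbol{\tau}}^{(n+1)}$. The whole argument hinges on a lemma about the first morphism, which I would state and establish first: \emph{if $\boldsymbol{\rho}=(\rho_m)_{m\geq1}$ and $\boldsymbol{\rho}'=(\rho'_m)_{m\geq1}$ are primitive $\Sigma_3\cSD\Sigma_3$-adic representations of the same shift space $X$, then $\rho_1(\cA_3)=\rho'_1(\cA_3)$; since $\rho_1$ and $\rho'_1$ are injective, this yields $\rho'_1=\rho_1\pi$ for the permutation $\pi=\rho_1^{-1}\rho'_1$.}

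To prove this lemma I would first note that every $\rho\in\Sigma_3\cSD\Sigma_3$ is injective and strongly left proper: writing $\rho=\pi\sigma\pi'$ with $\sigma\in\cSD$, the first letter $1$ of $\sigma$ occurs exactly once in each $\sigma(a)$, hence $\rho$ has first letter $\ell:=\pi(1)$ occurring exactly once in each $\rho(a)$. Since $X$ is the image under $\rho_1$ of the minimal shift $Y:=X_{\boldsymbol{\rho}}^{(2)}$ (minimality from primitivity), the occurrences of $\ell$ in any point $\rho_1(y)$, $y\in Y$, are exactly the left endpoints of the blocks $\rho_1(y_i)$; a direct desubstitution argument (reading off the block boundaries from the occurrences of $\ell$, as in the proof of Proposition~\ref{prop:def antecedent and bsp ext image}) then gives $\rho_1(\cA_3)=\cR_X(\ell)$, i.e.\ $\rho_1$ is a coding morphism for $X$ associated with $\ell$. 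Likewise $\rho'_1$ is a coding morphism for $X$ associated with the first letter $\ell'$ of $\rho'_1$, so it remains to prove $\ell=\ell'$. Now $\cE_X(\varepsilon)$ depends only on $\rho_1$: it is the image under $\pi$ of the extension graph of $\varepsilon$ attached to $\sigma$ in Figure~\ref{fig:extension graph of epsilon} or~\ref{fig:extension graph of epsilon2}, hence it is one of the six shapes listed there. For each such shape, the Rauzy graph $G_1(X)$ confines the possible return-word sets $\cR_X(a)$ of its (at most two) left-special letters $a$ to an explicit infinite family, and --- using that no image of a morphism of $\cSD$ contains a letter repeated outside a terminal power of a single letter --- one checks case by case that exactly one left-special letter $\ell(X)$ admits $\cR_X(\ell(X))=\pi_0(\sigma_0(\cA_3))$ for some $\sigma_0\in\cSD$ and some $\pi_0\in\Sigma_3$ with $\pi_0(1)=\ell(X)$. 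Hence $\ell=\ell(X)=\ell'$, and therefore $\rho_1(\cA_3)=\cR_X(\ell(X))=\rho'_1(\cA_3)$.

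Granting the lemma, the induction is routine. The case $n=1$ is the lemma itself. For the inductive step, assume $\sigma_1\cdots\sigma_n\pi_n=\tau_1\cdots\tau_n=:\phi$. Since $X$ is the image of $X_{\bsigma}^{(n+1)}$ under $\sigma_1\cdots\sigma_n=\phi\pi_n^{-1}$ and the image of $X_{\boldsymbol{\tau}}^{(n+1)}$ under $\phi$, we obtain $\phi\bigl(\pi_n^{-1}(X_{\bsigma}^{(n+1)})\bigr)=\phi\bigl(X_{\boldsymbol{\tau}}^{(n+1)}\bigr)$. Peeling off the morphisms $\sigma_1,\dots,\sigma_n$ one at a time --- each of them injective and strongly left proper, and for such a morphism $\varsigma$ one has $\varsigma(A)=\varsigma(B)\Rightarrow A=B$ for any shift spaces $A,B$, by locating the block boundaries from the occurrences of the first letter and invoking injectivity --- yields $\pi_n^{-1}(X_{\bsigma}^{(n+1)})=X_{\boldsymbol{\tau}}^{(n+1)}$. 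Consequently $(\pi_n^{-1}\sigma_{n+1},\sigma_{n+2},\sigma_{n+3},\dots)$ and $(\tau_{n+1},\tau_{n+2},\tau_{n+3},\dots)$ are both primitive $\Sigma_3\cSD\Sigma_3$-adic representations of $X_{\boldsymbol{\tau}}^{(n+1)}$, so the lemma provides $\pi_{n+1}\in\Sigma_3$ with $\pi_n^{-1}\sigma_{n+1}\pi_{n+1}=\tau_{n+1}$. Then $\tau_1\cdots\tau_{n+1}=\phi\pi_n\bigl(\pi_n^{-1}\sigma_{n+1}\pi_{n+1}\bigr)=\sigma_1\cdots\sigma_{n+1}\pi_{n+1}$, which closes the induction.

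The delicate point --- and the only genuinely ternary one --- is the uniqueness of the distinguished left-special letter $\ell(X)$ in the lemma: it amounts to showing, for the second left-special letter in each of the four shapes having two of them, that no return-word set compatible with $G_1(X)$ can coincide, up to a permutation fixing the common first letter, with the image set of a morphism of $\cSD$. Everything else (desubstitution, injectivity of proper morphisms, and the bookkeeping with permutations) is soft.
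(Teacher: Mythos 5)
Your proposal is correct in outline and takes a route that is genuinely, though mildly, different from the paper's. Both arguments are inductions that peel off one morphism at a time while tracking a permutation $\xi_n$ with $\sigma_1\cdots\sigma_n=\tau_1\cdots\tau_n\xi_n$ and identifying the level-$(n+1)$ shifts up to that permutation. The difference is in how the first morphism is pinned down: the paper argues that the shape of $\cE_X(\varepsilon)$ together with the lengths of the maximal powers of each letter determine the $\cSD$-part $\sigma'_n=\tau'_n$, and then must treat $\sigma'_n=\alpha$ separately because $\alpha\pi_{132}=\pi_{132}\alpha$ creates an ambiguity in the factorization through $\Sigma_3\cSD\Sigma_3$. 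You instead observe that any $\rho\in\Sigma_3\cSD\Sigma_3$ with first letter $\ell$ is a coding morphism of its image associated with $\ell$, so $\rho(\cA_3)=\cR_X(\ell)$, and you reduce everything to the uniqueness of the letter whose return-word set is a permuted $\cSD$-image set; since two coding morphisms associated with the same letter differ exactly by right composition with a permutation, this absorbs the permutation ambiguity uniformly and no special case for $\alpha$ is needed. That buys cleaner permutation bookkeeping at the price of a different case analysis.

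Two points need attention. First, the case check you defer --- that in each shape of Figures~\ref{fig:extension graph of epsilon} and~\ref{fig:extension graph of epsilon2} with two left-special letters ($\beta$, $\delta^{(k)}$, $\zeta^{(k)}$, $\eta$) the return words to the second left-special letter never form a set $\pi_0(\sigma_0(\cA_3))$ --- is the entire ternary content of your lemma and is only sketched. It does go through: in the $\delta^{(k)}$ and $\zeta^{(k)}$ shapes the return-word set to the other left-special letter contains the single letter $3$, forcing $\sigma_0\in\{\alpha,\beta,\gamma,\delta^{(k')}\}$, and length/parity comparisons together with the Rauzy-graph restrictions rule these out; in the $\eta$ and $\beta$ shapes the candidate image sets would require a length-one or a second length-two return word, or a transition forbidden by $G_1(X)$. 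So the claim is true, but as written your proof is incomplete at exactly its crucial step (admittedly comparable to the paper's own terse assertion that $\cE(\varepsilon)$ and the maximal letter powers ``uniquely determine'' $\sigma'_n$). Second, two small repairs: you should note that the first letter $\ell$ of $\rho_1$ is automatically left special in $X$ (its left extensions are the final letters of the words $\rho_1(a)$, and every $\sigma\in\cSD$ has at least two distinct final letters among its images), so that uniqueness among left-special letters indeed applies to both $\ell$ and $\ell'$; and your closing computation has a stray $\pi_n$: it should read $\tau_1\cdots\tau_{n+1}=(\sigma_1\cdots\sigma_n\pi_n)\bigl(\pi_n^{-1}\sigma_{n+1}\pi_{n+1}\bigr)=\sigma_1\cdots\sigma_{n+1}\pi_{n+1}$.
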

\begin{proof}
Let $\bsigma = (\pi_n\sigma'_n\pi'_n)_{n \in \N}$ and $\boldsymbol{\tau} = (\psi_n\tau'_n\psi'_n)_{n \in \N}$, with $\pi_n, \pi'_n, \psi_n, \psi'_n \in \Sigma_3$ and $\sigma'_n, \tau'_n \in \cSD$, be two $\Sigma_3\cSD\Sigma_3$-adic representations of a shift space $X$.

We prove by induction on $n \geq 0$ that there exists a permutation $\xi_n$ such that $\sigma_1\dots\sigma_n = \tau_1\dots\tau_n\xi_n$ and $\xi_n(X_\bsigma^{(n+1)}) = X_{\boldsymbol{\tau}}^{(n+1)}$.
For $n = 0$, it suffices to take $\xi_0 = \text{id}$. Assume now that we have found such $\xi_{n-1}$, $n \geq 1$.
The shift $X_{\boldsymbol{\tau}}^{(n)}$ can then be seen as the image of $\pi'_n(X_\bsigma^{(n+1)})$ under $\xi_{n-1}\pi_n\sigma'_n$ or as the image of $\psi'_n(X_{\boldsymbol{\tau}}^{(n+1)})$ under $\psi_n\tau'_n$.

The shape of the extension graph $\cE_{X_{\boldsymbol{\tau}}^{(n)}}(\varepsilon)$ and the lengths of the longest power of $1$ (resp., $2$, $3$) in $\cL(X_{\boldsymbol{\tau}}^{(n)})$ uniquely determine the morphisms $\sigma'_n = \tau'_n$.
Moreover, if $\sigma'_n \ne \alpha$, we even have $\xi_{n-1}\pi_n\sigma'_n = \psi_n\tau'_n$ thus $\sigma_1\dots\sigma_{n-1}\pi_n\sigma'_n = \tau_1\dots\tau_{n-1}\psi_n\tau'_n$. We then take $\xi_n = (\psi'_n)^{-1}\pi'_n$.
If $\sigma'_n = \alpha$, then either $\xi_{n-1}\pi_n = \psi_n$ and we proceed as above, or $\xi_{n-1}\pi_n = \psi_n\pi_{132}$. In that case, $\xi_{n-1}\pi_n\sigma'_n = \psi_n\tau'_n\pi_{132}$. Let $\psi''_n = \pi_{132}\psi'_n$. We then take $\xi_n = (\psi''_n)^{-1}\pi'_n$ and the conclusion follows as in the previous case.
\end{proof}

\section{Descriptions in $\cG$ of well-known families of minimal ternary dendric shifts}
\label{section:sub families}

The class of minimal ternary dendric shifts contains several classes of well-known families of shift spaces, namely Arnoux-Rauzy shifts, codings of regular 3-interval exchange transformations and Cassaigne shifts. 
In this section, we study the $\Sigma_3\cSD\Sigma_3$-adic representations of these particular families in the light of $\cG$.

\subsection{Arnoux-Rauzy shifts}
\label{subsection:AR}

A shift space $X \subset \cA_3^\ZZ$ is an {\em Arnoux-Rauzy shift} if it is a  minimal shift space with factor complexity $p(n)=2n+1$ that has exactly one left and one right special factor of each length.
Another equivalent definition is that $X$ admits a primitive representation $\bsigma \in \{\alpha, \pi_{213}\alpha\pi_{213}, \pi_{321}\alpha\pi_{321}\}^\N$~\cite{arnoux_rauzy}.
This equivalence is also a consequence of Proposition~\ref{prop:at most one letter} and Theorem~\ref{thm:main}.

\subsection{Interval exchange shifts}
\label{subsection:interval exchange}

Let us recall the definition of an interval exchange transformation.
We use the terminology of~\cite{Ferenczi_Zamboni:2008} with two permutations.
Let $I = [l,r)$ be a semi-interval, $\lambda = (\lambda_1,\dots,\lambda_d)$ be a positive $d$-dimensional vector such that $\|\lambda\|_1 = r-l$ and $(\pi_0,\pi_1)$ be two permutations of $\{1,2,\dots,d\}$.
The two permutations induce some partitions of $I$ by semi-intervals whose lengths are given by the vector $\lambda$ and that are ordered according to $\pi_0$ and $\pi_1$ respectively.
More precisely, we consider the partitions $\mathcal{I} = \{I_1,\dots,I_d\}$ and $\mathcal{J} = \{J_1,\dots,J_d\}$, where for each $i$, 
\begin{align*}
    I_i &= 
    \left[
    l + \sum_{\pi_0^{-1}(j)<\pi_0^{-1}(i)} \lambda_j,
    l + \sum_{\pi_0^{-1}(j)\leq\pi_0^{-1}(i)} \lambda_j \right) \quad \text{and} \\  
    J_i &= 
    \left[
    l+ \sum_{\pi_1^{-1}(j)<\pi_1^{-1}(i)} \lambda_j,
    l+ \sum_{\pi_1^{-1}(j)\leq\pi_1^{-1}(i)} \lambda_j
    \right)
\end{align*}
are semi-intervals of length  $\lambda_i$.
Setting $\mu_0 = \nu_0 = l$ and, for every $i \in \{1,\dots,d\}$, $\mu_i = l+\sum_{\pi_0^{-1}(j) \leq i} \lambda_j$ and
$\nu_i = l+\sum_{\pi_1^{-1}(j) \leq i} \lambda_j$, we have $I_{\pi_0(i)} = [\mu_{i-1},\mu_i)$ and $J_{\pi_1(i)} = [\nu_{i-1},\nu_i)$ for all $i$. 

\begin{center}
    \begin{tikzpicture}
\draw[|-,line width=1pt] 
	(0,0) 
	node[above,color=black] {$\mu_0$} 
	-- 
	node[above] {$\pi_0(1)$} 
	(1.8,0);
\draw[|-,line width=1pt] 
	(1.8,0) 
	node[above,color=black] {$\mu_1$} 
	-- 
	node[above] {$\pi_0(2)$} 
	(4.5,0)
	;
\draw[|-,line width=1pt,dotted] 
	(4.5,0) 
	node[above,color=black] {$\mu_2$} 
    --
    (5.2,0)
	;
\draw[|-|,line width=1pt] 
	(6.5,0) 
	node[above,color=black] {$\mu_{d-1}$} 
	-- 
	node[above] {$\pi_0(d)$} 
    (10,0)
	node[above,color=black] {$\mu_d$} 
	;
\draw[|-,line width=1pt] 
	(0,-.7) 
	node[below,color=black] {$\nu_0$} 
	-- 
	node[below] {$\pi_1(1)$} 
	(2.7,-.7)
	node[below,color=black] {$\nu_1$} 
	;
\draw[|-,line width=1pt] 
	(2.7,-.7) 
	-- 
	node[below] {$\pi_1(2)$} 
	(6.2,-.7)
	;
\draw[|-,line width=1pt,dotted] 
	(6.2,-.7) 
	node[below,color=black] {$\nu_2$} 
    --
    (6.9,-.7)
	;
\draw[|-|,line width=1pt] 
	(7.5,-.7) 
	-- 
	node[below] {$\pi_1(d)$} 
	(10,-.7)
	node[below,color=black] {$\nu_d$} 
	;
\end{tikzpicture}
\end{center}

The {\em interval exchange transformation} (IET) on $I$ associated with $(\lambda,\pi_0,\pi_1)$ is the piecewise translation $T_{\lambda,\pi_0,\pi_1}:I \to I$ such that $T_{\lambda,\pi_0,\pi_1}(I_i) = J_i$ for every $i$.
Thus it is the bijection defined by
\[
    T_{\lambda,\pi_0,\pi_1} (x) = x - \sum_{\pi_0^{-1}(j)<\pi_0^{-1}(i)} \lambda_j + \sum_{\pi_1^{-1}(j)<\pi_1^{-1}(i)} \lambda_j, 
    \quad \text{if } x \in I_i
\]
When we want to emphasize the number of intervals, we talk about $d$-interval exchange transformations and when the context is clear, we write $T$ instead of $T_{\lambda,\pi_0,\pi_1}$.
Obviously, up to translation and rescaling, we can always assume that $I = [0,1)$.
More precisely, we define the {\em normalized} IET associated with $T$ by $\bar{T}:[0,1) \to [0,1)$ by $\bar{T}(x) = \frac{1}{r-l} (T((r-l)x+l)-l)$.
Observe that if $\tau$ is the permutation of $\{1,\dots,d\}$ defined by $\tau(i) = d+1-i$, then replacing $\pi_0$ and $\pi_1$ by $\pi_0 \circ \tau$ and $\pi_1 \circ \tau$ respectively defines another interval exchange transformation which is also conjugate to $T_{\lambda,\pi_0,\pi_1}$; we denote it by $\tilde{T}_{\lambda,\pi_0,\pi_1}$.

We let $\binom{\pi_0(1)\pi_0(2)\cdots\pi_0(d)}{\pi_1(1)\pi_1(2)\cdots\pi_1(d)}$ denote the pair of permutations $(\pi_0,\pi_1)$.

\begin{example}
\label{ex:3-IET abc}
A $3$-interval exchange transformation on $[0,1)$ with pair of permutations $\binom{123}{231}$.
It is the rotation $R:[0,1)\to [0,1),\  x \mapsto x+\lambda_2+\lambda_3 \bmod 1$.
\begin{center}
\begin{tikzpicture}
\draw[|-,line width=1pt,color=red] 
	(0,0) 
	node[above,color=black] {$0$} 
	-- 
	node[above] {$1$} 
	(.8,0);
\draw[|-,line width=1pt,color=blue] 
	(.8,0) 
	-- 
	node[above] {$2$} 
	(4,0)
	;
\draw[|-,line width=1pt,color=green] 
	(4,0) 
	-- 
	node[above] {$3$} 
	(6,0)
	node[above,color=black] {$1$} 
	;
\draw[|-,line width=1pt,color=blue] 
	(0,-.7) 
	node[below,color=black] {$0$} 
	-- 
	node[below] {$2$} 
	(3.2,-.7);
\draw[|-,line width=1pt,color=green] 
	(3.2,-.7) 
	-- 
	node[below] {$3$} 
	(5.2,-.7)
	;
\draw[|-,line width=1pt,color=red] 
	(5.2,-.7) 
	-- 
	node[below] {$1$} 
	(6,-.7)
	node[below,color=black] {$1$} 
	;
\end{tikzpicture}
\end{center}
\end{example}

\subsubsection{Regular interval exchange transformations}

Let $T$ be an interval exchange transformation on $I$.
The \emph{orbit} of a point $x\in I$ is the set 
$\{T^n(x)\mid n\in\ZZ \}$. 
The transformation $T$ is said to be \emph{minimal} if, for any  $x\in I$, the  orbit of $x$ is dense in $I$.

A $d$-interval exchange transformation is said to be \emph{regular} if the orbits of the points $\mu_i$, $1 \leq i < d$, are infinite and disjoint.
A regular interval exchange transformation is also said to be \emph{without connections} or to satisfy the~\emph{idoc} condition (where idoc stands for {\em infinite disjoint orbit condition}).
As an example, any non-trivial $2$-interval exchange transformation is a rotation and, if normalized, it is regular if and only if the angle of the rotation is irrational.
The following result is due to Keane.

\begin{theorem}[Keane~\cite{Keane:1975}]\label{theoremKeane}
A regular interval exchange transformation is minimal.
\end{theorem}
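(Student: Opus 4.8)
The statement is classical (it is Keane's theorem), and the plan is to deduce minimality from the absence of periodic points together with a study of the ``gaps'' of a hypothetical proper closed invariant set. It is convenient to view $I=[0,1)$ as the circle $\mathbb{T}=\RR/\ZZ$, so that $T$ and $T^{-1}$ become piecewise rotations with finite discontinuity sets $\Delta\subseteq\{\mu_1,\dots,\mu_{d-1}\}$ and $\Delta'\subseteq\{\nu_1,\dots,\nu_{d-1}\}$. Since $T$ is continuous off $\Delta$ and $T^{-1}$ off $\Delta'$, the orbit closure of any $x\in\mathbb{T}$ is a nonempty closed set $F$ with $T(F\setminus\Delta)\subseteq F$ and $T^{-1}(F\setminus\Delta')\subseteq F$; call such a set \emph{almost invariant}. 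Since $\mathbb{T}$ is compact, every decreasing chain of almost invariant sets has nonempty intersection, which is again almost invariant, so by Zorn's lemma every almost invariant set contains a minimal one. Hence it suffices to prove that a minimal almost invariant set equals $\mathbb{T}$: the orbit closure of any $x$, containing a minimal almost invariant set, then equals $\mathbb{T}$, i.e.\ every orbit is dense.

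The first step is that regularity precludes periodic points: there is no $x$ with $T^n(x)=x$, $n\geq 1$. Indeed, if some forward iterate $T^kx$ were a discontinuity $\mu_i$ then $T^n(\mu_i)=T^k(T^nx)=T^kx=\mu_i$ would be a finite singular orbit, contradicting regularity; so the orbit of $x$ avoids $\Delta$, whence $T^n$ is a single rotation near $x$ fixing $x$, i.e.\ equals $\id$ on a neighbourhood of $x$. Let $V$ be the maximal open arc on which $T^n=\id$; it is proper (otherwise $T^n=\id$ globally, so all orbits are finite, absurd for $d\geq 2$). At the endpoint $y$ from which the orientation enters $V$, $T^n$ cannot be continuous (a continuous piecewise rotation equal to $\id$ just to the right of $y$ would be $\id$ on a two-sided neighbourhood of $y$, contradicting maximality of $V$); so $y$ is a discontinuity point of $T^n$ and, since it begins a continuity arc of $T^n$ on which $T^n=\id$, we get $T^n(y)=y$. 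Writing $T^j(y)=\mu_i$ with $0\leq j<n$ gives $T^n(\mu_i)=T^j(T^n(y))=T^j(y)=\mu_i$, again a finite singular orbit: contradiction.

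Now suppose for contradiction that $F$ is minimal almost invariant with $F\neq\mathbb{T}$. Then $U=\mathbb{T}\setminus F$ is a nonempty open set, a disjoint union of open arcs (gaps); as they are disjoint their lengths are summable, so $L=\sup\{|J|:J\text{ a gap}\}$ is attained and the set $\mathcal{M}$ of gaps of length $L$ is finite and nonempty. The key local fact, immediate from almost invariance, is: if a gap $J$ has no point of $\Delta$ in its interior, then $T|_J$ is a rotation, $T(J)$ is an arc of length $|J|$, and $T(J)$ can meet $F$ only in the finite set $\Delta'$ (otherwise $T^{-1}$ would be continuous at the offending point and carry it back into $F\cap J=\emptyset$); if moreover $T(J)$ meets no point of $F$ then $T(J)$ is itself a gap, necessarily in $\mathcal{M}$. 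One then tracks how $T$ acts on the finitely many maximal gaps (equivalently, on their finitely many endpoints, which lie in $F$): $T$ sends a maximal gap to a maximal gap \emph{except} when a ``collision'' occurs, namely when the gap contains a point of $\Delta$ in its interior, or its image contains a point of $F\cap\Delta'$ in its interior. Since $T$ is injective, distinct maximal gaps are disjoint and so are their images, so each $\mu_i$ and each $\nu_i$ can be responsible for at most one collision on either side, and a repetition of gaps already forces periodicity. Hence, following a maximal gap forward (and, if needed, backward), after finitely many collisions the orbit of gaps stays inside the finite set $\mathcal{M}$, hence is eventually periodic: some power $T^p$, $p\geq 1$, fixes a maximal gap $J$, on which (no interior $\Delta$-points along the cycle) $T^p$ is a composition of rotations carrying $J$ onto $J$, hence $T^p|_J=\id$. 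Every point of $J$ is then $T^p$-periodic, contradicting the first step. Therefore $F=\mathbb{T}$, and $T$ is minimal.

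The main obstacle is exactly the bookkeeping in the last step: a priori a maximal gap could contain a discontinuity of $T$, disrupting the ``maximal gap maps to maximal gap'' mechanism, so one must argue that such disruptions are few and eventually cease. Regularity is what makes this work — the orbits of the singular points $\mu_i,\nu_i$ are infinite and pairwise disjoint, which bounds the number of collisions — and, combined with the absence of periodic points, it forces the eventual periodicity that yields the contradiction. (Some care is also needed with the right-continuity convention, which breaks the symmetry between the two endpoints of an arc and dictates which endpoint of $V$ to track in the first step, and with the possibility that a minimal almost invariant set has isolated points; both are handled by routine adjustments.)
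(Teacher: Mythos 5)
The paper offers no proof of this statement at all: it is quoted verbatim from Keane~\cite{Keane:1975}. So your argument has to stand on its own, and as written it does not. Before the main issue, one repair you wave off as routine is not quite routine: once you pass to the circle $\mathbb{T}$, the point $0$ is in general a discontinuity of $T$ (and of $T^{-1}$), so the inclusions $\Delta\subseteq\{\mu_1,\dots,\mu_{d-1}\}$ and $\Delta'\subseteq\{\nu_1,\dots,\nu_{d-1}\}$ are false, and the orbit of $0$ is \emph{not} controlled by regularity, which only concerns $\mu_1,\dots,\mu_{d-1}$. Your first step can therefore terminate with ``$0$ is periodic'', which is not yet a contradiction; it becomes one only after a further case analysis (e.g.\ $T^{-1}(0)$ is some $\mu_j$ with $j\geq 1$, or $T(0)=0$, which forces either a finite $\mu_i$-orbit or two intersecting $\mu_i$-orbits), and this uses the two-sided, disjoint-orbit form of the hypothesis.

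The genuine gap is the ``collision'' bookkeeping in the last step. The counting ``each $\mu_i$ and each $\nu_i$ is responsible for at most one collision'' is only meaningful while the tracked objects remain pairwise disjoint maximal gaps; after the first collision the image of the tracked gap is cut into arcs of length strictly less than $L$, which need not be maximal gaps (they sit inside gaps of a priori unrelated length), so there is no well-defined ``orbit of gaps'' to continue, and you give no argument that the tracking ever re-enters the finite set $\mathcal{M}$. What your disjointness/injectivity observations actually prove is this: the partial map $G\mapsto T(G)$ on $\mathcal{M}$ is injective where defined and undefined on at most $\#\Delta+\#\Delta'$ gaps, and (by the no-periodic-point step) it has no cycles; hence the orbit structure on $\mathcal{M}$ is a union of simple paths, each terminating in a collision. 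That is perfectly consistent with everything you have established and yields no contradiction — and it is exactly here that the real content of Keane's theorem lives: one must use the \emph{disjointness} of the singular orbits in an essential way (for instance by showing that gap endpoints lie on orbits of the $\mu_i$ and deriving a connection, or via Rauzy--Veech induction with inducing intervals shrinking to points). A sanity check that disjointness cannot be bypassed: the $3$-IET with permutations $\binom{123}{132}$ whose restriction to the last two intervals is an irrational rotation has all $\mu_i$-orbits infinite yet is not minimal; it is excluded from the hypothesis only because $T(\mu_2)=\mu_1$ violates disjointness. Your sketch never uses disjointness at any identifiable point beyond a verbal appeal, which is the symptom of the missing argument.
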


The converse is not true. Indeed, consider the rotation of angle $\alpha$ with $0<\alpha<1/2$ irrational, as a $3$-interval exchange transformation with $\lambda=(1-2\alpha,\alpha,\alpha)$ and permutations $\binom{123}{312}$.
The transformation is minimal as any rotation of irrational angle but it is not regular since $\mu_1=1-2\alpha$, $\mu_2=1-\alpha$ and thus $\mu_2=T(\mu_1)$.

The following necessary condition for minimality of an interval exchange transformation is useful.
If an interval exchange transformation $T=T_{\lambda,\pi_0,\pi_1}$ is minimal, then the pair of permutations $(\pi_0,\pi_1)$ is {\em indecomposable}, that is, $\pi_0(\{1,\dots,k\}) \neq \pi_1(\{1,\dots,k\})$ for every $k<d$.
When $d=3$, any interval exchange transformation with indecomposable pair of permutation is conjugate to a 3-IET with pair of permutations $\binom{123}{231}$ or $\binom{132}{231}$.

\subsubsection{Codings of IET}

Let $T = T_{\lambda,\pi_0,\pi_1}$ be a $d$-interval exchange transformation and let $\cA = \{1,\dots,d\}$.
We say that a word $w=b_0b_1\cdots b_{m-1} \in \cA^*$ is {\em admissible for $T$} if the set 
\[
I_w=I_{b_0}\cap T^{-1}(I_{b_1})\cap\ldots\cap T^{-m+1}(I_{b_{m-1}})
\]
is non-empty.
The {\em language} of $T$ is the set $\cL_T$ of admissible words for $T$.
It uniquely defines the shift space 
\[
	X_T =\{x \in \cA^\ZZ \mid \cL(x) \subset \cL_T\}
\] 
that we call as the {\em natural coding} of $T$.
If $T$ is regular, then $X_T$ is minimal, hence $X_T =\{x \in \cA^\ZZ \mid \cL(x) = \cL_T\}$.
Of course, $X_T$ is invariant under normalization and reflection, i.e., we have $X_T = X_{\tilde{T}} = X_{\bar{T}}$.

\subsubsection{Derivation and induction}

The $\Sigma_3\cSD \Sigma_3$-adic representations of minimal dendric subshifts that we consider are based on derivation by return words.
In the context of codings of IET, this derivation can be understood through induced transformations and in particular Rauzy inductions~\cite{Rauzy1979}.

Let $T = T_{\lambda,\pi_0,\pi_1}$ be a $d$-IET on $I$. A semi-interval $I' \subset I$ is said to be {\em recurrent} for $T$ if for all $x \in I'$, there exists $n >0$ such that $T^n(x) \in I'$.
In that case, the {\em transformation induced by $T$ on $I'$} is the map $T_{I'}:I' \to I', \ x \mapsto T^{r_{I'}(x)}(x)$, where $r_{I'}(x) = \min\{n>0 \mid T^n(x) \in I'\}$.
The following result is classical (for a proof, see for instance~\cite{Dolce_Perrin:2017}).

\begin{proposition}\label{prop:induction and derivation}
If $T$ is a regular $d$-interval exchange transformation.
Then for every non-empty word $w \in \cL_T$, the transformation induced by $T$ on $I_w$ is a regular $d$-interval exchange transformation $T'$ and one has $X_{T'} = \cD_w(X_T)$.
\end{proposition}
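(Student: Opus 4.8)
The statement to prove is Proposition~\ref{prop:induction and derivation}: for a regular $d$-IET $T$ and a non-empty admissible word $w \in \cL_T$, the induced transformation $T_{I_w}$ on the semi-interval $I_w$ is again a regular $d$-IET $T'$ with $X_{T'} = \cD_w(X_T)$. I would break this into three parts: (i) $I_w$ is recurrent and the induced map $T_{I_w}$ is a $d$-IET; (ii) it is regular; (iii) its natural coding is the derived shift $\cD_w(X_T)$.

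\textbf{Step (i): $T_{I_w}$ is a $d$-IET.} Since $T$ is regular it is minimal (Theorem~\ref{theoremKeane}), so every orbit is dense and $I_w$ is recurrent; the return time $r_{I_w}$ is bounded (minimality plus compactness of the closure), so $T_{I_w}$ is well-defined. The key combinatorial fact is that $T_{I_w}$ is again a $d$-IET \emph{on the same number of intervals}: the discontinuities of $T_{I_w}$ inside $I_w$ are the points of $I_w$ whose forward orbit hits one of the $d-1$ singularities $\mu_1,\dots,\mu_{d-1}$ of $T$ before returning to $I_w$, together with the endpoints; a counting argument (each singularity contributes exactly one discontinuity on the first-return map, using regularity so that no two such orbits coincide and none hits an endpoint of $I_w$ prematurely) gives exactly $d-1$ interior discontinuities, hence $d$ subintervals. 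I would cite~\cite{Dolce_Perrin:2017} or~\cite{Rauzy1979} for the precise bookkeeping since this is the standard ``towers over $I_w$'' computation and is routine but notation-heavy.

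\textbf{Step (ii): regularity of $T'$.} The singularities of $T' = T_{I_w}$ lie on the $T$-orbits of the original singularities $\mu_i$, which are infinite and pairwise disjoint by regularity of $T$. One checks the orbit of a singularity of $T'$ under $T'$ is cofinal in the orbit of the corresponding $\mu_i$ under $T$ (the first-return dynamics visits the whole $T$-orbit up to finitely many initial points), so these orbits remain infinite and disjoint; hence $T'$ satisfies the idoc condition, i.e. is regular, and therefore minimal again by Theorem~\ref{theoremKeane}.

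\textbf{Step (iii): $X_{T'} = \cD_w(X_T)$.} This is where the coding-theoretic content sits. Recall $\cD_w(X_T) = \{x \in R(w)^\ZZ \mid \sigma(x) \in X_T\}$ where $\sigma$ is a coding morphism sending the letters of $R(w)$ onto the return words $\cR_{X_T}(w)$. The natural identification is: the return words to $w$ in $X_T$ are in bijection with the subintervals $I'_1,\dots,I'_d$ of $I_w$ defining $T'$, where the return word associated with $I'_j$ is the word coding the finite $T$-orbit segment from a point of $I'_j$ until its first return to $I_w$. Concretely, for $x$ in the interior of $I'_j$, if $r_{I_w}(x) = n$ then $x, Tx, \dots, T^{n-1}x$ lie in intervals $I_{b_0},\dots,I_{b_{n-1}}$ and $b_0 b_1 \cdots b_{n-1}$ is a return word to $w$ (since $b_0\cdots b_{n-1}$ extended by $w$ reads $w$ exactly at its two ends — here I'd use that $I'_j \subseteq I_w$ so the block starts with the prefix coding $I_w$, and returns to $I_w$ only at step $n$). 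Conversely every return word arises this way. So $\sigma$ is exactly the substitution on $d$ letters recording which interval of $T'$ you start in and what $T$ does before the first return. Then a point $y \in \cL(T')$-coded orbit corresponds under $\sigma$ to a $T$-orbit whose coding lies in $\cL_T$, and vice versa; passing to the shift spaces (using minimality of both $T$ and $T'$, so $X_T = \{x : \cL(x) = \cL_T\}$ and likewise for $T'$) gives $X_{T'} = \cD_w(X_T)$.

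\textbf{Main obstacle.} The genuinely delicate point is Step (i)/(iii) fused together: verifying that the first-return map has \emph{exactly} $d$ intervals (not more, because then $\#\cR_{X_T}(w)$ would exceed $d$ and $\cD_w$ would be on a larger alphabet) and that these intervals correspond \emph{bijectively} to the return words. This requires using regularity crucially — to rule out coincidences among the orbits of the $\mu_i$ that would merge or split intervals — and it requires the observation (implicit in the paragraph before the proposition in the main text, via the relation $v\cR(3) = \cR(u)v$ for $u = v3$) that $w$ need not be left-special, so one must be slightly careful that the interval $I_w$ and the ``return-word structure'' genuinely match. I expect the cleanest writeup cites the tower/Rokhlin decomposition over $I_w$ from~\cite{Dolce_Perrin:2017} and then only has to check the bijection between towers and return words, which is a short argument once the tower picture is in hand.
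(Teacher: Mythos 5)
The paper offers no proof of Proposition~\ref{prop:induction and derivation} at all --- it records the result as classical and points to~\cite{Dolce_Perrin:2017} --- and your outline is precisely the classical argument behind that citation: recurrence and well-definedness of the first-return map via minimality (Theorem~\ref{theoremKeane}), the idoc condition transferred to the induced map through the orbit correspondence of singularities, and the tower/return-word bijection yielding $X_{T'} = \cD_w(X_T)$. You also correctly isolate the one genuinely delicate step (that the induced map has exactly $d$ continuity intervals, in bijection with the $d$ return words to $w$, which uses that the endpoints of the cylinder $I_w$ lie on orbits of the singularities so the generic ``at most $d+2$'' count collapses) and defer its bookkeeping to the same sources~\cite{Dolce_Perrin:2017,Rauzy1979}, so your proposal is consistent with, and no less complete than, the paper's own treatment.
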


Induced transformations on $I_w$ can be obtained by the (left or right) {\em Rauzy induction}.
Set $l' = \min\{\mu_1,\nu_1\}$, $r' = \max\{\mu_{d-1},\nu_{d-1}\}$, $I_L = [l',r)$ and $I_R = [l, r')$.
Observe that $I_L$ (resp., $I_R$) is recurrent and, for all $x \in I_L$, $r_{I_L}(x) \leq 2$ (resp., for all $x \in I_R$, $r_{I_R}(x) \leq 2$).
The {\em left Rauzy induction} and {\em right Rauzy induction} of $T$ are then the transformations induced by $T$ on $I_L$ and $I_R$ respectively. We denote them $L(T)$ and $R(T)$.

A $d$-IET $T = T_{\lambda,\pi_0,\pi_1}$ is said to be {\em $L$-inducible} (resp., {\em $R$-inducible}) if $\lambda_{\pi_0(1)} \neq \lambda_{\pi_1(1)}$ (resp., $\lambda_{\pi_0(d)} \neq \lambda_{\pi_1(d)}$). 
Observe that if $T$ is $L$-inducible (resp., $R$-inducible), then $L(T)$ (resp., $R(T)$) is a $d$-IET.
A word $F_1\cdots F_n$ over $\{L,R\}$ is said to be {\em valid} for $T$ if it defines a sequence $(T_0,T_1,\dots,T_n)$ of $d$-IET by $T_0 = T$ and for all $k <n$, $T_k$ is $F_{n-k}$-inducible and $T_{k+1} = F_{n-k}(T_k)$. 

\begin{lemma}\label{lemma:composition of Rauzy inductions}
Let $T$ be a $d$-IET and let $F_1\cdots F_n$ be a word over $\{L,R\}$ which is valid for $T$.
Then $T_n$ is a $d$-interval exchange transformation on a semi-interval $I' \subset I$ recurrent for $T$ and we have $T_n = T_{I'}$.
\end{lemma}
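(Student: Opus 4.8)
The plan is to argue by induction on $n$, the key point being that composing two operations of the form ``induce on a recurrent sub-semi-interval'' again yields such an operation.

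For $n=0$ there is nothing to prove: $T_0=T$, one takes $I'=I$, and $T_I=T$. For the inductive step, assume the statement holds for all valid words of length $n-1$, and let $F_1\cdots F_n$ be valid for $T$, with associated sequence $(T_0,\dots,T_n)$. One checks directly from the definition that the word $F_2\cdots F_n$ is valid for $T$ and that its associated sequence is $(T_0,\dots,T_{n-1})$, since the Rauzy inductions are applied in the same order $F_n,F_{n-1},\dots,F_2$. By the induction hypothesis, $T_{n-1}$ is a $d$-IET on a semi-interval $I''\subseteq I$ recurrent for $T$ with $T_{n-1}=T_{I''}$. Since $F_1\cdots F_n$ is valid, $T_{n-1}$ is $F_1$-inducible, hence $T_n=F_1(T_{n-1})$ is a $d$-IET; by definition of the left and right Rauzy inductions, $T_n=(T_{n-1})_{I'}$ where $I'\subseteq I''$ is the semi-interval $I_L$ (if $F_1=L$) or $I_R$ (if $F_1=R$) of $T_{n-1}$, and $I'$ is recurrent for $T_{n-1}$ with first-return time at most $2$, as recalled just before the statement. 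It then remains only to transfer these two facts ($I'$ is recurrent, and the induced transformation) from $T_{n-1}=T_{I''}$ down to $T$ itself.

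This is the heart of the argument, and I would isolate it as the claim: if $I'\subseteq I''\subseteq I$ with $I''$ recurrent for $T$ and $I'$ recurrent for $T_{I''}$, then $I'$ is recurrent for $T$ and $(T_{I''})_{I'}=T_{I'}$. To prove it, fix $x\in I'$ and write $T_{I''}(y)=T^{\rho(y)}(y)$ with $\rho(y)=r^{T}_{I''}(y)\geq 1$ the $T$-return time to $I''$. An easy induction shows that for every $t\geq 1$ one has $(T_{I''})^t(x)=T^{N_t}(x)$ with $N_t:=\sum_{0\leq j<t}\rho\bigl((T_{I''})^j(x)\bigr)$, that $N_1<N_2<\cdots$, and that $\{m>0\mid T^m(x)\in I''\}=\{N_t\mid t\geq 1\}$. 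Let $s=r^{T_{I''}}_{I'}(x)$, finite since $I'$ is recurrent for $T_{I''}$. Then $T^{N_s}(x)=(T_{I''})^s(x)\in I'$, so $r^{T}_{I'}(x)\leq N_s<\infty$; and if $0<m<N_s$ satisfied $T^m(x)\in I'\subseteq I''$, then $m=N_t$ for some $t$ with $0<t<s$, whence $(T_{I''})^t(x)=T^m(x)\in I'$, contradicting the minimality of $s$. Hence $r^{T}_{I'}(x)=N_s$ and $T_{I'}(x)=T^{N_s}(x)=(T_{I''})^s(x)=(T_{I''})_{I'}(x)$. Applying this claim to $I'\subseteq I''\subseteq I$ as above gives that $I'$ is recurrent for $T$ and $T_n=(T_{I''})_{I'}=T_{I'}$, closing the induction.

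I expect the transitivity claim to be the only real obstacle: it is elementary but one must carefully distinguish the first-return times with respect to $T$ and with respect to $T_{I''}$, and use the identification of the visits of the $T$-orbit of $x$ to $I''$ with the iterates of $T_{I''}$. Everything else --- the reformulation of the word $F_2\cdots F_n$, the fact that a Rauzy induction of an inducible $d$-IET is a $d$-IET, and the recurrence and bounded return time of $I_L$ and $I_R$ --- is already recorded in the text or immediate from the definitions.
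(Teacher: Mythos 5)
Your proof is correct and follows essentially the same route as the paper's: the paper reduces to the case $n=2$ and proves exactly your transitivity claim by matching the visits $T^{n_k}(x)$ of the $T$-orbit to the larger interval with the iterates of the induced map, which is the identical return-time bookkeeping you carry out. Your version merely makes the induction and the transitivity statement explicit (and fixes the inclusion $I_2\subset I_1$ that the paper's proof states backwards), so there is no substantive difference.
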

\begin{proof}
It suffices to prove it for $n = 2$. The general case will follow by induction. As $F_1 F_2$ is valid for $T$, $T_1$ and $T_2$ are $d$-interval exchange transformations on $I_1 \subset I$ and $I_2 \subset I_1$ respectively.
Let $x \in I_2$. By definition, for all $k \geq 0$, $T_1^k(x) = T^{n_k}(x)$
for a strictly increasing sequence $(n_k)_{k \geq 0}$ such that $n_0 = 0$ and, if $n_k < i < n_{k+1}$, then $T^i(x) \notin I_1$.
Let $r = \min\{k > 0 \mid T_1^k(x) \in I_2\}$. As $I_1 \subset I_2$, $n_r$ is the smallest exponent $k > 0$ such that $T^k(x) \in I_2$ thus $T_2(x) = T_1^r(x) = T^{n_r}(x) = T_{I_2}(x)$.
\end{proof}

The next result summarizes several classical results concerning Rauzy inductions~\cite{Rauzy1979,Dolce_Perrin:2017}.




\begin{theorem}\label{thm:effect of Rauzy induction ternary}
Let $T = T_{\lambda, \pi_0,\pi_1}$ be a $3$-interval exchange transformation, where $(\pi_0,\pi_1) \in \left\{ \binom{123}{231}, \binom{132}{231}, \binom{321}{132} \right\}$.

If $\lambda$ satisfies the condition $c$ and there is an edge from $(\pi_0,\pi_1)$ to $(\tilde{\pi}_0,\tilde{\pi}_1)$ labeled by $(c, (\sigma(1), \sigma(2), \sigma(3))$ in the following graph, then $L(T) = T_{M_\sigma^{-1}\lambda,\tilde{\pi}_0,\tilde{\pi}_1}$ and $X_T$ is the image of $X_{L(T)}$ under $\sigma$.

\begin{tikzpicture}
\node (123) at (0,0) {$\binom{123}{231}$};
\node (132) at (3,0) {$\binom{132}{231}$};
\node (321) at (6,0) {$\binom{321}{132}$};

\path (123) edge [loop left, ->] node [pos=0.5, left, align=center] {$(\lambda_1 < \lambda_2,$\\$\ (21,2,3))$} (123);
\path (123) edge [bend left, ->] node [pos=0.5, above] {$(\lambda_1 > \lambda_2, (1,3,21))$} (132);

\path (132) edge [bend left, ->] node [pos=0.5, above] {$(\lambda_1 < \lambda_2, (2,21,3))$} (321);
\path (132) edge [bend left, ->] node [pos=0.5, below] {$(\lambda_1 > \lambda_2, (1,3,21))$} (123);

\path (321) edge [bend left, ->] node [pos=0.5, below] {$(\lambda_3 < \lambda_1, (2,1,13))$} (132);
\path (321) edge [loop right, ->] node [pos=0.5, right, align=center] {$(\lambda_3 > \lambda_1,$\\ $\ (13,2,3))$} (321);
\end{tikzpicture}

If $\lambda$ satisfies the condition $c$ and there is an edge from $(\pi_0,\pi_1)$ to $(\tilde{\pi}_0,\tilde{\pi}_1)$ labeled by $(c, (\sigma(1), \sigma(2), \sigma(3))$ in the following graph, then $R(T) = T_{M_\sigma^{-1}\lambda,\tilde{\pi}_0,\tilde{\pi}_1}$ and $X_T$ is the image of $X_{R(T)}$ under $\sigma$.

\begin{tikzpicture}
\node (123) at (0,0) {$\binom{123}{231}$};
\node (132) at (3,0) {$\binom{132}{231}$};
\node (321) at (6,0) {$\binom{321}{132}$};

\path (123) edge [loop left, ->] node [pos=0.5, left, align=center] {$(\lambda_3 > \lambda_1,$\\$\ (13,2,3))$} (123);
\path (123) edge [bend left, ->] node [pos=0.5, above] {$(\lambda_3 < \lambda_1, (1,2,13))$} (132);

\path (132) edge [bend left, ->] node [pos=0.5, above] {$(\lambda_2 > \lambda_1, (2,3,12))$} (321);
\path (132) edge [bend left, ->] node [pos=0.5, below] {$(\lambda_2 < \lambda_1, (1,12,3))$} (123);

\path (321) edge [bend left, ->] node [pos=0.5, below] {$(\lambda_1 > \lambda_2, (3,1,21))$} (132);
\path (321) edge [loop right, ->] node [pos=0.5, right, align=center] {$(\lambda_1 < \lambda_2,$\\ $\ (21,2,3))$} (321);
\end{tikzpicture}

\end{theorem}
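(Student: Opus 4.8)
The plan is to prove both halves of the statement by the same finite case analysis, treating the left induction in detail and obtaining the right one either by an entirely analogous computation or, as a shortcut, by the reflection symmetry $X_T=X_{\tilde T}$ which conjugates $R$ with $L$. Throughout I would use Lemma~\ref{lemma:composition of Rauzy inductions} in the trivial case $n=1$, which gives $L(T)=T_{I_L}$ for $I_L=[l',r)$ with $l'=\min\{\mu_1,\nu_1\}$, and dually $R(T)=T_{I_R}$.

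First I would make the induction interval explicit. For a $3$-IET one has $\mu_1=l+\lambda_{\pi_0(1)}$ and $\nu_1=l+\lambda_{\pi_1(1)}$, so the two edges leaving a given vertex of the left graph correspond exactly to the two cases $\lambda_{\pi_0(1)}<\lambda_{\pi_1(1)}$ and $\lambda_{\pi_0(1)}>\lambda_{\pi_1(1)}$ (and having an edge at all is equivalent to $L$-inducibility). In the first case $I_L=I\setminus I_{\pi_0(1)}$ and in the second $I_L=I\setminus J_{\pi_1(1)}$, so the complement of $I_L$ in $I$ is a single atom of $\mathcal I$ or of $\mathcal J$, and since $T$ carries that removed atom into $I_L$ in one step (resp. its preimage lies in $I_L$), the first-return time of $T$ to $I_L$ takes only the values $1$ and $2$. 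The same applies to $I_R$ with $\mu_{d-1},\nu_{d-1}$ in place of $\mu_1,\nu_1$.

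Then, for each of the three admissible pairs in $\left\{\binom{123}{231},\binom{132}{231},\binom{321}{132}\right\}$ and each of the two cases for $\lambda$, I would compute the first-return map of $T$ to $I_L$ directly: determine which subintervals of $I_L$ have return time $1$ and which have return time $2$, read off the associated first-return words over $\cA=\{1,2,3\}$, and check that, after relabelling the atoms of the induced partition according to their position in $I_L$, the induced map is the $3$-IET $T_{\lambda',\tilde\pi_0,\tilde\pi_1}$ with $(\tilde\pi_0,\tilde\pi_1)$ the target of the relevant edge; the new permutation pair is precisely Rauzy's combinatorial induction rule, so this reduces to verifying the displayed instances, and the substitution $\sigma$ labelling the edge is by construction the morphism sending each new letter to its first-return word. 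Once this is done, $\sigma$ is the coding morphism associated with the partition of $I_L$ into first-return subintervals, so decomposing the $T$-orbit of any point of $X_T$ into its first-return segments to $I_L$ gives $\cL_T=\bigcup_{u\in\cL_{L(T)}}\Fac(\sigma(u))$, which is exactly the assertion that $X_T$ is the image of $X_{L(T)}$ under $\sigma$ (this is the analogue of Proposition~\ref{prop:induction and derivation}, which does not apply verbatim only because $I_L$ need not be of the form $I_w$).

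Finally, the identity $L(T)=T_{M_\sigma^{-1}\lambda,\tilde\pi_0,\tilde\pi_1}$ costs nothing extra: writing $I'_a$ for the first-return subinterval of $I_L$ coded by the new letter $a$, the sets $T^j(I'_a)$ for $a\in\cA$ and $0\le j<|\sigma(a)|$ partition $I$, whence $\sum_a|\sigma(a)|_b\,|I'_a|=|I_b|$ for each $b\in\cA$, i.e. $M_\sigma\lambda'=\lambda$ with $\lambda'_a=|I'_a|$; since every $\sigma$ occurring has a unimodular incidence matrix, $\lambda'=M_\sigma^{-1}\lambda$. I expect the main obstacle to be the bookkeeping of the middle step: keeping the relabelling of the atoms of $I_L$ simultaneously consistent with the new permutation pair and with $\sigma$, and checking that the induced map genuinely has three intervals rather than two — which is where the $L$-inducibility hypothesis and the particular permutation structure are used. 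Everything else is a definition chase or a classical fact recorded in~\cite{Rauzy1979,Dolce_Perrin:2017}.
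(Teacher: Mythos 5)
The paper does not actually prove this statement: it is introduced with ``The next result summarizes several classical results concerning Rauzy inductions'' and is justified purely by citation to~\cite{Rauzy1979,Dolce_Perrin:2017}, after which the text moves on. Your plan instead supplies a self-contained verification, and it is the standard one: since $\mu_1=l+\lambda_{\pi_0(1)}$ and $\nu_1=l+\lambda_{\pi_1(1)}$, the two outgoing edges at each vertex are exactly the two cases $\lambda_{\pi_0(1)}\lessgtr\lambda_{\pi_1(1)}$, the complement of $I_L$ is a single atom of $\mathcal I$ or $\mathcal J$, the return time is at most $2$, and the six edge labels are checked by computing the first-return subintervals and their return words (I spot-checked the loop at $\binom{123}{231}$ with $\lambda_1<\lambda_2$ and your scheme reproduces the stated $(21,2,3)$, the new pair $\binom{123}{231}$ and $\lambda'=(\lambda_1,\lambda_2-\lambda_1,\lambda_3)=M_\sigma^{-1}\lambda$). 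Your Kac-tower argument $M_\sigma\lambda'=\lambda$ together with invertibility of $M_\sigma$, and the coding-morphism/orbit-decomposition argument giving $\cL_T=\bigcup_{u\in\cL_{L(T)}}\Fac(\sigma(u))$ and hence $X_T=\sigma$-image of $X_{L(T)}$, are both correct; the residual work is only the finite bookkeeping over the six edges, which you defer but which is routine. One small caveat: the proposed shortcut of deducing the right graph from the left one via the reflection $X_T=X_{\tilde T}$ is not quite a free reduction, because $\widetilde{\binom{132}{231}}=\binom{231}{132}$, which is not among the three vertices of the displayed left graph, so that route still requires a left-induction computation for an extra permutation pair (or careful half-open endpoint conventions); since you also offer the direct analogous computation for $R$, this is a blemish in the shortcut, not a gap in the proof. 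In short, your argument buys a self-contained proof of what the paper treats as a black-box classical fact, at the cost of a finite case analysis that the cited references already carry out in greater generality.
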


\subsubsection{$\cS$-adic representations of regular $3$-interval exchange transformations}

Let $\cX{2}$ denote the set of codings of interval exchange transformations associated with the permutations $\binom{123}{231}$, or equivalently, with the permutations $\binom{321}{132}$, and $\cX{3}$ the set of codings of interval exchanges with permutations $\binom{132}{231}$.
We study the link between the two classes using inductions.

\begin{proposition}\label{prop:inductions for [3,2]}
Let $X \in \cX{2}$ and let $T$ be the corresponding interval exchange transformation with permutations $\binom{123}{231}$ and length vector $\lambda$.
We have the following cases.
\begin{itemize}
\item
    If $\lambda_1 > \lambda_2 + \lambda_3$, then $I_1$ is recurrent and, if $T' = T_{I_1}$, $X_{T'} \in \cX{2}$ and $X$ is the image of $X_{T'}$
    under $\alpha$.
\item
    If $\lambda_2, \lambda_3 < \lambda_1 < \lambda_2 + \lambda_3$, then $I_1$ is recurrent and, if $T' = T_{I_1}$, $X_{T'} \in \cX{2}$ and $X$ is the image of $X_{T'}$
    under $\pi_{132}\eta\pi_{321}$.
\item
    If $\lambda_2 < \lambda_1 < \lambda_3$, then $I_3$ is recurrent and, if $T' = T_{I_3}$, $X_{T'} \in \cX{3}$ and $X$ is the image of $X_{T'}$
    under $\pi_{312}\beta\pi_{213}$.
\item
    If $\lambda_3 < \lambda_1 < \lambda_2$, then $I_2$ is recurrent and, if $T' = T_{I_2}$, $X_{T'} \in \cX{2}$ and $X$ is the image of $X_{T'}$ under $\pi_{213}\gamma$.
\item
    If $\lambda_1 < \lambda_2, \lambda_3$ and $k \lambda_1 < \lambda_3 < (k+1) \lambda_1$ for some integer $k\geq1$, then $I_2$ is recurrent and, if $T' = T_{I_2}$, $X_{T'} \in \cX{3}$ and $X$ is the image of $X_{T'}$ under $\pi_{213}\delta^{(k)}$.
\end{itemize}
Moreover, the vector of interval lengths in $T'$ is equal to $M_\sigma^{-1} \lambda$ where $\sigma$ is the morphism given by the cases above.
\end{proposition}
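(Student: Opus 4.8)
The proof is a case analysis, and in each case it reduces to realizing the induced transformation $T_{I_j}$ as a finite composition of left and right Rauzy inductions and then reading off the conclusion from Theorem~\ref{thm:effect of Rauzy induction ternary}. Writing $w=(j)$ for the one-letter word, $I_j=I_w$ is a cylinder of $T$; since $X\in\cX{2}$ the IET $T$ is regular, so Proposition~\ref{prop:induction and derivation} immediately yields that $I_j$ is recurrent, that $T':=T_{I_j}$ is a regular $3$-IET, and that $X_{T'}=\cD_j(X_T)$ --- equivalently, that $X=X_T$ is the image of $X_{T'}$ under the coding morphism associated with the letter $j$. Hence it only remains, in each case, to determine (i) the pair of permutations of $T'$ (it will be one of $\binom{123}{231}$ or $\binom{321}{132}$, so $X_{T'}\in\cX{2}$, or $\binom{132}{231}$, so $X_{T'}\in\cX{3}$), (ii) the concrete coding morphism $\sigma$, and (iii) the length vector of $T'$, which should be $M_\sigma^{-1}\lambda$.

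For (i)--(iii) I would start from $(\pi_0,\pi_1)=\binom{123}{231}$ with length vector $\lambda$ and apply Theorem~\ref{thm:effect of Rauzy induction ternary} repeatedly. Inducing on $I_1=[0,\lambda_1)$ only shrinks the interval from the right, so it is a composition of right Rauzy inductions; dually, inducing on $I_3$ uses only left inductions; inducing on $I_2$ uses a mixture --- first right inductions until $I_2$ becomes the rightmost interval of the current IET (which occurs as soon as the permutation pair becomes $\binom{132}{231}$), then left inductions cutting it off from the left. At each step the current permutation pair and length vector determine which outgoing edge of the relevant graph is used, hence the next permutation pair, the next length vector (the current one multiplied on the left by $M_{\sigma_i}^{-1}$), and the induction morphism $\sigma_i$; the hypotheses on $\lambda$ in each case are exactly what forces this sequence to be determined until the interval has shrunk to $I_j$. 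For example, when $\lambda_1>\lambda_2+\lambda_3$ two right inductions suffice and compose to $\alpha$; when $\lambda_2,\lambda_3<\lambda_1<\lambda_2+\lambda_3$ three right inductions are needed and compose to $\pi_{132}\eta\pi_{321}$. In the last case, $\lambda_1<\lambda_2,\lambda_3$ with $k\lambda_1<\lambda_3<(k+1)\lambda_1$, the first $k$ steps are the self-loop at $\binom{123}{231}$ under the condition $\lambda_3>\lambda_1$ (morphism $1\mapsto 13,\,2\mapsto 2,\,3\mapsto 3$, which replaces $\lambda_3$ by $\lambda_3-\lambda_1$) --- this is where the exponent $k$ of $\delta^{(k)}$ comes from --- after which $\lambda_3<\lambda_1$ and one proceeds as in the case $\lambda_3<\lambda_1<\lambda_2$. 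Composing the $\sigma_i$'s then gives simultaneously that $X_T$ is the image of $X_{T'}$ under $\sigma=\sigma_1\circ\cdots\circ\sigma_n$ and that the length vector of $T'$ equals $M_\sigma^{-1}\lambda$, and one checks by hand that $\sigma$ is the morphism listed in the statement.

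The hard part will be the bookkeeping. Concretely: in the cases where $I_2$ is the relevant interval one must verify that each intermediate length comparison dictating the next Rauzy induction follows from the standing hypotheses; one must track how the labels of the intervals propagate through the inductions; and one must check that $\sigma_1\circ\cdots\circ\sigma_n$ equals the stated $\pi\sigma'\pi'$ with $\sigma'\in\cSD$. In the $I_2$-cases this last point will require relabeling the intervals of $T'$ by a fixed permutation (which leaves membership in $\cX{3}$ unchanged), absorbing a residual permutation factor on the right of $\sigma$; and the $\delta^{(k)}$ case needs a short induction on $k$ to recognize the composite $\rho^{\,k}\circ\sigma_{k+1}\circ\cdots\circ\sigma_n$ as $\pi_{213}\delta^{(k)}$.
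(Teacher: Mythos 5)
Your overall strategy coincides with the paper's: realize the induction on $I_j$ as a valid word of left/right Rauzy inductions, apply Theorem~\ref{thm:effect of Rauzy induction ternary} step by step, and compose the resulting morphisms to get both the image statement and the length vector $M_\sigma^{-1}\lambda$. However, there is one genuine flaw in your setup: you assert that ``since $X\in\cX{2}$ the IET $T$ is regular'' and then invoke Proposition~\ref{prop:induction and derivation}. Membership in $\cX{2}$ only means that $X$ is the coding of \emph{some} IET with permutations $\binom{123}{231}$; no regularity is assumed, and Proposition~\ref{prop:induction and derivation} explicitly requires it. This is not a cosmetic point: the proposition is stated without a regularity hypothesis precisely because, in the proof of Theorem~\ref{thm:representation interval exchange}, it is applied to the auxiliary codings $Y_n$ whose underlying IETs are only shown to be regular at the very end (via Proposition~\ref{prop:dendric interval exchanges are regular}). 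The paper avoids regularity altogether by using Lemma~\ref{lemma:composition of Rauzy inductions}: once the word of inductions is valid (which is exactly what the strict inequalities on $\lambda$ guarantee) and the final interval is checked to be $I_j$, that lemma already gives that $I_j$ is recurrent and that the composed induction equals $T_{I_j}$, while Theorem~\ref{thm:effect of Rauzy induction ternary} gives the image statement at each step. Your own composition argument in fact yields everything you tried to extract from Proposition~\ref{prop:induction and derivation}, so the fix is simply to delete the regularity claim and cite Lemma~\ref{lemma:composition of Rauzy inductions} instead.

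Apart from that, your bookkeeping plan differs only superficially from the paper's (which details the case $\lambda_2<\lambda_1<\lambda_3$ via three \emph{left} inductions and sketches the others: two or three right inductions for the $I_1$-cases, and a left induction followed by right inductions plus a final swap of the letters $1$ and $2$ for the $I_2$-cases). Choosing the opposite order for the $I_2$-cases is legitimate, since any valid induction word ending at $I_2$ produces the same induced map; but be careful with your parenthetical ``until $I_2$ becomes the rightmost interval of the current IET'': after each induction the intervals are relabeled, so the interval currently labeled $2$ need not be the original $I_2$, and the residual relabeling is exactly why a permutation factor (the paper's ``swap the letters $1$ and $2$'') must be tracked, as you do acknowledge at the end. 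That part of your plan is fine once carried out explicitly; only the regularity step has to go.
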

\begin{proof}
It follows from Lemma~\ref{lemma:composition of Rauzy inductions} by applying Theorem~\ref{thm:effect of Rauzy induction ternary} several times until we obtain the induction on the given interval. Let us detail the steps for the third case.

Assume that $\lambda_2 < \lambda_1 < \lambda_3$ and assume that $T$ is normalized.
We start by applying a left Rauzy induction and, as $\lambda_1 > \lambda_2$, by Theorem~\ref{thm:effect of Rauzy induction ternary}, $T_1 := L(T)$ is the interval exchange transformation on $[\lambda_2, 1[$ with permutations $\binom{132}{231}$ and length vector $(\lambda_1 - \lambda_2, \lambda_3, \lambda_2)$. Moreover, $X$ is the image of $X_{T_1}$ under the morphism $\sigma_1 : 1 \mapsto 1$, $2 \mapsto 3$, $3 \mapsto 21$.
We then apply another left Rauzy induction on $T_1$. As $\lambda_1 - \lambda_2 < \lambda_1 < \lambda_3$, this gives the transformation $T_2$ on $[\lambda_1, 1[$ with permutations $\binom{321}{132}$ and interval lengths $(\lambda_2 + \lambda_3 - \lambda_1, \lambda_1 - \lambda_2, \lambda_2)$. The shift $X_{T_1}$ is the image of $X_{T_2}$ under the morphism $\sigma_2 : 1 \mapsto 2$, $2 \mapsto 21$, $3 \mapsto 3$.
We apply a third left Rauzy induction on $T_2$ to obtain the transformation $T_3$ on $[\lambda_1 + \lambda_2, 1[ = I_3$. Since $\lambda_2 < \lambda_2 + \lambda_3 - \lambda_1$, the associated permutations are $\binom{132}{231}$. Moreover, $X_{T_2}$ is the image of $X_{T_3}$ under $\sigma_3 : 1 \mapsto 2$, $2 \mapsto 1$, $3 \mapsto 13$. 
By construction, $X_{T'} = X_{T_3}$ is in $\cX{3}$, the lengths of the intervals in $T'$ are given by $M_{\sigma_1\sigma_2\sigma_3}^{-1}\lambda$ and $X$ is the image of $X_{T'}$ under the morphism $\sigma_1 \sigma_2 \sigma_3 = \pi_{312}\beta\pi_{213}$.

For the other cases, we simply give the steps and leave the details to the reader.
\begin{itemize}
    \item
        If $\lambda_1 > \lambda_2 + \lambda_3$, then we do two consecutive right Rauzy inductions.
    \item
        If $\lambda_2, \lambda_3 < \lambda_1 < \lambda_2 + \lambda_3$, we do three right Rauzy inductions.
    \item
        If $\lambda_3 < \lambda_1 < \lambda_2$, we apply a left and a right Rauzy induction then swap the letters $1$ and $2$.
    \item
        If $\lambda_1 < \lambda_2, \lambda_3$ and $k \lambda_1 < \lambda_3 < (k+1) \lambda_1$, we apply a left Rauzy induction and $k + 1$ right Rauzy inductions then swap the letters $1$ and $2$.
\end{itemize}
\end{proof}

The proof of the next result is similar.

\begin{proposition}\label{prop:inductions for [3,3]}
Let $X \in \cX{3}$ and let $T$ be the corresponding interval exchange transformation with permutations $\binom{132}{231}$ and length vector $\lambda$.
We have the following cases.
\begin{itemize}
\item
    If $\lambda_1 > \lambda_2 + \lambda_3$, then $I_1$ is recurrent and, if $T' = T_{I_1}$, $X_{T'} \in \cX{3}$ and $X$ is the image of $X_{T'}$ under $\alpha$.
\item
    If $\lambda_2 > \lambda_1 + \lambda_3$, then $I_2$ is recurrent and, if $T' = T_{I_2}$, $X_{T'} \in \cX{3}$ and $X$ is the image of $X_{T'}$ under $\pi_{213}\alpha\pi_{213}$.
\item
    If $\lambda_2 < \lambda_1 < \lambda_2 + \lambda_3$ and $k( \lambda_1 - \lambda_2) < \lambda_3<(k+1)( \lambda_1 - \lambda_2)$ for some integer $k \geq 1$, then $I_1$ is recurrent and, if $T' = T_{I_1}$, $X_{T'} \in \cX{3}$ and $X$ is the image of $X_{T'}$ under $\zeta^{(k)}\pi_{213}$.
\item
    If $\lambda_1 < \lambda_2 < \lambda_1 + \lambda_3$ and 
    $k( \lambda_2 - \lambda_1) < \lambda_3< (k+1)( \lambda_2 - \lambda_1)$ for some integer $k \geq 1$, then $I_2$ is recurrent and, if $T' = T_{I_2}$, $X_{T'} \in \cX{3}$ and $X$ is the image of $X_{T'}$ under $\pi_{213}\zeta^{(k)}\pi_{213}$.
\end{itemize}
Moreover, the vector of interval lengths in $T'$ is equal to $M_\sigma^{-1} \lambda$ where $\sigma$ is the morphism given by the cases above.
\end{proposition}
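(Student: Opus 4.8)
The plan is to mirror, case by case, the proof of Proposition~\ref{prop:inductions for [3,2]}: in each of the four situations I would exhibit an explicit word $F_1\cdots F_n$ over $\{L,R\}$ that is valid for the (normalized) transformation $T$ and whose associated final transformation $T_n$ is the transformation induced by $T$ on $I_1$ or $I_2$, and then read off every claim from Theorem~\ref{thm:effect of Rauzy induction ternary} and Lemma~\ref{lemma:composition of Rauzy inductions}. Once the valid word is fixed, Lemma~\ref{lemma:composition of Rauzy inductions} gives that $T_n$ is the transformation induced by $T$ on a recurrent subinterval $I'\subset I$; comparing, through the explicit intermediate length vectors, the endpoints of $I'$ with those of $I_1$ (resp.\ $I_2$) proves the recurrence statement. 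The relations $X=\sigma(X_{T'})$ and $\lambda'=M_\sigma^{-1}\lambda$ then follow by composing the one-step statements of Theorem~\ref{thm:effect of Rauzy induction ternary}, and tracking the pair of permutations along the path shows it ends (possibly after a relabelling exchanging the letters $1$ and $2$) at $\binom{132}{231}$, so that $X_{T'}\in\cX{3}$. Proposition~\ref{prop:induction and derivation} is not strictly needed, although it identifies $X_{T'}$ with $\cD_w(X_T)$ for the corresponding word $w$.

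Concretely, in the first case ($\lambda_1>\lambda_2+\lambda_3$) I would apply two right Rauzy inductions: since $\lambda_2<\lambda_1$, Theorem~\ref{thm:effect of Rauzy induction ternary} sends $\binom{132}{231}$ to $\binom{123}{231}$ via the morphism $1\mapsto1,\ 2\mapsto12,\ 3\mapsto3$ with new length vector $(\lambda_1-\lambda_2,\lambda_2,\lambda_3)$; since $\lambda_1-\lambda_2>\lambda_3$, a second right induction returns to $\binom{132}{231}$ via $1\mapsto1,\ 2\mapsto2,\ 3\mapsto13$, and the composition of the two morphisms is exactly $\alpha$, with the induced transformation acting on $I_1$. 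The second case ($\lambda_2>\lambda_1+\lambda_3$) is the image of the first under the reflection exchanging $L$ and $R$ together with the transposition of the letters $1$ and $2$; two left Rauzy inductions and the relabelling give the morphism $\pi_{213}\alpha\pi_{213}$ and induction on $I_2$. In the last two cases one does a single induction to reach a state carrying a self-loop in the Rauzy graph, iterates that self-loop $k$ times, and finishes with one induction leaving the loop; the hypotheses $\lambda_2<\lambda_1<\lambda_2+\lambda_3$ and $k(\lambda_1-\lambda_2)<\lambda_3<(k+1)(\lambda_1-\lambda_2)$ (resp.\ the symmetric ones) are precisely what guarantees that this word of length $k+2$ is valid and that the loop is traversed exactly $k$ times. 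Writing $\mu$ for the self-loop morphism, $\sigma_1$ for the first one and $\nu$ for the last one, one computes $\mu^k\colon 1\mapsto 13^k,\ 2\mapsto2,\ 3\mapsto3$ and then $\sigma_1\mu^k\nu$, which equals $\zeta^{(k)}$ up to the transposition of $1$ and $2$, that is $\zeta^{(k)}\pi_{213}$ (resp.\ $\pi_{213}\zeta^{(k)}\pi_{213}$), with the induced transformation on $I_1$ (resp.\ $I_2$).

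The main obstacle is the bookkeeping in the two $k$-dependent cases. One must check inductively that each intermediate transformation is $R$-inducible (resp.\ $L$-inducible); that the self-loop is taken exactly $k$ times and not one fewer or one more, which is where the sharp inequalities on $\lambda_3$ enter, since each iteration subtracts $\lambda_1-\lambda_2$ from the relevant coordinate; and that after the $k$-th iteration the pair of lengths involved has the strict inequality needed to leave the loop into the correct state. After that, the only remaining work is the word computation of $\sigma_1\mu^k\nu$ and recognising it, up to the $1\leftrightarrow2$ transposition, as $\zeta^{(k)}$. Everything else is a finite verification identical in spirit to the third case of Proposition~\ref{prop:inductions for [3,2]}, so I would write one case out in full and leave the remaining three to the reader.
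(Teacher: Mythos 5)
Your proposal is correct and follows essentially the same route as the paper: the paper proves this proposition exactly as Proposition~\ref{prop:inductions for [3,2]}, by composing one-step Rauzy inductions via Theorem~\ref{thm:effect of Rauzy induction ternary} and Lemma~\ref{lemma:composition of Rauzy inductions} until the induced interval is $I_1$ or $I_2$, detailing one case and summarizing the others (two right inductions for $\lambda_1>\lambda_2+\lambda_3$, the symmetric left inductions for $\lambda_2>\lambda_1+\lambda_3$, and an entry--$k$ loops--exit word in the two $k$-dependent cases). Your compositions check out ($\sigma_1\mu^k\nu=\zeta^{(k)}$ up to the final exchange of $1$ and $2$, giving $\zeta^{(k)}\pi_{213}$, resp.\ $\pi_{213}\zeta^{(k)}\pi_{213}$), and your final relabelling step is exactly the ``swap the letters $1$ and $2$'' the paper uses in the analogous cases of Proposition~\ref{prop:inductions for [3,2]}.
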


\begin{remark}
If $X \in \cX{2} \cup \cX{3}$ is the coding of a regular interval exchange transformation, then it satisfies one of the cases of Proposition~\ref{prop:inductions for [3,2]} or Proposition~\ref{prop:inductions for [3,3]} and, by Proposition~\ref{prop:induction and derivation}, $X_{T'}$ is also the coding of a regular interval exchange.
\end{remark}

Using Propositions~\ref{prop:inductions for [3,2]} and~\ref{prop:inductions for [3,3]}, we can define the graph $\cG_\IET$ represented in Figure~\ref{fig:graph for interval exchanges}.
It has two vertices $[3,2]$ and $[3,3]$ and, for each of the cases of Proposition~\ref{prop:inductions for [3,2]} (resp., of Proposition~\ref{prop:inductions for [3,3]}), it has an edge leaving $[3,2]$ (resp., $[3,3]$) going to the vertex $\cC$ such that $X_{T'} \in \mathcal{X}_{\cC}$ and labeled by the morphism $\sigma$ such that $X$ is the image of $X_{T'}$ under $\sigma$.
Moreover, we add incoming edges labeled by each of the permutations.
Remark that it is a subgraph of the graph $\cG$ represented in Figure~\ref{fig:graph of graphs} used to characterize ternary dendric shifts.

\begin{figure}[h]
\begin{center}
\begin{tikzpicture}
\node (32) at (0,0) {$[3,2]$};
\node (33) at (4,0) {$[3,3]$};
\node (fake32) at (-0.7,1.5) {};
\node (fake33) at (4.7,1.5) {};

\path (fake32) edge [->] node [pos=0.5,left] {$\Sigma_3$} (32);
\path (fake33) edge [->] node [pos=0.5,right] {$\Sigma_3$} (33);

\path (32) edge [loop left, ->] node [pos=0.5,left,align=center] {
$\alpha$, $\pi_{132}\eta\pi_{321}$} (32);

\path (33) edge [loop right, ->] node [pos=0.5,right,align=center] {
$\alpha$, $\pi_{213}\alpha\pi_{213}$,\\
$\zeta^{(k)}\pi_{213}$, $\pi_{213}\zeta^{(k)}\pi_{213}$} (33);

\path (32) edge [->] node [pos=0.5,above,align=center] {
$\pi_{312}\beta\pi_{213}$,\\
$\pi_{213}\gamma$, $\pi_{213}\delta^{(k)}$} (33);
\end{tikzpicture}
\end{center}
\caption{A shift space on $\cA_3$ is the coding of a regular interval exchange if and only if it has a primitive $\Sigma_3\cSD\Sigma_3$-adic representation labeling an infinite path in this graph denoted $\cG_{\IET}$.}
\label{fig:graph for interval exchanges}
\end{figure}
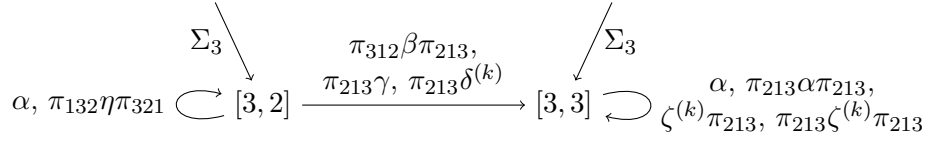

Before using this graph to give a $\cS$-adic characterization of regular interval exchanges, we recall the notion of letter frequency.

A shift space $X$ over $\cA$ is said to have {\em letter frequencies} if for all $a \in \cA$, there exists $\lambda_a \in [0,1]$ such that for all $x \in X$, $\lim_n \frac{|x_1 x_2 \cdots x_n|_a}{n} = \lambda_a$. 
A classical result of Boshernitzan~\cite{boshernitzan:1984} shows that minimal dendric shift spaces over $\cA_3$ (hence codings of regular 3-IET) are uniquely ergodic and thus have letter frequencies that are given by the measure of the letter cylinders.
In particular, when $X$ is the coding of a regular 3-IET, then the letter frequencies are given by the lengths of the intervals.
The following result is a direct consequence of~\cite[Theorem 5.7]{berthe_delecroix}.

\begin{proposition}\label{prop:frequency using s-adic representation}
Let $X$ be a uniquely ergodic shift space with primitive $\cS$-adic representation $\bsigma$. For all $n$, if $f_n$ is the vector of letter frequencies in $X_\bsigma^{(n)}$, then $f_1$ is proportional to $M_{\sigma_{[1,n)}} f_n$.
\end{proposition}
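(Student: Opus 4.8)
The statement to prove is Proposition~\ref{prop:frequency using s-adic representation}: if $X$ is uniquely ergodic with primitive $\cS$-adic representation $\bsigma$, then the vector $f_1$ of letter frequencies in $X$ is proportional to $M_{\sigma_{[1,n)}} f_n$, where $f_n$ is the frequency vector of $X_\bsigma^{(n)}$. The plan is to fix $n$, abbreviate $\tau = \sigma_{[1,n)} : \cA_n^* \to \cA_1^*$, and recall from Section~\ref{subsection:s-adicity} that $X = X_\bsigma$ is the image of $X_\bsigma^{(n)} =: Z$ under $\tau$. Since $\bsigma$ is primitive, both $X$ and $Z$ are minimal; moreover they are uniquely ergodic by hypothesis (the unique ergodicity of $Z$ follows because $Z$ is a factor of $X$ under the shift-commuting map induced by $\tau$, or one may simply note that in our intended application all the $X_\bsigma^{(n)}$ are themselves minimal dendric hence uniquely ergodic by Boshernitzan). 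Write $\mu$ and $\nu$ for the unique invariant probability measures on $X$ and $Z$ respectively, so that $(f_1)_a = \mu([a])$ and $(f_n)_b = \nu([b])$ for the letter cylinders.

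The key step is to relate $\mu$ to $\nu$ through the substitution $\tau$. I would use the standard fact that, for a minimal shift space, the frequency of a word $w$ in any point equals its measure, and that the push-forward construction $X = \{S^k\tau(z) \mid z \in Z,\ 0 \le k < |\tau(z_0)|\}$ gives a natural Kakutani–Rokhlin tower description: $X$ is built from $Z$ by replacing each letter $b \in \cA_n$ occurring in a point of $Z$ by the block $\tau(b)$ of length $|\tau(b)|$. Concretely, for a letter $a \in \cA_1$, counting occurrences of $a$ in a long prefix $\tau(z_1 z_2 \cdots z_N)$ of a point of $X$ gives
\[
 |\tau(z_1\cdots z_N)|_a = \sum_{b \in \cA_n} |z_1 \cdots z_N|_b \, |\tau(b)|_a = \sum_{b \in \cA_n} |z_1\cdots z_N|_b \, (M_\tau)_{a,b}.
\]
Dividing by $N$ and letting $N \to \infty$, the ratio $|z_1\cdots z_N|_b / N$ tends to $(f_n)_b$ (frequencies exist and are uniform by unique ergodicity of $Z$), so $|\tau(z_1\cdots z_N)|_a / N \to (M_\tau f_n)_a$. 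On the other hand, $|\tau(z_1\cdots z_N)|_a / |\tau(z_1\cdots z_N)| \to (f_1)_a$ because the images $\tau(z_1\cdots z_N)$ are arbitrarily long prefixes of points of $X$ and $X$ is uniquely ergodic; and $|\tau(z_1\cdots z_N)| / N \to \sum_b (f_n)_b |\tau(b)| =: L$, a positive constant independent of $a$. Combining, $(f_1)_a = (M_\tau f_n)_a / L$ for every $a$, which is exactly the asserted proportionality. Finally I would remark that this is precisely the content of~\cite[Theorem 5.7]{berthe_delecroix}, so the proof can simply cite it; alternatively the short self-contained argument above suffices.

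The main obstacle is the bookkeeping needed to justify that the two normalizations ($N$, the number of level-$n$ letters, versus $|\tau(z_1\cdots z_N)|$, the true length in $X$) differ by the multiplicative constant $L$ uniformly, and that unique ergodicity of $Z$ (not just of $X$) is available — in the general statement this is part of the hypothesis, but one should be careful that "primitive $\cS$-adic representation" plus "$X$ uniquely ergodic" is being used with $f_n$ well-defined, which is automatic here since each $X_\bsigma^{(n)}$ is minimal and, in the dendric setting, uniquely ergodic by Boshernitzan's theorem. Once those two points are in place, the computation is routine, so in the paper I would keep the proof to essentially one line invoking~\cite{berthe_delecroix}.
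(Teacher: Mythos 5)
Your proposal is correct, and its closing recommendation --- to prove the proposition by simply invoking \cite[Theorem 5.7]{berthe_delecroix} --- is exactly what the paper does: the paper gives no argument beyond stating that the result is a direct consequence of that theorem. Your self-contained counting argument is a sound elementary alternative: with $\tau = \sigma_{[1,n)}$, the identity $|\tau(z_1\cdots z_N)|_a = \sum_b (M_\tau)_{a,b}\,|z_1\cdots z_N|_b$, divided by $N$ and combined with $|\tau(z_1\cdots z_N)|/N \to \sum_b (f_n)_b |\tau(b)|$, yields the proportionality, using that $\tau(z_1\cdots z_N)$ are prefixes of a point of $X$ and that unique ergodicity of $X$ forces letter frequencies to exist along them. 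One caveat: your claim that $X_\bsigma^{(n)}$ is uniquely ergodic because it is ``a factor of $X$ under the shift-commuting map induced by $\tau$'' is not right as stated --- $\tau$ does not commute with the shift (it intertwines the shift on $X_\bsigma^{(n)}$ with a point-dependent power of the shift on $X$, a tower relation rather than a factor map). This is harmless here, since the statement already presupposes that the frequency vector $f_n$ exists, and in the paper's application the $X_\bsigma^{(n)}$ are minimal dendric, hence uniquely ergodic by Boshernitzan, as you note; with that repair (or simply with the citation) your proof stands.
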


\begin{lemma}\label{lemma:unique morphism with frequencies from [3,2]}
Let $X$ be a minimal ternary dendric shift space, $\bsigma = (\sigma_n)_{n \in \N}$ a primitive $\Sigma_3\cSD\Sigma_3$-adic representation of $X$ and $\lambda_1$, $\lambda_2$, $\lambda_3$ the letter frequencies in $X$.
If $\sigma_1 \in \{\alpha, \pi_{132}\eta\pi_{321}, \pi_{312}\beta\pi_{213}, \pi_{213}\gamma\} \cup \{\pi_{213}\delta^{(k)} \mid k \geq 1\}$, then $\sigma_1$ is determined by $\lambda_1$, $\lambda_2$ and $\lambda_3$ in the following way:
\begin{itemize}
\item
    $\lambda_1 > \lambda_2 + \lambda_3$ if and only if $\sigma_1 = \alpha$;
\item
    $\lambda_2, \lambda_3 < \lambda_1 < \lambda_2 + \lambda_3$ if and only if $\sigma_1 = \pi_{132}\eta\pi_{321}$;
\item
    $\lambda_2 < \lambda_1 < \lambda_3$ if and only if $\sigma_1 = \pi_{312}\beta\pi_{213}$;
\item
    $\lambda_3 < \lambda_1 < \lambda_2$ if and only if $\sigma_1 = \pi_{213}\gamma$;
\item
    $\lambda_1 < \lambda_2, \lambda_3$ and $k \lambda_1 < \lambda_3 < (k+1) \lambda_1$ if and only if $\sigma_1 = \pi_{213}\delta^{(k)}$.
\end{itemize}
\end{lemma}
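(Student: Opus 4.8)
The plan is to reduce the statement to a linear-algebra computation on incidence matrices via the frequency relation of Proposition~\ref{prop:frequency using s-adic representation}. First I would record that $X$, being a minimal ternary dendric shift, is uniquely ergodic (by Boshernitzan's theorem, invoked just before Proposition~\ref{prop:frequency using s-adic representation}), and that by Proposition~\ref{prop:Ster-adic representation} the shift $X_\bsigma^{(2)}$ is again a minimal ternary dendric shift; hence it too is uniquely ergodic and its vector $f_2$ of letter frequencies is \emph{strictly} positive (all three letters occur, by minimality over $\cA_3$). Writing $\lambda = (\lambda_1,\lambda_2,\lambda_3)$ for the frequency vector of $X = X_\bsigma^{(1)}$, Proposition~\ref{prop:frequency using s-adic representation} applied with $n = 2$ gives that $\lambda$ is proportional to $M_{\sigma_1} f_2$; since both vectors are positive the proportionality constant is positive, and since each $\sigma_1$ allowed in the hypothesis has incidence matrix of determinant $\pm 1$, we may invert to get $f_2 = c\, M_{\sigma_1}^{-1}\lambda$ for some $c>0$. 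The upshot is the single clean fact that $M_{\sigma_1}^{-1}\lambda$ must be a strictly positive vector.

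Next I would carry out the (routine) computation of $M_\sigma^{-1}$ for each of the five morphisms appearing in the hypothesis — these are exactly the morphisms labelling the edges leaving the vertex $[3,2]$ of $\cG_{\IET}$, and the vectors $M_\sigma^{-1}\lambda$ already occur in Proposition~\ref{prop:inductions for [3,2]}. One finds that the condition $M_\sigma^{-1}\lambda > 0$ is equivalent to: $\lambda_1 > \lambda_2 + \lambda_3$ for $\sigma = \alpha$; to $\lambda_2,\lambda_3 < \lambda_1 < \lambda_2+\lambda_3$ for $\sigma = \pi_{132}\eta\pi_{321}$; to $\lambda_2 < \lambda_1 < \lambda_3$ for $\sigma = \pi_{312}\beta\pi_{213}$; to $\lambda_3 < \lambda_1 < \lambda_2$ for $\sigma = \pi_{213}\gamma$; and to ``$\lambda_1 < \lambda_2$ and $k\lambda_1 < \lambda_3 < (k+1)\lambda_1$'' for $\sigma = \pi_{213}\delta^{(k)}$ (the latter inequalities forcing $\lambda_1 < \lambda_3$ as well, since $k \geq 1$). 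Combined with the previous paragraph, this gives at once the forward implication in each equivalence: if $\sigma_1$ is a given one of these morphisms, then $\lambda$ — and, in the last case, the integer $k$ — must satisfy the stated inequalities.

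For the converse implications I would note that the five conditions describe pairwise disjoint subsets of the open simplex: the pair of signs of $\lambda_1 - \lambda_2$ and $\lambda_1 - \lambda_3$ separates $\{\alpha, \pi_{132}\eta\pi_{321}\}$ (both signs positive) from $\pi_{312}\beta\pi_{213}$, from $\pi_{213}\gamma$, and from the family $\{\pi_{213}\delta^{(k)} : k \geq 1\}$; within the first group the sign of $\lambda_1 - \lambda_2 - \lambda_3$ distinguishes $\alpha$ from $\pi_{132}\eta\pi_{321}$, and within the last family the value of $k$ is pinned down by $\lambda_3/\lambda_1 \in (k, k+1)$. Hence if $\lambda$ satisfies one of the five conditions and $\sigma_1$ is, by hypothesis, one of the five morphisms, then $\sigma_1$ cannot be one whose associated region is disjoint from the one containing $\lambda$, so it must be exactly the claimed one. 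I do not expect any genuine obstacle here: the only points needing a little care are checking that $f_2$ is \emph{strictly} positive (so that all the inequalities obtained are strict and no simplex-boundary case can occur) and applying Proposition~\ref{prop:frequency using s-adic representation} with the matrices in the correct direction; the rest is an elementary computation over $\ZZ$.
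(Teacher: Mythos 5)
Your proposal is correct and follows essentially the same route as the paper's proof: apply Proposition~\ref{prop:frequency using s-adic representation} to get $\lambda$ proportional to $M_{\sigma_1}$ times the (strictly positive, by minimality and unique ergodicity) frequency vector of $X_\bsigma^{(2)}$, deduce the stated inequalities for each of the five morphisms, and conclude the equivalences from the pairwise disjointness of the five inequality regions. Your version merely makes explicit the matrix inversions and the disjointness check that the paper leaves as a brief remark.
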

\begin{proof}
Let $\mu_1$, $\mu_2$, $\mu_3$ be the letter frequencies in $X_\bsigma^{(2)}$. The vector $(\lambda_1 \ \lambda_2 \ \lambda_3)^t$ is proportional to $M_{\sigma_1} (\mu_1 \ \mu_2 \ \mu_3)^t$ and, by minimality, the letter frequencies in $X_\bsigma^{(2)}$ are positive. Thus, given $\sigma_1$, the inequalities follow. As $\lambda_1$, $\lambda_2$, $\lambda_3$ can satisfy at most one (and exactly one) set of inequalities, we have the equivalences.
\end{proof}

\begin{lemma}\label{lemma:unique morphism with frequencies from [3,3]}
Let $X$ be a minimal ternary dendric shift space, $\bsigma = (\sigma_n)_{n \in \N}$ a primitive $\Sigma_3\cSD\Sigma_3$-adic representation of $X$ and $\lambda_1$, $\lambda_2$, $\lambda_3$ the letter frequencies in $X$.
If $\sigma_1 \in \{\alpha, \pi_{213}\alpha\pi_{213}\} \cup \{\zeta^{(k)}\pi_{213}, \pi_{213}\zeta^{(k)}\pi_{213} \mid k \geq 1\}$, then $\sigma_1$ is determined by $\lambda_1$, $\lambda_2$ and $\lambda_3$ in the following way:
\begin{itemize}
\item
    $\lambda_1 > \lambda_2 + \lambda_3$ if and only if $\sigma_1 = \alpha$;
\item
    $\lambda_2 > \lambda_1 + \lambda_3$ if and only if $\sigma_1 = \pi_{213}\alpha\pi_{213}$;
\item
    $\lambda_2 < \lambda_1 < \lambda_2 + \lambda_3$ and $k(\lambda_1 - \lambda_2) < \lambda_3 < (k+1)(\lambda_1 - \lambda_2)$ if and only if $\sigma_1 = \zeta^{(k)}\pi_{213}$;
\item
    $\lambda_1 < \lambda_2 < \lambda_1 + \lambda_3$ and $k(\lambda_2 - \lambda_1) < \lambda_3 < (k+1)(\lambda_2 - \lambda_1)$ if and only if $\sigma_1 = \pi_{213}\zeta^{(k)}\pi_{213}$.
\end{itemize}
\end{lemma}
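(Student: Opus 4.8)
The plan is to mimic the proof of Lemma~\ref{lemma:unique morphism with frequencies from [3,2]} almost verbatim, only changing the morphisms involved. First I would introduce $\mu_1,\mu_2,\mu_3$, the letter frequencies of $X_\bsigma^{(2)}$, noting that $X_\bsigma^{(2)}$ is again a minimal ternary dendric shift (Proposition~\ref{prop:Ster-adic representation}), hence uniquely ergodic, so that the $\mu_i$ exist and are strictly positive by minimality. By Proposition~\ref{prop:frequency using s-adic representation}, the vector $(\lambda_1\ \lambda_2\ \lambda_3)^t$ is proportional to $M_{\sigma_1}(\mu_1\ \mu_2\ \mu_3)^t$, so everything reduces to writing down $M_{\sigma_1}$ for each of the four morphisms $\alpha$, $\pi_{213}\alpha\pi_{213}$, $\zeta^{(k)}\pi_{213}$ and $\pi_{213}\zeta^{(k)}\pi_{213}$, and reading off the constraints on $(\lambda_1,\lambda_2,\lambda_3)$ that positivity of the $\mu_i$ imposes.

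Concretely, once the compositions are unfolded --- e.g.\ $\zeta^{(k)}\pi_{213}$ sends $1\mapsto 12$, $2\mapsto 13^k$, $3\mapsto 13^{k+1}$, and $\pi_{213}\zeta^{(k)}\pi_{213}$ sends $1\mapsto 21$, $2\mapsto 23^k$, $3\mapsto 23^{k+1}$ --- one obtains, for instance in the case $\zeta^{(k)}\pi_{213}$, that $\lambda_1-\lambda_2$, $\lambda_2+\lambda_3-\lambda_1$, $\lambda_3-k(\lambda_1-\lambda_2)$ and $(k+1)(\lambda_1-\lambda_2)-\lambda_3$ are proportional to $\mu_2+\mu_3$, $(k-1)\mu_2+k\mu_3$, $\mu_3$ and $\mu_2$ respectively, all positive; this is exactly the announced system $\lambda_2<\lambda_1<\lambda_2+\lambda_3$, $k(\lambda_1-\lambda_2)<\lambda_3<(k+1)(\lambda_1-\lambda_2)$. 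The three other morphisms are handled identically and give $\lambda_1<\lambda_2<\lambda_1+\lambda_3$ with $k(\lambda_2-\lambda_1)<\lambda_3<(k+1)(\lambda_2-\lambda_1)$, respectively $\lambda_1>\lambda_2+\lambda_3$, respectively $\lambda_2>\lambda_1+\lambda_3$. This settles the four ``$\Rightarrow$'' implications.

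For the reverse implications I would argue, as in Lemma~\ref{lemma:unique morphism with frequencies from [3,2]}, that the four systems are pairwise incompatible: the first two each force a coordinate above $1/2$; the third imposes $\lambda_2<\lambda_1<1/2$ and the fourth $\lambda_1<\lambda_2<1/2$; and within either parametrized family the conditions are disjoint for distinct $k$ since the ratio $\lambda_3/(\lambda_1-\lambda_2)$ (resp.\ $\lambda_3/(\lambda_2-\lambda_1)$) lies in the disjoint open interval $(k,k+1)$. Since $\sigma_1$ is assumed to lie in the given list, it forces exactly one of the systems, which must then be the one satisfied by $\lambda_1,\lambda_2,\lambda_3$; hence the frequencies determine $\sigma_1$, as claimed.

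I do not expect a genuine obstacle here: the only care needed is bookkeeping --- getting the order of composition right when conjugating $\zeta^{(k)}$ and $\alpha$ by $\pi_{213}$ before forming the incidence matrices, and double-checking that the four inequality regions are honestly disjoint (the boundary equalities, which would correspond to non-regular interval exchanges or to a different choice of $\sigma_1$, are ruled out precisely because the $\mu_i$ are strictly positive). Everything else is a transcription of the $[3,2]$ case using Propositions~\ref{prop:inductions for [3,3]} and~\ref{prop:frequency using s-adic representation}.
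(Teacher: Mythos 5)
Your proposal is correct and follows exactly the argument the paper intends: the paper proves Lemma~\ref{lemma:unique morphism with frequencies from [3,2]} by comparing $(\lambda_1\ \lambda_2\ \lambda_3)^t$ with $M_{\sigma_1}$ times the (positive) frequency vector of $X_\bsigma^{(2)}$ and invoking disjointness of the inequality regions, and leaves the $[3,3]$ case as the same computation, which you carry out correctly (your unfoldings of $\zeta^{(k)}\pi_{213}$ and $\pi_{213}\zeta^{(k)}\pi_{213}$ and the resulting positivity conditions check out). Your added verification that the four regions are pairwise disjoint is a slightly more explicit version of the paper's one-line assertion, but it is the same approach.
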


We will also need the following result. 
The proof can, for example, be found in~\cite{Delecroix:2015}.

\begin{proposition}\label{prop:dendric interval exchanges are regular}
Let $X$ be the coding of a $d$-interval exchange transformation. If $p_X(n) = (d - 1)n + 1$ then the interval exchange transformation is regular.
\end{proposition}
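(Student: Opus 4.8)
\emph{Proof plan.} The plan is to read the orbit structure of the discontinuities of $T$ off the factor complexity of $X=X_T$, and then invoke Keane's definition of regularity; no Rauzy induction machinery is needed. Normalize so that $T=T_{\lambda,\pi_0,\pi_1}$ acts on $[0,1)$ with domain division points $0=\mu_0<\mu_1<\dots<\mu_{d-1}<\mu_d=1$, and for $n\ge 1$ put
\[
  D_n=\{\,T^{-k}(\mu_i)\ :\ 0\le k\le n-1,\ 1\le i\le d-1\,\}\cap(0,1),
\]
a set of at most $n(d-1)$ points. The elementary observation I would start from is that the length-$n$ $T$-itinerary is constant on each of the $1+\#D_n$ subintervals of $[0,1)$ cut out by $D_n$ and changes across each point of $D_n$: at $x=T^{-k}(\mu_i)$ with $k$ minimal, $T^k$ is continuous at $x$ and carries a neighbourhood of $x$ onto a neighbourhood of the division point $\mu_i$, so the $k$-th letter of the itinerary switches there, whereas off $D_n$ no letter can switch. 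Since the admissible length-$n$ words are exactly the itineraries that occur, $\#(\cL_T\cap\cA^n)\le 1+\#D_n$, and as $\cL(X_T)\subseteq\cL_T$ this yields, for every $n$,
\[
  p_X(n)=\#\cL_n(X_T)\ \le\ 1+\#D_n\ \le\ 1+n(d-1).
\]

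Next I would use the hypothesis $p_X(n)=(d-1)n+1$ (for all $n$) to force equality in the display above, so that $\#D_n=n(d-1)$ for every $n$; equivalently, the $n(d-1)$ points $T^{-k}(\mu_i)$ with $0\le k\le n-1$, $1\le i\le d-1$, are pairwise distinct and lie in $(0,1)$. Letting $n\to\infty$, all points $T^{-k}(\mu_i)$ with $k\ge 0$, $1\le i\le d-1$, are pairwise distinct. Finally I would upgrade this to regularity, i.e. to the statement that the two-sided orbits $\{T^m(\mu_i):m\in\ZZ\}$, $1\le i\le d-1$, are infinite and pairwise disjoint. If $T^p(\mu_i)=\mu_i$ with $p\ge 1$, then also $T^{-p}(\mu_i)=\mu_i$, contradicting that $\mu_i,T^{-1}(\mu_i),T^{-2}(\mu_i),\dots$ are distinct; hence every $\mu_i$ has infinite orbit. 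If $T^a(\mu_i)=T^b(\mu_j)$ with $i\ne j$, subtract $\min(a,b)$ from both exponents so that one of them is $0$; then $\mu_j=T^{-a}(\mu_i)$ or $\mu_i=T^{-b}(\mu_j)$ for some nonnegative exponent, so one point lies simultaneously in the backward orbit of $\mu_i$ and in that of $\mu_j$ — again contradicting distinctness. Therefore the orbits of $\mu_1,\dots,\mu_{d-1}$ are infinite and pairwise disjoint, i.e. $T$ is regular. As a byproduct, equality also gives $\cL(X_T)=\cL_T$, and then Theorem~\ref{theoremKeane} shows $X$ is minimal, although this is not needed.

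The main obstacle I anticipate is the combinatorial identity behind the first paragraph: making precise that the boundary set of the order-$n$ itinerary partition of $[0,1)$ is exactly $D_n$, with neither spurious nor missing cut points. This rests on the facts that the set of discontinuities of $T^k$ equals $\{T^{-j}(\mu_i):0\le j<k,\ 1\le i<d\}$ and that distinct division points bound intervals carrying distinct labels, so that the itinerary genuinely changes the first time the orbit of $x$ meets a $\mu_i$ and cannot change at any other point (one also has the standard fact that each cylinder $I_w$ is an interval, by induction on $|w|$ using that $T$ is a single translation on every atom of the domain partition, but for the argument only the inequality $\#(\cL_T\cap\cA^n)\le 1+\#D_n$ is needed, the equality being then forced by the hypothesis). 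Everything after that is bookkeeping; one could alternatively organize the complexity count through the Cassaigne–Nicolas formula of Proposition~\ref{prop:complexity}, reading $\#E^+(w)-1$ as the number of division points $\mu_i$ lying in the interior of the interval $T^{|w|}(I_w)$, which is equivalent but slightly more indirect.
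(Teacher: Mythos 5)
Your argument is correct, but note that the paper does not actually prove this proposition: it simply refers the reader to~\cite{Delecroix:2015}, so there is no internal proof to compare against. What you give is the standard argument behind that reference: the atoms of the length-$n$ itinerary partition are half-open intervals whose interior endpoints lie in $D_n=\{T^{-k}(\mu_i):0\le k\le n-1,\ 1\le i\le d-1\}\cap(0,1)$ (this is exactly the cylinder-interval induction you flag as the main point to make precise, and it yields the inequality $p_X(n)\le \#(\cL_T\cap\cA^n)\le 1+\#D_n\le (d-1)n+1$, which is all you need); the hypothesis then forces $\#D_n=n(d-1)$ for every $n$, i.e.\ all backward iterates $T^{-k}(\mu_i)$, $k\ge 0$, $1\le i\le d-1$, are pairwise distinct (and, as equality also rules out any of them being $0$, none is lost to the intersection with $(0,1)$); and your final bookkeeping correctly converts this into the paper's definition of regularity for the \emph{two-sided} orbits, since both a periodicity relation $T^{p}(\mu_i)=\mu_i$ and a collision $T^{a}(\mu_i)=T^{b}(\mu_j)$, $i\ne j$, can be normalized (using that $T$ is a bijection) into an equality between two distinct backward iterates. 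So the proof is complete modulo the routine induction you yourself identify, and it is elementary in the sense that it uses no Rauzy induction, only the counting of itinerary cut points; this is essentially the argument one finds in the cited source.
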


\begin{theorem}\label{thm:representation interval exchange}
A shift space $X$ is the coding of a regular 3-interval exchange transformation if and only if it has a primitive $\Sigma_3 \cSD \Sigma_3$-adic representation labeling a path in the graph $\cG_\IET$ represented in Figure~\ref{fig:graph for interval exchanges}.
\end{theorem}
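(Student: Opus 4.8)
The plan is to prove both implications by following the graph $\cG_\IET$, using the induction Propositions~\ref{prop:inductions for [3,2]} and~\ref{prop:inductions for [3,3]} for the ``only if'' part and the frequency Lemmas~\ref{lemma:unique morphism with frequencies from [3,2]} and~\ref{lemma:unique morphism with frequencies from [3,3]} for the ``if'' part.

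For the ``only if'' direction I would start from $X=X_T$ with $T$ a regular $3$-IET. By Keane's Theorem~\ref{theoremKeane}, $T$ is minimal, so its pair of permutations is indecomposable, hence (since $d=3$) conjugate to $\binom{123}{231}$ or $\binom{132}{231}$; as $X_T$ is invariant under conjugation, normalization and reflection, I may assume $X\in\cX{2}\cup\cX{3}$. Then $X$ falls into exactly one of the cases of Proposition~\ref{prop:inductions for [3,2]} or~\ref{prop:inductions for [3,3]}, so $X$ is the image under some $\sigma_1\in\Sigma_3\cSD\Sigma_3$ of $X_{T'}$ with $T'$ again a $3$-IET with permutations $\binom{123}{231}$ or $\binom{132}{231}$, the morphism $\sigma_1$ labelling the corresponding edge of $\cG_\IET$; moreover, by the remark following those propositions, $X_{T'}$ is again the coding of a regular $3$-IET. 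Iterating yields $\bsigma=(\sigma_n)_{n\geq1}$ labelling an infinite path in $\cG_\IET$. It then remains to see that $\bsigma$ is a primitive $\cS$-adic representation of $X$: by Proposition~\ref{prop:induction and derivation} each inducing interval is a letter cylinder $I_{a_n}$, so $X_{T'}$ at step $n$ equals $\cD_{a_n}(X_{\bsigma}^{(n)})$ and $\sigma_n$ is a coding morphism associated with the letter $a_n$; hence $\bsigma$ is a directive sequence of the kind produced in Theorem~\ref{thm:S-adic representation of minimal} for the minimal shift $X$, and is therefore primitive with $X=X_{\bsigma}$.

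For the ``if'' direction I would take a primitive $\Sigma_3\cSD\Sigma_3$-adic representation $\bsigma=(\sigma_n)_{n\geq1}$ of $X$ labelling a path in $\cG_\IET$. Since $\cG_\IET$ is a subgraph of $\cG$, Theorem~\ref{thm:main} gives that $X$ (and, by Proposition~\ref{prop:Ster-adic representation}, every $X_{\bsigma}^{(n)}$) is a minimal ternary dendric shift, so $p_X(k)=2k+1$ and, by Boshernitzan's theorem, $X$ is uniquely ergodic; let $\lambda^{(n)}$ be the normalized, strictly positive vector of letter frequencies of $X_{\bsigma}^{(n)}$, proportional to $M_{\sigma_n}\lambda^{(n+1)}$ by Proposition~\ref{prop:frequency using s-adic representation}. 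For each $n$ let $T_n$ be the $3$-IET with length vector $\lambda^{(n)}$ and permutation pair dictated by the vertex of the path at level $n$ ($\binom{123}{231}$ for $[3,2]$, $\binom{132}{231}$ for $[3,3]$). Because $\sigma_n$ labels an edge leaving that vertex, Lemma~\ref{lemma:unique morphism with frequencies from [3,2]} or~\ref{lemma:unique morphism with frequencies from [3,3]} forces $\lambda^{(n)}$ to satisfy precisely the length inequalities of the case of Proposition~\ref{prop:inductions for [3,2]} or~\ref{prop:inductions for [3,3]} attached to $\sigma_n$; hence $X_{T_n}$ is the image of $X_{T_{n+1}}$ under $\sigma_n$ (the permutation pair of $T_{n+1}$ matching the target vertex, since the length vector of the induced IET is proportional to $M_{\sigma_n}^{-1}\lambda^{(n)}\propto\lambda^{(n+1)}$). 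Composing, $X_{T_1}$ is the image of $X_{T_N}$ under $\sigma_{[1,N)}$ for every $N$, so $\mathcal{L}^{(1)}(\bsigma)\subseteq\mathcal{L}(X_{T_1})$ and therefore $X=X_{\bsigma}\subseteq X_{T_1}$. Since the coding of any $3$-IET has factor complexity at most $2k+1$, I get $p_{X_{T_1}}(k)\leq 2k+1=p_X(k)$, which together with $X\subseteq X_{T_1}$ forces $\mathcal{L}(X_{T_1})=\mathcal{L}(X)$, i.e. $X=X_{T_1}$. Finally $p_X(k)=(3-1)k+1$, so Proposition~\ref{prop:dendric interval exchanges are regular} shows $T_1$ is regular.

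The bookkeeping steps (identifying the start vertex, checking that the inducing interval is the expected letter cylinder, matching the permutation pair at each level with $\cG_\IET$) are already packaged into Propositions~\ref{prop:inductions for [3,2]}--\ref{prop:inductions for [3,3]} and the definition of $\cG_\IET$, so they are routine. The genuine obstacle is the identity $X=X_{T_1}$ in the ``if'' direction: one cannot simply desubstitute inside $X_{T_1}$ level by level, since that begs the question at every level; the argument must instead combine the a priori inclusion $X\subseteq X_{T_1}$ with the complexity bound $p_{X_{T_1}}(k)\leq 2k+1$, the point being that the frequency data, extracted through Lemmas~\ref{lemma:unique morphism with frequencies from [3,2]} and~\ref{lemma:unique morphism with frequencies from [3,3]}, is exactly what pins down the correct interval exchange among the continuum of IETs compatible with the abstract directive sequence.
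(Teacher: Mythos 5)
Your proposal is correct and follows essentially the same route as the paper: the ``only if'' direction by iterating Propositions~\ref{prop:inductions for [3,2]} and~\ref{prop:inductions for [3,3]} with primitivity coming from Theorem~\ref{thm:S-adic representation of minimal} and Proposition~\ref{prop:induction and derivation}, and the ``if'' direction by building the interval exchanges $T_n$ from the frequency vectors $\lambda^{(n)}$, pinning down the relevant case via Lemmas~\ref{lemma:unique morphism with frequencies from [3,2]} and~\ref{lemma:unique morphism with frequencies from [3,3]}, and finishing with Proposition~\ref{prop:dendric interval exchanges are regular}. The only divergence is the final identification $X = X_{T_1}$, which you obtain from the inclusion $X \subseteq X_{T_1}$ together with the bound $p_{X_{T_1}}(n) \leq 2n+1$, whereas the paper simply asserts that the level-by-level image relations make $\bsigma$ an $\cS$-adic representation of $Y_1$; your version is, if anything, a more explicit justification of the paper's tersest step.
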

\begin{proof}
First assume that $X$ is the coding of a regular 3-interval exchange transformation. 
Possibly starting the representation with a permutation, we can assume that $X \in \cX{2} \cup \cX{3}$.
The sequence obtained by iterating Propositions~\ref{prop:inductions for [3,2]} and~\ref{prop:inductions for [3,3]} gives a $\Sigma_3\cSD\Sigma_3$-adic representation $\bsigma$ of $X$ and, if $X_\bsigma^{(n)} \in \cC_n$, then $\bsigma$ labels the path $(\mathcal{X}_{\cC_n})_{n \in \N}$ by construction of the graph.
Using Theorem~\ref{thm:S-adic representation of minimal} and Proposition~\ref{prop:induction and derivation}, the sequence is primitive.

It remains to prove that if $X$ has a primitive $\Sigma_3 \cSD \Sigma_3$-adic representation $\bsigma = (\sigma_n)_{n \in \N}$ labeling a path $(\cC_n)_{n \in \N}$ in the $\cG_\IET$, then $X$ is the coding of a regular 3-interval exchange transformation. We can assume that the initial permutation is the identity.
As $\cG_\IET$ is a subgraph of the graph $\cG$, $X$ is minimal dendric by Theorem~\ref{thm:main} and the letter frequencies exist by~\cite{boshernitzan:1984}. Similarly, for all $n \in \N$, the letter frequencies exist in $X_\bsigma^{(n)}$. 
Let $\lambda^{(n)}$ denote the vector of frequencies in $X_\bsigma^{(n)}$.
By Proposition~\ref{prop:frequency using s-adic representation}, $\lambda^{(n+1)}$ is proportional to $M_{\sigma_n}^{-1} \lambda^{(n)}$.

For every $n$, let $Y_n \in X_{\cC_n}$ be the coding of the interval exchange transformation whose vector of interval lengths is $\lambda^{(n)}$.
We prove that $Y_n$ is the image of $Y_{n+1}$ under $\sigma_n$.
This will prove that $\bsigma$ is a primitive $\Sigma_3 \cSD \Sigma_3$-adic representation of $Y_1$, hence that $X = Y_1$.
The fact that the underlying IET is regular then follows from Proposition~\ref{prop:dendric interval exchanges are regular}.

If $\cC_n = [3,2]$, then $\sigma_n \in \{\alpha, \pi_{132}\eta\pi_{321}, \pi_{312}\beta\pi_{213}, \pi_{213}\gamma\} \cup \{\pi_{213}\delta^{(k)} \mid k \geq 1\}$.
By Lemma~\ref{lemma:unique morphism with frequencies from [3,2]}, the vector $\lambda^{(n)}$ satisfies some inequalities that make $Y_n$ fall into one of the cases of Proposition~\ref{prop:inductions for [3,2]}.
By checking the different cases, we deduce from this result that $Y_n$ is indeed the image of $Y_{n+1}$ under $\sigma_n$.
The proof is similar if $\cC_n = [3,3]$, using Lemma~\ref{lemma:unique morphism with frequencies from [3,3]} and Proposition~\ref{prop:inductions for [3,3]} instead.
\end{proof}

\subsection{Cassaigne shifts}

A shift space $X$ over $\cA_3$ is a {\em Cassaigne shift} if it has a primitive $\mathfrak{C}$-adic representation, where $\mathfrak{C} = \{c_1,c_2\}$ and
\[
c_{1}:
\begin{cases}
1 \mapsto 1 	\\
2 \mapsto 13	\\
3 \mapsto 2
\end{cases}
\qquad\text{and}\qquad
c_{2}:
\begin{cases}
1 \mapsto 2\\
2 \mapsto 13\\
3 \mapsto 3
\end{cases}.
\]
Cassaigne shifts are minimal ternary dendric shifts and a directive sequence $(\sigma_n)_{n \geq 1} \in \mathfrak{C}^\N$ is primitive if and only if it cannot be eventually factorized over $\{c_1^2,c_2^2\}$, i.e., there is no $N \in \N$ such that for all $n \geq N$, $c_{N+2n}=c_{N+2n+1}$~\cite{Cassaigne_Labbe_Leroy_WORDS}. 

By considering products of morphisms, we obtain that a shift space is a Cassaigne shift if and only if it has a primitive $\mathfrak{C}'$-adic representation where 
$\mathfrak{C}' = \{c_{11},c_{22},c_{122},c_{211},c_{121},c_{212}\}$ and
\[
\begin{array}{lll}
c_{11} = c_1^2:
\begin{cases}
1 \mapsto 1		\\
2 \mapsto 12 	\\
3 \mapsto 13
\end{cases};
&
c_{122} = c_1 c_2^2:
\begin{cases}
1 \mapsto 12	\\
2 \mapsto 132 	\\
3 \mapsto 2
\end{cases};
&
c_{121} = c_1c_2c_1:
\begin{cases}
1 \mapsto 13		\\
2 \mapsto 132 	\\
3 \mapsto 12
\end{cases};
\\ 
\\
c_{22} = c_2^2:
\begin{cases}
1 \mapsto 13	\\
2 \mapsto 23 	\\
3 \mapsto 3
\end{cases};
&
c_{211} = c_2 c_1^2:
\begin{cases}
1 \mapsto 2	\\
2 \mapsto 213 	\\
3 \mapsto 23
\end{cases};
&
c_{212} = c_2c_1c_2:
\begin{cases}
1 \mapsto 23		\\
2 \mapsto 213 	\\
3 \mapsto 13
\end{cases}.
\end{array}
\]
Thus a shift space is a Cassaigne shift if and only if it has a primitive $\cS$-adic representation using morphisms from the set 
\[
	\{\alpha,
	\pi_{321}\overline{\alpha}\pi_{321},
	\pi_{213}\overline{\gamma}\pi_{231},
	\pi_{231}\beta\pi_{132},
	\pi_{132}\eta\pi_{231},
	\pi_{321}\overline{\eta}\pi_{231}	
	\}
\]
or, equivalently, if and only if it has a primitive $\cS_C$-adic representation where
\[
	\cS_C = \{\alpha,
	\pi_{321}\alpha\pi_{321},
	\pi_{213}\gamma\pi_{231},
	\pi_{231}\beta\pi_{132},
	\pi_{132}\eta\pi_{231},
	\pi_{321}\eta\pi_{231}	
	\}.
\]

\begin{proposition}
\label{prop:cassaigne not iet}
There is no Cassaigne shift which is an Arnoux-Rauzy or the coding of a regular interval exchange.
\end{proposition}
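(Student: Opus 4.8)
The strategy is to use Proposition~\ref{prop:equivalent S-adic representations}: a Cassaigne shift $X$ has, by definition, a primitive $\cS_C$-adic representation $\boldsymbol{\tau}=(\tau_n)$, and by Proposition~\ref{prop:equivalent S-adic representations} every $\Sigma_3\cSD\Sigma_3$-adic representation of $X$ is equivalent to $\boldsymbol{\tau}$, hence (by the observation preceding that proposition) has the same sequence of underlying $\cSD$-morphisms. So I only have to show that the $\cSD$-sequence forced by ``$X$ is Cassaigne'' cannot simultaneously be the one forced by ``$X$ is Arnoux--Rauzy'' or by ``$X$ codes a regular $3$-IET'', and in each case I will derive a failure of primitivity.

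For the Arnoux--Rauzy case: if $X$ is Arnoux--Rauzy then $\cC^-(X)=\cC^+(X)=\emptyset$, so by Proposition~\ref{prop:at most one letter} the representation $\boldsymbol{\tau}$ lies in $(\Sigma_3\cSL\Sigma_3)^\N\cap(\Sigma_3\cSR\Sigma_3)^\N$; since $\cSL\cap\cSR=\{\alpha\}$, each $\tau_n$ has underlying $\cSD$-morphism $\alpha$, and the only elements of $\cS_C$ with this property are $\alpha$ and $\pi_{321}\alpha\pi_{321}$. Both incidence matrices $M_\alpha$ and $M_{\pi_{321}\alpha\pi_{321}}$ have second row $(0,1,0)$, so every product $M_{\tau_{[1,N)}}$ has second row $(0,1,0)$ and is never strictly positive, contradicting the primitivity of $\boldsymbol{\tau}$. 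Hence no Cassaigne shift is Arnoux--Rauzy.

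For the interval-exchange case: assume in addition that $X$ codes a regular $3$-IET, so by Theorem~\ref{thm:representation interval exchange} it has a primitive $\Sigma_3\cSD\Sigma_3$-adic representation $\bsigma=(\sigma_n)$ labelling a path in $\cG_\IET$, and $\sigma_n,\tau_n$ share their underlying $\cSD$-morphism for all $n$. As every $\tau_n\in\cS_C$, that morphism lies in $\{\alpha,\gamma,\beta,\eta\}$, so the path $\bsigma$ uses no edge of $\cG_\IET$ whose label has underlying morphism $\zeta^{(k)}$ or $\delta^{(k)}$. Reading Figure~\ref{fig:graph for interval exchanges}, after deleting those edges the only loops left at $[3,3]$ are $\alpha$ and $\pi_{213}\alpha\pi_{213}$, and both of their incidence matrices have last row $(0,0,1)$; hence a path that ever is at $[3,3]$ is trapped there and fails primitivity. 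So $\bsigma$ stays at $[3,2]$, whence $\sigma_n\in\{\alpha,\pi_{132}\eta\pi_{321}\}$ for all $n$ (the only $[3,2]$-loops); and if $\pi_{132}\eta\pi_{321}$ never occurred we would get $\bsigma=\alpha^\N$, which is not primitive, so it does occur.

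The last step is to track the permutations in the equivalence $\bsigma\sim\boldsymbol{\tau}$: writing $\tau_n=\xi_n^{-1}\sigma_n\xi_{n+1}$ with $\xi_1=\id$ (the $\xi_n$ exist and are unique since the $\sigma_{[1,n)}$ are injective), a direct finite check against the list $\cS_C$ shows that $\tau_n\in\cS_C$ forces $\xi_{n+1}=\xi_n\in\{\id,\pi_{132},\pi_{231},\pi_{321}\}$ when $\sigma_n=\alpha$, and forces $\xi_n\in\{\id,\pi_{231}\}$ and $\xi_{n+1}=\pi_{213}$ when $\sigma_n=\pi_{132}\eta\pi_{321}$. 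Starting from $\xi_1=\id$, a run of $\alpha$'s keeps $\xi_n=\id$, so at the first occurrence of $\pi_{132}\eta\pi_{321}$, say at index $m$, we obtain $\xi_{m+1}=\pi_{213}$; since $\pi_{213}\notin\{\id,\pi_{132},\pi_{231},\pi_{321}\}$ and $\pi_{213}\notin\{\id,\pi_{231}\}$, neither admissible value of $\sigma_{m+1}$ makes $\tau_{m+1}=\pi_{213}\sigma_{m+1}\xi_{m+2}$ an element of $\cS_C$, contradicting $\tau_{m+1}\in\cS_C$. The only genuinely technical points in the whole argument are this bookkeeping of conjugating permutations and the crude positivity checks on the incidence matrices read off from $\cG_\IET$ and from $\cS_C$; I expect the permutation bookkeeping to be the main obstacle, everything else being structural.
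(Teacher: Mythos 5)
Your proposal is correct and takes essentially the same route as the paper's proof: the Arnoux--Rauzy case via Proposition~\ref{prop:at most one letter} and the non-primitivity of sequences over $\{\alpha,\pi_{321}\alpha\pi_{321}\}$, and the IET case via Proposition~\ref{prop:equivalent S-adic representations} and Theorem~\ref{thm:representation interval exchange}, forcing the path to stay on the $[3,2]$-loops $\{\alpha,\pi_{132}\eta\pi_{321}\}$ and then obtaining a contradiction by tracking the conjugating permutations, exactly as the paper does around the first occurrence of the $\eta$-type morphism. The only slight imprecision is asserting $\xi_1=\id$ (the paper allows an initial permutation $\psi_0$ before the $\cG_{\IET}$-path), but this is harmless since your contradiction is local to the indices $m$ and $m+1$ and only uses $\xi_{m+1}=\pi_{213}$, which is independent of the initial permutation.
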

\begin{proof}
Let $X$ be a Cassaigne shift and $\bsigma = (\sigma_n)_{n \geq 1}$ be a primitive $\cS_C$-adic representation of $X$. 
Firstly, it is clear that $X$ is not an Arnoux-Rauzy shift. 
Using Proposition~\ref{prop:at most one letter}, it would indeed require $\bsigma$ to only use left-invariant and right-invariant morphisms from $\cSD$, hence to be in $\{\alpha,\pi_{321}\alpha\pi_{321}\}^\NN$.
It then suffices to observe that there is no primitive sequence in $\{\alpha,\pi_{321}\alpha\pi_{321}\}^\NN$.

By Proposition~\ref{prop:equivalent S-adic representations} and Theorem~\ref{thm:representation interval exchange}, if $X$ is the coding of a regular interval exchange, then $\bsigma$ is equivalent to a primitive sequence $\boldsymbol{\tau} \in (\Sigma_3\cSD\Sigma_3)^\N$ (preceded by a permutation $\psi_0$) labeling an infinite path in the graph $\cG_\IET$.
Let $\sigma_n = \pi_n \sigma'_n \pi'_n$ and $\tau_n = \psi_n \tau'_n \psi'_n$ with $\pi_n, \pi'_n, \psi_n, \psi'_n \in \Sigma_3$ and $\sigma'_n, \tau'_n \in \cSD$.

As $\bsigma$ and $\boldsymbol{\tau}$ are equivalent, $\sigma'_n = \tau'_n$ thus, $\tau_n \notin \{\pi_{213}\delta^{(k)}, \zeta^{(k)}\pi_{213}, \pi_{213}\zeta^{(k)}\pi_{213} \mid k \geq 1\}$.
Since $\boldsymbol{\tau}$ is primitive, it cannot belong to $(\Sigma_3\cSD\Sigma_3)^*\{\alpha,\pi_{213}\alpha\pi_{213}\}^\N$ thus the path labeled by $\boldsymbol{\tau}$ stays in the vertex $[3,2]$ and $\boldsymbol{\tau} \in \{\alpha, \pi_{132}\eta\pi_{321}\}^\N$. Moreover, there exists $N$ such that $\tau_N = \pi_{132}\eta\pi_{321}$.

Let us show that we obtain a contradiction. 
Let $\xi_n$ be the permutation such that $\psi_0\tau_1\dots\tau_{n-1} = \sigma_1\dots \sigma_{n-1}\xi_n$. Since $\psi_0\tau_1\dots\tau_{n-1}$ is injective, $\tau_n = \xi_n^{-1}\sigma_n\xi_{n+1}$ for all $n \geq 1$.
For $n = N$, this equality becomes
\[
    \pi_{132}\eta\pi_{321} = \xi_N^{-1}\pi_N\eta\pi_{231}\xi_{N+1}
\]
thus, $\pi_{321} = \pi_{231}\xi_{N+1}$ and $\xi_{N+1} = \pi_{213}$.
If $\tau_{N+1} = \alpha$, then we have
\[
    \alpha = \pi_{213}\pi_{N+1}\alpha\pi_{N+1}\xi_{N+2}.
\]
This is impossible as $\pi_{N+1}$ is either the identity or $\pi_{321}$. If $\tau_{N+1} = \pi_{132}\eta\pi_{321}$, then
\[
    \pi_{132}\eta\pi_{321} = \pi_{213}\pi_{N+1}\eta\pi_{231}\xi_{N+2}
\]
which is also impossible since $\pi_{N+1} \in \{\pi_{132}, \pi_{321}\}$.
\end{proof}

\section{Further work}

The problem of finding an $\cS$-adic characterization of minimal dendric shift over larger alphabets is still open.
It is likely that, for any fixed alphabet $\cA_k = \{1,2,\dots,k\}$, there exists a graph $\cG_k$ that allows to extend Theorem~\ref{thm:main}, but its definition is more tricky.
We will attack this problem in a future work.

In particular, for Lebesgue almost every 3-dimensional probability vector $\lambda$, there is a Cassaigne shift $X$ with vector of letter frequencies $\lambda$ and with the additional property of being finitely balanced~\cite{CassaigneLabbeLeroy:arxiv}, that is, there is a constant $K$ such that for every $a \in \cA_3$ and every $n \in \NN$, $\sup_{u,v \in \cL_n(X)} |u|_a-|v|_a \leq K$.
It is an open problem to generalize Cassaigne shifts over larger alphabets and it seems reasonable to look for such a generalization among minimal dendric shifts.
Better understanding the $\cS$-adic representations of minimal dendric shifts could thus be helpful.

Another interesting question would be to study the $\cS$-adic representations obtained by factorizing the morphisms of $\cSD$ into elementary automorphisms.
Such factorizations always exist (see Theorem~\ref{thm:decoding dendric}) and therefore yield to another graph where the edges are labeled by elementary automorphisms of $F_{\cA_3}$.

\section*{Acknowledgement}

We thank the anonymous referee that helped us improve the graphs $\cG$ and $\cG_{\IET}$.

\bibliographystyle{alpha}
\bibliography{ternarycase.bib}

\end{document}